\newcommand{\kirby}{\omega} 
\newcommand{\Kirby}{\Omega} 
\newcommand{\JW}{P} 
\newcommand{\symobj}{\JW} 
\newcommand{\bn}{\mathrm{BN}} 
\newcommand{\BN}{\mathcal{BN}} 
\newcommand{\ABN}{\Ann\BN} 
\newcommand{\PBN}{\mathcal{PBN}} 
\newcommand{\Ann}{\mathcal{A}}
\newcommand{\TL}{\mathrm{TL}} 
\newcommand{\dTL}{\mathrm{d}\TL} 
\newcommand{\KdTL}{\mathrm{Kd}\TL} 
\newcommand{\tpc}{\mathrm{M}\dTL} 
\newcommand{\ctpc}{\overline{\mathrm{M}\dTL}} 
\newcommand{\Mat}{\mathrm{Mat}} 
\DeclareMathOperator*{\colim}{colim}
\newcommand{\Web}{\mathrm{Web}} 
\newcommand{\dWeb}{\mathrm{d}\Web} 
\newcommand{\Pol}{\mathbf{Pol}} 
\newcommand{\xs}{z} 
\newcommand{\sx}{\tilde{x}} 
\newcommand{\ux}{x} 
\newcommand{\gen}{c} 
\newcommand{\incl}{\iota}
\newcommand{\Incl}{\mathrm{Incl}}
\newcommand{\Vect}{\mathrm{Vect}} 
\newcommand{\VectZ}{\mathrm{Vect}_{0}^{\Gamma}} 
\newcommand{\CSZ}{\CS_{0}} 
\newcommand{\cdTL}{\overline{\dTL}}
\newcommand{\cABN}{\overline{\ABN}}
\newcommand{\cPBN}{\overline{\PBN}}
\definecolor{kirb}{RGB}{254,112,198}
\tikzset{kirbystyle/.style={ultra thick,kirb},
  }
\newcommand{\kirbcolword}{pink }
\patchcmd{\@setref}{\bfseries ??}{\bfseries\color{red} OWE A COFFEE/BEER}{}{}
\begin{document}

\author{Matthew Hogancamp}
\address{Department of Mathematics, Northeastern University, 360 Huntington Ave, Boston,
MA 02115, USA}
\email{m.hogancamp@northeastern.edu}

\author{David~E.~V.~Rose}
\address{Department of Mathematics, University of North Carolina, 
Phillips Hall, CB \#3250, UNC-CH, 
Chapel Hill, NC 27599-3250, USA
\href{https://davidev.web.unc.edu/}{davidev.web.unc.edu}}
\email{davidrose@unc.edu}

\author{Paul Wedrich}
\address{Fachbereich Mathematik, Universit\"at Hamburg, 
Bundesstra{\ss}e 55, 
20146 Hamburg, Germany
\href{http://paul.wedrich.at}{paul.wedrich.at}}
\email{paul.wedrich@uni-hamburg.de}

\title{A Kirby color for Khovanov homology}

\begin{abstract} 
We construct a Kirby color in the setting of Khovanov homology:
an ind-object of the annular Bar-Natan category that
is equipped with a natural handle slide isomorphism.
Using functoriality and cabling properties of Khovanov homology, 
we define a Kirby-colored Khovanov homology that is invariant under
the handle slide Kirby move,
up to isomorphism.
Via the Manolescu--Neithalath 2-handle formula, 
Kirby-colored Khovanov homology agrees with the $\glnn{2}$ skein lasagna module, 
hence is an invariant of $4$-dimensional $2$-handlebodies.
\end{abstract}

\maketitle

\setcounter{tocdepth}{1}

\tableofcontents

\section{Introduction}
In the context of quantum link invariants (such as the Jones polynomial) a
\emph{Kirby color}\footnote{This name seems to have come into use sometime
before 2001 \cite{MR2009998}, although the concept is older. Another common name
in the tensor category literature is ``(virtual) regular object'', see e.g.
\cite[p.270]{MR3242743}} is a certain linear combination of cabling patterns
yielding a framed link invariant that is invariant under the second
(handle slide) Kirby move \cite{MR0467753}. The resulting ``Kirby-colored''
quantum invariant of a link $\mathcal{L} \subset S^3$ can then be regarded as an
invariant of the 4-manifold obtained by attaching $2$-handles to the $4$-ball
$B^4$ along the link $\mathcal{L} \subset \partial B^4$ \cite{MR1185809}.
Perhaps the most famous examples of this process are the
Witten--Reshetikhin--Turaev invariants \cite{MR990772,MR1091619}, though we note
these are special in that they admit a further renormalization that is
invariant under the first (blow-up) Kirby move\footnote{Being invariant under
both Kirby moves, the Witten--Reshetikhin--Turaev invariants are thus
3-manifold invariants, depending only on the boundary of the aforementioned
4-dimensional 2-handlebody.}.

Khovanov homology \cite{Kho} is a bigraded homology theory for links $\cal{L}
\subset S^3$, which categorifies the Jones polynomial. One of the
longest-standing problems regarding Khovanov homology is whether (and if so,
how) it extends to an invariant of $3$- or $4$-manifolds. The goal of this paper
is to propose a solution to this problem that proceeds by developing a
\emph{Kirby color for Khovanov homology}.

\subsection{The Kirby color}
\label{ss:intro kirby}
To describe our work with slightly more precision, recall that in the
(pre-categorified) context of framed link invariants associated with a linear
ribbon category, the set of cabling patterns forms an algebra. For the Jones
polynomial, this is the Kauffman bracket skein algebra of the thickened annulus.
Analogously, in the categorified context the cabling patterns form a monoidal
category: for Khovanov homology this role is played\footnote{See
Remark~\ref{rem:dgtrace} for a more sophisticated choice, which will not be
necessary for the purpose of this paper.} by the Bar-Natan category of the
thickened annulus, which hereafter will be denoted $\ABN$. For this
introduction, it suffices to note that objects of $\ABN$ are embedded
$1$-manifolds in the annulus and morphisms are formal linear combinations of
(dotted) cobordisms embedded in the thickened annulus, modulo local relations (from
\cite[\S  11.2]{BN2}). Moreover, $\ABN$ is $\Z$-graded and linear, with monoidal
structure given by inserting one annulus into the interior of another. See \S
\ref{ss:dTL} for the precise definitions. 

We let $\gen$ denote the object of $\ABN$ which is a single essential circle, 
so $\gen^n$ denotes $n$ concentric essential circles. 
The symmetric group $\mathfrak{S}_n$ acts on $\gen^n$, 
thus the symmetric power $\Sym^n(\gen)$ exists in
the Karoubi completion $\Kar(\ABN)$ 
as the image of the symmetrizing idempotent. 

\begin{definition}
Let $\kirby:=\kirby_0\oplus \kirby_1$, where the summands are the colimits
\begin{align*}
\kirby_0&:=\colim\Big(\Sym^0(\gen)\rightarrow q^{-2}\Sym^2(\gen) \rightarrow  q^{-4}\Sym^4(\gen) \rightarrow\cdots  \Big)\\
\kirby_1&:=\colim\Big(q^{-1}\Sym^{1}(\gen)\rightarrow q^{-3}\Sym^3(\gen) \rightarrow  q^{-5}\Sym^5(\gen) \rightarrow\cdots  \Big),
\end{align*}
regarded as objects of an appropriate completion $\overline{\ABN}$ (see Convention~\ref{conv}). 
Here, the maps 
are given by dotted annulus cobordisms, 
and the grading shifts $q^{-n}$ ensure that these maps are degree zero.
\end{definition}

The object $\kirby \in \overline{\ABN}$ is the titular Kirby color for Khovanov homology. 
In contrast to the pre-categorified situation, it is an \emph{object} of a monoidal
category rather than an \emph{element} of an algebra, 
and it manifestly does not require working at a root of unity. 
In particular, the Kirby color $\kirby$ for
Khovanov homology does not categorify any Kirby element for 
the $\slnn{2}$ Witten--Reshetikhin--Turaev invariants!   
\medskip

Nevertheless, the Kirby color $\kirby$ \emph{behaves} like a categorified Kirby
element~\cite{MR2251160} in the sense that it leads to link invariants that are
invariant under handle slide. 
In the categorical setting, handle slide invariance is an additional structure, rather than a property.
This structure is best phrased in the annular setting,
namely using the relative Bar-Natan category of the annulus $\PBN$, 
wherein objects are annular curves with boundary (see \S \ref{ss:punctured BN}). 
This category is a module category for $\ABN$ and contains two special objects
\[
L := 
\begin{tikzpicture}[anchorbase,scale=1]
\node[gray] at (0,0) {$\bullet$};
\draw[gray] (0,0) circle (.5);
\draw[very thick] (0,.5) to [out=270,in=90] (-.25,0) to [out=270,in=90] (0,-.5);
\end{tikzpicture}
\; , \quad
R :=
\begin{tikzpicture}[anchorbase,scale=1]
\node[gray] at (0,0) {$\bullet$};
\draw[gray] (0,0) circle (.5);
\draw[very thick] (0,.5) to [out=270,in=90] (.25,0) to [out=270,in=90] (0,-.5);
\end{tikzpicture} \, .
\]

\begin{theoremA}[Handle slide] 
	\hypertarget{thm:A}
The Kirby color $\kirby$ is equipped with the following handle slide structure:
	\begin{itemize}
		\item (Lemma~\ref{lem:handleslide}) There exists a distinguished
		isomorphism $\kirby\bullet L \cong \kirby\bullet R$ in an appropriate
		completion $\overline{\PBN}$, that we call the \emph{elementary handle
		slide}. Graphically, this may be depicted as in the first isomorphism of \eqref{eq:slide}. 	
		\begin{equation}
			\label{eq:slide}
		\begin{tikzpicture}[anchorbase,scale=.625]
			\node[gray] at (0,0) {$\bullet$};
			\draw[gray] (0,0) circle (2);
			\draw[very thick] (0,2) to [out=270,in=90] (-1.5,0) to [out=270,in=90] (0,-2);
			\draw[very thick, double] (0,0) circle (.75);
			\filldraw[white] (.375,.25) rectangle (1.125,-.25); 
			\draw[very thick] (.375,.25) rectangle (1.125,-.25);
			\node at (.75,0) {$\kirby$};
			\end{tikzpicture}
			\cong
			\begin{tikzpicture}[anchorbase,rotate=180,scale=.625]
			\node[gray] at (0,0) {$\bullet$};
			\draw[gray] (0,0) circle (2);
			\draw[very thick] (0,2) to [out=270,in=90] (-1.5,0) to [out=270,in=90] (0,-2);
			\draw[very thick, double] (0,0) circle (.75);
			\filldraw[white] (-.375,.25) rectangle (-1.125,-.25); 
			\draw[very thick] (-.375,.25) rectangle (-1.125,-.25);
			\node at (-.75,0) {$\kirby$};
			\end{tikzpicture}
			\quad, \qquad
					\begin{tikzpicture}[anchorbase,scale=.625]
					\node[gray] at (0,0) {$\bullet$};
					\draw[gray] (0,0) circle (2);
					\draw[very thick] (.25,2) to [out=270,in=90] (-1,0) to [out=270,in=90] (.25,-2);
					\draw[very thick] (-.25,2) to [out=270,in=90] (-1.5,0) to [out=270,in=90] (-.25,-2);
					\draw[very thick, double] (0,0) circle (.5);
					\filldraw[white] (.25,.25) rectangle (.75,-.25); 
					\draw[very thick] (.25,.25) rectangle (.75,-.25);
					\filldraw[white] (-1.75,.325) rectangle (-.75,-.325); 
					\draw[very thick] (-1.75,.325) rectangle (-.75,-.325); 
					\node at (.5,0) {$\kirby$};
					\node at (-1.25,0) {$D$};
					\end{tikzpicture} 
				\cong
				\begin{tikzpicture}[anchorbase,rotate=180,scale=.625]
					\node[gray] at (0,0) {$\bullet$};
					\draw[gray] (0,0) circle (2);
					\draw[very thick] (.25,2) to [out=270,in=90] (-1,0) to [out=270,in=90] (.25,-2);
					\draw[very thick] (-.25,2) to [out=270,in=90] (-1.5,0) to [out=270,in=90] (-.25,-2);
					\draw[very thick, double] (0,0) circle (.5);
					\filldraw[white] (.25,.25) rectangle (.75,-.25); 
					\draw[very thick] (.25,.25) rectangle (.75,-.25);
					\filldraw[white] (-1.75,.325) rectangle (-.75,-.325); 
					\draw[very thick] (-1.75,.325) rectangle (-.75,-.325); 
					\node at (.5,0) {$\kirby$};
					\node at (-1.25,0) {$D$};
					\end{tikzpicture}
		\end{equation}

		\item (Lemma~\ref{lem:naturalhandleslide}) Compositions of elementary
		handle slides assemble into a collection of handle slide isomorphisms,
		which are natural with respect to cobordisms involving the
		``sliding strands'' (illustrated as $D$ in the second isomorphism of \eqref{eq:slide}).
	\end{itemize}
\end{theoremA}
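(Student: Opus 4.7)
The plan is to exhibit the elementary handle slide as an induced morphism between colimits in the completion $\overline{\PBN}$. Since $\kirby = \kirby_0 \oplus \kirby_1$ is the direct sum of colimits of shifted symmetric powers $q^{-k}\Sym^k(\gen)$ in $\overline{\ABN}$, and since the $\ABN$-action on $\PBN$ preserves colimits, one has
$$\kirby \bullet L = \colim_k q^{-k}\Sym^k(\gen) \bullet L, \qquad \kirby \bullet R = \colim_k q^{-k}\Sym^k(\gen) \bullet R,$$
with $k$ running over even (resp.\ odd) integers for the $\kirby_0$ (resp.\ $\kirby_1$) summand. It therefore suffices to exhibit compatible families of morphisms at each truncation level and check that the induced morphisms on colimits are mutually inverse.

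First I would construct, for each $k$, a cobordism-induced morphism $\psi_k \colon q^{-k}\Sym^k(\gen) \bullet L \to q^{-(k+2)}\Sym^{k+2}(\gen) \bullet R$ arising from a local dotted cobordism in the thickened annulus that transports the strand $L$ to the strand $R$ while adjoining two additional essential circles on the right of the puncture (which are absorbed by the symmetrizer on the target). The grading shifts ensure that $\psi_k$ has degree $0$. The key compatibility---that the $\psi_k$ fit into commutative squares with the transition maps of the two colimits, modulo the symmetric idempotents---amounts to an identity among dotted annular cobordisms that I would verify using the local Bar-Natan relations recalled in \S\ref{ss:dTL}. A mirror-symmetric construction furnishes a candidate inverse $\phi_k$. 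To see that the induced maps $\psi$ and $\phi$ on colimits are mutually inverse, one reduces each composition, at each level, to a morphism that simplifies via neck-cutting and sphere/dot evaluations to a transition map of the appropriate colimit, and hence becomes the identity upon passage to the colimit.

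For naturality, the central observation is that $\psi_k$ and $\phi_k$ are supported in a neighborhood of the sliding strand, so any cobordism $D$ whose support is disjoint from this neighborhood commutes with the slide automatically via the $\ABN$-module structure on $\PBN$. Multi-strand handle slides are then realized by iterating the elementary slide one strand at a time, the $\bullet\kirby$ insertions keeping the local supports disjoint, and naturality with respect to arbitrary cobordisms $D$ between the sliding strands follows from this locality together with the symmetries of $\kirby$ (e.g.\ invariance of $\Sym^n(\gen)$ under permutations of its constituent circles).

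The main obstacle is the rigorous verification that $\psi$ and $\phi$ are mutually inverse. At no finite stage are the level-wise composites identities---the symmetric powers change rank on either side---so the cancellation is genuinely a colimit phenomenon. Making it precise demands tracking how the symmetrized essential circles produced by $\psi_{k+2} \circ \phi_k$ are absorbed into the transition map $q^{-k}\Sym^k(\gen) \bullet L \to q^{-(k+2)}\Sym^{k+2}(\gen) \bullet L$, a statement which ultimately reduces to sphere/dot evaluations and symmetrizer identities in the Bar-Natan category but requires careful bookkeeping of gradings and idempotents. Philosophically, this colimit-absorption is the categorical shadow of the virtual dimensions appearing in the pre-categorified Kirby element.
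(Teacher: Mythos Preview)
Your overall strategy---build level-wise maps between the directed systems, verify that their two-fold composites agree with the transition maps, and conclude that the induced maps on colimits are mutually inverse---is exactly the paper's approach. The naturality argument via disjoint support is also essentially what the paper does (Lemma~\ref{lem:naturalhandleslide}).

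However, your proposed level-wise maps $\psi_k \colon q^{-k}\Sym^k(\gen)\bullet L \to q^{-(k+2)}\Sym^{k+2}(\gen)\bullet R$ cannot exist as nonzero morphisms. There is a parity obstruction in $\PBN^1_1$: by Theorem~\ref{thm:MdTL} (specifically the isomorphism \eqref{eq:YZiso}), one has $\Hom_{\PBN^1_1}\bigl((m,L),(n,R)\bigr) \cong q\,\Hom_{\dTL}(\gen^m,\gen^{n+1})$, and the latter vanishes unless $m+n$ is odd. Since your source has $k$ essential circles and your target has $k+2$, the morphism space is zero. No dotted cobordism of the kind you describe can move $L$ to $R$ while changing the number of essential circles by an even amount.

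The paper's remedy is to step by \emph{one} rather than two: the level-wise map goes $q^{-n}\JW_n\bullet Z \to q^{-n-1}\JW_{n+1}\bullet \tau(Z)$, using the degree-one ``strand absorption/emission'' generator of $\tpc$ (Definition~\ref{def:MdTL}). Consequently the elementary handle slide is an isomorphism $\kirby_k\bullet Z \cong \kirby_{k+1}\bullet \tau(Z)$, i.e.\ it \emph{exchanges} the summands $\kirby_0 \leftrightarrow \kirby_1$; only after summing does one obtain $\kirby\bullet L \cong \kirby\bullet R$. The invertibility check is then exactly your ``colimit absorption'': the first relation in \eqref{eq:MdTLrels} shows that the composite of two such odd steps equals (dotted blue strand) $+$ (dotted cup on black strands), the first term is annihilated by the Jones--Wenzl projector, and what remains is precisely the transition map $U_n$ of the directed system. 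So your philosophy is right, but the parity of the step size is the missing idea.
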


 In Theorem~\ref{thm:handleslide}, we further show that the Kirby color $\kirby$,
together with its handle slide isomorphisms, constitutes an object of the
Drinfeld center of the completed relative Bar-Natan category $\cPBN$ of the
annulus, considered as a bimodule for the monoidal Bar-Natan bicategory $\BN$
associated to the rectangle. 
The proof of Theorem \hyperlink{thm:A}{A} uses a generators and relations 
presentation of a certain subcategory of $\PBN$ 
that we provide in Theorem~\ref{thm:MdTL}.

\subsection{Manifold invariants and TQFT context}
\label{ss:intro invariants}
In order to port our results on handle slide invariance from the annular setting
to links in $S^3$, 
we need a well-defined notion of cabling in Khovanov homology. 
The following is a straightforward consequence of the functoriality of Khovanov homology:

\begin{theoremB}[Cabling in Khovanov homology]
	\hypertarget{introthm:cabling}
Let $\mathcal{L}=\mathcal{K}_1\cup \cdots \cup \mathcal{K}_r$ be an
$r$-component framed oriented link in $S^3$.
There is a functor
\[
\Kh_{\mathcal{L}} \colon \ABN^{\times r} \rightarrow \Vect^{\Z\times \Z}
\]
sending $(\gen^{k_1},\cdots, \gen^{k_r})$ 
to the Khovanov homology of the cable $\mathcal{L}^{\underline{k}} := \mathcal{K}_1^{k_1} \cup \cdots \cup \mathcal{K}_r^{k_r}$.  
\end{theoremB}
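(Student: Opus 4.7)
The plan is to realize cabling as a geometric operation---insertion into tubular neighborhoods---and then appeal to the functoriality of Khovanov homology for (dotted) link cobordisms in the Bar-Natan formulation. First I would fix, for each component $\mathcal{K}_i$ of the framed oriented link $\mathcal{L}$, an embedding $\phi_i \colon A \times I \hookrightarrow S^3$ of the thickened annulus whose core circle is $\mathcal{K}_i$ and whose longitudinal framing matches the given framing; these can be arranged to have pairwise disjoint images. For a tuple $(X_1, \ldots, X_r) \in \Ob(\ABN^{\times r})$, inserting each annular $1$-manifold $X_i \subset A$ via $\phi_i|_{A \times \{1/2\}}$ and taking the disjoint union produces a framed oriented link in $S^3$ which, when $X_i = \gen^{k_i}$, agrees with the cable $\mathcal{L}^{\underline{k}}$. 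I set $\Kh_{\mathcal{L}}(X_1, \ldots, X_r)$ to be the Khovanov homology of this link.

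To extend to morphisms, a morphism $(f_1, \ldots, f_r)$ in $\ABN^{\times r}$ is a tuple of $\k$-linear combinations of dotted cobordisms $\Sigma_i \subset A \times I$, modulo the Bar-Natan local relations. Using the product embeddings $\phi_i \times \id_I \colon A \times I \times I \hookrightarrow S^3 \times I$, each $\Sigma_i$ becomes a dotted cobordism in $S^3 \times I$ between the corresponding cabled links, and their disjoint union yields a dotted link cobordism to which the Bar-Natan--Khovanov TQFT assigns a bigraded linear map. I define $\Kh_{\mathcal{L}}(f_1, \ldots, f_r)$ to be this map. Compatibility with composition follows because the vertical stacking in the thickened annulus translates, under $\phi_i \times \id_I$, to vertical stacking of link cobordisms in $S^3 \times I$; identities similarly map to identities, and multilinearity with respect to the $r$ factors is automatic from the geometric disjointness of the $\phi_i$.

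The remaining verification is that the local relations defining $\ABN$ (sphere evaluation, neck-cutting, and dot-squared relations) descend to equalities in Khovanov homology. Since these relations hold inside small balls of $A \times I$, and the embeddings $\phi_i \times \id_I$ map such balls isomorphically into small balls of $S^3 \times I$, they are sent to the corresponding local relations imposed by the Bar-Natan--Khovanov TQFT on link cobordisms in $S^3 \times I$, so the assignment factors through the quotient. The main point to be handled is the well-known sign ambiguity in the functoriality of the original formulation of Khovanov homology; working within the Bar-Natan framework throughout bypasses this, as functoriality of the Khovanov--Bar-Natan complex on honest (dotted) link cobordisms in $S^3 \times I$ is well-established. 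This yields the claimed functor $\Kh_{\mathcal{L}} \colon \ABN^{\times r} \rightarrow \Vect^{\Z \times \Z}$.
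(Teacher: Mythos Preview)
Your argument follows essentially the same route as the paper: geometric insertion via tubular neighborhoods gives a functor from annular cobordisms to link cobordisms in $S^3$, one then composes with Khovanov homology, and finally checks that the Bar-Natan local relations are respected so the composite descends to $\ABN^{\times r}$. The paper also makes explicit the intermediate step of extending to $\Mat(\CS_{\mathcal{A}})^{\times r}$ (formal direct sums and grading shifts) before passing to the quotient, which you leave implicit.

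There is one genuine gap in your last paragraph. You assert that ``working within the Bar-Natan framework throughout bypasses'' the sign ambiguity in functoriality. This is not correct: Bar-Natan's original dotted-cobordism formulation is exactly where the sign ambiguity lives, and it does \emph{not} resolve it. The paper is explicit that the construction requires a fully functorial version of Khovanov homology and singles out Blanchet's oriented model (or one of the other known fixes) as the input. Without such a fix, the map assigned to a cobordism is only well-defined up to sign, and your $\Kh_{\mathcal{L}}$ would not be a functor. You should replace the appeal to ``the Bar-Natan framework'' with an explicit choice of sign-corrected Khovanov homology.
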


Here, $\mathcal{K}_i^{k_i}$ denotes the $k_i$-fold parallel cable of the component $\mathcal{K}_i$
and $\Vect^{\Z\times \Z}$ denotes the category of bigraded vector spaces.
The functor in Theorem \hyperlink{introthm:cabling}{B} requires certain choices that
fix the (original) well-known sign ambiguity in the functoriality of Khovanov homology;
this is discussed further in \S \ref{ss:cabling} where we re-state and prove this result 
as Theorem \ref{thm:cable functor}.
Since $\Vect^{\Z\times \Z}$ is closed under all of the relevant operations used to define $\overline{\ABN}$
(grading shifts, direct sums and summands, filtered colimits), 
cabling extends to a functor
\[
\Kh_\mathcal{L} \colon(\overline{\ABN})^{\times r} \rightarrow \Vect^{\Z\times \Z} \, .
\]

\begin{definition}[Kirby-colored Khovanov homology]
	\label{def:KKh}
	Let $\mathcal{L}$ be a framed oriented link in $S^3$ with a decomposition
	into sublinks $\mathcal{L}= \mathcal{L}_1\cup \mathcal{L}_2$. Set
\[
\Kh(\mathcal{L}_1\cup \mathcal{L}_2^\kirby) :=	
\Kh_{\mathcal{L}_1\cup
\mathcal{L}_2}(\gen,\ldots,\gen, \kirby,\ldots,\kirby)
\]
in which all the components of $\mathcal{L}_1$ carry the label $\gen$, and all
the components of $\mathcal{L}_2$ carry the label $\kirby$.
\end{definition}

\begin{remark}
In \cite[\S  7.5]{GLW} Grigsby--Licata--Wehrli consider a different 
colimit of Khovanov homologies of cables of a knot $\mathcal{K}$, 
which appears to be unrelated to our Kirby colored Khovanov homology.
\end{remark}

Theorems \hyperlink{thm:A}{A} and \hyperlink{introthm:cabling}{B}
suggest that $\Kh(\mathcal{L}_1\cup \mathcal{L}_2^\kirby)$ 
may be a diffeomorphism invariant of the pair $(B^4(\mathcal{L}_2), \mathcal{L}_1)$, where $B^4(\mathcal{L}_2)$ is
the $4$-dimensional 2-handlebody obtained by attaching $2$-handles to $B^4$
along the components of $\mathcal{L}_2$, 
and $\mathcal{L}_1$ is regarded as a link in the boundary $3$-manifold
$S^3(\mathcal{L}_2):=\partial B^4(\mathcal{L}_2)$.
To establish this directly, one would further need to incorporate $1$- and $3$-handles 
in a way that they satisfy an appropriate cancellation property with $\kirby$.

The resulting $4$-manifold invariant would a priori be valued in \emph{isomorphism classes} of
bigraded vector spaces, rather than valued in bigraded vector spaces. 
An upgrade to the latter would in particular require not only isomorphisms associated to handle
slide Kirby moves, but a \emph{coherent family} thereof. 
For this, one would need a classification of movie moves for Kirby moves,
in analogy with the Carter--Saito movie moves for isotopic link cobordisms \cite{CSbook}.
We are not aware of such a classification.

In the absence of both of the above, we instead establish
$\Kh(\mathcal{L}_1\cup \mathcal{L}_2^\kirby)$ as a bigraded vector space-valued 
invariant of $(B^4(\mathcal{L}_2), \mathcal{L}_1)$
by comparison with the $\mathfrak{gl}_N$ skein lasagna module 
$4$-manifold invariants from \cite{2019arXiv190712194M}, 
whose invariance is manifest.

\begin{theoremC}
	\hypertarget{thm:intro lasagna} 
Let $\mathcal{L}=\mathcal{L}_1\cup
\mathcal{L}_2$ be a framed oriented link in $S^3$.   Decorate all the components
of $\mathcal{L}_1$ with $\gen$, and all the components of $\mathcal{L}_2$ with
$\kirby$.  The following bigraded vector spaces are isomorphic:

\begin{enumerate}
\item the Kirby-colored Khovanov homology $\Kh(\mathcal{L}_1\cup \mathcal{L}_2^\kirby)$,
\item the \emph{cabled} $N=2$ Khovanov--Rozansky homology of $\mathcal{L}_1\cup \mathcal{L}_2^\kirby$
as defined by Manolescu--Neithalath \cite{2020arXiv200908520M} for
split unions $\mathcal{L}_1\sqcup \mathcal{L}_2^\kirby$ and extended to the general case in
\cite{2206.04616}, and
\item the $N=2$ skein lasagna module (degree zero blob homology) of $(B^4(\mathcal{L}_2),
\mathcal{L}_1)$ from
\cite{2019arXiv190712194M}.
\end{enumerate}
As a consequence, the Kirby-colored Khovanov homology $\Kh(\mathcal{L}_1\cup
\mathcal{L}_2^\kirby)$ is an invariant of the pair $(B^4(\mathcal{L}_2),
\mathcal{L}_1)$, valued in bigraded vector spaces.
\end{theoremC}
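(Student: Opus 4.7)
The plan is to establish the chain of isomorphisms (1) $\cong$ (2) $\cong$ (3), from which invariance of $\Kh(\mathcal{L}_1\cup \mathcal{L}_2^\kirby)$ follows immediately since (3) is manifestly a diffeomorphism invariant of $(B^4(\mathcal{L}_2), \mathcal{L}_1)$ by the construction of \cite{2019arXiv190712194M}. The equivalence (2) $\cong$ (3) is already in the literature: for split unions $\mathcal{L}_1 \sqcup \mathcal{L}_2$ it is the Manolescu--Neithalath $2$-handle formula, and the extension to the general linked case is the main result of \cite{2206.04616}. So the substantive new content is (1) $\cong$ (2).

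For (1) $\cong$ (2) the idea is to recognize both sides as the same filtered colimit of Khovanov homologies of cables, then invoke a cofinality argument. Unpacking the definitions: the Manolescu--Neithalath cabled invariant for $\mathcal{L}_1 \cup \mathcal{L}_2^\kirby$ is a filtered colimit, indexed by cabling multiplicities of the components of $\mathcal{L}_2$, of the Khovanov homologies $\Kh(\mathcal{L}_1 \cup \mathcal{L}_2^{\underline{k}})$ of iterated cables, with structure maps induced by explicit dotted cup/cap cobordisms in $S^3$. On the other side, by Theorem \hyperlink{introthm:cabling}{B}, the cabling functor $\Kh_\mathcal{L}$ extends to the completion $\overline{\ABN}$ and preserves filtered colimits in each argument. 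Evaluating on $\kirby = \kirby_0 \oplus \kirby_1$ therefore produces a colimit of Khovanov homologies of cables decorated with the symmetric-power summands $\Sym^k(\gen)$, connected by the images under $\Kh_\mathcal{L}$ of the annular dotted cobordisms that define $\kirby$.

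Once both sides are presented in this form, the comparison reduces to two compatibility checks. First, by functoriality of $\Kh_\mathcal{L}$, the annular dotted cobordism effecting the transition $q^{-2k}\Sym^{2k}(\gen) \to q^{-2(k+1)}\Sym^{2(k+1)}(\gen)$ inside $\kirby_0$ (and similarly for $\kirby_1$) translates, after pushing into $S^3$, precisely into the Manolescu--Neithalath stabilization cobordism between $\mathcal{L}_2^{k}$ and $\mathcal{L}_2^{k+2}$, once the ambient annulus is embedded in the tubular neighborhood of the relevant component. Second, inserting the symmetrizing idempotent on a $k$-cable picks out a canonical direct summand of $\Kh(\mathcal{L}_2^{k})$, and the resulting sub-diagram of symmetrized summands is cofinal in the full cable diagram, since the composite of consecutive stabilization maps factors through the symmetrizer. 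Hence both colimits compute the same bigraded vector space.

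The main obstacle I anticipate is bookkeeping: verifying that the grading shifts $q^{-n}$ built into the definition of $\kirby$ match those appearing in the Manolescu--Neithalath tower, and that the sign-fixing used in Theorem \hyperlink{introthm:cabling}{B} renders all squares in the comparison diagram strictly commutative rather than merely commutative up to sign. A secondary subtlety lies in treating components of $\mathcal{L}_2$ simultaneously: because $\Kh_\mathcal{L}$ is a multi-variable functor preserving filtered colimits in each slot separately, the multi-colimit defining $\Kh(\mathcal{L}_1\cup\mathcal{L}_2^\kirby)$ must be compared with the iterated (or equivalently, by Fubini for filtered colimits, simultaneous) colimit used by Manolescu--Neithalath. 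Once these bookkeeping points are settled, the cofinality argument is formal and gives (1) $\cong$ (2), completing the proof.
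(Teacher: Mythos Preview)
Your strategy is the same as the paper's: take (2) $\cong$ (3) from the literature and prove (1) $\cong$ (2) by identifying both sides with a colimit of Khovanov homologies of cables under dotted annulus cobordisms. However, your description of the Manolescu--Neithalath side is imprecise in a way that hides the one substantive step.

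The $2$-handle formula does not present (2) as a filtered colimit of $\Kh(\mathcal{L}_1 \cup \mathcal{L}_2^{\underline{k}})$ under dotted-cup maps alone. It is a direct sum over cable multiplicities modulo three relations: (a) coinvariants for the \emph{parabolic} subgroups $\mathfrak{S}_{m}\times\mathfrak{S}_{n}$ braiding like-oriented strands, (b) killing the undotted annulus cobordisms, and (c) identifying along dotted annulus maps. The heart of the argument is that (a) and (b) together amount to \emph{full} symmetrization with respect to $\mathfrak{S}_{m+n}$, i.e.\ projection onto the Jones--Wenzl summand; this uses the Grigsby--Licata--Wehrli identification of the braiding action with the $\TL$-crossing, together with the characterization of $\JW_n$ as the idempotent that absorbs permutations and kills turnbacks. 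Once that is in place, relation (c) is exactly the directed system defining $\kirby$, and the comparison is formal.

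Your ``cofinality'' framing gestures at this but does not pin it down: the point is not that a symmetrized sub-diagram is cofinal in a larger one, but that the relations defining (2) already force the full symmetrization built into $\kirby$ on side (1). The claim ``the composite of consecutive stabilization maps factors through the symmetrizer'' is true and relevant, but by itself it only shows that the unsymmetrized and symmetrized colimits agree along dotted-cup maps; it does not address the coinvariant and turnback relations in the $2$-handle formula. Your anticipated obstacles (grading shifts, signs, Fubini for multi-colimits) are genuine bookkeeping but not where the content lies.
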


The isomorphisms $(2) \cong (3)$ have been established in
\cite{2020arXiv200908520M, 2206.04616}.
In \S \ref{sec:KirbyKh}, 
we verify $(1) \cong (2)$ by showing that cabling with the Kirby color $\kirby$
implements the Manolescu--Neithalath $2$-handle formula 
from \cite{2020arXiv200908520M} that defines $(2)$.

\begin{remark}
The $\slnn{2}$-version of the skein lasagna module of $(B^4(\mathcal{L}_2), \mathcal{L}_1)$ is graded
by the relative second homology group $H_2(B^4(\mathcal{L}_2), \mathcal{L}_1; \Z/2)$.  
The isomorphism from Theorem \hyperlink{thm:intro lasagna}{C} identifies this grading with the direct sum 
decomposition of $\Kh_{\mathcal{L}_1\cup \mathcal{L}_2}(\gen,\ldots,\gen,\kirby,\ldots,\kirby)$ inherited from 
$\kirby = \kirby_0\oplus \kirby_1$.
\end{remark}

\subsection{Diagrammatics}
\label{ss:intro diagrams}
A theorem of Russell \cite{Russell} implies that the monoidal category $\ABN$ admits a diagrammatic
presentation in terms of the \emph{dotted Temperley-Lieb category} $\dTL$ 
(see our \S \ref{ss:dTL} for a definition). 
The latter is a non-semisimple $\Z$-graded monoidal category that contains the 
familiar Temperley-Lieb category as its degree zero subcategory.
A further goal in the present paper is to extend the graphical calculus for $\dTL$
to give a presentation of the
monoidal category obtained from $\ABN$ by adjoining the Kirby objects
$\kirby_0$ and $\kirby_1$. 
This is accomplished in \S \ref{sec:diag}.   

One interesting feature of this extended calculus is the necessity of infinite
sums of diagrams.  The following gives the flavor of the sort of relations one
encounters:
\[
\begin{tikzpicture}[anchorbase,xscale=.75]
\draw[kirbystyle]  (0,0)node[below=-2pt]{\scriptsize $[0]$} to (0,1)node[above=-2pt]{\scriptsize $[0]$};
\draw[kirbystyle]  (1,0)node[below=-2pt]{\scriptsize $[0]$} to (1,1)node[above=-2pt]{\scriptsize $[0]$};
\end{tikzpicture}
= \
\sum_{n\geq 0}
\frac{(-1)^{\genfrac(){0pt}{2}{n}{2}}}{n!}
\begin{tikzpicture}[anchorbase,scale=1]
	\draw[kirbystyle]  (0,0)node[below=-2pt]{\scriptsize $[0]$}  to (0,1.25)node[above=-2pt]{\scriptsize $[0]$};
	\draw[kirbystyle]  (1,0)node[below=-2pt]{\scriptsize $[0]$} to (0,.5);
	\draw[kirbystyle] (0,1) to[out=-90,in=180] (.5,.75) to[out=0,in=-90] (1,1)
		to (1,1.25)node[above=-2pt]{\scriptsize $[0]$};
	\draw[fill=white] (.5,.75) circle (.15);
	\node at (.5,1) {\scriptsize $n$};
	\node at (.5,.25) {\small $\bullet$};
	\node at (.65,.4) {\scriptsize $n$};
\end{tikzpicture} \, .
\]
Here, the $[0]$-label indicates the Kirby object $\kirby_0$, 
and this equation gives a decomposition of the identity morphism 
of $\kirby_0 \otimes \kirby_0$ into mutually orthogonal idempotents. 
This relation reflects a certain \emph{quasi-idempotence} property of the Kirby object, 
namely that $\kirby_0\otimes \kirby_0 \cong \bigoplus_{n\geq 0} q^{-2n} \kirby_{0}$ 
(with similar statements for $\kirby_i\otimes \kirby_j$). 
See Corollary \ref{cor:kirby squared} for details.

\subsection{The Kirby color as representing planar evaluation}
\label{ss:intro representing}

For the remainder of this introduction,  
we phrase all of our results in terms of the diagrammatic category $\dTL$, 
instead of $\ABN$ (to which it is equivalent).  
If $\mathcal{U}$ is the 0-framed unknot,  the associated the cabling functor
$\Kh_{\mathcal{U}} \colon {\dTL}\rightarrow \Vect^{\Z\times \Z}$ satisfies
\[
\Kh_{\mathcal{U}}(\gen^{n})\cong q^{-n}\K[x_1,\ldots,x_n]/(x_i^2=0) \, .
\]
We refer to $\Kh_{\mathcal{U}}$ as the \emph{polynomial representation} of $\dTL$ (or its completion $\cdTL$) 
and we denote it by $\Pol(-) = \Kh_{\mathcal{U}}(-)$. 
Since the cohomological grading of $\Kh_\mathcal{U}(X)$ is
trivial for all $X\in \dTL$, 
we typically regard $\Pol(-)$ as taking values in the category $\Vect^{\Z}$ of singly-graded vector spaces. 
The category-theoretic role of the Kirby color $\kirby$ is that it represents the functor
\[
\Pol^\ast \colon \cdTL^{\op} \rightarrow \Vect^{\Z} \, , \quad X\mapsto \Pol(X)^\ast
\]
where on the right-hand side $\Pol(X)^\ast$ denotes the graded dual of the graded vector space $\Pol(X)$. 
Specifically, in \S \ref{ss:poly redux} we restate and prove the following.

\begin{theoremD}
There is an isomorphism $\Hom_{\cdTL}(-,\kirby) \cong \Pol^\ast(-)$ of functors $\cdTL^{\op} \rightarrow \Vect^{\Z}$. 
Additionally, we have isomorphisms
\[
\kirby\otimes X \cong \Pol(X)\otimes \kirby
\]
natural in $X\in \cdTL$.
\end{theoremD}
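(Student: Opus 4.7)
The plan is to prove the representability statement first, and then derive the tensor product formula by a Yoneda argument. For the representability isomorphism $\Hom_{\cdTL}(-, \kirby) \cong \Pol^\ast(-)$, I would construct an explicit natural transformation $\Phi$ and then check it is a bijection on a generating family of objects. Since $\kirby = \kirby_0 \oplus \kirby_1$ is a filtered colimit of the objects $q^{-n}\Sym^n(\gen)$ and every object of $\dTL$ is compact, for $X \in \dTL$ any morphism $f : X \to \kirby$ factors through some $q^{-n}\Sym^n(\gen)$. Applying the polynomial functor gives a linear map $\Pol(f) : \Pol(X) \to q^{-n}\Pol(\Sym^n(\gen))$, and the target identifies with symmetric polynomials in $x_1, \ldots, x_n$ modulo $x_i^2 = 0$. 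I would define $\Phi(f)$ by post-composing $\Pol(f)$ with the projection onto the top class $e_n = x_1 \cdots x_n$ (with the appropriate grading shift). The key technical point is checking that this descends to the colimit: one must verify that replacing $f$ by its image under a structure map $q^{-n}\Sym^n(\gen) \to q^{-(n+2)}\Sym^{n+2}(\gen)$ yields the same functional, which reduces to an explicit computation of how the defining dotted cobordism acts on top-degree symmetric polynomials.

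To show $\Phi$ is a natural isomorphism, I would reduce to the generating objects $X = \gen^k$ of $\dTL$, using the diagrammatic presentation of $\cdTL$ from Theorem~\ref{thm:MdTL} and \S\ref{sec:diag} to extend to all objects via naturality and colimit density. On the generating objects, both sides admit explicit descriptions: $\Pol^\ast(\gen^k)$ has graded dimension $(q + q^{-1})^k$, while $\Hom_{\dTL}(\gen^k, \Sym^n(\gen))$ admits a basis indexed by dotted half-diagrams that stabilizes, as $n \to \infty$, to a vector space of the same graded dimension. A bijective matching of bases, carried out by interpreting diagrammatic morphisms as polynomial evaluation functionals, then completes the verification.

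For the tensor product formula $\kirby \otimes X \cong \Pol(X) \otimes \kirby$, I would apply Yoneda. For any test object $Y \in \cdTL$, using the self-duality of the generating objects $\gen^k$ in the planar category $\dTL$,
\[
\Hom(Y, \kirby \otimes X) \cong \Hom(Y \otimes X^\ast, \kirby) \cong \Pol^\ast(Y \otimes X^\ast) \cong \Pol(X) \otimes \Pol^\ast(Y),
\]
combining the representability result with tensor multiplicativity of $\Pol$ (which holds on objects since $\gen^{a+b} = \gen^a \otimes \gen^b$ and $\Pol(\gen^n) \cong \Pol(\gen)^{\otimes n}$, and extends to morphisms by naturality). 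Similarly, representability alone gives $\Hom(Y, \Pol(X) \otimes \kirby) \cong \Pol(X) \otimes \Pol^\ast(Y)$, and Yoneda's lemma then yields the desired isomorphism, naturally in $X$.

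The main obstacle I anticipate is the verification that $\Phi$ is well-defined on the colimit and genuinely natural, which requires a careful analysis of the interplay between the structure maps in $\kirby_\varepsilon$ (given by explicit dotted annular cobordisms) and the action of $\Pol$ on symmetric powers of $\Pol(\gen)$; this is ultimately where the specific choice of colimit system in the definition of $\kirby$ is used. A secondary obstacle is the explicit matching of bases for $\Hom(\gen^k, \Sym^n(\gen))$ and $\Pol^\ast(\gen^k)$, which requires tracking all grading shifts and using the quasi-idempotence relations from the diagrammatic calculus of \S\ref{sec:diag}.
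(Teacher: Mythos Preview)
Your approach is essentially the paper's, but run in the opposite order, and this reversal costs you something. The paper first proves the tensor formula $\kirby\otimes X\cong \Pol(X)\otimes \kirby$ directly, by a one-object check: for $X=\gen$ one writes down an explicit two-term idempotent decomposition of $\id_{\kirby\otimes\gen}$ (using that a cup-cap equals a sum of two dot placements), giving $\kirby\otimes\gen\cong q^{-1}\kirby\oplus q\kirby$; monoidality and preservation of filtered colimits then extend this to all of $\cdTL$. With the tensor formula in hand, representability is nearly free: duality reduces $\Hom_{\cdTL}(X,\kirby)$ to $\Hom_{\cdTL}(\gen^0,\kirby\otimes X^\vee)\cong \Pol(X^\vee)\otimes\Hom_{\cdTL}(\gen^0,\kirby)$, and a single colimit computation shows $\Hom_{\cdTL}(\gen^0,\kirby)\cong\K$.

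Your route instead attacks representability first, via an explicit functional $\Phi(f)$ obtained by projecting $\Pol(f)$ to a top class. This is workable, but the two obstacles you flag are real and are precisely what the paper's ordering avoids: the compatibility of $\Phi$ with the transition maps and the basis-matching on each $\gen^k$ amount to redoing, object by object, what the paper handles in one stroke via the idempotent decomposition on $\gen$. Your Yoneda derivation of the tensor formula from representability is correct (and is the paper's argument in reverse), though note it only applies directly to dualizable $X$, so you still need the colimit-preservation step to reach all of $\cdTL$. A minor point: your reference to Theorem~\ref{thm:MdTL} is misplaced---that result presents the two-boundary-point module category $\PBN^1_1$, not $\cdTL$; the relevant diagrammatics live in \S\ref{sec:diag}.
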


\begin{remark} We would like to emphasize that the surprising feature in
	representing the functor $\Pol^*$ is not that a representing object exists
	in \emph{some appropriate completion} of $\dTL$. Rather, it is that the
	representing object $\kirby$ can be described very explicitly as the,
	arguably, simplest non-trivial directed system (i.e.~ind-object) over
	$\dTL$. Furthermore, the explicit description is essential for computations
	of Kirby-colored Khovanov homology (Definition~\ref{def:KKh}) and for the
	diagrammatics described in \S \ref{ss:intro diagrams}.
	\end{remark}

The functor $\Pol \colon \dTL\rightarrow \Vect^{\Z}$ also has an algebraic description via 
$\slnn{2}$ representation theory. 
As mentioned above, 
the usual (undotted) Temperley-Lieb category $\TL$ 
(at circle-value $2$)
can be regarded as the subcategory of degree zero morphisms in $\dTL$.   
As is well-known, there is a fully faithful monoidal functor 
$\TL\rightarrow \Rep(\slnn{2})$ that sends $\gen \mapsto V$, 
the defining $2$-dimensional representation of $\slnn{2}$.  
If we forget the action of the Chevalley generators $E,F\in \slnn{2}$ but
remember the weight grading, we obtain a functor $\TL\rightarrow \Vect^{\Z}$
that coincides with $\Pol|_{\TL}$. Now $\Pol \colon \dTL\rightarrow
\Vect^{\Z}$ may be thought of as the extension of this functor to $\dTL$, defined
by sending the ``dot'' endomorphism of $\gen$ to the action of $E\in \slnn{2}$
on $V$.

\begin{remark}
Since the Karoubi (idempotent) completion of $\TL$ \emph{is} semisimple, it is
not hard to see that the restricted functor $\Pol^\ast|_{\overline{\TL}} \colon
\overline{\TL}^{\op} \rightarrow \Vect^\Z$ is representable by the object
\[
\bigoplus_{n\geq 0} \Pol \big( \Sym^n(c) \big)^\ast \otimes \Sym^n(c)
\]
where here $\Sym^n(c)$ is the object in $\Kar(\TL)$ 
corresponding to the simple finite-dimensional $\slnn{2}$-module $\Sym^n(V)$.
Since $\Pol \big( \Sym^n(c) \big)$ has graded dimension equal to
the quantum integer $[n+1]$, this formula is reminiscent of the familiar formula
for the Kirby element in $\slnn{2}$ Witten--Reshetikhin--Turaev theory.

However, it is the category $\dTL$ (non-semisimple even after Karoubi completing) 
that naturally arises in Khovanov homology, 
and the representing object for $\Pol^\ast\colon \cdTL \to \Vect^\Z$ need not be (and indeed is not) 
isomorphic to the representing object for its restriction to $\overline{\TL}$. 
This elucidates why our Kirby color does not categorify the familiar 
Kirby element from $\slnn{2}$ Witten--Reshetikhin--Turaev theory.  
\end{remark}

\subsection*{Conventions}
All results in this paper hold over any field $\K$ of characteristic zero. We
let $\k$ denote an arbitrary field. Knots and links are always framed and oriented.

\subsection*{Acknowledgements}
We thank 
Christian Blanchet,
Elijah Bodish, 
Ben Elias, 
Eugene Gorsky, 
Jiuzu Hong, 
David Reutter, 
Kevin Walker,
and Hans Wenzl for useful discussions and correspondence.
Special thanks to Ciprian Manolescu and Ikshu Neithalath,
whose 2-handle formula motivated the construction of the Kirby color.

\subsection*{Funding}

D.R. was partially supported by NSF CAREER grant DMS-2144463 and Simons
Collaboration Grant 523992. P.W. acknowledges support by the Deutsche
Forschungsgemeinschaft (DFG, German Research Foundation) under Germany's
Excellence Strategy - EXC 2121 ``Quantum Universe'' - 390833306.

\section{Categorical Background}

In this section we recall category-theoretic constructions
that will be used throughout. 
In particular, we review graded, linear categories 
and discuss (co)limits and various completions in this setting.

\subsection{Graded linear categories}

We begin with some basic notions, mostly for the purpose of establishing notation.
Let $\Gamma$ be an abelian group and $\k$ a field. 
A \emph{$\Gamma$-graded vector space} is a
$\Gamma$-indexed collection of $\k$-vector spaces $(V_i)_{i\in \Gamma}$. 
Given a pair of $\Gamma$-graded vector spaces $V,W$, 
we let $\HOM(V,W)$ denote the $\Gamma$-graded vector space
which is given in degree $i\in \Gamma$ by
\[
\HOM^i(V,W) := \prod_{j\in \Gamma}\Hom_\k(V_j,W_{i+j}) \, .
\]
We let $\VectZ$ denote the category with objects $\Gamma$-graded vector spaces
and with morphisms
\[
\Hom_{\VectZ}(V,W) := \HOM^0(V,W) \, .
\]
The category $\VectZ$ is symmetric monoidal, with tensor product given by
\[
\big( (V_i)_{i\in \Gamma} \otimes (W_j)_{j\in \Gamma} \big)_k := \bigoplus_{i+j=k} V_i \otimes W_j \, .
\]
Further, the tensor-hom adjunction
\[
\Hom_{\VectZ}(U \otimes V , W) \cong 
\Hom_{\VectZ}(U , \HOM(V , W)) \, .
\]
holds in $\VectZ$. 
In other words, 
$\VectZ$ is \emph{closed} with \emph{internal hom} given by $\HOM$. 
Since $\VectZ$ is further (co)complete (i.e.~has all (co)limits, see below), 
it is natural to consider categories enriched thereover.

\begin{definition}
\label{def:CSZ}
A \emph{$\Gamma$-graded $\k$-linear category} is a category $\CS$ that is enriched over $\VectZ$. 
If $\CS$ is a $\Gamma$-graded $\k$-linear category, 
then we let $\CSZ$ (its \emph{degree zero subcategory}) denote the non-full subcategory 
with the same objects as $\CS$ and with morphisms the degree zero morphisms in $\CS$.
\end{definition}

\begin{remark}
	\label{rem:isodegzero}
An \emph{isomorphism} in a $\Gamma$-graded $\k$-linear category $\CS$ will 
always mean an element of $\Hom_{\CS_0}(X,Y)$
with a two-sided inverse (necessarily also degree zero).
\end{remark}

The prototypical example of a $\Gamma$-graded $\k$-linear category is the category 
$\Vect^\Gamma$ with objects $\Gamma$-graded vector spaces
and with morphisms
\[
\Hom_{\Vect^\Gamma}(V,W) := \HOM(V,W) \, .
\]
The degree zero subcategory of $\Vect^\Gamma$ is simply $\VectZ$.

We will need to adjoin the images of idempotents to $\Gamma$-graded $\k$-linear categories. 
This is accomplished with the following operation.

\begin{definition}[Karoubi completion]\label{def:karoubi}
Let $\CS$ be a $\Gamma$-graded $\k$-linear category.
The \emph{(graded) Karoubi envelope} $\Kar(\CS)$ is the category 
whose objects are pairs $(X,e)$ in which $X\in \CS$ and $e$ is a 
degree zero idempotent endomorphism of $X$,
i.e.~$e\in \End_\CS^0(X)$ and $e^2=e$. 
Morphisms in $\Kar(\CS)$ are defined by
\[
\Hom_{\Kar(\CS)}\Big((X,e),(X',e')\Big) := \Big\{f\in \Hom_\CS(X,X') \: |\: f = e'\circ f \circ e\Big\}.
\]
\end{definition}

In other words, 
the graded Karoubi envelope of $\CS$ has the same objects as the usual Karoubi envelope of $\CS_0$.
Often, we will use abbreviated notation for objects in $\Kar(\CS)$, 
denoting the object $(X,e)$ simply by the idempotent $e$ itself.

\subsection{Shifts and biproducts}

We now discuss (in turn) shifts and biproducts in the graded linear setting.
Our eventual aim is Definition \ref{def:GAcompletion}, which gives the 
completion of a $\Gamma$-graded $\k$-linear categories with respect to these notions.

Given $j\in \Gamma$, 
let $q^j$ denote the endofunctor of $\VectZ$ (or $\Vect^\Gamma$) that shifts 
grading up by $j$.
In other words, if $V=(V_i)_{i\in \Gamma}$ is a graded vector space, 
then $q^j V$ is the graded vector space with $(q^jV)_i = V_{i-j}$. 
The following extends this notion to graded linear categories.

\begin{definition}
	\label{def:shifts}
Let $X,Y\in \CS$ be objects of a $\Gamma$-graded $\k$-linear category and let $k \in \Gamma$.
We denote $Y\cong q^k X$ 
if $Y$ is equipped with an invertible morphism $\sigma\in \Hom_{\CS}^k(X,Y)$.
Such objects are called \emph{shifts of X}.
\end{definition}

Note that the morphism $\sigma$ in Definition \ref{def:shifts} is (typically) \emph{not} an isomorphism in $\CS$
(see Remark \ref{rem:isodegzero}).
Shifts of $X$ are characterized by the following universal property, whose proof is a straightforward exercise.
\begin{lemma}
The following are equivalent.
\begin{enumerate}
\item $Y\cong q^kX$.
\item $\Hom_\CS(Y,-) \cong q^{-k}\Hom_\CS(X,-)$ as functors $\CS\rightarrow \Vect^\Gamma$.
\item $\Hom_\CS(-,Y) \cong q^{k}\Hom_\CS(-,X)$ as functors $\CS\rightarrow \Vect^\Gamma$. \qed
\end{enumerate} 
\end{lemma}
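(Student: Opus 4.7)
The plan is to prove this as a direct application of an enriched (graded) Yoneda-type argument. Since the statement is symmetric in the contravariant/covariant variants, it suffices to prove $(1)\Leftrightarrow (2)$; the proof of $(1)\Leftrightarrow(3)$ is formally identical, with pre-composition replaced by post-composition and the sign of the shift flipped.

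For $(1)\Rightarrow(2)$, I would start from an invertible $\sigma\in\Hom_\CS^k(X,Y)$ and define the natural transformation $\Phi_Z\colon \Hom_\CS(Y,Z)\to q^{-k}\Hom_\CS(X,Z)$ by pre-composition with $\sigma$, i.e.~$\Phi_Z(f)=f\circ\sigma$. A morphism $f\in\Hom_\CS^i(Y,Z)$ yields $f\circ\sigma\in\Hom_\CS^{i+k}(X,Z)=(q^{-k}\Hom_\CS(X,Z))^i$, so $\Phi$ is degree-preserving as a map of $\Gamma$-graded vector spaces and hence a morphism in $\Vect^\Gamma$. Naturality in $Z$ is immediate from associativity of composition, and invertibility of $\Phi_Z$ is witnessed by pre-composition with $\sigma^{-1}\in\Hom_\CS^{-k}(Y,X)$.

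For $(2)\Rightarrow(1)$, I would apply the graded Yoneda lemma. Given a natural isomorphism $\Phi\colon\Hom_\CS(Y,-)\xrightarrow{\cong}q^{-k}\Hom_\CS(X,-)$ of functors to $\Vect^\Gamma$, evaluating at $Z=Y$ yields a degree-zero isomorphism of $\Gamma$-graded vector spaces $\Hom_\CS(Y,Y)\cong q^{-k}\Hom_\CS(X,Y)$. Let $\sigma:=\Phi_Y(\id_Y)$, which is an element of $(q^{-k}\Hom_\CS(X,Y))^0=\Hom_\CS^k(X,Y)$. Naturality of $\Phi$ with respect to the morphism $\sigma$ (viewed, after a degree shift, as a morphism $X\to Y$) together with the analogous construction from $\Phi^{-1}$ applied to $\id_X$ produces a two-sided inverse of degree $-k$ to $\sigma$, establishing $Y\cong q^kX$ in the sense of Definition~\ref{def:shifts}.

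The argument is essentially routine, and the only subtlety I anticipate is keeping careful track of the grading shifts: one must check that all natural transformations and their inverses live in the correct graded hom-spaces, and that the element produced by Yoneda truly lies in $\Hom_\CS^k(X,Y)$ rather than some other graded component. Once this bookkeeping is done, the equivalence follows from the standard Yoneda philosophy, adapted to enrichment in $\Vect^\Gamma$.
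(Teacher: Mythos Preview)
Your proposal is correct and is precisely the standard enriched-Yoneda argument one would expect. The paper does not actually supply a proof of this lemma: it is stated with a terminal \qed{} and the preceding sentence calls it ``a straightforward exercise,'' so there is nothing to compare against beyond noting that your write-up is exactly the kind of verification the authors had in mind.
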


We next discuss \emph{biproducts}. We emphasize to the reader that we will use
the standard notation $\coprod$ and $\prod$ for general category-theoretic coproducts and products
(reviewed in the graded linear setting below), 
and reserve the symbol $\bigoplus$ for biproducts, which we now recall.
Note that a $\Gamma$-graded $\k$-linear category can be viewed 
as enriched over pointed sets: 
in each $\Hom$-space there exists a zero-morphism $0_{Y,X}\in \Hom_\CS(X,Y)$ 
satisfying
\begin{itemize}
\item $f\circ 0_{Y,X} = 0_{Z,X}$ for all $f\in \Hom_{\CS}(Y,Z)$, and
\item $0_{Y,X}\circ g = 0_{Y,Z}$ for all $g\in \Hom_{\CS}(Z,X)$.
\end{itemize}

\begin{definition}
	\label{def:biproducts}
Let $\CS$ be a $\Gamma$-graded $\k$-linear category.
Given objects $Y,X_i\in \CS$ where $i$ ranges over a (possibly infinite) set $I$, 
we say that $Y$ is the \emph{biproduct} of the $X_i$, and denote this $Y\cong \bigoplus_i X_i$, 
provided $Y$ is equipped with morphisms $\sigma_i\in \Hom_{\CS_0}(X_i,Y)$ 
and $\pi_i\in \Hom_{\CS_0}(Y,X_i)$ such that
\begin{enumerate}[(i)]
\item $\pi_i\circ \sigma_j = \id_{Y_i}$ if $i=j$ and $0_{Y_i, Y_j}$ if $i\neq j$,
\item the collection $\{\sigma_i\}_{i \in I}$ gives $Y$ the structure of the coproduct $\coprod X_i$, and
\item the collection $\{\pi_i\}_{i \in I}$ gives $Y$ the structure of the product $\prod_i X_i$.
\end{enumerate}
\end{definition}

\begin{remark}
More-generally, the above definition could be used in any category enriched over pointed sets.
Our condition (i) is readily seen to be equivalent to the condition sometimes seen in the literature, 
namely that the ``canonical comparison map'' $\coprod_{i \in I} X_i \to \prod_{i \in I} X_i$ 
is an isomorphism.
\end{remark}

In the (graded) linear setting, finite biproducts are easily recognized, 
as they are characterized ``equationally.''
The following is standard, see e.g.~\cite[\S  VIII.2, Theorem 2]{MacLane}.

\begin{lemma}[Finite biproduct recognition]
\label{lemma:finite biproduct recognition} 
Let $\CS$ be a $\Gamma$-graded $\k$-linear category and let 
$\sigma_i\in \Hom_{\CS_0}(X_i,Y)$ and $\pi_i\in \Hom_{\CS_0}(Y,X_i)$ 
for $i=1,\ldots,k$.
The maps $\{\pi_i\}_{i=1}^k$ and $\{\sigma_i\}_{i=1}^k$ 
exhibit $Y$ as the biproduct of the $X_i$ if and only if:
\begin{itemize}
\item  $\pi_i\circ \sigma_j = \delta_{ij} \id_{X_i}$, and
\item  $\id_Y = \sum_{i=1}^k \sigma_i\circ \pi_i$.\qed
\end{itemize} 
\end{lemma}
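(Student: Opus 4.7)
The plan is to prove both directions by using the two displayed equations as a ``resolution of identity'' that converts universal properties into explicit formulas and vice versa.

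For the forward direction, I would assume that $(\sigma_i, \pi_i)$ exhibit $Y$ as the biproduct in the sense of Definition~\ref{def:biproducts}. The first equation $\pi_i \circ \sigma_j = \delta_{ij} \id_{X_i}$ is simply condition (i). For the second equation, I would consider the morphism $s := \sum_{i=1}^k \sigma_i \circ \pi_i \in \End_{\CS_0}(Y)$ and observe that, by the first equation, $s \circ \sigma_j = \sum_i \sigma_i \circ \pi_i \circ \sigma_j = \sigma_j$ for every $j$. Since $\id_Y$ also satisfies this identity and the coproduct property (ii) of $Y = \coprod_i X_i$ asserts the uniqueness of such a morphism, we conclude $s = \id_Y$.

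For the reverse direction, assume both equations hold. Condition (i) is immediate from the first equation. To verify the coproduct property (ii), given any object $Z \in \CS$ and morphisms $f_i \in \Hom_{\CS}(X_i, Z)$, I would define $f := \sum_{i=1}^k f_i \circ \pi_i$ and check using the first equation that $f \circ \sigma_j = f_j$ for all $j$. Uniqueness follows by a one-line computation: if $g \circ \sigma_i = f_i$ for all $i$, then
\[
g = g \circ \id_Y = g \circ \sum_{i=1}^k \sigma_i \circ \pi_i = \sum_{i=1}^k (g \circ \sigma_i) \circ \pi_i = \sum_{i=1}^k f_i \circ \pi_i = f.
\]
The product property (iii) follows by the dual argument: given $g_i \in \Hom_{\CS}(Z, X_i)$, the morphism $g := \sum_{i=1}^k \sigma_i \circ g_i$ satisfies $\pi_j \circ g = g_j$, with uniqueness verified by pre-composing with $\id_Y = \sum_i \sigma_i \circ \pi_i$ on the appropriate side.

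There is no genuine obstacle here; the proof is entirely formal once one recognizes that the completeness relation $\id_Y = \sum_i \sigma_i \circ \pi_i$ is precisely the equational shadow of the universal property (it allows any morphism with domain or codomain $Y$ to be reconstructed from its components). The only minor subtlety worth noting is that the $f_i$ and $g_i$ in the universal properties may have arbitrary degrees, but since $\Hom_{\CS}(Y,Z)$ and $\Hom_{\CS}(Z,Y)$ are themselves $\Gamma$-graded vector spaces, the finite sums defining $f$ and $g$ are unambiguous and degree-preserving on each homogeneous component.
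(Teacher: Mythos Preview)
Your proof is correct and is the standard argument. Note that the paper does not actually prove this lemma: it marks the statement with a \qed and cites \cite[\S~VIII.2, Theorem~2]{MacLane}, so there is no in-paper proof to compare against.
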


\begin{remark}
	\label{rem:absolute}
An immediate consequence of Lemma \ref{lemma:finite biproduct recognition} is that 
any linear functor $\CS\rightarrow \DS$ sends finite biproducts to finite biproducts
(i.e.~biproducts are absolute colimits in categories enriched in abelian groups).
Shifts are similarly preserved by all graded linear functors.
\end{remark}

In \S\ref{ss:biprod recognition} below we given an extension of Lemma \ref{lemma:finite biproduct recognition}
that gives a similar ``equational'' characterization of certain infinite biproducts.
Note that infinite biproducts do sometimes exist ``in nature'', 
e.g.~the direct sum of countably many graded vector spaces is a biproduct 
provided the direct sum is locally finite.

It is possible to formally adjoin both grading shifts and finite biproducts 
to a given $\Gamma$-graded $\k$-linear category that may lack them. 

\begin{definition}
	\label{def:GAcompletion}
If $\CS$ is a $\Gamma$-graded $\k$-linear category, then the
\emph{$\Gamma$-additive completion of $\CS$} 
is the category $\Mat(\CS)$ wherein objects are
formal expressions 
$\bigoplus_{i \in I} q^{k_i} X_i$ where $I$ is a finite set, 
$k_i \in \Gamma$ and $X_i \in \CS$.
Morphisms are given by matrices, i.e.
\[
\Hom_{\Mat(\CS)}\left(
\bigoplus_{i \in I} q^{k_i} X_i , \bigoplus_{j \in J} q^{\ell_j} Y_j \right) 
:= \bigoplus_{(j,i) \in J\times I} q^{\ell_j - k_i}\Hom_\CS(X_i,Y_j) \, .
\]
\end{definition}

\begin{remark}
If $\CS$ already admits grading shifts and finite biproducts, 
then the canonical inclusion $\CS \hookrightarrow \Mat(\CS)$ is an equivalence.
\end{remark}

We conclude this section with a discussion of \emph{copowers}, 
which are a common generalization of both shifts and certain finite biproducts.

\begin{definition}
	\label{def:copowers}
Let $\CS$ be a $\Gamma$-graded $\k$-linear category. 
Given $X\in \CS$ and $V\in \Vect^\Gamma$, the \emph{copower}
$V\otimes X$ is the object (unique up to canonical isomorphism when it exists) characterized by 
isomorphisms
\[
\Hom_\CS(V\otimes X,Y)\cong \Hom_{\Vect^\Gamma}(V,\Hom_\CS(X,Y))
\]
natural in $Y$.
\end{definition}

\begin{example}
If $V=q^k \k$, then the copower $V\otimes X$ coincides with the shift $q^kX$.
If $V = \coprod_i q^{k_i}\k$ (not necessarily finite), 
then the copower $V\otimes X$ satisfies
\[
V\otimes X \cong \coprod_i q^{k_i}X \, .
\]
\end{example}

\subsection{Colimits and ind-completion}
	\label{ss:Ind}

We assume the reader is familiar with the standard notions of 
limits and colimits in ordinary category theory. 
Although there is a rich theory of (co)limits in the setting of enriched categories, 
our purposes require only slight elaboration on the usual definitions. 
(Essentially, we require that all diagrams and structure maps have degree zero.)
We now recall the necessary background, 
restricting to the case of colimits in $\VectZ$-enriched categories for efficiency of exposition.
 
\begin{definition}
	\label{def:colimit}
Let $\CS$ be a $\Gamma$-graded $\k$-linear category.  
A \emph{diagram} in $\CS$ is a small category $\IS$ and a functor 
$\a \colon \IS \to \CS$ that factors through the canonical inclusion $\CS_0\rightarrow \CS$. 
The \emph{colimit} of $\a$ in $\CS$ is the object $\colim_{i \in \IS}\alpha(i)$ 
(unique up to canonical isomorphism if it exists) characterized by
\begin{equation}
	\label{eq:colimit}
\Hom_{\CS}(\colim_{i \in \IS}\alpha(i), - ) \cong \lim_{i \in \IS} \Hom_{\CS}(\alpha(i),-)
\end{equation}
as functors $\CS \to \Vect^\Gamma$. 
\end{definition}

We will refer to a diagram $\a \colon \IS \to \CS$ as 
\emph{$\IS$-indexed} and will call $\IS$ the \emph{indexing category}.

\begin{remark}
	\label{rem:colimit}
We draw the reader's attention to some details in Definition \ref{def:colimit}.
For fixed $Y\in \CS$, the limit $\lim_{i \in \IS} \Hom_{\CS}(\alpha(i),Y)$ can be calculated in $\VectZ$ 
since the structure maps have degree zero by hypothesis on $\a$.  
This limit can be described as an explicit graded subspace of $\prod_{i\in \IS} \Hom_\CS(\a(i),Y)$.
Letting $Y$ vary yields the functor $\CS \to \Vect^\Gamma$ appearing in the right-hand side of \eqref{eq:colimit}. 
By Remark \ref{rem:isodegzero}, the structure maps in $\Hom_\CS(\a(i),\colim_{i \in \IS}\alpha(i))$
(obtained by plugging the identity morphism of $\colim_{i \in \IS}\alpha(i)$
into the left-hand side of \eqref{eq:colimit})
are necessarily degree zero.
\end{remark}

\begin{remark} 
Colimits as in Definition \ref{def:colimit} are called \emph{conical} colimits in the language of enriched category theory, 
to distinguish them from the more general notion of \emph{weighted} colimits. 
All explicit references to colimits in this paper are conical, so we drop the adjective. 
The notion of (conical) limit is dual.
\end{remark}

\begin{example}
Let $\IS$ be a (small) discrete category, 
i.e.~a category wherein the only morphisms are the identity morphisms.
An $\IS$-indexed diagram in $\CS$ then consists of a set of objects $\{X_i\}_{i \in \IS}$ (given by $X_i = \alpha(i)$) 
and we have $\colim_{i \in \IS} \a(i) = \coprod_{i \in \IS} X_i$ and $\lim_{i \in \IS} \a(i) = \prod_{i \in \IS} X_i$.
As noted in Remark \ref{rem:colimit}, the inclusion $\rho_i \colon X_i \to \coprod_{i \in \IS} X_i$
and projection $\pi_i \colon \prod_{i \in \IS} X_i \to  X_i$ are degree zero.
\end{example}

We will make particular use of \emph{filtered} colimits, 
and now review all we will need concerning them.

\begin{definition}
	\label{def:filtered}
A small category $\IS$ is \emph{filtered} provided the following conditions hold:
\begin{itemize}
\item given objects $i,j \in \IS$, there exists an object $k$ and morphisms 
$i \to k$ and $j \to k$, and
\item given parallel morphisms $f \colon i \to j$ and $g \colon i \to j$ in $\IS$, 
there exists a morphism $h \colon j \to k$ so that $hf = hg$.
\end{itemize}
If $\CS$ is a $\Gamma$-graded $\k$-linear category, then a 
\emph{directed system} in $\CS$ is a $\IS$-indexed diagram in $\CS$
where $\IS$ is a (small) filtered category;
a \emph{filtered colimit} in $\CS$ is the colimit of such a directed system.
\end{definition}

It is often useful to compute (filtered) colimits by restricting to 
certain subcategories of the indexing category. 
More generally, suppose that $F\colon \IS \to \JS$ is a functor 
and that $\a \colon \JS \to \CS$ is a $\JS$-indexed diagram in $\CS$. 
The structure maps $\alpha(F(i)) \to \colim_{j \in \JS} \alpha(j)$ then induce 
a canonical comparison map
\begin{equation}
	\label{eq:comp}
\colim_{i \in \IS} \alpha(F(i)) \to \colim_{j \in \JS} \alpha(j) \, .
\end{equation}

\begin{definition}
	\label{def:final}
Let $F\colon \IS \to \JS$ be a functor between (indexing) categories.
The functor $F$ is called \emph{final} if the canonical comparison map 
\eqref{eq:comp} is an isomorphism for all diagrams $\a \colon \JS \to \CS$
such that both colimits involved exist.
\end{definition}

We will also refer to a full subcategory $\IS\subset \JS$ as \emph{final}
if the inclusion functor is final. The following is a standard criterion for establishing that 
a functor is final (it is a special case of the equivalent characterization of final 
from \cite[\S  IX.3]{MacLane}).

\begin{lemma}
	\label{lem:final}
If $F \colon \IS \to \JS$ is a functor such that:
\begin{itemize}
\item for each $j\in \JS$ there is a morphism $j\rightarrow F(i)$ for some $i\in \IS$, and
\item given $i \in \IS$ and parallel morphisms $f_1,f_2\colon j \to F(i)$ in $\JS$
there exists a morphism $g \colon i \rightarrow i'$ in $\IS$ such that $F(g)\circ f_1 = F(g)\circ f_2$,
\end{itemize}
then $F$ is final. \qed
\end{lemma}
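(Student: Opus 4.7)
The plan is to reduce to the universal property of colimits and then run the classical finality argument, being careful that everything happens at the level of degree-zero structure maps as in Remark~\ref{rem:colimit}. By Definition~\ref{def:colimit}, it suffices to show that for any $Y \in \CS$ the induced map
\[
\lim_{j \in \JS} \Hom_{\CS}(\alpha(j), Y) \longrightarrow \lim_{i \in \IS} \Hom_{\CS}(\alpha(F(i)), Y)
\]
is an isomorphism of $\Gamma$-graded vector spaces. Unwinding the limits, elements on the left are $\JS$-cones $(\psi_j)_{j \in \JS}$ with $\psi_j \in \Hom_{\CS}(\alpha(j), Y)$, and elements on the right are $\IS$-cones $(\phi_i)_{i \in \IS}$; the map sends $(\psi_j)$ to $(\psi_{F(i)})$. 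I will check bijectivity in each degree.

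For injectivity, suppose $(\psi_j)$ and $(\psi'_j)$ restrict to the same $\IS$-cone. Given any $j \in \JS$, the first hypothesis provides a morphism $h \colon j \to F(i)$, and applying the $\JS$-cone compatibility to $h$ yields $\psi_j = \psi_{F(i)} \circ \alpha(h) = \psi'_{F(i)} \circ \alpha(h) = \psi'_j$. For surjectivity, starting from an $\IS$-cone $(\phi_i)$, I define $\psi_j := \phi_i \circ \alpha(h)$ for any choice of $h \colon j \to F(i)$. Independence of the choice of $(i,h)$ is the heart of the argument: given two such choices $h_1 \colon j \to F(i_1)$ and $h_2 \colon j \to F(i_2)$, one first uses the filtering-style completion implicit in the hypotheses (apply the first condition to $F(i_1)$ and $F(i_2)$, then use $\IS$-cone compatibility to reduce to a common target of the form $F(i)$) to assume $i_1 = i_2 = i$. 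Then the two maps $h_1, h_2 \colon j \to F(i)$ are parallel, so the second hypothesis produces $g \colon i \to i'$ in $\IS$ with $F(g) \circ h_1 = F(g) \circ h_2$, and applying $\phi_{i'} = \phi_i \circ \alpha(F(g))$ (the $\IS$-cone condition on $g$) makes both definitions of $\psi_j$ agree.

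It then remains to verify that the family $(\psi_j)$ defined above is genuinely a $\JS$-cone, i.e.~that $\psi_j = \psi_{j'} \circ \alpha(k)$ for every $k \colon j \to j'$ in $\JS$. This is proved by picking an auxiliary $h' \colon j' \to F(i)$ from the first hypothesis and observing that both $h'\circ k$ and $h'$ land in $F(i)$, reducing to the independence statement already proved.

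The only genuinely new content beyond the classical $\Set$-enriched proof is ensuring compatibility with the grading. Since $F$ and $\alpha$ are ordinary functors and the $\IS$-cone structure maps $\phi_i$ are degree zero (Remark~\ref{rem:colimit}), all compositions $\phi_i \circ \alpha(h)$ live in $\Hom_{\CS}^d(\alpha(j), Y)$ whenever $\phi_i$ does, so the bijection respects the grading; this is the point I expect to need the most care, but it is routine given that $\IS, \JS$ are plain categories rather than enriched ones. The composition $(\psi_j) \mapsto (\psi_{F(i)}) \mapsto \psi_j$ and its reverse give mutual inverses on the nose, completing the proof.
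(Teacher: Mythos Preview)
The paper does not give a proof; it simply cites Mac Lane \S IX.3 and appends a \qed. Your cone-by-cone approach is the standard one, and the injectivity half and the final $\JS$-cone verification are fine.

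The gap is in the well-definedness step of surjectivity. Given $h_1 \colon j \to F(i_1)$ and $h_2 \colon j \to F(i_2)$ with $i_1 \neq i_2$, you claim one can ``use the filtering-style completion implicit in the hypotheses'' to reduce to a common target $F(i)$. But the two stated hypotheses provide no such reduction: applying condition~(1) to the object $F(i_1)$ yields a morphism $F(i_1) \to F(i')$ in $\JS$, which need not be of the form $F(g)$ for any $g$ in $\IS$, so you cannot invoke the $\IS$-cone compatibility to trade $\phi_{i_1}$ for $\phi_{i'}$. In fact the lemma as literally stated is false without further assumptions: take $\IS$ discrete on two objects $0,1$, take $\JS$ to be the span $a \leftarrow c \rightarrow b$, and set $F(0)=a$, $F(1)=b$. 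Both hypotheses hold (every hom-set in $\JS$ is empty or a singleton), yet $(c\downarrow F)$ is disconnected and $F$ is not final. What is missing---and what every application in the paper satisfies---is that $\IS$ be filtered (in practice, a sub-poset of $\N$). With that extra assumption your argument does go through: choose $i_1\to i\leftarrow i_2$ in $\IS$, push $h_1,h_2$ forward to parallel maps into $F(i)$ using the $\IS$-cone condition on those arrows of $\IS$, and only then apply hypothesis~(2).
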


\begin{example}
The poset $(\Z,\leq)$ is a small filtered category. 
A $\Z$-indexed diagram in a $\Gamma$-graded $\k$-linear category $\CS$ 
can be understood as a sequence of objects and degree zero morphisms
\[
\cdots \xrightarrow{f_{-1}} A_0 \xrightarrow{f_1} A_1 \xrightarrow{f_2} \cdots
\]
in $\CS$. The subcategory $(\N,\leq) \subset (\Z,\leq)$ is final.
\end{example}

Recall that a category $\CS$ is called \emph{cocomplete} 
when colimits of all diagrams in $\CS$ exists therein. 
When this is not the case, it is always possible to embed $\CS$ into its \emph{presheaf category} $\CS^\wedge$
(of contravariant functors to $\Vect^\Gamma$, when $\CS$ is $\Gamma$-graded $\k$-linear) which is cocomplete; 
however, the latter is often intractable. 
If one is primarily interested in filtered colimits, 
the following provides an intermediary means to formally adjoin such colimits to
a $\Gamma$-graded $\k$-linear category.

\begin{definition}
	\label{def:Ind}
Let $\CS$ be a $\Gamma$-graded $\k$-linear category. 
The \emph{ind-completion} of $\CS$ is the category $\Ind(\CS)$ 
with objects the directed systems $\alpha \colon \IS \to \CS$. 
Given directed systems $\alpha \colon \IS \to \CS$ and  
$\beta \colon \JS \to \CS$, the morphism space is given by
\[
\Hom_{\Ind(\CS)}\left(\alpha,\beta\right) 
:= \lim_{i \in \IS} \colim_{j \in \JS} \Hom_{\CS}(\alpha(i),\beta(j))
\]
where on the right-hand side the colimit and limit are taken in $\Vect^\Gamma$
(equivalently here, in $\VectZ$).
\end{definition}

An object of $\Ind(\CS)$ will be informally referred to as an \emph{ind-object} of $\CS$.
We will denote ind-objects either by explicitly displaying the directed system, e.g.
\[
\alpha(0) \to \alpha(1) \to \cdots
\]
or simply by writing\footnote{Since the objects of $\Ind(\CS)$ are ``formal" colimits,
it is common to denote them by ``$\colim_{i \in \IS}$''$\alpha(i)$; 
however, we drop the quotation marks since it will be clear from context whether
the colimit is an object of $\Ind(\CS)$ or an object in a category admitting filtered colimits.} 
$\colim_{i \in \IS}\alpha(i)$.
In the following, we collect a number of useful facts/observations 
that will facilitate working in $\Ind(\CS)$; 
almost all can be found in \cite{KaSch}.

\begin{remark}
	\label{rmk:C^I to Ind(C)}
Let $\IS$ be a (small) category and let $\CS$ be $\Gamma$-graded $\k$-linear.
The collection of $\IS$-indexed diagrams in $\CS$ forms a $\Gamma$-graded $\k$-linear category $\CS^{\IS}$
wherein morphisms are homogeneous natural transformations. 
If $\IS$ is filtered, then there is a functor $\CS^\IS \to \Ind(\CS)$, 
since natural transformations of diagrams induce maps of colimits. 
In fact, such functors describe all morphisms in $\Ind(\CS)$. 
To wit, suppose we are given a morphism $f \in \Hom_{\Ind(\CS)}(\alpha,\beta)$ from an 
ind-object $\alpha \colon \IS \to \CS$ to $\beta \colon \JS \to \CS$.
Then, there exists a filtered category $\KS$ and final functors $F_{\IS} \colon \KS \to \IS$ and $F_{\JS} \colon \KS \to \JS$ 
and a natural transformation $\hat{f} \colon \alpha \circ F_{\IS} \to \beta \circ F_{\JS}$ so that the induced morphism
\[
\colim_{i \in \IS} \alpha(i) \cong \colim_{k \in \KS} \alpha(F_{\IS}(k)) \xrightarrow{\hat{f}}
\colim_{k \in \KS} \beta(F_{\JS}(k)) \cong \colim_{j \in \JS} \beta(j)
\]
agrees with $f$. 
(Here, we also denote  the map induced on colimits by $\hat{f}$ by $\hat{f}$ as well.)
This description e.g.~elucidates composition of morphisms in $\Ind(\CS)$.
\end{remark}

\begin{remark}
	\label{rem:compact}
Suppose that $\DS$ is a ($\Gamma$-graded $\k$-linear) category that is closed under all (small) filtered colimits. 
Recall that an object $K \in \DS$ is called \emph{compact} provided for all filtered colimits 
$\colim_{i\in \IS}\a(i)$ in $\DS$ we have that
\[
\Hom_\DS(K,\colim_{i\in \IS}\a(i)) = \colim_{i\in \IS} \Hom_\DS(K,\alpha(i)) \, .
\]
If $\CS$ is a $\Gamma$-graded $\k$-linear category, then we may regard it as a
full subcategory of $\Ind(\CS)$ via the fully faithful functor $\CS
\hookrightarrow \Ind(\CS)$ that sends an object $X$ in $\CS$ to the directed
system that is constant at $X$ and indexed by the one element poset.
Under this inclusion, $\CS$ is equivalent to the full
subcategory of compact objects in $\Ind(\CS)$. 
This implies that $\Ind(\CS)$ is \emph{compactly generated}, 
i.e.~all objects are filtered colimits of compact objects.

Further, $\Ind(\CS)$ admits all filtered colimits and the inclusion $\CS \hookrightarrow \Ind(\CS)$ is initial
among all functors from $\CS$ to categories that admit all filtered colimits. 
Explicitly, if $\DS$ admits all filtered colimits, then we can extend any functor $F \colon \CS \to \DS$ 
to $\Ind(\CS)$ by declaring that $F\big(\colim_{i \in \IS} \alpha(i) \big) = \colim_{i \in \IS} F \big( \alpha(i) \big)$.
\end{remark}

\begin{remark}
	\label{rem:Indmonoidal}
Remark \ref{rem:compact} implies that the Yoneda embedding $\CS \hookrightarrow \CS^{\wedge}$
factors through the inclusion $\CS \hookrightarrow \Ind(\CS)$.
Indeed, $\Ind(\CS)$ admits an alternative definition as the full subcategory of $\CS^{\wedge}$
of objects that are filtered colimits of representable functors.

Now suppose that $\CS$ is a \emph{monoidal} $\Gamma$-graded $\k$-linear category.
The presheaf category $\CS^{\wedge}$ is then monoidal under Day convolution \cite{Day}.
This restricts to a monoidal structure on $\Ind(\CS)$ that commutes with filtered colimits 
in each factor, i.e.
\[
\colim_{i \in \IS} \alpha(i) \otimes \colim_{j \in \IS} \beta(j) \cong \colim_{i,j \in \IS \times \JS} \alpha(i) \otimes \beta(j) \, .
\]
Note that the inclusion $\CS \hookrightarrow \Ind(\CS)$ is monoidal.
\end{remark}

\subsection{Recognizing infinite biproducts}
\label{ss:biprod recognition}

We conclude our categorical background with the following technical result, 
which provides an analogue of Lemma \ref{lemma:finite biproduct recognition} 
for certain infinite biproducts.

\begin{lemma}[Biproduct Recognition]
	\label{lemma:biproduct recognition} 
Let $\DS$ be a compactly generated $\Gamma$-graded $\k$-linear category, 
and let morphisms $\pi_i \in \Hom_{\DS_0}(Y, Y_i)$ 
and $\sigma_i \in \Hom_{\DS_0}(Y_i ,Y)$ 
be given for $i \in I$ (an arbitrary indexing set). 
The following conditions suffice for the maps $\{\pi_i\}_{i \in I}$ and $\{\sigma_i\}_{i \in I}$
to exhibit $Y$ as the biproduct of the $Y_i$:
\begin{enumerate}[(i)]
\item  $\pi_i\circ \sigma_j = \delta_{ij} \id_{Y_i}$,
\item for each compact $K \in \DS$ and $k \in \Gamma$, 
we have that $\Hom_{\DS}^k(K,Y_i)=0$ for all but finitely many $i \in I$, and
\item for each compact $K \in \DS$ and $f \in \Hom_{\DS}(K,Y)$,
we have that $f = \sum_{i \in I} \sigma_i\circ \pi_i \circ f$.
\end{enumerate}
(Note that the sum in (iii) is finite by (ii).)
\end{lemma}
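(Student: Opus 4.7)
The plan is to verify directly that $(Y,\{\sigma_i\},\{\pi_i\})$ satisfies the three conditions of Definition~\ref{def:biproducts}. Condition (i) of that definition is immediate from hypothesis (i), so the task reduces to showing that $Y$ together with $\{\sigma_i\}_{i\in I}$ realizes the coproduct of the $Y_i$ and that $Y$ together with $\{\pi_i\}_{i\in I}$ realizes the product. The main tool will be the compact generation hypothesis on $\DS$: as recalled in Remark~\ref{rem:compact}, a morphism $Y\to W$ in $\DS$ is determined by, and may be assembled from, a natural assignment $f\mapsto \phi(f)\in \Hom_{\DS}(K,W)$ defined for each compact $K$ and each $f\in \Hom_{\DS}(K,Y)$.

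For the coproduct property, given degree-zero morphisms $g_i\colon Y_i\to Z$, I would construct $g\colon Y\to Z$ by the rule
\[
g\circ f := \sum_{i\in I} g_i\circ \pi_i\circ f
\]
for compact $K$ and $f\in \Hom_{\DS}(K,Y)$. Hypothesis (ii) ensures the sum is finite (decompose $f$ into its homogeneous components and use that each $\pi_i$ has degree zero), and the assignment is evidently natural in $K$, so it determines a degree-zero morphism $g\colon Y\to Z$. The identity $g\circ \sigma_j = g_j$ is then checked against compacts using (i). For uniqueness, hypothesis (iii) says that $\id_Y$ is the morphism determined by $f\mapsto \sum_i \sigma_i\pi_i f$; so any competitor $g'$ with $g'\sigma_i=g_i$ satisfies $g'\circ f = \sum_i g'\sigma_i\pi_i f = g\circ f$ for every compact test morphism $f$, whence $g'=g$.

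The product property is dual: given degree-zero $h_i\colon Z\to Y_i$, define $h\colon Z\to Y$ via $h\circ f := \sum_i \sigma_i\circ h_i\circ f$ for compact $K$ and $f\colon K\to Z$; finiteness again comes from (ii) (applied in the degree of $f$). The relation $\pi_j\circ h = h_j$ is immediate from (i). For uniqueness, if $h,h'\colon Z\to Y$ both satisfy $\pi_i h = h_i = \pi_i h'$ for all $i$, apply (iii) to $(h-h')\circ f\colon K\to Y$ for each compact $f\colon K\to Z$ to conclude $(h-h')\circ f = \sum_i \sigma_i\pi_i(h-h')\circ f = 0$, and hence $h=h'$.

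The main technical subtlety is ensuring that the ``pointwise'' definitions of $g$ and $h$ truly assemble into morphisms in $\DS$; this is precisely what the compact generation hypothesis buys us, via the identification of $\DS$ with the ind-completion of its compact subcategory and the fact that the restricted Yoneda functor is fully faithful on such ind-objects. Once this is granted, everything else is formal bookkeeping: hypothesis (ii) guarantees that every sum encountered is finite, and hypothesis (iii) encodes the ``completeness'' relation $\id_Y = \sum_i \sigma_i\pi_i$ (interpreted after composing with any compact test morphism) that powers both the existence and uniqueness halves of the two universal properties.
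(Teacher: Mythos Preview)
Your proposal is correct and follows essentially the same approach as the paper: both arguments construct the required factorizations by prescribing their precompositions with morphisms from compact objects, using (ii) for finiteness and (iii) for the completeness relation. The only difference is presentational---you invoke the restricted Yoneda principle (identifying $\DS$ with the ind-completion of its compact subcategory) abstractly, whereas the paper fixes explicit presentations $X=\colim_\alpha K_\alpha$ and $Y=\colim_\beta C_\beta$ and checks compatibility with the structure maps by hand; in particular, the paper spells out the factorization $\sigma_i\pi_i\iota_{\beta'}=\iota_{\beta_0}\nu$ (via compactness of $C_{\beta'}$) that underlies your terse ``checked against compacts using (i)''.
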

\begin{proof}
We first show that the maps $\{\pi_i\}_{i \in I}$ exhibit $Y$ as a product $Y \cong \prod_{i \in I} Y_i$.  
For this, we must construct a two-sided inverse to the assignment:
\[
\Phi \colon \Hom_\DS(X,Y) \to \prod_{i \in I} \Hom_\DS(X,Y_i) \, , \quad F \mapsto (\pi_i\circ F)_{i \in I}
\]
where $X\in \CS$ is arbitrary. 

To construct the inverse $\Phi'$, we approximate $X$ by compact objects, 
i.e.~we write $X = \colim_\alpha K_\alpha$ with $K_\alpha$ compact.
Now let $(f_i)_{i \in I} \in \prod_{i \in I} \Hom_\CS(X,Y_i)$ be given. 
For each $\a$, set
\[
F'_\a:=\sum_{i \in I} \sigma_i\circ f_i\circ \iota_\a \in \Hom_{\DS}(K_\a,Y),
\]
where $\iota_\a \in \Hom_{\DS}(K_\alpha,X)$ 
are the structure maps of the colimit. 
Since $K_\a$ is compact and $f_i \circ \iota_\alpha \in \Hom_\DS(K_\alpha,Y_i)$,
this sum is finite (in each homogeneous degree), hence gives a well-defined morphism.
Furthermore, the collection of maps $F'_\a \in \Hom_\DS(K_\a,Y)$ is compatible with the morphisms
$\gamma_{\b,\a} \colon K_\a \to K_\b$ in the directed system (whose colimit is $X$) 
in the following sense: 
\[
F'_\b\circ \gamma_{\b,\a} = \sum_{i \in I} \sigma_i\circ f_i\circ \iota_\b \circ \gamma_{\b,\a} 
=  \sum_{i \in I} \sigma_i\circ f_i\circ \iota_\a = F'_\a.
\]

We hence let
\[
\Phi' \colon \prod_{i \in I} \Hom_\DS(X,Y_i) \rightarrow  \Hom_\DS(X,Y) \, , \quad 
\Phi' \big((f_i)_{i \in I} \big) := \colim_\alpha F'_\a \, .
\]
Since $X=\colim_\a K_\a$, we have that
$\Hom_\DS(X,Y) \cong \lim_\alpha \Hom_\DS(K_\alpha,Y)$.
Thus, we may check that $\Phi'(\Phi(F))=F$ by showing that this holds
upon precomposing with each $\iota_\a$.
We thus compute:
\[
\Phi'(\Phi(F))\circ \iota_\a  = \Phi' \big( (\pi_i\circ F)_{i \in I} \big) \circ \iota_\a 
= \sum_{i \in I} \sigma_i\circ (\pi_i \circ F)\circ \iota_\alpha 
= \sum_{i \in I} \sigma_i\circ \pi_i \circ (F \circ \iota_\alpha) 
\stackrel{(iii)}{=} F \circ \iota_\a
\]
as desired.
To see that $\Phi \big(\Phi' \big( (f_i)_{i\in I} \big) \big) = (f_i)_{i \in I}$, 
we must check that
\[
\pi_i \circ \Phi'((f_i)_{i \in I}) =  f_i
\]
as morphisms $X\rightarrow Y_i$, for all $i \in I$. 
Again, it suffices to prove equality holds upon precomposing with each $\iota_\a$, 
so we compute
\[
\pi_i \circ \Phi'(\{f_i\}_i)\circ \iota_\a 
= \pi_i\circ \sum_{j \in I} \sigma_j\circ f_j\circ \iota_\a 
= \sum_{j \in I} \pi_i \circ  \sigma_j\circ f_j\circ \iota_\a 
\stackrel{(i)}{=} f_i\circ \iota_\alpha.
\]
as desired.

It remains to show that the maps $\{\sigma_i\}_{i \in I}$ exhibit $Y$ as a 
coproduct $Y \cong \coprod_{i \in I} Y_i$. 
To see this, we must construct a two-sided inverse to
\[
\Psi \colon \Hom_\DS(Y,Z) \to \prod_{i \in I} \Hom_\DS(Y_i,Z) \, , \quad 
G \mapsto (G\circ \sigma_i)_{i \in I}
\]
where $Z\in \DS$ is arbitrary. 
We begin by approximating $Y$ by compact objects, 
i.e.~ we write $Y = \colim_\b C_\b$ with $C_\b \in \DS$ compact. 
Our candidate two-sided inverse to $\Psi$ is defined by:
\begin{equation}
	\label{eq:Psiprime}
\Psi' \colon \prod_{i \in I} \Hom_\DS(Y_i,Z) \to \Hom_\DS(Y,Z) \, , \quad 
\Psi' \big( (g_i)_{i \in I} \big) \circ \iota_\b = \sum_{i \in I} g_i\circ \pi_i \circ \iota_\b \, .
\end{equation}
Here, $\iota_\b \in \Hom_{\DS}(C_\b,Y)$ are the structure maps of the colimit, 
and we again use that $\Hom_\DS(Y,Z) \cong \lim_\beta \Hom_\DS(C_\beta,Z)$ 
to see that the formulae specify $\Psi' \big( (g_i)_{i \in I} \big) \in \Hom_\DS(Y,Z)$.
For each $\beta$, the sum in \eqref{eq:Psiprime}
is finite (thus well-defined) by condition (ii) 
applied to the maps $\pi_i \circ \iota_\beta \in \Hom_\DS(C_\beta,Y_i)$.

We compute that:
\[
\Psi'(\Psi(G)) \circ \iota_\b 
=  \sum_{i \in I} (G\circ \sigma_i)\circ \pi_i \circ \iota_\b 
= G \circ \sum_{i \in I} \sigma_i \circ \pi_i \circ \iota_\b
\stackrel{(iii)}{=} G \circ \iota_\b
\]
for each $\b$, thus $\Psi'(\Psi(G)) = G$.
To see that $\Psi \big( \Psi' \big( (g_i)_{i \in I} \big) \big) = (g_i)_{i \in I}$, 
we must show that $\Psi' \big( (g_i)_{i \in I} \big) \circ \sigma_i = g_i$ for all $i$. 
Since $\id_{Y_i} = \pi_i \circ \sigma_i$,
it suffices to show that, for all $i \in I$, 
$\Psi'((g_i)_{i \in I}) \circ \sigma_i\circ \pi_i = g_i\circ \pi_i$ as maps $Y \to Z$.  
Yet again, since $Y= \colim_\b C_\b$, it suffices to show that this identity
holds after precomposing with $\iota_{\b'}$ for all $\b'$. In other words, we
have reduced the problem to establishing the identity
\[
\colim_\b \Big( \sum_{j \in I} g_j \circ \pi_j \circ \iota_\b \Big) \circ \sigma_i\circ \pi_i \circ \iota_{\b'}
= g_i\circ \pi_i \circ \iota_{\b'}
\]
for all $i$ and all $\b'$.   
For this, we will use compactness of $C_{\beta'}$ in an essential way.

To wit, since $Y = \colim_\b C_\b$ is a \emph{filtered} colimit and $C_{\beta'}$ is compact, 
we have that 
\[
\Hom_{\DS}(C_{\beta'},Y) = \Hom_{\DS}(C_{\beta'},\colim_\b C_\b)
= \colim_\b \Hom_{\DS}(C_{\beta'}, C_\b) \, .
\]
Further, since the colimit is taken over a directed system, 
every element of the latter lies in the image of the map
\[
\Hom_{\DS}(C_{\beta'}, C_{\beta_0}) \xrightarrow{\iota_{\beta_0} \circ -} \Hom_{\DS}(C_{\beta'},Y)
\]
for some $\beta_0$.
It follows that we may write
\[
\sigma_i \circ \pi_i \circ \iota_{\beta'}  = \iota_{\beta_0} \circ \nu
\]
for some $\nu \in \Hom_{\DS}(C_{\beta'}, C_{\beta_0})$.
We then compute
\begin{align*}
\colim_\b \Big( \sum_{j \in I} g_j \circ \pi_j \circ \iota_\b \Big) \circ \sigma_i\circ \pi_i \circ \iota_{\b'}
&= \colim_\b \Big( \sum_{j \in I} g_j \circ \pi_j \circ \iota_\b \Big) \circ \iota_{\beta_0} \circ \nu \\
&= \sum_{j \in I} g_j \circ \pi_j \circ \iota_{\beta_0} \circ \nu \\
&= \sum_{j \in I} g_j \circ \pi_j \circ \sigma_i \circ \pi_i \circ \iota_{\beta'} \\
&= g_i \circ \pi_i \circ \iota_{\beta'}
\end{align*}
as desired.
\end{proof}

\section{Dotted Temperley--Lieb and annular Bar-Natan categories}
\label{sec:dTL}

In this section, we recall the dotted Temperley--Lieb category 
and a theorem of Russell \cite{Russell}
showing that it gives an explicit monoidal presentation 
for the Bar-Natan skein module of the annulus. 
We further discuss a variation on the latter in which curves 
(and cobordisms) are permitted 
to meet the boundary of the (thickened) annulus. 
Such categories are module categories for 
the Bar-Natan skein module of the annulus (without boundary), 
and we give a presentation 
for the $2$-boundary version and its action 
by the dotted Temperley--Lieb category.

From now on, we work with a field $\K$ of characteristic zero.

\subsection{The categories $\dTL$ and $\ABN$}
	\label{ss:dTL}

\begin{definition}
	\label{def:dTL}
The \emph{dotted Temperley--Lieb category} $\dTL$ is the 
$\Z$-graded $\K$-linear 
pivotal category 
freely generated by a single (symmetrically) self-dual object $\gen$ and an endomorphism 
$x \in \End^2_{\dTL}(\gen)$,
modulo a certain monoidal ideal generated by three relations. 
Using the standard graphical language for pivotal categories 
and denoting the endomorphism $x$ as a dot, the relations are as follows:
\begin{equation}
	\label{eq:dTLrels}
\begin{tikzpicture}[anchorbase,scale=.75]
\draw[very thick] (0,0) circle (.5);
\end{tikzpicture} = 2 \,\, , \quad
\begin{tikzpicture}[anchorbase,scale=1]
\draw[very thick] (0,0) to node{$\bullet$} node[right=-1pt,yshift=3pt]{\scriptsize$2$} (0,1);
\end{tikzpicture} 
:=
\begin{tikzpicture}[anchorbase,scale=1]
\draw[very thick] (0,0) to node[pos=.3]{$\bullet$} node[pos=.7]{$\bullet$} (0,1);
\end{tikzpicture} 
=0 \, , \quad
\begin{tikzpicture}[anchorbase,scale=1]
\draw[very thick] (0,0) to node{$\bullet$}(0,1);
\draw[very thick] (.5,0) to (.5,1);
\end{tikzpicture} 
\, + \,
\begin{tikzpicture}[anchorbase,scale=1]
\draw[very thick] (0,0) to (0,1);
\draw[very thick] (.5,0) to node{$\bullet$} (.5,1);
\end{tikzpicture} 
=
\begin{tikzpicture}[anchorbase,scale=1]
\draw[very thick] (0,0) to [out=90,in=180] (.25,.375) node{$\bullet$} to [out=0,in=90] (.5,0);
\draw[very thick] (0,1) to [out=270,in=180] (.25,.625) to [out=0,in=270] (.5,1);
\end{tikzpicture} 
+
\begin{tikzpicture}[anchorbase,scale=1]
\draw[very thick] (0,0) to [out=90,in=180] (.25,.375) to [out=0,in=90] (.5,0);
\draw[very thick] (0,1) to [out=270,in=180] (.25,.625) node{$\bullet$} to [out=0,in=270] (.5,1);
\end{tikzpicture} \, .
\end{equation}
\end{definition}

These relations, together with the relation that a dotted circle equals zero
(which is implied by \eqref{eq:dTLrels} since $\mathrm{char(\K)}\neq 2$), 
are called the \emph{SBN relations} in \cite{MR2370224}.

\begin{remark}
Unpacking Definition \ref{def:dTL}, we see that objects in $\dTL$ are the tensor
powers $\gen^n:=\gen^{\otimes n}$ of the generating object $\gen$ for $n\in
\N$. The morphism space $\Hom_{\dTL}(\gen^m,\gen^n)$ is the $\Z$-graded $\K$-vector space spanned
by dotted $(m,n)$-planar tangles (with $m$ points at the bottom and $n$ points
at the top), modulo the relations in \eqref{eq:dTLrels} and planar isotopy
(which is implicit in the word ``pivotal'' above). The degree of such a dotted
tangle equals $2(\# \text{ of dots})$. Note that the relations in
\eqref{eq:dTLrels} imply that
\begin{equation}
	\label{eq:twodotswitch}
\begin{tikzpicture}[anchorbase,scale=1]
\draw[very thick] (0,0) to node{$\bullet$} (0,1);
\draw[very thick] (.5,0) to node{$\bullet$} (.5,1);
\end{tikzpicture} 
=
\begin{tikzpicture}[anchorbase,scale=1]
\draw[very thick] (0,0) to [out=90,in=180] (.25,.375) node{$\bullet$} 
	to [out=0,in=90] (.5,0);
\draw[very thick] (0,1) to [out=270,in=180] (.25,.625) node{$\bullet$} 
	to [out=0,in=270] (.5,1);
\end{tikzpicture} \, .
\end{equation} 
\end{remark}

\begin{remark}
The endomorphism algebras of $\dTL$ are also called ``nil-blob
algebras''. In \cite{MR4187261}, it is shown that they arise naturally as
diagrammatically defined endomorphism subalgebras of type $\tilde{A_1}$ Soergel
bimodules and as singular weight idempotent truncations of suitable KLR
algebras.
\end{remark}

\begin{remark}
	\label{rem:dTLrepth}
	Let $V=\spann_{\Q}(v_+,v_-)$ denote the graded $\Q$-vector space with basis
	elements of degree $\deg(v_\pm)=\pm 1$. The dotted Temperley--Lieb category
	$\dTL$ over $\K=\Q$ acts on tensor powers of $V$ as follows:
	\begin{gather*}
		\begin{tikzpicture}[anchorbase,scale=1]
			\draw[very thick] (0,0) to [out=90,in=180] (.25,.375)
				to [out=0,in=90] (.5,0);
			\end{tikzpicture}\colon V\otimes V \to \Q\, , 
			\begin{cases}
				v_+\otimes v_+\mapsto 0\\
				v_+\otimes v_-\mapsto 1\\
				v_-\otimes v_+\mapsto 1\\
				v_-\otimes v_-\mapsto 0\\
			\end{cases}, \quad 
			\begin{tikzpicture}[anchorbase,scale=1]
				\draw[very thick] (0,0) to node{$\bullet$} (0,1);
				\end{tikzpicture} 
				\colon V \to V\, , \; 
				\begin{cases}
					v_+ \mapsto 0\\
					v_- \mapsto v_+
				\end{cases}
			\, ,\\		
			\begin{tikzpicture}[anchorbase,yscale=-1]
				\draw[very thick] (0,0) to [out=90,in=180] (.25,.375)
					to [out=0,in=90] (.5,0);
				\end{tikzpicture}\colon
				\Q \to V \otimes V\, , \; 1 \mapsto v_+\otimes v_- + v_- \otimes v_+\, .
	\end{gather*}
 The cup and the cap maps can be
interpreted as the $q=1$ reduction of morphisms between tensor powers of type II
vector representations of the quantum group $U_q(\slnn{2})$. The dot corresponds
to the action of the quantum group 
Chevalley generator $E$. The action is
faithful, as we will see (in different language) in \S\ref{ss:PolyRep}.
\end{remark}

\begin{remark}
Restricting to the subcategory of degree zero morphisms in $\dTL$ recovers the 
circle-value $2$ specialization of the
Temperley--Lieb category.
Recall that this category is equivalent to the full subcategory of 
$\slnn{2}$ representations tensor generated by the defining representation.
In the following, the Temperley--Lieb category will always refer to this specialization, 
which we denote by $\TL$.
\end{remark}

We next discuss the Bar-Natan category \cite{BN2} of an orientable surfaces 
$(\Sigma, \mathbf{p})$ with marked points on its boundary, 
which is the natural setting for Khovanov homology 
(of tangles in thickened surfaces).
When $\Sigma = \D$ and $|\mathbf{p}| = m+n$, 
this categorifies the $\Hom$-space $\Hom_{\TL}(\gen^m,\gen^n)$. 
Surprisingly (at least to those unfamiliar with the theory of trace decategorification), 
we'll see below that in the case when $\Sigma=\Ann$ is the annulus 
and $\mathbf{p}=\varnothing$, this category agrees with $\dTL$.

\begin{definition}
	\label{def:BN}
Let $\Sigma$ be an orientable surface with (possibly empty) boundary and let
$\mathbf{p} \subset \partial \Sigma$ be finite. 
The \emph{Bar-Natan category} is 
$\BN(\Sigma ; \mathbf{p}) := \Mat\big(\bn(\Sigma ; \mathbf{p})\big)$
where $\bn(\Sigma ; \mathbf{p})$ is the $\Z$-graded $\K$-linear category
defined as follows.
Objects in $\bn(\Sigma ; \mathbf{p})$ are
smoothly embedded $1$-manifolds $C \subset \Sigma$ with boundary
$\partial C = \mathbf{p}$ meeting $\partial \Sigma$ transversely. Given objects
$C_1,C_2$, $\Hom_{\bn}(C_1,C_2)$ is the $\Z$-graded $\K$-module spanned by
embedded orientable cobordisms $W \subset \Sigma \times [0,1]$ 
with corners (when $\mathbf{p} \neq \varnothing$) from
$C_1$ to $C_2$, modulo the following local relations:
\begin{equation}
	\label{eq:BNrels}
\begin{tikzpicture} [fill opacity=0.2,anchorbase, scale=.375]
	\path[fill=red, opacity=.2] (1,0) arc[start angle=0, end angle=180,x radius=1,y radius=.5] 
		to (-1,4) arc[start angle=180, end angle=0,x radius=1,y radius=.5] to (1,0);
	\path[fill=red, opacity=.2] (1,0) arc[start angle=360, end angle=180,x radius=1,y radius=.5] 
		to (-1,4) arc[start angle=180, end angle=360,x radius=1,y radius=.5] to (1,0);
	\draw [very thick] (0,4) ellipse (1 and 0.5);
	\draw [very thick] (0,0) ellipse (1 and 0.5);
	\draw[very thick] (1,4) -- (1,0);
	\draw[very thick] (-1,4) -- (-1,0);
\end{tikzpicture}
\, = \,
\begin{tikzpicture} [fill opacity=0.2,anchorbase, scale=.375,rotate=180]
	\path[fill=red,opacity=.2] (1,4) arc[start angle=0, end angle=180,x radius=1,y radius=.5] 
		to [out=270,in=180] (0,2.5) to [out=0,in=270] (1,4);
	\path[fill=red,opacity=.2] (1,4) arc[start angle=360, end angle=180,x radius=1,y radius=.5] 
		to [out=270,in=180] (0,2.5) to [out=0,in=270] (1,4);
	\draw[very thick] (0,4) ellipse (1 and 0.5);
	\draw[very thick] (-1,4) to [out=270,in=180] (0,2.5) to [out=0,in=270] (1,4);
	\path[fill=red,opacity=.2] (1,0) arc[start angle=0, end angle=180,x radius=1,y radius=.5] 
		to [out=90,in=180] (0,1.5) to [out=0,in=90] (1,0);
	\path[fill=red,opacity=.2] (1,0) arc[start angle=360, end angle=180,x radius=1,y radius=.5] 
		to [out=90,in=180] (0,1.5) to [out=0,in=90] (1,0);
	\draw[very thick] (0,0) ellipse (1 and 0.5);
	\draw[very thick] (-1,0) to [out=90,in=180] (0,1.5) to [out=0,in=90] (1,0);
	\node[opacity=1] at (0,1) {\footnotesize$\bullet$};
\end{tikzpicture}
+
\begin{tikzpicture} [fill opacity=0.2,anchorbase, scale=.375]
	\path[fill=red,opacity=.2] (1,4) arc[start angle=0, end angle=180,x radius=1,y radius=.5] 
		to [out=270,in=180] (0,2.5) to [out=0,in=270] (1,4);
	\path[fill=red,opacity=.2] (1,4) arc[start angle=360, end angle=180,x radius=1,y radius=.5] 
		to [out=270,in=180] (0,2.5) to [out=0,in=270] (1,4);
	\draw[very thick] (0,4) ellipse (1 and 0.5);
	\draw[very thick] (-1,4) to [out=270,in=180] (0,2.5) to [out=0,in=270] (1,4);
	\path[fill=red,opacity=.2] (1,0) arc[start angle=0, end angle=180,x radius=1,y radius=.5] 
		to [out=90,in=180] (0,1.5) to [out=0,in=90] (1,0);
	\path[fill=red,opacity=.2] (1,0) arc[start angle=360, end angle=180,x radius=1,y radius=.5] 
		to [out=90,in=180] (0,1.5) to [out=0,in=90] (1,0);
	\draw[very thick] (0,0) ellipse (1 and 0.5);
	\draw[very thick] (-1,0) to [out=90,in=180] (0,1.5) to [out=0,in=90] (1,0);
	\node[opacity=1] at (0,1) {\footnotesize$\bullet$};
\end{tikzpicture}
\, , \quad
\begin{tikzpicture}[anchorbase, scale=.375]
	\path [fill=red,opacity=0.3] (0,0) circle (1);
	\draw (-1,0) .. controls (-1,-.4) and (1,-.4) .. (1,0);
	\draw[dashed] (-1,0) .. controls (-1,.4) and (1,.4) .. (1,0);
	\draw[very thick] (0,0) circle (1);
\end{tikzpicture}
= 0
\, , \quad
\begin{tikzpicture}[anchorbase, scale=.375]
	\path [fill=red,opacity=0.3] (0,0) circle (1);
	\draw (-1,0) .. controls (-1,-.4) and (1,-.4) .. (1,0);
	\draw[dashed] (-1,0) .. controls (-1,.4) and (1,.4) .. (1,0);
	\draw[very thick] (0,0) circle (1);
	\node at (0,0.6) {\footnotesize$\bullet$};
\end{tikzpicture}
= 1
\, , \quad
\begin{tikzpicture}[fill opacity=.3, scale=.5, anchorbase]
	\filldraw [very thick,fill=red] (-1,-1) rectangle (1,1);
	\node [opacity=1] at (0,-.25) {$\bullet$};
	\node [opacity=1] at (0,.25) {$\bullet$};
	\end{tikzpicture}
= 0 \, .
\end{equation}
 The degree of a cobordism with corners $W \colon C_1 \to C_2$ is given
by $\deg(W) = \frac{1}{2}|\mathbf{p}| - \chi(W)$, and a dot on a surface is used
as shorthand for taking connect sum with a torus at that point and multiplying
by $\frac{1}{2}$. For example, 
\begin{equation}
	\label{eq:dotdef}
\begin{tikzpicture} [fill opacity=0.2,anchorbase, scale=.375]
	\path[fill=red,opacity=.2] (1,4) arc[start angle=0, end angle=180,x radius=1,y radius=.5] 
		to [out=270,in=180] (0,2.5) to [out=0,in=270] (1,4);
	\path[fill=red,opacity=.2] (1,4) arc[start angle=360, end angle=180,x radius=1,y radius=.5] 
		to [out=270,in=180] (0,2.5) to [out=0,in=270] (1,4);
	\draw[very thick] (0,4) ellipse (1 and 0.5);
	\draw[very thick] (-1,4) to [out=270,in=180] (0,2.5) to [out=0,in=270] (1,4);
	\node[opacity=1] at (0,3) {\footnotesize$\bullet$};
\end{tikzpicture}
:=
\frac{1}{2}
\begin{tikzpicture}[scale=.5,anchorbase]
\begin{scope}
    \clip (-.65,4) arc[start angle=180, end angle=360,x radius=.65,y radius=.25]
    	to (1.25,4) to (1.25,.4) to (-1.25,.4) to  (-1.25,4) to (-.65,4);
\fill[red,opacity=.3] (0,2.5) ellipse (1 and 2);
\draw[very thick] (0,2.5) ellipse (1 and 2);
\fill[white] (0,3) to [out=300,in=60] (0,2) to [out=120,in=240] (0,3);
\draw[very thick] (0,3) to [out=300,in=60] (0,2);
\draw[very thick] (0.1,1.8) to [out=125,in=235] (0.1,3.2);
\end{scope}
\fill[red,opacity=.2] (0,4) ellipse (.65 and .25);
\draw[very thick] (0,4) ellipse (.66 and .25);
\end{tikzpicture} \, .
\end{equation}
\end{definition}

\begin{remark} It is possible to define the
Bar-Natan category integrally, if we work with ``formally dotted'' cobordisms.
In this case, the first relation in \eqref{eq:BNrels} (the so-called
\emph{neck-cutting relation}) implies the characterization of a dot given in
\eqref{eq:dotdef}, after clearing denominators.
\end{remark}

\begin{remark}
The first three relations in \eqref{eq:BNrels} imply that the neck-cutting relation gives
an idempotent decomposition for the identity morphism of any null-homotopic circle 
in $\Sigma$, i.e.~we have the ``circle removal'' isomorphism
\begin{equation}
	\label{eq:circle}
\begin{tikzpicture}[anchorbase,scale=.5]
\draw[very thick] (0,0) circle (.5);
\end{tikzpicture}
\cong q^{-1} \varnothing \oplus q \varnothing
\end{equation}
in $\BN(\Sigma ; \mathbf{p})$ for such circles.
\end{remark}

\begin{definition}\label{def:ABN}
Let $\ABN:=\BN(S^1\times [0,1] ; \varnothing)$ denote the Bar-Natan category 
associated to the annulus with no points on the boundary.  
We refer to $\ABN$ as the \emph{annular Bar-Natan category}.
\end{definition}
\begin{remark}
	\label{rem:ABNmonoidal}
$\ABN$ is a monoidal category, with tensor product given by glueing one 
(thickened) annulus inside the other, so that $A\otimes B$ is ``$A$ inside $B$''.
\end{remark}

The following is essentially a re-packaging of a theorem of  Russell \cite{Russell}.

\begin{prop}
	\label{prop:dTLABN}
There is a fully faithful monoidal functor 
$\Phi \colon \dTL \hookrightarrow \ABN$ defined by 
``rotating dotted Temperley--Lieb diagrams'' around the annular core, i.e.
\[
\begin{tikzpicture}[anchorbase,scale=1]
\draw[very thick] (0,0) to (0,1);
\end{tikzpicture} 
\mapsto
\begin{tikzpicture}[anchorbase,scale=1]
\node[gray] at (0,0) {$\bullet$};
\draw[gray] (0,0) ellipse (1 and 0.375);
\draw[gray] (-1,0) to (-1,1);
\draw[gray] (1,0) to (1,1);
\draw[very thick] (0,0) ellipse (.5 and 0.1875);
\draw[very thick] (-.5,0) to (-.5,1);
\draw[very thick] (.5,0) to (.5,1);
\path[fill=red,opacity=.2] (.5,1) arc[start angle=0, end angle=180,x radius=.5,y radius=.1875] 
	to (-.5,0) arc[start angle=180, end angle=0,x radius=.5,y radius=.1875] to (.5,1);
\draw[gray,ultra thick] (0,0) to (0,1);
\draw[very thick] (0,1) ellipse (.5 and 0.1875);
\path[fill=red,opacity=.2] (.5,1) arc[start angle=360, end angle=180,x radius=.5,y radius=.1875] 
	to (-.5,0) arc[start angle=180, end angle=360,x radius=.5,y radius=.1875] to (.5,1);
\node[gray] at (0,1) {$\bullet$};
\draw[gray] (0,1) ellipse (1 and 0.375);
\end{tikzpicture} 
\; , \quad
\begin{tikzpicture}[anchorbase,scale=1]
\draw[very thick] (0,0) to node{$\bullet$} (0,1);
\end{tikzpicture} 
\mapsto
\begin{tikzpicture}[anchorbase,scale=1]
\node[gray] at (0,0) {$\bullet$};
\draw[gray] (0,0) ellipse (1 and 0.375);
\draw[gray] (-1,0) to (-1,1);
\draw[gray] (1,0) to (1,1);
\draw[very thick] (0,0) ellipse (.5 and 0.1875);
\draw[very thick] (-.5,0) to (-.5,1);
\draw[very thick] (.5,0) to (.5,1);
\path[fill=red,opacity=.2] (.5,1) arc[start angle=0, end angle=180,x radius=.5,y radius=.1875] 
	to (-.5,0) arc[start angle=180, end angle=0,x radius=.5,y radius=.1875] to (.5,1);
\draw[gray,ultra thick] (0,0) to (0,1);
\draw[very thick] (0,1) ellipse (.5 and 0.1875);
\path[fill=red,opacity=.2] (.5,1) arc[start angle=360, end angle=180,x radius=.5,y radius=.1875] 
	to (-.5,0) arc[start angle=180, end angle=360,x radius=.5,y radius=.1875] to (.5,1);
\node at (-.25,.5) {$\bullet$};
\node[gray] at (0,1) {$\bullet$};
\draw[gray] (0,1) ellipse (1 and 0.375);
\end{tikzpicture} 
\; , \quad
\begin{tikzpicture}[anchorbase,scale=1]
\draw[very thick] (0,0) to [out=90,in=180] (.25,.5) to [out=0,in=90] (.5,0);
\end{tikzpicture} 
\mapsto
\begin{tikzpicture}[anchorbase,scale=1]
\node[gray] at (0,0) {$\bullet$};
\draw[gray] (0,0) ellipse (1 and 0.375);
\draw[gray] (-1,0) to (-1,1);
\draw[gray] (1,0) to (1,1);
\draw[very thick] (0,0) ellipse (.25 and 0.09375);
\draw[very thick] (0,0) ellipse (.75 and 0.28125);
\draw[very thick] (-.75,0) to [out=90,in=180] (-.5,.5) to [out=0,in=90] (-.25,0);
\draw[very thick] (.25,0) to [out=90,in=180] (.5,.5) to [out=0,in=90] (.75,0);
\path[fill=red,opacity=.2] (-.75,0) to [out=90,in=180] (-.5,.5)
	arc[start angle=180, end angle=0,x radius=.5,y radius=0.09375] to [out=0,in=90] 
		(.75,0) arc[start angle=0, end angle=180,x radius=.75,y radius=0.28125];
\path[fill=red,opacity=.2] (-.5,.5) arc[start angle=180, end angle=0,x radius=.5,y radius=0.09375] 
	to [out=180,in=90] (.25,0) arc[start angle=0, end angle=180,x radius=.25,y radius=0.09375]
		to [out=90,in=0] (-.5,.5);
\draw[gray,ultra thick] (0,0) to (0,1);
\path[fill=red,opacity=.2] (-.75,0) to [out=90,in=180] (-.5,.5)
	arc[start angle=180, end angle=360,x radius=.5,y radius=0.09375] to [out=0,in=90] 
		(.75,0) arc[start angle=360, end angle=180,x radius=.75,y radius=0.28125];
\path[fill=red,opacity=.2] (-.5,.5) arc[start angle=180, end angle=360,x radius=.5,y radius=0.09375] 
	to [out=180,in=90] (.25,0) arc[start angle=360, end angle=180,x radius=.25,y radius=0.09375]
		to [out=90,in=0] (-.5,.5);
\node[gray] at (0,1) {$\bullet$};
\draw[gray] (0,1) ellipse (1 and 0.375);
\end{tikzpicture} 
\; , \quad
\begin{tikzpicture}[anchorbase,scale=1,rotate=180]
\draw[very thick] (0,0) to [out=90,in=180] (.25,.5) to [out=0,in=90] (.5,0);
\end{tikzpicture} 
\mapsto
\begin{tikzpicture}[anchorbase,scale=1]
\node[gray] at (0,0) {$\bullet$};
\draw[gray] (0,0) ellipse (1 and 0.375);
\draw[gray] (-1,0) to (-1,1);
\draw[gray] (1,0) to (1,1);
\path[fill=red,opacity=.2] (-.75,1) to [out=270,in=180] (-.5,.5)
	arc[start angle=180, end angle=0,x radius=.5,y radius=0.09375] to [out=0,in=270] 
		(.75,1) arc[start angle=0, end angle=180,x radius=.75,y radius=0.28125];
\path[fill=red,opacity=.2] (-.5,.5) arc[start angle=180, end angle=0,x radius=.5,y radius=0.09375] 
	to [out=180,in=270] (.25,1) arc[start angle=0, end angle=180,x radius=.25,y radius=0.09375]
		to [out=270,in=0] (-.5,.5);
\draw[gray,ultra thick] (0,0) to (0,1);
\draw[very thick] (0,1) ellipse (.25 and 0.09375);
\draw[very thick] (0,1) ellipse (.75 and 0.28125);
\draw[very thick] (-.75,1) to [out=270,in=180] (-.5,.5) to [out=0,in=270] (-.25,1);
\draw[very thick] (.25,1) to [out=270,in=180] (.5,.5) to [out=0,in=270] (.75,1);
\path[fill=red,opacity=.2] (-.75,1) to [out=270,in=180] (-.5,.5)
	arc[start angle=180, end angle=360,x radius=.5,y radius=0.09375] to [out=0,in=270] 
		(.75,1) arc[start angle=360, end angle=180,x radius=.75,y radius=0.28125];
\path[fill=red,opacity=.2] (-.5,.5) arc[start angle=180, end angle=360,x radius=.5,y radius=0.09375] 
	to [out=180,in=270] (.25,1) arc[start angle=360, end angle=180,x radius=.25,y radius=0.09375]
		to [out=270,in=0] (-.5,.5);
\node[gray] at (0,1) {$\bullet$};
\draw[gray] (0,1) ellipse (1 and 0.375);
\end{tikzpicture} 
\]
The induced functor $\Mat(\dTL) \to \ABN$ is an equivalence of
categories.
\end{prop}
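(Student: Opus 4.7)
The proposition contains three separate claims: (i) $\Phi$ is well-defined; (ii) $\Phi$ is fully faithful; (iii) the induced functor $\Mat(\dTL) \to \ABN$ is an equivalence. Since $\Phi$ is prescribed on generators, (i) amounts to checking that the three defining relations of $\dTL$ hold in $\ABN$ after rotation, with monoidal compatibility following immediately from the nesting structure on $\ABN$ (Remark~\ref{rem:ABNmonoidal}). Given (i) and (ii), claim (iii) reduces to essential surjectivity of $\Mat(\Phi)$.

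For well-definedness, the relation ``circle $=2$'' becomes the evaluation of an embedded torus in the thickened annulus, which yields $2$ by two successive applications of the neck-cutting relation from \eqref{eq:BNrels} followed by the sphere evaluations (equivalently, the trace of $\id$ on the Frobenius algebra $\K[x]/(x^2)$). The relation ``two dots $=0$'' is precisely the rightmost BN relation in \eqref{eq:BNrels} applied to the essential cylinder. The dot-slide relation translates into a BN identity between two adjacent essential cylinders, derivable by neck-cutting on an appropriate compressing annulus in the solid torus. For essential surjectivity of $\Mat(\Phi)$: any object of $\bn$ is a disjoint union of circles in the annulus, which are either essential (hence isomorphic to copies of $\Phi(\gen)$) or null-homotopic (hence isomorphic to $q^{-1}\varnothing\oplus q\varnothing$ by the circle-removal relation \eqref{eq:circle}). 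This realizes every object of $\ABN$ as a biproduct of grading shifts of objects in the image of $\Phi$.

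The main obstacle is full faithfulness (ii), which is essentially Russell's theorem. The strategy is to construct a two-sided inverse to
\[
\Phi_{m,n}\colon \Hom_{\dTL}(\gen^m,\gen^n) \to \Hom_{\ABN}(\Phi(\gen^m),\Phi(\gen^n)).
\]
For surjectivity of $\Phi_{m,n}$, one shows that any cobordism in the thickened annulus admits a BN-canonical form: a Morse-theoretic slicing combined with iterated neck-cutting along compressing disks eliminates all handles in exchange for dots, and the sphere relations together with ``two dots $=0$'' then reduce each component to carry at most one dot, with underlying surface the rotation of a planar TL tangle. For injectivity, one must verify linear independence of these normal forms; this can be achieved by pairing them against appropriate test cobordisms that close them into diagrams whose values are computed by the $N=2$ Khovanov TQFT of explicitly understood links (or via the basis construction in Russell's thesis), thereby distinguishing the dotted TL basis elements via their images. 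This injectivity step is the most delicate part and is the technical heart of Russell's argument.
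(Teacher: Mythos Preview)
Your proposal is correct and takes essentially the same approach as the paper: the paper's proof simply cites \cite{QR2} for fullness and essential surjectivity and \cite{Russell} for faithfulness, and your sketch unpacks precisely what those references do (Morse-theoretic reduction to normal forms via neck-cutting for fullness, circle removal for essential surjectivity, and Russell's basis argument for faithfulness). Your well-definedness discussion is slightly more explicit than the paper, which leaves this implicit in the word ``defined'' in the statement; the only place where you are a bit hand-wavy is the verification of the dot-slide relation in $\ABN$, but this is a routine neck-cutting computation.
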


\begin{proof}
Fullness and essential surjectivity (after passing to $\Mat(\dTL)$) 
follow as in \cite[Propositions 3.8 and 4.2]{QR2}, 
which establish the analogous result in the setting of $\glnn{2}$ foams. 
Faithfulness follows from \cite[Theorem 2.1]{Russell}.
\end{proof}

\begin{remark}
We could have worked in greater generality throughout \S \ref{ss:dTL} and in the following. 
Generalizing Definition~\ref{def:dTL}, for graded commutative ring $R$ and fixed
elements $h\in R_2$, $t\in R_4$, one can also consider the \emph{equivariant}
$R$-linear dotted Temperley--Lieb category $\dTL^R$, defined analogously as
above, but subject to the relations:
\begin{equation}
	\label{eq:eqdTLrels}
\begin{tikzpicture}[anchorbase,scale=.75]
\draw[very thick] (0,0) circle (.5);
\end{tikzpicture} = 2 \,\, , \quad
\begin{tikzpicture}[anchorbase,scale=1]
\draw[very thick] (0,0) to node{$\bullet$} node[right=-1pt,yshift=3pt]{\scriptsize$2$} (0,1);
\end{tikzpicture} 
:=
h 
\begin{tikzpicture}[anchorbase,scale=1]
\draw[very thick] (0,0) to node[pos=.5]{$\bullet$}  (0,1);
\end{tikzpicture}
+t\,
\begin{tikzpicture}[anchorbase,scale=1]
	\draw[very thick] (0,0) to (0,1);
	\end{tikzpicture} 
 \, , \quad
\begin{tikzpicture}[anchorbase,scale=1]
\draw[very thick] (0,0) to node{$\bullet$}(0,1);
\draw[very thick] (.5,0) to (.5,1);
\end{tikzpicture} 
\, + \,
\begin{tikzpicture}[anchorbase,scale=1]
\draw[very thick] (0,0) to (0,1);
\draw[very thick] (.5,0) to node{$\bullet$} (.5,1);
\end{tikzpicture} 
 -h \,\,
\begin{tikzpicture}[anchorbase,scale=1]
\draw[very thick] (0,0) to (0,1);
\draw[very thick] (.5,0) to (.5,1);
\end{tikzpicture} 
=
\begin{tikzpicture}[anchorbase,scale=1]
\draw[very thick] (0,0) to [out=90,in=180] (.25,.375) node{$\bullet$} to [out=0,in=90] (.5,0);
\draw[very thick] (0,1) to [out=270,in=180] (.25,.625) to [out=0,in=270] (.5,1);
\end{tikzpicture} 
+
\begin{tikzpicture}[anchorbase,scale=1]
\draw[very thick] (0,0) to [out=90,in=180] (.25,.375) to [out=0,in=90] (.5,0);
\draw[very thick] (0,1) to [out=270,in=180] (.25,.625) node{$\bullet$} to [out=0,in=270] (.5,1);
\end{tikzpicture} 
- h\,
\begin{tikzpicture}[anchorbase,scale=1]
\draw[very thick] (0,0) to [out=90,in=180] (.25,.375) to [out=0,in=90] (.5,0);
\draw[very thick] (0,1) to [out=270,in=180] (.25,.625) to [out=0,in=270] (.5,1);
\end{tikzpicture} 
\end{equation} 
If $2$ is invertible, the third relation implies that the dotted circle
evaluates to $h$. 
All of the above results carry over to the equivariant setting, provided we work
with analogously generalized categories $\BN^R(\Sigma ; \mathbf{p})$.
\end{remark}

\subsection{Basic structure of $\dTL$}
\label{ss:basic struct}

Since $\dTL$ is a $\Z$-graded category, its center $Z(\dTL)$ 
(i.e.~the endo-natural transformations of the identity functor) is a $\Z$-graded algebra.  

\begin{lem}
	\label{lemma:center of dTL}
There is a map of graded algebras $\K[s,z]/(s^2=1)\rightarrow Z(\dTL)$ defined by 
\[
z|_{\gen^n} := \begin{tikzpicture}[anchorbase]
\draw[very thick] (0,0) to (0,.5);
\draw[very thick] (.3,0) to (.3,.5);
\draw[very thick] (.8,0) to (.8,.5);
\node at (.55,.25) {$\mydots$};
\node at (0,.25) {$\bullet$};
\end{tikzpicture}
\ -\
 \begin{tikzpicture}[anchorbase]
\draw[very thick] (0,0) to (0,.5);
\draw[very thick] (.3,0) to (.3,.5);
\draw[very thick] (.8,0) to (.8,.5);
\node at (.55,.25) {$\mydots$};
\node at (.3,.25) {$\bullet$};
\end{tikzpicture}
\ + \cdots  +(-1)^{n-1}  
\begin{tikzpicture}[anchorbase]
\draw[very thick] (0,0) to (0,.5);
\draw[very thick] (.3,0) to (.3,.5);
\draw[very thick] (.8,0) to (.8,.5);
\node at (.55,.25) {$\mydots$};
\node at (.8,.25) {$\bullet$};
\end{tikzpicture}
\]
\end{lem}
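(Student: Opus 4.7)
The plan is to define the images of $s$ and $z$ explicitly as endo-natural transformations of $\id_{\dTL}$, verify centrality against a convenient generating set of morphisms, and then check the algebra relations. Set $z|_{\gen^n}$ as in the statement, and send $s$ to the natural transformation with $s|_{\gen^n} := (-1)^n\id_{\gen^n}$. Note that each $z|_{\gen^n}$ has degree $2$ (one dot per summand) and $s|_{\gen^n}$ has degree $0$, so the grading conventions match.

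Centrality of $s$ is immediate: any morphism $f \in \Hom_{\dTL}(\gen^n,\gen^m)$ is a $\K$-linear combination of dotted planar $(n,m)$-tangles, and such tangles force $n \equiv m \pmod{2}$ on any non-zero component. Thus $(-1)^n = (-1)^m$ on the support of $f$, giving $s|_{\gen^m}\circ f = f\circ s|_{\gen^n}$. Moreover $s^2 = \id$ since $(-1)^{2n} = 1$.

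For centrality of $z$, I will check naturality against the generating morphisms of $\dTL$, namely dots, cups, and caps. Naturality against dots is trivial, since all dots on parallel strands commute. For a cup $\cup_i\colon \gen^{n-2}\to \gen^n$ inserting arcs at positions $i,i+1$, the dots on strands $j < i$ commute past the cup with matched signs (both sides contribute $(-1)^{j-1}$), and the dots on strands $j > i+1$ pass through to dots on strand $j-2$ of $\gen^{n-2}$ with sign $(-1)^{j-1}=(-1)^{(j-2)-1}$, again matching. The only remaining residue is the interior contribution $(-1)^{i-1}(x_i\cup_i - x_{i+1}\cup_i)$, which vanishes because the cup is a single connected arc: planar isotopy (equivalently, pivotality of $\dTL$) lets the dot slide freely along it, so $x_i\cup_i = x_{i+1}\cup_i$. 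The argument for caps $\cap_i$ is entirely dual, reducing to $\cap_i x_i = \cap_i x_{i+1}$, which again follows from dot sliding along the single arc that forms the cap.

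Finally, the algebra relations check out: since $Z(\dTL)$ is a commutative graded algebra, $s$ and $z$ automatically commute, and $s^2 = 1$ was established above. Thus the assignment extends uniquely to a graded algebra homomorphism $\K[s,z]/(s^2-1)\to Z(\dTL)$. The main delicate point is the sign-shift bookkeeping in the cup/cap computation; the alternating coefficients $(-1)^{j-1}$ are tuned precisely so that all contributions from strands outside the cup (or cap) cancel, isolating the interior difference $x_i - x_{i+1}$, which then kills itself by isotopy. Everything else is routine.
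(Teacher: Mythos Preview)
Your argument is correct and is precisely the direct verification the paper has in mind; the paper records the proof only as ``Straightforward,'' and you have supplied the details, checking naturality of $z$ against the generating dot, cup, and cap morphisms and handling the sign bookkeeping accurately (the key identity $(-1)^{j-1}=(-1)^{(j-2)-1}$ for $j>i+1$, together with $x_i\cup_i=x_{i+1}\cup_i$ from isotopy, is exactly what makes the alternating signs work).
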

and $s|_{\gen^n}=(-1)^n\id_{\gen^n}$.
\begin{proof}
Straightforward.
\end{proof}

In Corollary \ref{cor:center of dTL} we prove that this algebra map is an isomorphism.

\begin{remark}\label{rmk:too many z's}
We will use the abbreviation $z_n:= z|_{\gen^n}$.
We have $z_n^{n+1}=0$ because each of the $n$ strands in $\id_{\gen^n}$ can carry at most one dot.
\end{remark}

\begin{remark}\label{rmk:involution sign} The involution $\dTL\rightarrow \dTL$
given by reflecting all diagrams across a vertical axis sends $z_n\mapsto
(-1)^{n-1}z_n$, i.e.~$z\mapsto -sz$.
\end{remark}

The object $\gen^n\in \dTL$ carries an action of the symmetric group $\mathfrak{S}_n$, 
by defining 
\begin{equation}
	\label{eqn:TLbraiding}
\begin{tikzpicture}[anchorbase]
\draw[very thick] (0,0) to (.5,.5);
\draw[very thick] (.5,0) to (0,.5);
\end{tikzpicture}
:=
\begin{tikzpicture}[anchorbase]
\draw[very thick] (0,0) to (0,.5);
\draw[very thick] (.5,0) to (.5,.5);
\end{tikzpicture}
-
\begin{tikzpicture}[anchorbase]
\draw[very thick] (0,0) to [in=135,out=45](.5,0);
\draw[very thick] (0,.5) to [in=-135, out=-45](.5,.5);
\end{tikzpicture}
\end{equation}
However,
this does not quite make $\dTL$ into a symmetric monoidal category, because dots slide only up to a sign:
\[
\begin{tikzpicture}[anchorbase]
\draw[very thick] (0,0) to (.5,.5);
\draw[very thick] (.5,0) to (0,.5);
\node at (.37,.37) {$\bullet$} ;
\end{tikzpicture}
=
-
\begin{tikzpicture}[anchorbase]
\draw[very thick] (0,0) to (.5,.5);
\draw[very thick] (.5,0) to (0,.5);
\node at (.12,.12){$\bullet$};
\end{tikzpicture} \; , \quad 
\begin{tikzpicture}[anchorbase,xscale=-1]
\draw[very thick] (0,0) to (.5,.5);
\draw[very thick] (.5,0) to (0,.5);
\node at (.37,.37) {$\bullet$} ;
\end{tikzpicture}
=
-
\begin{tikzpicture}[anchorbase,xscale=-1]
\draw[very thick] (0,0) to (.5,.5);
\draw[very thick] (.5,0) to (0,.5);
\node at (.12,.12){$\bullet$};
\end{tikzpicture}
\]
It is sometimes convenient to normalize away this sign. 

\begin{conv}\label{conv:End}
Let $\ux_i\in \End_{\dTL}(\gen^n)$ denote a dot on the $i$-th strand 
and set $\sx_i := (-1)^{i-1}\ux_i$,  i.e.~
\[
\sx_i = (-1)^{i-1} \ux_i = (-1)^{i-1}  
\begin{tikzpicture}[anchorbase,xscale=.8]
\draw[double, very thick] (0,0) node[below=-2pt]{\tiny$i{-}1$}  to (0,1);
\draw[very thick] (.5,0) to node{$\bullet$} (.5,1);
\draw[double,very thick] (1,0) node[below=-2pt]{\tiny$n{-}i$} to (1,1);
\end{tikzpicture}\, 
\quad \text{where} \quad 
\begin{tikzpicture}[anchorbase,scale=1]
\draw[double, very thick] (0,0) node[below=-2pt]{\tiny$k$} to (0,1);
\end{tikzpicture}
:=
\underbrace{
\begin{tikzpicture}[baseline=.8em,scale=1]
\draw[very thick] (0,0) to (0,1);
\node at (.25,.5) {$\mydots$};
\draw[very thick] (.5,0) to (.5,1);
\end{tikzpicture}}_k \, .
\]
We will sometimes also denote
$f \in \K[\ux_1,\ldots,\ux_n]/(\ux_i^2) = \K[\sx_1,\ldots,\sx_n]/(\sx_i^2) \subset \End_{\dTL}(\gen^n)$ by
\[
\begin{tikzpicture}[anchorbase,scale=1]
\draw[double, very thick] (0,0) to node[pos=.85,left=-1pt]{\scriptsize $n$} 
	node[pos=.45]{\tiny $\CQGbox{f}$} (0,1);
\end{tikzpicture} \, .
\]
Note that $w f(\sx_1,\ldots,\sx_n) w\inv= f(\sx_{w(1)},\ldots,\sx_{w(n)})$ for all $w\in \mathfrak{S}_n$ 
and all $f \in \K[\sx_1,\ldots,\sx_n]/(\sx_i^2)$, 
and $z_n = \sx_1+\cdots+\sx_n$. 
\end{conv}

\subsection{The symmetrizing idempotent}
\label{ss:JW}

\begin{definition}\label{def:JW} Let $\JW_n\in \End_{\dTL}(\gen^n)$ denote the
symmetrizing idempotent, also known as the $n^{th}$ 
\emph{Jones--Wenzl projector} 
(at circle-value $2$),
defined by
\[
\JW_n := 
\begin{tikzpicture}[anchorbase,scale=.75]
	\draw[very thick] (-.25,-.75) to (-.25,.75);
	\node at (0,-.5){\mysdots};
	\node at (0,.5){\mysdots};
	\draw[very thick] (.25,-.75) to (.25,.75);
	\filldraw[white] (-.5,.25) rectangle (.5,-.25); 
	\draw[very thick] (-.5,.25) rectangle (.5,-.25);
	\node at (0,0) {\scriptsize$\JW_{n}$};
\end{tikzpicture}
:=\frac{1}{n!} \sum_{w\in \mathfrak{S}_n} 
\begin{tikzpicture}[anchorbase,scale=.75]
	\draw[very thick] (-.25,-.75) to (-.25,.75);
	\node at (0,-.5){\mysdots};
	\node at (0,.5){\mysdots};
	\draw[very thick] (.25,-.75) to (.25,.75);
	\filldraw[white] (-.5,.25) rectangle (.5,-.25); 
	\draw[very thick] (-.5,.25) rectangle (.5,-.25);
	\node at (0,0) {$w$};
\end{tikzpicture} \, .
\]
\end{definition}

The Jones--Wenzl projector $\JW_n$ is uniquely characterized by the
following properties:
\begin{itemize}
	\item $\JW_n^2 = \JW_n$, 
	\item $\JW_n w =\JW_n = w\JW_n$ for all  $w\in \mathfrak{S}_n$ (equivalently, $\JW_n$ ``kills turnbacks''),  and
	\item $\JW_n \neq 0$
\end{itemize}
and admits an inductive description via
\begin{equation}
	\label{eq:JWrecur}
\begin{tikzpicture}[anchorbase,scale=1]
	\draw[very thick] (-.25,-.75) to (-.25,.75);
	\node at (0,-.5){\mysdots};
	\node at (0,.5){\mysdots};
	\draw[very thick] (.25,-.75) to (.25,.75);
	\filldraw[white] (-.5,.25) rectangle (.5,-.25); 
	\draw[very thick] (-.5,.25) rectangle (.5,-.25);
	\node at (0,0) {\scriptsize$\JW_{n}$};
\end{tikzpicture}
=
\begin{tikzpicture}[anchorbase,scale=1]
	\draw[very thick] (-.25,-.75) to (-.25,.75);
	\node at (0,-.5){\mysdots};
	\node at (0,.5){\mysdots};
	\draw[very thick] (.25,-.75) to (.25,.75);
	\filldraw[white] (-.5,.25) rectangle (.5,-.25); 
	\draw[very thick] (-.5,.25) rectangle (.5,-.25);
	\node at (0,0) {\scriptsize$\JW_{n-1}$};
	\draw[very thick] (.75,-.75) to (.75,.75);
\end{tikzpicture}
-
\frac{n-1}{n}
\begin{tikzpicture}[anchorbase,scale=.75]
	\draw[very thick] (-.75,-1.5) to (-.75,1.5);
	\node at (-.5,-1.25){\mysdots};
	\node at (-.5,0){\mysdots};
	\node at (-.5,1.25){\mysdots};
	\draw[very thick] (-.25,-1.5) to (-.25,1.5);
	\draw[very thick] (.25,1.5) to (.25,.5) to [out=270,in=180] (.5,.125)
		to [out=0,in=270] (.75,.5) to (.75,1.5);
	\draw[very thick] (.25,-1.5) to (.25,-.5) to [out=90,in=180] (.5,-.125)
		to [out=0,in=90] (.75,-.5) to (.75,-1.5);
	\filldraw[white] (-1,.5) rectangle (.5,1); 
	\draw[very thick] (-1,.5) rectangle (.5,1);
	\node at (-.25,.75) {\scriptsize$\JW_{n-1}$};
	\filldraw[white] (-1,-.5) rectangle (.5,-1); 
	\draw[very thick] (-1,-.5) rectangle (.5,-1);
	\node at (-.25,-.75) {\scriptsize$\JW_{n-1}$};
\end{tikzpicture} \, .
\end{equation}
For more details, see e.g.~\cite{MR1280463}.

\begin{lem}
	\label{lem:JWrels}
The following relations hold in $\dTL$:
\begin{equation}
	\label{eq:PxP}
\JW_n \ux_i \JW_n = -\JW_n \ux_{i+1} \JW_n \quad \text{for } 1 \leq i \leq n-1
\end{equation}
\begin{equation}\label{eq:PxxxP}
\JW_n \ux_1\cdots \ux_k \JW_n = \begin{tikzpicture}[anchorbase,yscale=.75]
	\draw[very thick] (-.75,-1.5) to node {$\bullet$} (-.75,.5);
	\draw[very thick] (-.25,-1.5) to node {$\bullet$} (-.25,.5);
	\draw[double, very thick] (0,-1.5) to node[right]{\scriptsize $n-k$} (0,.5);
	\node at (-.5,-1.375){\mysdots};
	\node at (-.5,-.5){\mysdots};
	\node at (-.5,.375){\mysdots};
	\filldraw[white] (-1,.25) rectangle (.25,-.25); 
	\draw[very thick] (-1,.25) rectangle (.25,-.25);
	\node at (-.375,0) {\scriptsize$\JW_{n}$};
	\filldraw[white] (-1,-.75) rectangle (.25,-1.25); 
	\draw[very thick] (-1,-.75) rectangle (.25,-1.25);
	\node at (-.375,-1) {\scriptsize$\JW_{n}$};
\end{tikzpicture}
=  \frac{(-1)^{\binom{k}{2}}(n-k)!}{n!} z_n^k \JW_n
\end{equation}
\begin{equation}
	\label{eq:dAwd}
\begin{tikzpicture}[anchorbase,scale=.75]
	\draw[very thick,double] (-.625,-.75) to (-.625,.75);
	\node at (-.875,-.5) {\tiny$k$};
	\draw[very thick] (-.25,.75) to (-.25,-.25) to [out=270,in=180] (0,-.625)
		node{$\bullet$} to [out=0,in=270] (.25,-.25) to (.25,.75);
	\draw[very thick,double] (.625,-.75) to (.625,.75);
	\node at (1.375,-.5) {\tiny$n{-}k{-}2$};
	\filldraw[white] (-1,.25) rectangle (1,-.25); 
	\draw[very thick] (-1,.25) rectangle (1,-.25);
	\node at (0,0) {\scriptsize$\JW_n$};
\end{tikzpicture}
=
\begin{tikzpicture}[anchorbase,scale=.75]
	\draw[very thick,double] (-.625,-.75) to (-.625,.75);
	\node at (-1.125,-.5) {\tiny$k{+}1$};
	\draw[very thick] (-.25,.75) to (-.25,-.25) to [out=270,in=180] (0,-.625)
		node{$\bullet$} to [out=0,in=270] (.25,-.25) to (.25,.75);
	\draw[very thick,double] (.625,-.75) to (.625,.75);
	\node at (1.375,-.5) {\tiny$n{-}k{-}3$};
	\filldraw[white] (-1,.25) rectangle (1,-.25); 
	\draw[very thick] (-1,.25) rectangle (1,-.25);
	\node at (0,0) {\scriptsize$\JW_n$};
\end{tikzpicture}
\quad \text{for } 0 \leq k \leq n-3 \quad \text{(and its vertical reflection)}
\end{equation}
\begin{equation}
	\label{eq:tracedot}
\begin{tikzpicture}[anchorbase,scale=1]
	\node at (-.5,.5){\tiny$n{-}1$};
	\draw[very thick,double] (-.125,-.75) to (-.125,.75);
	\draw[very thick] (.25,.25) to [out=90,in=180] (.5,.5) to [out=0,in=90] (.75,.25)
		 to node{$\bullet$} (.75,-.25) to [out=270,in=0] (.5,-.5) 
		to [out=180,in=270] (.25,-.25);
	\filldraw[white] (-.5,.25) rectangle (.5,-.25); 
	\draw[very thick] (-.5,.25) rectangle (.5,-.25);
	\node at (0,0) {\scriptsize$\JW_{n}$};
\end{tikzpicture}
=
- \frac{n-1}{n}
\begin{tikzpicture}[anchorbase,scale=.75]
	\draw[very thick] (-.75,-1.5) to (-.75,.5);
	\node at (-.5,-1.375){\mysdots};
	\node at (-.5,-.5){\mysdots};
	\node at (-.5,.375){\mysdots};
	\draw[very thick] (-.25,-1.5) to (-.25,.5);
	\draw[very thick] (0,-1.5) to node{$\bullet$} (0,.5);
	\filldraw[white] (-1,.25) rectangle (.25,-.25); 
	\draw[very thick] (-1,.25) rectangle (.25,-.25);
	\node at (-.375,0) {\scriptsize$\JW_{n-1}$};
	\filldraw[white] (-1,-.75) rectangle (.25,-1.25); 
	\draw[very thick] (-1,-.75) rectangle (.25,-1.25);
	\node at (-.375,-1) {\scriptsize$\JW_{n-1}$};
\end{tikzpicture}
=
\frac{(-1)^{n-1}}{n} z_{n-1}\JW_{n-1}.
\end{equation}
\end{lem}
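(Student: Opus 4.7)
The plan is to derive all four relations from two ingredients: the local $\dTL$ relation in \eqref{eq:dTLrels}, which may be rewritten as $\ux_i + \ux_{i+1} = U_i D_i^{\bullet} + U_i^{\bullet} D_i$ on strands $i, i+1$ of $\gen^n$ (where $U_i, D_i$ denote the cup and cap and a bullet denotes a dot on the corresponding arc), and the defining property that $\JW_n$ kills turnbacks, namely $\JW_n U_i = 0 = D_i \JW_n$. An auxiliary computation, which I would carry out at the outset, is that the \emph{dotted circle} in $\dTL$ evaluates to zero: this follows by pre-composing the above relation with $U_1$ and using $D_1 U_1 = 2$, which yields an equation of the form $2 U_1^{\bullet} = \alpha U_1 + 2 U_1^{\bullet}$ (where $\alpha = D_1^{\bullet} U_1$ is the dotted circle value) and hence forces $\alpha = 0$.

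For (1), I sandwich $\ux_i + \ux_{i+1} = U_i D_i^{\bullet} + U_i^{\bullet} D_i$ by $\JW_n$ on both sides; each term on the right contains either a $U_i$ adjacent to the top $\JW_n$ or a $D_i$ adjacent to the bottom one, so both vanish and the result is $\JW_n(\ux_i + \ux_{i+1})\JW_n = 0$. For (2), (1) together with $\sx_i = (-1)^{i-1}\ux_i$ implies $\JW_n\sx_i\JW_n$ is independent of $i$; using $\JW_n w = \JW_n$ for $w\in \symg_n$ and the conjugation rule $w\sx_i w^{-1} = \sx_{w(i)}$ of Convention~\ref{conv:End}, I average over $\symg_n$ to get
\[
\JW_n \sx_1 \cdots \sx_k \JW_n = \JW_n \cdot \frac{1}{n!}\!\sum_{w\in \symg_n} \sx_{w(1)}\cdots \sx_{w(k)} \cdot \JW_n.
\]
Because $\sx_i^2 = 0$ restricts the average to squarefree monomials and each $k$-subset of $\{1,\ldots,n\}$ appears $(n-k)!\,k!$ times, the sum inside equals $(n-k)!\,k!\, e_k(\sx_1,\ldots,\sx_n) = (n-k)!\, z_n^k$. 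Pulling the central element $z_n^k$ through $\JW_n$ (Lemma~\ref{lemma:center of dTL}) and using $\JW_n^2 = \JW_n$ gives $\frac{(n-k)!}{n!} z_n^k \JW_n$; reverting $\sx_i$ to $\ux_i$ contributes the sign $(-1)^{\binom{k}{2}}$.

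For (3), I multiply the $\dTL$ relation on strands $i+1, i+2$ on the right by $U_i$ (the cup on strands $i, i+1$, viewed as a morphism $\gen^{n-2} \to \gen^n$). Dot-sliding along the cup arc gives $\ux_{i+1} U_i = U_i^{\bullet}$, and the commutation $\ux_{i+2} U_i = U_i \ux_i^{\mathrm{in}}$ shifts the dot onto the $i$-th input strand. On the right-hand side, the zigzag relations give $D_{i+1} U_i = \id_{\gen^{n-2}}$ and, by sliding the dot through the zigzag, $D_{i+1}^{\bullet} U_i = \ux_i^{\mathrm{in}}$. Rearranging yields $U_i^{\bullet} - U_{i+1}^{\bullet} = (U_{i+1} - U_i)\ux_i^{\mathrm{in}}$, and pre-composing with $\JW_n$ annihilates the right-hand side (since $\JW_n U_i = \JW_n U_{i+1} = 0$), giving $\JW_n U_i^{\bullet} = \JW_n U_{i+1}^{\bullet}$. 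The vertical reflection follows by the same argument applied upside-down.

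For (4), I expand $\JW_n$ via the recursion \eqref{eq:JWrecur}, with $\JW_{n-1}$ understood as $\JW_{n-1}\otimes \id$ acting trivially on strand $n$. Since $\ux_n$ commutes with $\JW_{n-1}$, taking partial trace term-by-term and pulling the commuting $\JW_{n-1}$'s outside yields
\[
\tr_n(\ux_n \JW_n) = \JW_{n-1}\tr_n(\ux_n) - \tfrac{n-1}{n}\JW_{n-1}\tr_n(\ux_n e_{n-1})\JW_{n-1}.
\]
The first trace vanishes because $\tr_n(\ux_n) = \id_{\gen^{n-1}} \cdot (\text{dotted circle}) = 0$, and the second equals $\ux_{n-1}$ by sliding the dot through the zigzag formed by tracing the cup-cap of $e_{n-1}$. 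This gives the first equality of (4). The second then follows from (1) applied $n-2$ times (to move the dot from strand $n-1$ to strand $1$ under $\JW_{n-1}$) combined with (2) with $k=1$ for $\JW_{n-1}$, which gives $\JW_{n-1}\ux_{n-1}\JW_{n-1} = \frac{(-1)^{n-2}}{n-1}z_{n-1}\JW_{n-1}$. The main subtleties throughout are sign bookkeeping from $\sx_i = (-1)^{i-1}\ux_i$ and from dot-sliding past cups and caps, and verifying the dotted-circle-vanishes computation, without which the partial trace arguments in (4) would produce an incorrect extra $\id$-term.
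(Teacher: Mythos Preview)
Your proof is correct and follows essentially the same strategy as the paper's. The underlying ingredients are identical: the local $\dTL$ relation, turnback annihilation for $\JW_n$, the Jones--Wenzl recursion \eqref{eq:JWrecur}, and the vanishing of the dotted circle. There are only minor organizational differences: for \eqref{eq:PxP} you invoke the $\dTL$ relation directly while the paper uses permutation absorption via the auxiliary variables $\sx_i$; for \eqref{eq:dAwd} you carry out the computation for general $n$ whereas the paper first reduces to $n=3$ via absorption and then performs the same manipulation. One small terminological slip: in your argument for \eqref{eq:dAwd} you say ``pre-composing with $\JW_n$'' when you mean post-composing (applying $\JW_n$ on top), since the identity you use is $\JW_n U_i = 0$; the mathematics is unaffected.
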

\begin{proof}
Throughout the proof we use the auxiliary variables $\sx_i = (-1)^{i-1}\ux_i$
from Convention~\ref{conv:End}, as well as their straightforward interaction
with permutations $w\in \mathfrak{S}_n$. In particular, we will make repeated
use of the relation
\begin{equation}\label{eq:signed x sandwiches}
\JW_n f(\sx_1,\ldots,\sx_n) \JW_n = \JW_n wf(\sx_1,\ldots,\sx_n) w\inv \JW_n = \JW_n f(\sx_{w(1)},\ldots,\sx_{w(n)}) \JW_n
\end{equation}
which holds since $\JW_n$ absorbs permutations.
Equation \eqref{eq:PxP} is now immediate.

For \eqref{eq:PxxxP}, recall that $\xs_n = \sx_1+\cdots+\sx_n$, so $\xs_n^k = k! \sum_{i_1<\cdots < i_k} \sx_{i_1}\cdots \sx_{i_k}$.  
Now, compute:
\[
z_n^k\JW_n = \JW_n z_n^k \JW_n = k!\sum_{i_1<\cdots < i_k} \JW_n \sx_{i_1}\cdots \sx_{i_k} \JW_n = k!\binom{n}{k} \JW_n \sx_{1}\cdots \sx_{k} \JW_n. 
\]
In the first equality we use that $\xs_n$ is central and that $\JW_n^2 = \JW_n$, 
and in the last equality we use \eqref{eq:signed x sandwiches}.   
Accounting for the signs gives \eqref{eq:PxxxP}, 
since $\sx_1\cdots \sx_k = (-1)^{\binom{k}{2}} \ux_1\cdots \ux_k$.

For \eqref{eq:dAwd}, it suffices to consider the $n=3$ case, 
by the $\ell=3$ case of the ``absorption'' relation:
\begin{equation}
	\label{eq:absorb}
(\id_{i-1} \otimes \JW_\ell \otimes \id_{n-i-\ell+1}) \JW_n = \JW_n = 
\JW_n (\id_{i-1} \otimes \JW_\ell \otimes \id_{n-i-\ell+1}) \, .
\end{equation}
We compute using the last relation in \eqref{eq:dTLrels} that
\[
\begin{tikzpicture}[anchorbase,scale=.75]
	\draw[very thick] (-.5,.75) to (-.5,-.25) to [out=270,in=180] (-.25,-.625)
		node{$\bullet$} to [out=0,in=270] (0,-.25) to (0,.75);
	\draw[very thick] (.5,-.75) to (.5,.75);
	\filldraw[white] (-1,.25) rectangle (1,-.25); 
	\draw[very thick] (-1,.25) rectangle (1,-.25);
	\node at (0,0) {\scriptsize$\JW_3$};
\end{tikzpicture}
=
- \,
\begin{tikzpicture}[anchorbase,scale=.75]
	\draw[very thick] (-.5,.75) to (-.5,-.25) to [out=270,in=180] (-.25,-.625)
		to [out=0,in=270] (0,-.25) to (0,.75);
	\draw[very thick] (.5,-.75) to node[pos=.175]{$\bullet$} (.5,.75);
	\filldraw[white] (-1,.25) rectangle (1,-.25); 
	\draw[very thick] (-1,.25) rectangle (1,-.25);
	\node at (0,0) {\scriptsize$\JW_3$};
\end{tikzpicture}
\, + \,
\begin{tikzpicture}[anchorbase,scale=.75,xscale=-1]
	\draw[very thick] (-.5,.75) to (-.5,-.25) to [out=270,in=180] (-.25,-.625)
		to [out=0,in=270] (0,-.25) to (0,.75);
	\draw[very thick] (.5,-.75) to node[pos=.175]{$\bullet$} (.5,.75);
	\filldraw[white] (-1,.25) rectangle (1,-.25); 
	\draw[very thick] (-1,.25) rectangle (1,-.25);
	\node at (0,0) {\scriptsize$\JW_3$};
\end{tikzpicture}
\, + \,
\begin{tikzpicture}[anchorbase,scale=.75,xscale=-1]
	\draw[very thick] (-.5,.75) to (-.5,-.25) to [out=270,in=180] (-.25,-.625)
		node{$\bullet$} to [out=0,in=270] (0,-.25) to (0,.75);
	\draw[very thick] (.5,-.75) to (.5,.75);
	\filldraw[white] (-1,.25) rectangle (1,-.25); 
	\draw[very thick] (-1,.25) rectangle (1,-.25);
	\node at (0,0) {\scriptsize$\JW_3$};
\end{tikzpicture}
=
\begin{tikzpicture}[anchorbase,scale=.75,xscale=-1]
	\draw[very thick] (-.5,.75) to (-.5,-.25) to [out=270,in=180] (-.25,-.625)
		node{$\bullet$} to [out=0,in=270] (0,-.25) to (0,.75);
	\draw[very thick] (.5,-.75) to (.5,.75);
	\filldraw[white] (-1,.25) rectangle (1,-.25); 
	\draw[very thick] (-1,.25) rectangle (1,-.25);
	\node at (0,0) {\scriptsize$\JW_3$};
\end{tikzpicture} \, .
\]
Lastly,
\[
\begin{tikzpicture}[anchorbase,scale=1]
	\node at (-.5,.5){\tiny$n{-}1$};
	\draw[very thick,double] (-.125,-.75) to (-.125,.75);
	\draw[very thick] (.25,.25) to [out=90,in=180] (.5,.5) to [out=0,in=90] (.75,.25)
		 to node{$\bullet$} (.75,-.25) to [out=270,in=0] (.5,-.5) 
		to [out=180,in=270] (.25,-.25);
	\filldraw[white] (-.5,.25) rectangle (.5,-.25); 
	\draw[very thick] (-.5,.25) rectangle (.5,-.25);
	\node at (0,0) {\scriptsize$\JW_{n}$};
\end{tikzpicture}
\stackrel{{\eqref{eq:JWrecur}}}{=}
\begin{tikzpicture}[anchorbase,scale=1]
	\draw[very thick] (-.25,-.75) to (-.25,.75);
	\node at (0,-.5){\mysdots};
	\node at (0,.5){\mysdots};
	\draw[very thick] (.25,-.75) to (.25,.75);
	\filldraw[white] (-.5,.25) rectangle (.5,-.25); 
	\draw[very thick] (-.5,.25) rectangle (.5,-.25);
	\node at (0,0) {\scriptsize$\JW_{n-1}$};
	\draw[very thick] (1,0) circle (.25);
	\node at (1.25,0) {$\bullet$};
\end{tikzpicture}
- \frac{n-1}{n}
\begin{tikzpicture}[anchorbase,scale=.75]
	\draw[very thick] (-.75,-1.75) to (-.75,.75);
	\node at (-.5,-1.5){\mysdots};
	\node at (-.5,-.5){\mysdots};
	\node at (-.5,.5){\mysdots};
	\draw[very thick] (-.25,-1.75) to (-.25,.75);
	\draw[very thick] (0,-1.75) to node{$\bullet$} (0,.75);
	\filldraw[white] (-1,.25) rectangle (.25,-.25); 
	\draw[very thick] (-1,.25) rectangle (.25,-.25);
	\node at (-.375,0) {\scriptsize$\JW_{n-1}$};
	\filldraw[white] (-1,-.75) rectangle (.25,-1.25); 
	\draw[very thick] (-1,-.75) rectangle (.25,-1.25);
	\node at (-.375,-1) {\scriptsize$\JW_{n-1}$};
\end{tikzpicture}
\]
which gives \eqref{eq:tracedot} since the relations in \eqref{eq:dTLrels}
imply that a dotted circle is zero.
\end{proof}

\subsection{The Karoubi envelope of $\dTL$}
In this section, 
we give an explicit presentation of $\Kar(\dTL)$ in terms of the objects $\JW_n$.
Recall that the Karoubi envelope of the usual Temperley-Lieb category $\TL$ is 
generated by the (images of the) Jones-Wenzl idempotents $\JW_n$.  
Further, these objects are simple in $\Kar(\TL)$, with one-dimensional endomorphism algebras, 
and no nonzero morphisms $\JW_n\rightarrow \JW_m$ for $n\neq m$.  
The situation for $\dTL$ is quite different: 
there are many interesting morphisms in $\Kar(\dTL)$ between $\JW_n$ 
for various $n$.

Motivated by Lemma \ref{lem:JWrels}, 
we now define certain morphisms in $\Kar(\dTL)$.

\begin{definition}
	\label{def:KarGens}
For each $n \geq 0$, let $U_n\colon \symobj_n\to \symobj_{n+2}$ and
$D_{n+2}\colon \symobj_{n+2}\to \symobj_{n}$ be the following morphisms in $\Kar(\dTL)$:
\[
U_n := 
\begin{tikzpicture}[anchorbase,scale=.75]
	\draw[very thick] (-.75,-1.75) to (-.75,.75);
	\node at (-.5,-1.5){\mysdots};
	\node at (-.5,-.5){\mysdots};
	\node at (-.5,.5){\mysdots};
	\draw[very thick] (-.25,-1.75) to (-.25,.75);
	\draw[very thick] (.25,.75) to (.25,-.25) to [out=270,in=180] (.5,-.625)
		node{$\bullet$} to [out=0,in=270] (.75,-.25) to (.75,.75);
	\filldraw[white] (-1,.25) rectangle (1,-.25); 
	\draw[very thick] (-1,.25) rectangle (1,-.25);
	\node at (0,0) {\scriptsize$\JW_{n+2}$};
	\filldraw[white] (-1,-.75) rectangle (0,-1.25); 
	\draw[very thick] (-1,-.75) rectangle (0,-1.25);
	\node at (-.5,-1) {\scriptsize$\JW_{n}$};
\end{tikzpicture}
\; , \quad
D_{n+2} := 
(n+2)(n+1)
\begin{tikzpicture}[anchorbase,scale=.75,yscale=-1]
	\draw[very thick] (-.75,-1.75) to (-.75,.75);
	\node at (-.5,-1.5){\mysdots};
	\node at (-.5,-.5){\mysdots};
	\node at (-.5,.5){\mysdots};
	\draw[very thick] (-.25,-1.75) to (-.25,.75);
	\draw[very thick] (.25,.75) to (.25,-.25) to [out=270,in=180] (.5,-.625)
		node{$\bullet$} to [out=0,in=270] (.75,-.25) to (.75,.75);
	\filldraw[white] (-1,.25) rectangle (1,-.25); 
	\draw[very thick] (-1,.25) rectangle (1,-.25);
	\node at (0,0) {\scriptsize$\JW_{n+2}$};
	\filldraw[white] (-1,-.75) rectangle (0,-1.25); 
	\draw[very thick] (-1,-.75) rectangle (0,-1.25);
	\node at (-.5,-1) {\scriptsize$\JW_{n}$};
\end{tikzpicture}
\, .
\]
Let $\xs_{n}\colon \symobj_n \to \symobj_{n}$ 
denote (by slight abuse of notation)
the element of $\End_{\dTL}(\JW_n)$ given by the central element 
$z$ from Lemma \ref{lemma:center of dTL}.
\end{definition}

\begin{lem}
	\label{lem:KarRels}
The morphisms in Definition \ref{def:KarGens} satisfy the following relations:
\[
\xs_{n}^{n+1}=0
\, , \quad
U_{n-2} D_{n} = - \xs_{n}^2 = D_{n}U_{n-2} 
\, , \quad 
\xs_{n} U_{n-2} = U_{n-2} \xs_{n-2}
\, , \quad 
\xs_{n} D_{n+2} = D_{n+2} \xs_{n+2}
\]
\end{lem}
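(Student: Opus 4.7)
The plan is to dispatch the four relations in order of increasing difficulty. First, the nilpotency $\xs_n^{n+1}=0$ is immediate from Remark~\ref{rmk:too many z's}, since $\xs_n$ coincides with the restriction of the central element $z_n$, which already satisfies $z_n^{n+1}=0$ in $\End_{\dTL}(\gen^n)$. The two commutation identities $\xs_n U_{n-2}=U_{n-2}\xs_{n-2}$ and $\xs_n D_{n+2}=D_{n+2}\xs_{n+2}$ are direct consequences of the naturality of $z$ established in Lemma~\ref{lemma:center of dTL}: this naturality extends to $\Kar(\dTL)$ and is then applied to the morphisms $U_{n-2}$ and $D_{n+2}$.

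For $U_{n-2}D_n = -\xs_n^2$ as an element of $\End(\JW_n)$, the strategy is to absorb the interior $\JW_{n-2}$ factors into the surrounding $\JW_n$'s via \eqref{eq:absorb}, reducing the composition to $n(n-1)\cdot \JW_n M \JW_n$, where $M$ is a local dotted cup-cap $h^{dot}_{n-1}\circ e^{dot}_{n-1}$ acting as identity on the first $n-2$ strands. Using the SBN relation \eqref{eq:dTLrels} together with $\ux_i^2=0$, one rewrites $M=\ux_{n-1}u_{n-1}\ux_{n-1}=\ux_{n-1}\ux_n$ as an endomorphism of $\gen^n$. The $k=2$ case of \eqref{eq:PxxxP}, together with the sign conversion between $\ux$ and $\sx$ from Convention~\ref{conv:End}, then computes $\JW_n\ux_{n-1}\ux_n\JW_n = -z_n^2\JW_n/(n(n-1))$, and multiplying by $n(n-1)$ yields $-\xs_n^2$ as claimed.

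For the companion identity $D_nU_{n-2} = -\xs_n^2$ (understood in the appropriate endomorphism algebra), the plan is to move the outer $\JW_{n-2}$'s past the cups and caps via tensor interchange and absorb them into the middle $\JW_n$, reducing the composition to $n(n-1)\cdot N$, where $N=(\id \otimes e^{dot}_{n-1})\JW_n(\id\otimes h^{dot}_{n-1})$ is a dotted partial closure of $\JW_n$ over strands $n-1,n$. Without dots this closure vanishes by the cup-killing property $\JW_n h_{n-1}=0$ implied by \eqref{eq:JWrecur}, so $N$ is a genuine contribution from the dots. Expanding $\JW_n$ via \eqref{eq:JWrecur} and iteratively applying the dotted single-strand trace identity \eqref{eq:tracedot} reduces $N$ to an expression proportional to $z_{n-2}^2\JW_{n-2}/(n(n-1))$, yielding the analogous conclusion with matching sign. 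This partial closure calculation is the main technical step; the other three relations are essentially immediate from naturality or from local diagrammatic manipulations.
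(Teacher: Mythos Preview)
Your proposal is correct and follows essentially the same route as the paper: nilpotency via Remark~\ref{rmk:too many z's}, the commutation relations via centrality of $z$, the identity $U_{n-2}D_n=-\xs_n^2$ by converting the dotted cup--cap to $\ux_{n-1}\ux_n$ (the paper invokes \eqref{eq:twodotswitch} directly rather than passing through $\ux_{n-1}u_{n-1}\ux_{n-1}$, but this is the same computation) and then applying \eqref{eq:PxxxP}, and $D_nU_{n-2}=-\xs_{n-2}^2$ by expanding $\JW_n$ via \eqref{eq:JWrecur} and using \eqref{eq:tracedot}. The only cosmetic difference is that the paper displays the intermediate diagrams explicitly, making the vanishing of the dotted-circle term and the two applications of \eqref{eq:tracedot} visible rather than summarized.
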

\begin{proof}
The first relation is immediate (see Remark \ref{rmk:too many z's}).
Next, we have
\[
U_{n-2}D_{n} =
n(n-1)
\begin{tikzpicture}[anchorbase,scale=.75]
	\draw[very thick] (-.75,-1.5) to (-.75,1.5);
	\node at (-.5,-1.25){\mysdots};
	\node at (-.5,0){\mysdots};
	\node at (-.5,1.25){\mysdots};
	\draw[very thick] (-.25,-1.5) to (-.25,1.5);
	\draw[very thick] (.25,1.5) to (.25,.5) to [out=270,in=180] (.5,.125)
		node{$\bullet$} to [out=0,in=270] (.75,.5) to (.75,1.5);
	\draw[very thick] (.25,-1.5) to (.25,-.5) to [out=90,in=180] (.5,-.125)
		node {$\bullet$} to [out=0,in=90] (.75,-.5) to (.75,-1.5);
	\filldraw[white] (-1,.5) rectangle (1,1); 
	\draw[very thick] (-1,.5) rectangle (1,1);
	\node at (0,.75) {\scriptsize$\JW_{n}$};
	\filldraw[white] (-1,-.5) rectangle (1,-1); 
	\draw[very thick] (-1,-.5) rectangle (1,-1);
	\node at (0,-.75) {\scriptsize$\JW_{n}$};
\end{tikzpicture}
\stackrel{{\eqref{eq:twodotswitch}}}{=}
n(n-1)
\begin{tikzpicture}[anchorbase,scale=.75]
	\draw[very thick] (-.75,-1.5) to (-.75,1.5);
	\node at (-.5,-1.25){\mysdots};
	\node at (-.5,0){\mysdots};
	\node at (-.5,1.25){\mysdots};
	\draw[very thick] (-.25,-1.5) to (-.25,1.5);
	\draw[very thick] (.25,-1.5) to node{$\bullet$} (.25,1.5);
	\draw[very thick] (.75,-1.5) to node{$\bullet$} (.75,1.5);
	\filldraw[white] (-1,.5) rectangle (1,1); 
	\draw[very thick] (-1,.5) rectangle (1,1);
	\node at (0,.75) {\scriptsize$\JW_{n}$};
	\filldraw[white] (-1,-.5) rectangle (1,-1); 
	\draw[very thick] (-1,-.5) rectangle (1,-1);
	\node at (0,-.75) {\scriptsize$\JW_{n}$};
\end{tikzpicture}
\stackrel{\eqref{eq:PxxxP}}{=}-\xs_{n}^2 \, .
\]
Next, we compute
\[
D_nU_{n-2} = 
n(n-1)
\begin{tikzpicture}[anchorbase,scale=.75]
	\draw[very thick] (-.75,-1) to (-.75,1);
	\node at (-.5,-.5){\mysdots};
	\node at (-.5,.5){\mysdots};
	\draw[very thick] (-.25,-1) to (-.25,1);
	\draw[very thick] (.25,-.25) to [out=270,in=180] (.5,-.625)
		node{$\bullet$} to [out=0,in=270] (.75,-.25)
		to (.75,.25) to [out=90,in=0] (.5,.625)
		node{$\bullet$} to [out=180,in=90] (.25,.25) to (.25,-.25);
	\filldraw[white] (-1,.25) rectangle (1,-.25); 
	\draw[very thick] (-1,.25) rectangle (1,-.25);
	\node at (0,0) {\scriptsize$\JW_{n}$};
\end{tikzpicture}
\stackrel{{\eqref{eq:JWrecur}}}{=}
n(n-1)
\begin{tikzpicture}[anchorbase,scale=.75]
	\draw[very thick] (-.75,-1) to (-.75,1);
	\node at (-.5,-.5){\mysdots};
	\node at (-.5,.5){\mysdots};
	\draw[very thick] (-.25,-1) to (-.25,1);
	\draw[very thick] (.25,-.25) to [out=270,in=180] (.5,-.625)
		node{$\bullet$} to [out=0,in=270] (.75,-.25)
		to (.75,.25) to [out=90,in=0] (.5,.625)
		node{$\bullet$} to [out=180,in=90] (.25,.25) to (.25,-.25);
	\filldraw[white] (-1,.25) rectangle (.5,-.25); 
	\draw[very thick] (-1,.25) rectangle (.5,-.25);
	\node at (-.25,0) {\scriptsize$\JW_{n-1}$};
\end{tikzpicture}
-
(n-1)^2
\begin{tikzpicture}[anchorbase,scale=.75]
	\draw[very thick] (-.75,-1.5) to (-.75,1.5);
	\node at (-.5,-1.25){\mysdots};
	\node at (-.5,0){\mysdots};
	\node at (-.5,1.25){\mysdots};
	\draw[very thick] (-.25,-1.5) to (-.25,1.5);
	\draw[very thick] (.25,.5) to [out=270,in=180] (.5,.125)
		to [out=0,in=270] (.75,.5) to node{$\bullet$} (.75,1) 
		to [out=90,in=0] (.5,1.375) to [out=180,in=90] (.25,1);
	\draw[very thick] (.25,-.5) to [out=90,in=180] (.5,-.125)
		to [out=0,in=90] (.75,-.5) to node{$\bullet$} (.75,-1)
		to [out=270,in=0] (.5,-1.375) to [out=180,in=270] (.25,-1);
	\filldraw[white] (-1,.5) rectangle (.5,1); 
	\draw[very thick] (-1,.5) rectangle (.5,1);
	\node at (-.25,.75) {\scriptsize$\JW_{n-1}$};
	\filldraw[white] (-1,-.5) rectangle (.5,-1); 
	\draw[very thick] (-1,-.5) rectangle (.5,-1);
	\node at (-.25,-.75) {\scriptsize$\JW_{n-1}$};
\end{tikzpicture}
\stackrel{{\eqref{eq:tracedot}}}{=}
- \xs_{n-2}^2 \, .
\]
Lastly, the final two relations follow from centrality of $z$.
\end{proof}

Using Lemma \ref{lem:KarRels} (and Proposition \ref{prop:PolyRep} below), 
we obtain a generators-and-relations presentation for $\Mat(\Kar(\dTL))$ as 
a $\Z$-additive $\K$-linear category.

\begin{thm}\label{thm:KardTL} The category $\Mat(\Kar(\dTL))$ is equivalent to
	the category of finitely-generated graded projective modules for the path algebra of the quiver
	(with two connected components)
	\begin{gather*}
		\begin{tikzcd}[ampersand replacement=\&,column sep=3em]  
			\mathsf{P}_0
		\ar[r,"\mathsf{U}_0",black,yshift=0.1cm,bend left=10]
		\&
		\mathsf{P}_2
		\arrow[out=120,in=60,loop,"\mathsf{z}_2"]
		\ar[r,"\mathsf{U}_2",black,yshift=0.1cm,bend left=10]
		\ar[l,"D_2",yshift=-0.1cm,bend left=10]
	   \&
	   \mathsf{P}_4
	   \arrow[out=120,in=60,loop,"\mathsf{z}_4"]
	   \ar[r,"\mathsf{U}_4",black,yshift=0.1cm,bend left=10]
	   \ar[l,"\mathsf{D}_4",yshift=-0.1cm,bend left=10]
	   \&
	   \cdots
	   \ar[l,"\mathsf{D}_6",yshift=-0.1cm,bend left=10]
		\end{tikzcd}
	\quad , \qquad
		\begin{tikzcd}[ampersand replacement=\&,column sep=3em]  
			\mathsf{P}_1
			\arrow[out=120,in=60,loop,"\mathsf{z}_1"]
			\ar[r,"\mathsf{U}_1",black,yshift=0.1cm,bend left=10]
			\&
			\mathsf{P}_3
			\arrow[out=120,in=60,loop,"\mathsf{z}_3"]
			\ar[r,"\mathsf{U}_3",black,yshift=0.1cm,bend left=10]
			\ar[l,"\mathsf{D}_3",yshift=-0.1cm,bend left=10]
		   \&
		   \mathsf{P}_5
		   \arrow[out=120,in=60,loop,"\mathsf{z}_5"]
		   \ar[r,"\mathsf{U}_5",black,yshift=0.1cm,bend left=10]
		   \ar[l,"\mathsf{D}_5",yshift=-0.1cm,bend left=10]
		   \&
		   \cdots
		   \ar[l,"\mathsf{D}_7",yshift=-0.1cm,bend left=10]
			\end{tikzcd}
		\end{gather*}
	subject to the relations in Lemma \ref{lem:KarRels}. 
\end{thm}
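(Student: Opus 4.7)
The plan is to produce an equivalence $F$ from the category of finitely-generated graded projective modules over the path algebra $A$ (with the stated relations) to $\Mat(\Kar(\dTL))$ by sending $\mathsf{P}_n \mapsto \JW_n$ and each generating arrow to its namesake morphism from Definition~\ref{def:KarGens}. Lemma~\ref{lem:KarRels} precisely records that the defining relations of $A$ are satisfied by these morphisms in $\Kar(\dTL)$, so $F$ is well-defined and graded (each arrow has degree $2$); it extends to finite biproducts, grading shifts, and summands in the evident way.

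For essential surjectivity, observe that since dots are of positive degree, the degree zero subcategory $\dTL_0$ coincides with the Temperley--Lieb category $\TL$ at circle value $2$. Hence any degree zero idempotent on $\gen^n \in \dTL$ already lives in $\TL$, and the classical Jones--Wenzl decomposition $\gen^n \cong \bigoplus_k \JW_k^{\oplus m_{n,k}}$ in $\Kar(\TL) \subset \Kar(\dTL)$ shows that every object of $\Mat(\Kar(\dTL))$ splits as a direct sum of shifts of the $\JW_k$'s, so is in the essential image of $F$.

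The bulk of the work goes into showing that $F$ is full and faithful, which amounts to verifying that the natural map
\[
e_m A e_n \to \JW_m \Hom_{\dTL}(\gen^n, \gen^m) \JW_n
\]
is an isomorphism of graded vector spaces for all $n, m \geq 0$. On the algebra side, the relations $\mathsf{U}_k \mathsf{D}_{k+2} = -\mathsf{z}_{k+2}^2$ and $\mathsf{D}_{k+2} \mathsf{U}_k = -\mathsf{z}_k^2$ collapse any oscillating path into a monotone one times a power of $\mathsf{z}$, while the commutation relations $\mathsf{z}_n \mathsf{U}_{n-2} = \mathsf{U}_{n-2} \mathsf{z}_{n-2}$ and $\mathsf{z}_n \mathsf{D}_{n+2} = \mathsf{D}_{n+2} \mathsf{z}_{n+2}$ let one funnel every remaining loop to a single endpoint; combined with $\mathsf{z}_n^{n+1} = 0$, this yields the basis $\{\mathsf{z}_m^k \mathsf{U}_{m-2} \cdots \mathsf{U}_n : 0 \leq k \leq n\}$ for $e_m A e_n$ when $n \leq m$, the analogous dual basis involving $\mathsf{D}$'s when $n > m$, and $0$ when $n, m$ have opposite parities. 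On the $\dTL$ side, the parity constraint is immediate from the through-strand count, and an analogous spanning result holds: any sandwiched dotted Temperley--Lieb diagram can be normalized by absorbing permutations into the projectors, killing undotted turnbacks outright by \eqref{eq:JWrecur}, and using \eqref{eq:PxxxP} and \eqref{eq:tracedot} to consolidate any remaining dotted turnback into a factor of $z$. Linear independence on the $\dTL$ side, and thus faithfulness of $F$ on $\Hom$-spaces, follows from the faithfulness of the polynomial representation of $\dTL$ (Proposition~\ref{prop:PolyRep} below), under which $\End_{\dTL}(\JW_n)$ acts as the full cyclic $\K[z_n]/(z_n^{n+1})$-module on $\Pol(\JW_n)$.

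The main obstacle is orchestrating the reduction on the $\dTL$ side: although each relation in Lemma~\ref{lem:JWrels} is easy to apply individually, establishing a confluent normal form for an arbitrary $\JW_m D \JW_n$ requires a careful induction on the complexity (e.g., number of through strands and dots) of the underlying dotted diagram $D$. Once the normal form is in place, matching the computed dimension $\min(n,m)+1$ with that of $e_m A e_n$ and invoking the polynomial representation pins down $F$ as the desired equivalence.
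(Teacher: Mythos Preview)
Your approach is correct and essentially the same as the paper's: define the functor on generators, deduce essential surjectivity from the fact that degree-zero idempotents live in $\TL$, then establish full faithfulness by matching spanning sets on both sides and invoking the polynomial representation for linear independence.

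The only notable difference is in how the $\dTL$-side reduction is carried out. You propose an inductive normal-form argument using \eqref{eq:PxxxP} and \eqref{eq:tracedot}; the paper instead gives a one-shot reduction using \eqref{eq:dAwd} (which lets one slide all dotted cups to a common side) followed by \eqref{eq:twodotswitch}, arriving directly at a scalar multiple of $U_{n-2}\cdots U_m z_m^k$ without any induction. This sidesteps the ``confluent normal form'' obstacle you flag. Also, a minor citation issue: the turnback-killing property of $\JW_n$ is its defining characterization (stated just after Definition~\ref{def:JW}), not \eqref{eq:JWrecur}, which is the recursion.
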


	Similar quiver-descriptions exist for other (non-semisimple) variations of
	   Temperley--Lieb categories, e.g.~in positive characteristic
	   \cite{2019arXiv190711560T} and at roots of unity \cite{sutton2021sl2}.
	   However, such quiver descriptions do not capture the monoidal structure
	   that is present in all these cases. For background on locally unital
	   algebras, such as path algebras of infinite quivers, and their modules, see \cite[\S 2.2]{BrSt}.

\begin{proof}
Let $\cal{C}$ denote the category of finitely-generated graded projective modules for the quiver
algebra described in the statement. The indecomposable objects in $\cal{C}$ are precisely the projective modules 
corresponding to the vertices $\mathsf{P}_n$, which we will denote by the same symbol. The assignments 
\[
\mathsf{P}_n \mapsto \JW_n \, , \quad
\mathsf{z}_n \mapsto \xs_{n} \, , \quad
\mathsf{U}_n \mapsto U_n \, , \quad
\mathsf{D}_n \mapsto D_n
\]
then determine a functor $\cal{C} \to \Mat(\Kar(\dTL))$, and it remains to show
that this establishes an equivalence of categories.

The subcategory of $\dTL$ consisting of morphisms of degree zero 
is precisely 
the Temperley--Lieb category $\TL$. 
By Definition \ref{def:karoubi}, 
the indecomposable objects in $\Mat(\Kar(\dTL))$ are thus the same as in $\Mat(\Kar(\TL))$, 
i.e.~they are the (images of the) Jones--Wenzl projectors and their shifts.
Hence, the functor $\cal{C} \to \Mat(\Kar(\dTL))$ is essentially surjective.

It remains to see that it is fully faithful.
For this, we assume that $m \leq n$
and give an explicit spanning set for 
$\Hom_{\Mat(\Kar(\dTL))}(\JW_m,\JW_n)$;
the $m>n$ case is analogous (but also follows from this case via duality).
By definition, this $\Hom$-space is spanned by elements $\JW_n X \JW_m$ where 
$X$ is an $(m,n)$-planar tangle, with each component possibly carrying a single dot.
If $X$ has any un-dotted caps at its bottom or un-dotted cups at its top, 
then $\JW_n X \JW_m=0$. 
Otherwise, we can use \eqref{eq:dAwd} to express
\[
\JW_n X \JW_m =\pm 
\begin{tikzpicture}[anchorbase,scale=.75]
	\draw[very thick,double] (-.25,-1.75) to 
		node[pos=.425,left=-15pt]{\tiny$\CQGbox{\ux_1 \cdots \ux_{\ell}}$}
		node[pos=.6,left=-1pt]{\scriptsize $k$} (-.25,1.75);
	\draw[very thick] (.25,1.75) to (.25,.75) to [out=270,in=180] (.5,.375)
		node{$\bullet$} to [out=0,in=270] (.75,.75);
	\draw[very thick] (.25,-1.75) to (.25,-.75) to [out=90,in=180] (.5,-.375)
		node {$\bullet$} to [out=0,in=90] (.75,-.75);
	\node at (1.75,1.5){\mydots};
	\node at (1,.5){\mysdots};
	\node at (1,-.5){\mysdots};
	\node at (1,-1.5){\mydots};
	\draw[very thick] (1.25,.75) to [out=270,in=180] (1.5,.375)
		node{$\bullet$} to [out=0,in=270] (1.75,.75); 
	\draw[very thick] (1.25,-.75) to [out=90,in=180] (1.5,-.375)
		node {$\bullet$} to [out=0,in=90] (1.75,-.75) to (1.75,-1.75);
	\filldraw[white] (-.5,-.75) rectangle (2,-1.25); 
	\draw[very thick] (-.5,-.75) rectangle (2,-1.25);
	\node at (.75,-1) {\scriptsize$\JW_{m}$};
	\draw[very thick] (2,.75) to [out=270,in=180] (2.25,.375)
		node{$\bullet$} to [out=0,in=270] (2.5,.75); 
	\node at (2.75,.5){\mysdots};
	\draw[very thick] (3,.75) to [out=270,in=180] (3.25,.375)
		node{$\bullet$} to [out=0,in=270] (3.5,.75) to (3.5,1.75);
	\filldraw[white] (-.5,.75) rectangle (3.75,1.25); 
	\draw[very thick] (-.5,.75) rectangle (3.75,1.25);
	\node at (1.75,1) {\scriptsize$\JW_{n}$};
\end{tikzpicture}
\stackrel{{\eqref{eq:twodotswitch}}}{=}\pm
\begin{tikzpicture}[anchorbase,scale=.75]
	\draw[very thick,double] (-.25,-1.75) to 
		node[pos=.425]{\tiny$\CQGbox{\ux_1 \cdots \ux_{\ell+m-k}}$}
		node[pos=.6,left=-1pt]{\scriptsize$m$} (-.25,1.75);
	\draw[very thick] (.25,1.75) to (.25,.75) to [out=270,in=180] (.5,.375)
		node{$\bullet$} to [out=0,in=270] (.75,.75);
	\node at (1,1.5){\mydots};
	\node at (1,.5){\mysdots};
	\draw[very thick] (1.25,.75) to [out=270,in=180] (1.5,.375)
		node{$\bullet$} to [out=0,in=270] (1.75,.75) to (1.75,1.75);
	\filldraw[white] (-.5,.75) rectangle (2,1.25); 
	\draw[very thick] (-.5,.75) rectangle (2,1.25);
	\node at (.75,1) {\scriptsize$\JW_{n}$};
	\filldraw[white] (-.75,-.75) rectangle (.25,-1.25); 
	\draw[very thick] (-.75,-.75) rectangle (.25,-1.25);
	\node at (-.25,-1) {\scriptsize$\JW_{m}$};
\end{tikzpicture}
=
\pm \frac{(k-\ell)!}{m!}
U_{n-2} \cdots U_m \xs_m^{m+\ell-k} \, .
\]
Here, $k$ is the number of ``through strands'' in $X$,
$\ell$ is the number of dots on those strands, and $\pm$ indicates some signs, not necessarily equal.

Thus, $\Hom_{\Kar(\dTL)}(\JW_m,\JW_n)$ is spanned by the elements $\big\{ U_{n-2}
\cdots U_m \xs_{m}^{k} \big\}_{k=0}^m$ when $m \leq n$ and by the elements
$\big\{ \xs_{n}^{k} D_{n+2} \cdots D_m \big\}_{k=0}^n$ when $m \geq n$. Hence,
the functor $\cal{C} \to \Mat(\Kar(\dTL))$ is full. In order to see that it is
faithful, it suffices to show that the above spanning sets are the images of
spanning sets of the corresponding morphism spaces in $\cal{C}$ (an easy check
using the relations on the quiver underlying $\cal{C}$) and are linearly independent. 
The latter follows
by showing that the spanning morphisms act linearly independently on the
polynomial representation of $\dTL$ considered in \S \ref{ss:PolyRep}, and we
delay this check until that section.
\end{proof}

\begin{cor}
	\label{cor:HomPP}
Let $m-n \in 2 \Z$.
There is an isomorphism of graded vector spaces
\[
\Hom_{\Kar(\dTL)}(\JW_m,\JW_n) \cong q^{|m-n|}\K[\xs]/\xs^{1+\min(m,n)}
\]
induced by composition with the maps $D_r$ and $U_r$,
which identifies $\xs$ with the endomorphism 
$\xs_{\min(m,n)} \in \End^2_{\Kar(\dTL)}(\JW_{\min(m,n)})$.
\end{cor}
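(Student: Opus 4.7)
The plan is to deduce this corollary directly from the proof of Theorem \ref{thm:KardTL}. Assuming $m \leq n$ without loss of generality, that proof already exhibits $\{U_{n-2}\cdots U_m \xs_m^k\}_{k=0}^m$ as a spanning set for $\Hom_{\Kar(\dTL)}(\JW_m,\JW_n)$, giving exactly $\min(m,n)+1$ spanning elements. I would define the candidate map
\[
\Psi\colon q^{|m-n|}\K[\xs]/\xs^{1+\min(m,n)} \to \Hom_{\Kar(\dTL)}(\JW_m,\JW_n), \qquad \xs^k \mapsto U_{n-2}\cdots U_m \xs_m^k,
\]
which is well-defined by the relation $\xs_m^{m+1}=0$ from Lemma \ref{lem:KarRels} and surjective by the spanning statement. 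The case $m \geq n$ is handled symmetrically using the spanning set $\{\xs_n^k D_{n+2}\cdots D_m\}_{k=0}^n$ and the map $\xs^k \mapsto \xs_n^k D_{n+2}\cdots D_m$.

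Next, I would verify that $\Psi$ is graded: each of $U_r$, $D_r$, and $\xs_r$ carries a single dot and hence has degree $2$. Thus $U_{n-2}\cdots U_m \xs_m^k$ has degree $(n-m) + 2k = |m-n| + 2k$, matching the degree of $\xs^k$ in the shifted polynomial algebra on the right-hand side. The identification of $\xs$ with $\xs_{\min(m,n)}$ then follows from the commutation relations $\xs_n U_{n-2} = U_{n-2}\xs_{n-2}$ and $\xs_n D_{n+2} = D_{n+2}\xs_{n+2}$ of Lemma \ref{lem:KarRels}; iterating the first gives
\[
\xs_n \cdot U_{n-2}\cdots U_m \xs_m^k = U_{n-2}\cdots U_m \xs_m^{k+1},
\]
so multiplication by $\xs$ corresponds to post-composition by $\xs_n$, and similarly to pre-composition by $\xs_m$, both of which are identified with $\xs_{\min(m,n)}$ on the relevant Jones--Wenzl summand.

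The main (and only substantive) obstacle beyond Theorem \ref{thm:KardTL} is injectivity of $\Psi$, i.e.~linear independence of the spanning set. This is precisely the step that was deferred in the proof of Theorem \ref{thm:KardTL} to the polynomial representation discussion in \S \ref{ss:PolyRep}. I would establish it by evaluating the spanning morphisms under the polynomial representation $\Pol\colon \dTL \to \Vect^\Z$: since $\Pol(\JW_r)$ admits an explicit basis (indexed by $\slnn{2}$-weights) on which the action of $\xs_r$, $U_r$, and $D_r$ is computable by concrete formulae derived from the definitions, one checks that the images of $\{U_{n-2}\cdots U_m \xs_m^k\}_{k}$ are linearly independent in $\HOM(\Pol(\JW_m), \Pol(\JW_n))$. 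This simultaneously discharges the deferred linear independence claim from Theorem \ref{thm:KardTL} and establishes that $\Psi$ is an isomorphism of graded vector spaces.
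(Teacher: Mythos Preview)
Your proposal is correct and is essentially the paper's own approach: the corollary is stated without proof in the paper, being an immediate consequence of the proof of Theorem~\ref{thm:KardTL}, which already produces the spanning sets $\{U_{n-2}\cdots U_m \xs_m^k\}_{k=0}^m$ (resp.\ $\{\xs_n^k D_{n+2}\cdots D_m\}_{k=0}^n$) and defers their linear independence to the polynomial representation in \S\ref{ss:PolyRep}. Your unpacking of the degree count, the well-definedness via $\xs_m^{m+1}=0$, and the identification of $\xs$ with $\xs_{\min(m,n)}$ through the commutation relations of Lemma~\ref{lem:KarRels} is exactly what the paper leaves implicit.
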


If $m-n$ is odd, then clearly $\Hom_{\Kar(\dTL)}(\JW_m,\JW_n)=0$ for parity reasons.

\begin{cor}\label{cor:center of dTL}
The map $\K[s,z]/(s^2=1)\rightarrow Z(\dTL)$ from Lemma \ref{lemma:center of dTL} is an isomorphism of graded algebras.
\end{cor}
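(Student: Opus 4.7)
My approach is to leverage Theorem \ref{thm:KardTL} to identify $Z(\dTL)$ with the center of the explicit path algebra, and then analyze that center degree-by-degree. First, I claim that the inclusion $\dTL \hookrightarrow \Mat(\Kar(\dTL))$ induces an isomorphism $Z(\dTL) \cong Z(\Mat(\Kar(\dTL)))$ of graded algebras: any natural transformation $\eta$ of $\id_{\dTL}$ extends uniquely via $\eta_{(X,e)} := e \circ \eta_X \circ e$ (and by block-diagonal extension through shifts and biproducts), while restriction to the generating objects $\gen^n$ recovers $\eta$. Via the quiver equivalence, a homogeneous degree-$d$ central element thus corresponds to a coherent family $(c_n)_n$ with $c_n \in \End^d_{\Kar(\dTL)}(\JW_n)$. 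By Corollary \ref{cor:HomPP}, these endomorphism algebras $\K[\xs_n]/\xs_n^{n+1}$ are concentrated in even degrees, so $Z(\dTL)^d = 0$ unless $d$ is even; assuming $d = 2k$, I write $c_n = a_n \xs_n^k$ for scalars $a_n \in \K$ (with the convention $\xs_n^k = 0$ when $k > n$).

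The next step is to impose naturality with respect to the generators $U_n$ and $D_n$ of the quiver from Theorem \ref{thm:KardTL}. Using the relation $\xs_{n+2} U_n = U_n \xs_n$ from Lemma \ref{lem:KarRels}, the requirement $c_{n+2} U_n = U_n c_n$ becomes $(a_{n+2} - a_n)\, U_n \xs_n^k = 0$, and Corollary \ref{cor:HomPP} ensures that $U_n \xs_n^k$ is nonzero in $\Hom(\JW_n, \JW_{n+2})$ whenever $k \leq n$; hence $a_{n+2} = a_n$ throughout this range. The analogous condition for $D_n$ yields a symmetric constraint that is already implied. Since the quiver of Theorem \ref{thm:KardTL} has two connected components, indexed by the parity of $n$, the coherent family $(c_n)_n$ is determined by two independent scalars $a^+, a^-$ corresponding to the even and odd components respectively, producing a two-dimensional space of degree-$2k$ central elements.

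This two-dimensional space is spanned by $(z_n^k)_n$ and $((-1)^n z_n^k)_n$, which are precisely the images of $z^k$ and $s z^k$ under the map of Lemma \ref{lemma:center of dTL} (these realize $(a^+, a^-) = (1,1)$ and $(1,-1)$, respectively), proving surjectivity in each homogeneous degree. Injectivity follows immediately: if $a z^k + b s z^k$ maps to zero, then $(a + b(-1)^n)\, z_n^k = 0$ for all $n$, and since $z_n^k$ acts as the nonzero element $\xs_n^k$ on $\JW_n$ for $n \geq k$ (by Corollary \ref{cor:HomPP}), evaluating at an even and odd such $n$ forces $a = b = 0$. The main technical subtlety is the initial identification $Z(\dTL) \cong Z(\Mat(\Kar(\dTL)))$ and the verification that naturality with respect to the finite list of quiver generators in Theorem \ref{thm:KardTL} suffices — both of which are routine consequences of the presentation established there.
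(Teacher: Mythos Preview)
Your proof is correct and follows essentially the same approach as the paper: both arguments pass to $\Kar(\dTL)$, use Corollary~\ref{cor:HomPP} to identify $\End(\JW_n)\cong\K[\xs_n]/(\xs_n^{n+1})$, and then exploit naturality with respect to the maps $U_n$ to pin down a central element by one scalar per parity class. The only difference is packaging: the paper assembles the constraints into a cofiltered limit $\lim_n \K[\xs_n]/(\xs_n^{n+1})\cong\K[z]\oplus\K[z]$ and constructs the inverse map all at once, whereas you work one homogeneous degree at a time and do a dimension count.
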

\begin{proof}
We will construct the inverse isomorphism $Z(\dTL)\rightarrow \K[s,z]/(s^2=1)$
explicitly.   Suppose we are given $f\in Z(\dTL)$.   For each $n$, let $f_n$
denote the restriction of $f$ to $\symobj_n$.  By Corollary \ref{cor:HomPP},
post-composing with the dotted cup $U_n$ gives an isomorphism of graded vector
spaces
\[
\End_{\dTL}(\symobj_n) \cong q^{-2} \Hom_{\dTL}(\symobj_n,\symobj_{n+2}) \, ,
\]
and pre-composing with $U_n$ induces a map
\[
\End_{\dTL}(\symobj_{n+2})\rightarrow q^{-2} \Hom_{\dTL}(\symobj_n,\symobj_{n+2})\cong \End_{\dTL}(\symobj_n) \, .
\]
By centrality, $f_n$ and $f_{n+2}$ must have the same image in $\End_{\dTL}(\symobj_n)$, 
so $(f_n)_{n\in \N}$ defines an element of the limit
\[
\lim_{n\in I} \End_{\dTL}(\symobj_n) \cong \lim_{n\in I} \K[\xs_n]/(\xs_n^n)
\]
where $I$ is the poset $2\N \sqcup (2\N+1)$ (disconnected since the maps go in steps of $2$).   
This limit is isomorphic to  $\K[z]\otimes \K[z]$, which in turn is isomorphic to $\K[s,z]/(s^2=1)$ via the map sending
\[
f\otimes g \mapsto \frac{1+s}{2} f+ \frac{1-s}{2}g \, .
\]
It is straightforward to check that this is an inverse to the algebra map from Lemma \ref{lemma:center of dTL}.
\end{proof}

\begin{remark}
In the proof above, we have used that the limit of the inverse (cofiltered) system
\[
\cdots \leftarrow \K[z]/(z^n) \leftarrow \K[z]/(z^{n+2})\leftarrow \cdots
\]
(in which all maps are canonical projections) is $\K[\xs]$.  We obtain the polynomial ring $\K[\xs]$ 
and not the power series ring $\K\llbracket \xs\rrbracket$,
since the limit is computed in the category of $\Z$-graded vector spaces.
\end{remark}

\subsection{The polynomial representation of $\dTL$}
	\label{ss:PolyRep}
	In \S  \ref{ss:intro representing} and 
	Remark~\ref{rem:dTLrepth} we have seen that the fiber functor from the
	Temperley--Lieb category to vector spaces 
	(namely representations of $\slnn{2}$) 
	admits an extension to the dotted Temperley--Lieb category,
	valued in graded vector spaces, 
	in which the dot acts as the Chevalley generator $E$. 
	In the present section we will give an alternative
	description of this action, the \emph{polynomial representation}, and prove
	that it is faithful.

\begin{definition}
Let $\{x_i\}_{i=1}^\infty$ be variables of degree $2$.
The \emph{polynomial representation} of $\dTL$ is the 
$\K$-linear monoidal functor 
$\Pol \colon \dTL\rightarrow \Vect^\Z$ given on objects by 
$\Pol(\gen^n) = q^{-n}\K[x_1,\ldots,x_n]/(x_i^2)$.
This functor is given on generating morphisms by:
\begin{itemize}
\item The dot $x \in \End_{\dTL}(\gen^1)$ maps to multiplication by $x$ on $\Pol(\gen)=q^{-1}\K[x]/(x^2)$.
\item The cap morphism 
$
\begin{tikzpicture}[anchorbase,scale=.75]
\draw[very thick] (0,0) to [out=90,in=180] (.25,.5) to [out=0,in=90] (.5,0);
\end{tikzpicture} 
\colon \gen^2 \to \gen^0$ maps to the degree zero linear map 
$q^{-2}\K[x_1,x_2]/(x_i^2)\rightarrow \K$ sending $x_i\mapsto 1$, 
$1\mapsto 0$, and $x_1x_2\mapsto 0$.

\item The cup morphism 
$
\begin{tikzpicture}[anchorbase,scale=.75,yscale=-1]
\draw[very thick] (0,0) to [out=90,in=180] (.25,.5) to [out=0,in=90] (.5,0);
\end{tikzpicture} 
\colon \gen^0 \to \gen^2$ maps to the degree zero linear map 
$\K\rightarrow q^{-2}\K[x_1,x_2]/(x_i^2)$ sending $1\mapsto x_1+x_2$.
\end{itemize}
\end{definition}

It is straightforward to check 
that the relations in $\dTL$ are satisfied by the above
assignments, hence $\Pol$ is well-defined.

\begin{proposition}
	\label{prop:PolyRep}
The polynomial representation $\Pol\colon \dTL\rightarrow \Vect^\Z$ is faithful (but not full).
\end{proposition}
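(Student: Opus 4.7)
The plan is to establish faithfulness by reducing to the Karoubi envelope and computing actions on a well-behaved basis of the symmetric subspaces; non-fullness will then follow from a simple dimension count.

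First, extend $\Pol$ additively to $\Pol \colon \Kar(\dTL) \to \Vect^\Z$. Since $\dTL \hookrightarrow \Kar(\dTL)$ is fully faithful and each $\gen^n$ decomposes in $\Kar(\dTL)$ as a direct sum of shifts of the Jones--Wenzl objects $\JW_k$ (via the classical Temperley--Lieb Bratteli diagram), faithfulness of $\Pol$ on $\dTL$ reduces to showing that the extension is injective on each $\Hom_{\Kar(\dTL)}(\JW_m, \JW_n)$. Assume $m \leq n$ (the other case is analogous by duality). By the proof of Theorem~\ref{thm:KardTL}, this hom space is spanned by the $m+1$ morphisms $U_{n-2} \circ \cdots \circ U_m \circ \xs_m^k$ for $k = 0, \ldots, m$.

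The plan is to evaluate $\Pol$ of each such morphism on $1 \in \Pol(\JW_m)$ and verify that the outputs are linearly independent in $\Pol(\JW_n)$. The key observation is that in the ``signed'' variables $\sx_i = (-1)^{i-1}\ux_i$ of Convention~\ref{conv:End}, the $\mathfrak{S}_n$-action on $\Pol(\gen^n) = q^{-n}\K[\sx_1,\ldots,\sx_n]/(\sx_i^2)$ coming from the Temperley--Lieb braiding \eqref{eqn:TLbraiding} reduces to genuine permutation of variables, so that $\Pol(\JW_n)$ identifies with the space of symmetric polynomials therein, with basis $\{e_k(\sx_1,\ldots,\sx_n)\}_{k=0}^n$ of elementary symmetric polynomials. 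A direct computation then shows: $\Pol(\xs_n)$ acts as multiplication by the power sum $p_1 = \sum_i \sx_i$ (since $\xs_n$ corresponds to $z_n = \sum_i\sx_i$), sending $e_k \mapsto (k+1)\,e_{k+1}$; and $\Pol(U_n)$ sends $e_k$ to a non-zero scalar multiple of $e_{k+2}(\sx_1,\ldots,\sx_{n+2})$, the scalar arising from the value $-\sx_{n+1}\sx_{n+2}$ of the dotted cup together with the symmetrization factor introduced by $\JW_{n+2}$.

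Composing these, $\Pol(U_{n-2} \circ \cdots \circ U_m \circ \xs_m^k)(1)$ is a non-zero scalar multiple of $e_{k+n-m}(\sx_1,\ldots,\sx_n) \in \Pol(\JW_n)$; as $k$ ranges over $\{0, \ldots, m\}$, the indices $k+n-m$ are distinct elements of $\{n-m, \ldots, n\}$, so these outputs are linearly independent basis vectors, establishing faithfulness. For non-fullness, Corollary~\ref{cor:HomPP} gives $\dim_\K \Hom_{\Kar(\dTL)}(\JW_m, \JW_n) = \min(m,n)+1$, so $\dim_\K \Hom_{\dTL}(\gen^m, \gen^n)$ grows polynomially in $\min(m,n)$, while $\dim_\K \Hom_\K(\Pol(\gen^m), \Pol(\gen^n)) = 2^{m+n}$ grows exponentially; already at $m=n=1$ one has $\dim \End_{\dTL}(\gen) = 2$ but $\dim \End_\K(\Pol(\gen)) = 4$. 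The main obstacle lies in the explicit computation above: the crux is recognizing that in $\sx$-variables the relevant actions become standard operations on elementary symmetric polynomials, and the chief technical nuisance is tracking signs arising from the dotted cup and from the conversions between $\ux$- and $\sx$-conventions.
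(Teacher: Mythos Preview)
Your approach is correct and takes a genuinely different route from the paper's. The paper works directly with $\Hom_{\dTL}(\gen^{m+n},\gen^0)$: it exhibits a spanning set $\BS$ of dotted cap diagrams (those with all dots adjacent to the unbounded region), constructs for each diagram a dual monomial in $\Pol(m+n)$ via a greedy algorithm, and shows the resulting pairing matrix is unitriangular with respect to a carefully chosen total order. This yields both faithfulness of $\Pol$ and, as a byproduct, an explicit diagrammatic basis of every hom-space in $\dTL$.

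Your argument instead passes to the Karoubi envelope, reducing to injectivity on $\Hom_{\Kar(\dTL)}(\JW_m,\JW_n)$, and then evaluates the spanning morphisms $U_{n-2}\cdots U_m\,\xs_m^k$ on $1\in\Pol(\JW_m)$. This is cleaner once the Jones--Wenzl decomposition is in hand, and it dovetails nicely with the paper's logic: the spanning claim you borrow from the proof of Theorem~\ref{thm:KardTL} is established there purely by diagrammatic manipulations (independent of Proposition~\ref{prop:PolyRep}), and your linear-independence computation is exactly the ``delayed check'' that the proof of Theorem~\ref{thm:KardTL} defers to \S\ref{ss:PolyRep}. So your argument simultaneously proves faithfulness and finishes Theorem~\ref{thm:KardTL}, whereas the paper's direct argument keeps these logically separate and additionally produces a concrete diagrammatic basis of $\Hom_{\dTL}(\gen^m,\gen^n)$. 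One small correction: the decomposition of $\gen^n$ into Jones--Wenzl summands involves no grading shifts, since the idempotents are degree zero.
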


For the reader's convenience we include a detailed proof, although analogous
results are well-known in the context of (nil) blob algebras. 

\begin{proof} We abbreviate $\Pol(n):=\Pol(\gen^n)$ for $n\in \N$. We have a
commutative diagram
\[
\begin{tikzcd}
	\Hom_{\dTL}(\gen^m,\gen^n) \arrow{r}{\otimes \id_n} \arrow{d} &
	 \Hom_{\dTL}(\gen^{m+n},\gen^{2n}) \arrow{r}{\mathrm{cap}\circ} \arrow{d} &
	 \Hom_{\dTL}(\gen^{m+n},\gen^0) \arrow{d} \\
	\Hom_\K\big( \Pol(m),\Pol(n) \big)  \arrow{r}{\otimes\id_{\Pol(n)}}  &
	\Hom_\K\big( \Pol(m+n),\Pol(2n) \big)  \arrow{r}{\Pol(\mathrm{cap})\circ} &
	\Hom_\K \big(\Pol(m+n),\Pol(0) \big)
	\end{tikzcd}
\]
of $\Z$-graded $\K$-vector spaces, where $\mathrm{cap}\colon
\gen^{2n}\rightarrow c^0$ denotes the nested cap morphism in $\dTL$,  the
vertical maps are applications of $\Pol$,  and we are abbreviating by writing
$\Hom_{\Vect^\Z}= \Hom_\K$.   In the bottom row we are using the fact that
$\Pol$ is monoidal, so $\Pol(m+n)\cong \Pol(m)\otimes\Pol(n)$ for all $m$.

The composition along each row is an isomorphism, so injectivity of the leftmost vertical arrow 
will follow from the injectivity of the rightmost.
To show this, consider the spanning set 
$\BS =\{d_i\}_{i\in I}$ for $\Hom_{\dTL}(\gen^{m+n},\gen^0)$, 
consisting of dotted Temperley--Lieb diagrams with the property that all of their dots 
are adjacent to the unbounded region above the diagram. 
The last relation in \eqref{eq:dTLrels} indeed shows that such diagrams span, 
since any dotted Temperley--Lieb diagram not satisfying this property can be rewritten 
as a linear combination of diagrams where each dot has either the same number or fewer 
strands separating it from the unbounded region, and at least one has strictly fewer.

We will find a set $\BS^*=\{ \d^*_i \}_{i\in I} \subset \Pol(m+n)$ of monic monomials 
and a total order on $I$, such that 
\begin{equation}
	\label{eq:UT}
d_i(\d^*_j)=\begin{cases}
	1 & \text{ if } i=j\\
	0 & \text{ if } i>j
\end{cases}
\end{equation}
This means that the pairing matrix $(d_i(\d^*_j))_{i,j\in I}$ is
unitriangular with respect to the order on $I$. 
As a consequence, these elements act linearly independently on $\Pol(m+n)$, 
i.e.~are linearly independent in $\Hom_{\Vect^\Z} \big(\Pol(m+n),\Pol(0) \big)$.
(This further shows that $\BS$ is in fact a basis for $\Hom_{\dTL}(\gen^{m+n},\gen^0)$.)

Given $d_i \in \BS$, define $\d^*_i$ as an ordered
monomial $x_{i_1}\cdots x_{i_k}$ with $i_1<\cdots < i_k$ by the following rule: 
for every cap in $d_i$ record the indices $a<b$ of its boundary points. 
Now include $x_a$ (but not $x_b$) in the monomial if there is no dot on the cap and 
include neither variable if there is a dot on the cap.
By construction, we have that $d_i(\d^*_i)=1$.

The total order on $I$ giving \eqref{eq:UT} corresponds to 
the following total order on such monomials:
we declare $i>j$ if the total degree $\d^*_i$ is greater than that of $\d^*_j$. 
We compare monomials of equal degree using an appropriate lexicographic order
determined by $x_1>x_2>\cdots$.  Precisely, $x_{i_1}\cdots x_{i_k}>x_{j_1}\cdots x_{j_k}$ if there is an index $l\geq 1$ for which $i_p=j_p$ for $1\leq p\leq l-1$ (regarded as vacuously true if $l=1$), and $i_l<j_l$.
For example,  on degree three monomials we have
\[
x_1x_2x_3 > x_1x_2x_4>x_1x_3x_4>x_2x_3x_4>\cdots.
\]

Given a monomial $\d^*_j = x_{j_1}\cdots x_{j_k}\in \BS^*$ with $k$ variables,
we can (re)construct the corresponding dotted Temperley--Lieb diagram $d_j$ as
follows. In order to evaluate non-trivially on $\d^*_j$, the diagram $d_j$ needs
precisely $k$ undotted caps and $(m+n)/2-k$ dotted caps. The positions of the
undotted caps are uniquely determined by $\d^*_j$ via the following greedy
algorithm.  We start with the variable $x_{j_k}$ of largest
index, then there must be an undotted cap connecting the boundary points $j_k$
and $j_k +1$. (The sought-after second endpoint must be $>j_k$, but, if it were
$> j_k+1$, it would require a dotted cap nested within, which is impossible in
$\BS$.) Now remove $x_{j_k}$ from the monomial and iterate until the monomial is
$1$. Finally, connect all of the remaining points with dotted caps, with none
nested in one another.

Finally, suppose that $d_i\in \BS$ is another diagram such that $d_i(\d_j^*)\neq 0$ 
(and so $d_i$ has exactly $k$ undotted caps), then we claim that $i\leq j$. 
Indeed, $d_i(\d_j^*)\neq 0$ if and only if every undotted cap in $d_i$ has one (but not both) 
of the variables corresponding to its boundary points present in $\d_j^*$. 
If all the variables correspond to the left boundary point, then $i=j$.
Otherwise there exists a smallest index $0\leq l\leq k$, such that the
variable $x_{i_{l}}$ corresponds to the right endpoint of a cap in $d_i$; 
denote the variable for the left endpoint by $j_l$. 
Then, $\d^*_i = x_{j_1}\cdots x_{j_{l-1}} x_{i_{l}} \cdots $ and 
$\d^*_j = x_{j_1}\cdots x_{j_{l-1}} x_{j_{l}} \cdots $ with $i_l>j_l$, so $i<j$.
\end{proof}

\begin{remark}
Proposition \ref{prop:PolyRep} shows that $\End_{\dTL}(\gen^n)$ 
can be viewed as the subalgebra of 
\[
\End_\K \big( \K[x_1,\ldots,x_n]/(x_i^2) \big)
\]
generated by multiplication by $x_i$ and the image of $\End_{\TL}(\gen^n)$. 
In this way, one can arrive at a definition of $\dTL$ using only (undotted) $\TL$ and its
polynomial representation.
\end{remark}

\begin{proposition}\label{prop:Pol of symobj} 
The polynomial representation extends to a functor 
$\Pol \colon \Kar(\dTL)\rightarrow \Vect^\Z$ 
(denoted by the same symbol), and we have
\[
\Pol(\symobj_n) \cong  q^{-n}\K[\xs]/(\xs^{n+1})\subset q^{-n}\K[x_1,\ldots,x_n]/(x_i^2).
\]
where $\xs = x_1-x_2+\cdots+(-1)^{n-1}x_n$.
\end{proposition}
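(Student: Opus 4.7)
The extension of $\Pol$ to $\Kar(\dTL)$ is formal: the target $\Vect^{\Z}$ is idempotent-complete, so any $\K$-linear functor from $\dTL$ extends uniquely (up to canonical isomorphism) to $\Kar(\dTL)$ by sending $(X,e)$ to the image of $\Pol(e)$. Thus the real content is computing the image of $\Pol(\JW_n)$, i.e.\ the subspace of $\mathfrak{S}_n$-invariants in $\Pol(\gen^n) = q^{-n}\K[x_1,\ldots,x_n]/(x_i^2)$, with respect to the symmetric group action induced by the crossings \eqref{eqn:TLbraiding}.

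The first step is to compute how the generating transposition $s_i = \id - \mathrm{cup}\circ\mathrm{cap}$ (acting in positions $i,i{+}1$) acts on $\Pol(\gen^n)$. Using the formulas defining $\Pol$ on cups and caps, a direct calculation shows that $s_i$ fixes the monomials $1$ and $x_ix_{i+1}$, while
\[
s_i(x_i) = -x_{i+1}, \qquad s_i(x_{i+1}) = -x_i.
\]
This is the ``signed'' permutation action, which is easily converted to the standard one via the change of variables $\tilde{x}_j := (-1)^{j-1}x_j$. A quick check gives $s_i(\tilde{x}_i) = \tilde{x}_{i+1}$ and $s_i(\tilde{x}_{i+1}) = \tilde{x}_i$, so $\mathfrak{S}_n$ acts on $\K[\tilde{x}_1,\ldots,\tilde{x}_n]/(\tilde{x}_j^2) = \K[x_1,\ldots,x_n]/(x_j^2)$ by the usual permutation of the $\tilde{x}_j$.

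The second step is to identify the invariants. Since each $\tilde{x}_j$ squares to zero, the ring of $\mathfrak{S}_n$-invariants in $\K[\tilde{x}_1,\ldots,\tilde{x}_n]/(\tilde{x}_j^2)$ is spanned by the elementary symmetric polynomials $e_0, e_1, \ldots, e_n$ in the $\tilde{x}_j$. Setting $\xs := e_1 = \tilde{x}_1+\cdots+\tilde{x}_n = x_1 - x_2 + \cdots + (-1)^{n-1}x_n$ and expanding, the only surviving multinomial terms are those with all distinct indices, so $\xs^k = k!\,e_k$ for $0\leq k \leq n$ and $\xs^{n+1}=0$. Because $\mathrm{char}(\K)=0$, this shows that $\{1,\xs,\ldots,\xs^n\}$ is a $\K$-basis for the invariants, giving
\[
\Pol(\symobj_n) = \Pol(\JW_n)\bigl(\Pol(\gen^n)\bigr) \cong q^{-n}\K[\xs]/(\xs^{n+1})
\]
as a graded subspace of $q^{-n}\K[x_1,\ldots,x_n]/(x_i^2)$, with the grading shift $q^{-n}$ inherited from $\Pol(\gen^n)$ and $|\xs|=2$. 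As a consistency check, Lemma~\ref{lemma:center of dTL} identifies $z_n = \sum_{i=1}^n (-1)^{i-1}\ux_i$, whose action on $\Pol(\gen^n)$ is multiplication by exactly this $\xs$, matching Definition~\ref{def:KarGens}.

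The only subtle point is the bookkeeping of signs when converting the TL-braiding action to the standard permutation action; once that change of variables is in hand, everything reduces to the elementary fact that the symmetric polynomials in squared-to-zero variables are generated (over a field of characteristic zero) by the power sum $e_1$.
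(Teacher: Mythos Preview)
Your proof is correct and follows essentially the same route as the paper: extend to $\Kar(\dTL)$ by idempotent-completeness of $\Vect^\Z$, compute that the Temperley--Lieb crossing acts as the signed permutation, conjugate by $x_j\mapsto(-1)^{j-1}x_j$ to obtain the standard $\mathfrak{S}_n$-action, and identify the invariants with $\K[\xs]/(\xs^{n+1})$. The only cosmetic difference is that the paper phrases the last step via power sums (higher power sums vanish since $x_i^2=0$) whereas you use elementary symmetric polynomials and $\xs^k=k!\,e_k$; both arguments are equivalent in characteristic zero.
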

\begin{proof}
The extension to $\Kar(\dTL)$ is immediate, since $\Vect^\Z$ is idempotent complete.

It remains to compute $\Pol(\JW_n)$.
Recall from \eqref{eqn:TLbraiding} that there is an algebra map
$\K[\mathfrak{S}_n] \to \End_{\dTL}(\gen^n)$
and that the Jones-Wenzl idempotent $\JW_n$ is the image of the symmetrizing
idempotent $\frac{1}{n!} \sum_{w\in \mathfrak{S}_n} w$.
On the level of the polynomial representation, the action of $\mathfrak{S}_n$ is given by
\[
\Pol(w) = \e\circ w \circ \e,
\]
where $w$ is the standard action on $\K[x_1,\ldots,x_n]/(x_i^2)$ by permuting variables, 
and $\e$ is the algebra automorphism of $\K[x_1,\ldots,x_n]/(x_i^2)$ sending $x_i
\mapsto (-1)^{i-1} x_i=\sx_i$.  In other words, the action of $\Pol(w)$ is by permuting the 
variables $\{x_1,-x_2,\ldots,\pm x_n\}$=$\{\sx_1,\sx_2,\ldots,\sx_n\}$.

It follows that the image of $\JW_n$ is the $\mathfrak{S}_n$-invariant subspace
$\big(\K[x_1,-x_2,\ldots,\pm x_n]/(x_i^2)\big)^{\mathfrak{S}_n}$. This is isomorphic to the
subalgebra generated by the first power sum symmetric polynomial
$\xs = x_1-x_2+\cdots \pm x_n$, 
since all higher-degree power sum symmetric polynomials are zero.
The relation $\xs^{n+1}=0$ follows from Remark \ref{rmk:too many z's}.
\end{proof}

We finish this section by giving two alternative interpretations of the
polynomial representation. A third will appear in Example \ref{ex:eval is pol}.

\begin{remark}\label{rem:diagrammatic poly} The polynomial representation
admits the following diagrammatic presentation, which we will revisit and extend in
\S \ref{sec:diag}.  For each $n \in \N$, 
let $\Pol'(n)$ denote the $\K$-vector
space formally spanned by diagrams of the form
\[
\begin{tikzpicture}[anchorbase, scale=1]
	\draw[kirbystyle] (-.25,0)  to (1.25,0);
	\draw[very thick] (.15,0) to (.15,.5);
	\node at (.5,.25) {$\cdots$} ;
	\draw[very thick] (.85,0) to (.85,.5);
	\draw[very thick] (0,.5) rectangle (1,1); \node at (.5,.75) {$D$};
	\draw[very thick] (.15,1) to (.15,1.5);
	\node at (.5,1.25){$\cdots$} ;
	\draw[very thick] (.85,1) to (.85,1.5);
	\end{tikzpicture}
\]
where $D$ is a diagram in $\Hom_{\dTL}(\gen^m,\gen^n)$ for some $m$,
modulo the $\dTL$ relations among black strands, together with the local relations:
\[
\begin{tikzpicture}[anchorbase, scale=1]
	\draw[very thick] (.25,1) to [out=270,in=180] (.5,.5) to[out=0,in=270] (.75,1);
	\draw[kirbystyle] (-.25,0)  to (1.25,0);
\end{tikzpicture}
=
\begin{tikzpicture}[anchorbase, scale=1]
	\draw[very thick] (.25,0) to (.25,.5) node[black]{$\bullet$} to (.25,1);
	\draw[very thick] (.75,0) to (.75,1);
	\draw[kirbystyle] (-.25,0)  to (1.25,0);
\end{tikzpicture}
+
\begin{tikzpicture}[anchorbase, scale=1]
	\draw[very thick] (.25,0) to (.25,1);
	\draw[very thick] (.75,0)  to (.75,.5) node[black]{$\bullet$}  to  (.75,1);
	\draw[kirbystyle] (-.25,0)  to (1.25,0);
\end{tikzpicture}
\qquad\qquad
\begin{tikzpicture}[anchorbase, scale=1]
	\draw[very thick] (.25,0) to [out=90,in=180] (.5,.5) to[out=0,in=90] (.75,0);
	\draw[kirbystyle] (-.25,0)  to (1.25,0);
\end{tikzpicture}
= \text{zero}
\qquad\qquad
\begin{tikzpicture}[anchorbase, scale=1]
	\draw[very thick] (.25,0) to [out=90,in=180] (.5,.5)  node[black]{$\bullet$}  to[out=0,in=90] (.75,0);
	\draw[kirbystyle] (-.25,0)  to (1.25,0);
\end{tikzpicture}
=
\begin{tikzpicture}[baseline=.75em,scale=1]
	\draw[kirbystyle] (-.25,0)  to (1.25,0);
\end{tikzpicture}.
\]
We make $\Pol'(n)$ into a $\Z$-graded vector space by declaring that the degree of a 
diagram is $2\{\#\text{dots}\}-n$.  
The assignment $n\mapsto \Pol'(n)$ extends to a functor $\dTL\rightarrow \Vect^\Z$, 
with the action of morphisms in $\dTL$ given by vertical stacking. 
It is not hard to see that $\Pol \cong \Pol'$ as functors $\dTL \to \Vect^\Z$.
\end{remark}
		
\begin{remark}
	As shown in \cite[\S  3]{QR2} in the analogous setting of
$\glnn{2}$ foams, there is an equivalence of categories $\ABN \cong \hTr(\BN)
\cong \vTr(\BN)$ where $\hTr$ and $\vTr$ denote horizontal and (an appropriate
notion of) vertical trace of a bicategory, respectively. As discussed in
\cite[\S  9]{BHLW}, this implies that $\dTL \cong \ABN$ acts naturally on
the \emph{center of objects} of the bicategory $\BN$, i.e.~on the $\Z$-graded
vector space
\[
\bigoplus_{n \in \N} \End_{\BN}(\id_n) \, .
\]
For example, to the cap morphism 
$
\begin{tikzpicture}[anchorbase,scale=.75]
\draw[very thick] (0,0) to [out=90,in=180] (.25,.5) to [out=0,in=90] (.5,0);
\end{tikzpicture} 
\colon \gen^2 \to \gen^0$
in $\dTL$ 
we can associate the following surface with boundary:
\[
\begin{tikzpicture}[anchorbase,scale=.75]
\draw (0,0) circle (1.25);
\draw (-1,-.125) to [out=180,in=270] (-1.25,0) to [out=90,in=180] (-.6,.25);
\draw (1,-.125) to [out=0,in=270] (1.25,0) to [out=90,in=0] (.6,.25);
\fill[red,opacity=.2, even odd rule] (0,0) circle[radius=1] circle[radius=1.25];
\fill[red,opacity=.2, even odd rule] (0,0) circle[radius=.625] circle[radius=1.25];
\draw[very thick] (0,0) circle (1);
\draw[very thick] (0,0) circle (.625);
\end{tikzpicture} 
\]
(a sphere with two disks removed), 
and we can ``plug in'' any endomorphism of the identity $1$-morphism $\id_2$ 
to obtain a closed (dotted) cobordism, 
i.e.~an endomorphism of the identity $1$-morphism $\id_0$.
Using the identification
\[
\End_{\BN}(\id_n) \cong \big(\K[x]/(x^2)\big)^{\otimes n} \cong \K[x_1,\ldots,x_n]/(x_i)^2
\]
one recovers the polynomial representation (up to shift).
\end{remark}

\section{Kirby color and handle slides}

In this section, we define our Kirby color and establish its handle slide invariance.

\begin{conv}\label{conv} To simplify notation in the remainder of the paper, 
we will abbreviate by writing
$\cdTL := \Ind(\Mat(\Kar(\dTL)))$ and $\cABN :=
\Ind(\Mat(\Kar(\ABN)))$\footnote{In the definition of $\cABN$ the operation
$\Mat$ can be omitted since $\ABN$ has $\Mat$ already built in and $\Kar(\ABN)$
still has finite biproducts and grading shifts.}. 
\end{conv}

\subsection{The Khovanov Kirby color}

Recall from \S  \ref{ss:Ind} that objects of $\cdTL$ are given by 
directed systems of objects in $\Mat(\Kar(\dTL))$. 
We now introduce our Kirby objects as such.

\begin{definition}
	\label{def:Kirby}
	For $k\geq 0$, the \emph{Kirby object} of \emph{winding number} $k$
	is the object $\kirby_k \in \cdTL$ given by the following directed system in $\Kar(\dTL)$:
\begin{equation}
	\label{eq:dirsys}
	 \kirby_k:=\left(
		 q^{-k}\, \symobj_{k} \xrightarrow{U_k} 
	 q^{-k-2}\, \symobj_{k+2} \xrightarrow{U_{k+2}} 
	 q^{-k-4} \symobj_{k+4}\, \xrightarrow{} \cdots \right) 
\end{equation}
where $U_{k+2i}$ is as in Definition \ref{def:KarGens}.
For $k<0$, we set $\kirby_k:=\kirby_{|k|}$.
\end{definition}

We immediately observe that the object $\kirby_k$ only depends on the parity of $k$.

\begin{lemma} 
	\label{lem:shift} 
For $k\in \Z$ there are canonical isomorphisms
$\kirby_k\cong \kirby_{k+2}$ in $\cdTL$.
\end{lemma}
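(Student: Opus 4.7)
The plan is to exhibit $\kirby_k \cong \kirby_{k+2}$ as a canonical comparison map between colimits of directed systems that differ only by precomposition with a final functor on the indexing category. This should follow formally from the cofinality criterion in Lemma~\ref{lem:final}, with no real computation required.

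Unpacking Definition~\ref{def:Kirby}, the directed system $\alpha_k \colon (\N,\leq) \to \Kar(\dTL)$ defining $\kirby_k$ is given by
\[
\alpha_k(n) = q^{-k-2n}\JW_{k+2n}, \qquad \alpha_k(n \leq n+1) = U_{k+2n},
\]
and in particular $\alpha_{k+2}(n) = \alpha_k(n+1)$. So if I let $F \colon (\N,\leq) \to (\N,\leq)$ denote the shift functor $n \mapsto n+1$, then $\alpha_{k+2} = \alpha_k \circ F$ on the nose. The structure maps $\iota_{n+1} \colon \alpha_k(n+1) \to \colim_m \alpha_k(m)$ of the colimit $\kirby_k$ thus assemble into a cocone on $\alpha_{k+2}$, inducing a canonical comparison morphism
\[
\kirby_{k+2} = \colim_n \alpha_k(F(n)) \longrightarrow \colim_m \alpha_k(m) = \kirby_k
\]
in $\cdTL$, as in \eqref{eq:comp}. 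To conclude that this map is an isomorphism it suffices, by Definition~\ref{def:final}, to verify that $F$ is final, and for this I would apply Lemma~\ref{lem:final}: the first hypothesis holds because every $j \in \N$ admits the morphism $j \leq j+1 = F(j)$, and the second hypothesis is vacuous because $(\N,\leq)$ is a poset (so parallel morphisms agree automatically).

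For $k < 0$, the isomorphism $\kirby_k \cong \kirby_{k+2}$ follows immediately from the case of $|k|$ via the convention $\kirby_k = \kirby_{|k|}$ (with a small separate check when $k=-1$, which just says $\kirby_{-1} = \kirby_1$ is isomorphic to $\kirby_1$ trivially, and when $k$ transitions through $0$, which reduces to $\kirby_0 \cong \kirby_2$ already established). There is no genuine obstacle here; the only thing worth emphasizing is that the isomorphism is \emph{canonical} precisely because it is produced by the universal property of the colimit from the structure maps of $\kirby_k$, without any auxiliary choice.
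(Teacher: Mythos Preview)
Your argument is correct and essentially identical to the paper's: both reduce to $k\geq 0$ and invoke Lemma~\ref{lem:final} to see that the shift inclusion of indexing posets is final, so the induced comparison map on colimits is an isomorphism. The only cosmetic difference is that the paper phrases the final functor as the inclusion $(k+2)+2\N \hookrightarrow k+2\N$, whereas you reindex by $\N$ and use the shift $n\mapsto n+1$.
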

\begin{proof}
Without loss of generality, we may assume that $k \geq 0$.
By Lemma \ref{lem:final}, 
the inclusion of $(k+2)+2\N \hookrightarrow k+2\N$ of filtered categories is final,
thus it induces an isomorphism $\kirby_{k+2} \xrightarrow{\cong} \kirby_k$ in $\cdTL$.
\end{proof}

The reference to winding numbers in Definition \ref{def:Kirby}
may seem obscure, given that Lemma \ref{lem:shift} shows that 
only the parity of $k$ is relevant. 
The appropriate context for this terminology is a conjectural
$\glN$-analogue of \eqref{eq:dirsys} that we discuss in 
Section \ref{ss:other+glN}.
In the present context, however, 
the following construction combines 
the distinct Kirby objects.

\begin{definition}\label{def:total kirby}
The \emph{total Kirby color} is $\kirby:=\kirby_0\oplus \kirby_1$.
\end{definition}

\begin{remark}
There are analogous definitions for Kirby colors defined as objects of a
full subcategory of the pro-completion of $\Mat(\Kar(\dTL))$. 
Our definition is chosen to be compatible with \cite{2020arXiv200908520M}.
\end{remark}

In \S  \ref{sec:handleslide} below (after reviewing further background), 
we establish the handle slide isomorphism for the total Kirby color.
Before doing so, we pause to establish an algebraic result that will be used later.
Note that $\Vect^\Z$ has grading shifts, direct sums and summands, and filtered colimits, 
so Remark \ref{rem:compact} implies that the polynomial representation from
\S \ref{ss:PolyRep} extends to a functor:
\begin{equation}
\label{eq:eval}
\Pol\colon \cdTL \rightarrow \mathrm{Vect}^\Z
\end{equation}
(denoted by the same symbol as before).
We thus evaluate this functor on the Kirby objects $\kirby_k$.

\begin{proposition}
	\label{prop:kirbyunknot} There is an isomorphism
\[
\Pol(\kirby_k) \cong \bigoplus_{i=0}^\infty q^{-2i}\K = \K\oplus q^{-2}\K\oplus q^{-4}\K\oplus \cdots
\]
of $\Z$-graded $\K$-vector spaces.
With respect to this isomorphism, the value of $\Pol$ on the canonical map 
$q^{-k}\symobj_k\rightarrow \kirby_k$ corresponds to the inclusion 
$\bigoplus_{i=0}^k q^{-2i}\K \hookrightarrow \bigoplus_{i=0}^\infty q^{-2i}\K$.
\end{proposition}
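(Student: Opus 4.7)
The plan is to compute the defining colimit of $\Pol(\kirby_k)$ directly in $\Vect^\Z$. Since $\Vect^\Z$ admits all filtered colimits, the polynomial representation extends to $\cdTL$ by sending filtered colimits to filtered colimits (see Remark~\ref{rem:compact} and \eqref{eq:eval}). Combined with Proposition~\ref{prop:Pol of symobj}, this yields
\[
\Pol(\kirby_k) \;\cong\; \colim_{i \geq 0} \Pol\bigl( q^{-k-2i}\symobj_{k+2i} \bigr)
 \;\cong\; \colim_{i\geq 0}\; q^{-2(k+2i)}\K[\xs]/(\xs^{k+2i+1})\, ,
\]
with structure maps given by the images $\Pol(U_{k+2i})$ of the morphisms from Definition~\ref{def:KarGens}.

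Next I identify each $\Pol(U_n)$. By centrality of $z$ from Lemma~\ref{lemma:center of dTL}, $\Pol(U_n)$ is $\K[\xs]$-linear with respect to the module structure of Proposition~\ref{prop:Pol of symobj}, and thus determined by $\Pol(U_n)(1)$. A direct degree count forces $\Pol(U_n)(1) = c_n \, \xs^2$ for some scalar $c_n \in \K$, so all that remains is to verify $c_n \neq 0$. I will compute $c_n$ by tracking the dotted cup---which under $\Pol$ sends $1 \mapsto x_{n+1}x_{n+2}$---through the projector $\Pol(\JW_{n+2})$: using the identity $\sum_{w\in\mathfrak{S}_{n+2}} \sx_{w(n+1)}\sx_{w(n+2)} = n! \, (\xs_{n+2})^2$ inside $\K[x_1,\ldots,x_{n+2}]/(x_i^2)$, averaging over $\mathfrak{S}_{n+2}$ yields $c_n = -\tfrac{1}{(n+1)(n+2)}$, which is nonzero. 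An alternative is to invoke $D_{n+2}\circ U_n = -\xs_n^2$ from Lemma~\ref{lem:KarRels}, which forces $c_n \neq 0$ whenever $\xs^2$ acts non-trivially on $\K[\xs]/(\xs^{n+1})$, handling the small cases $n=0,1$ directly.

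The colimit then reduces to a graded-piece-by-graded-piece analysis. In each graded degree $-2m$ with $m \geq 0$, the $i$-th term $q^{-2(k+2i)}\K[\xs]/(\xs^{k+2i+1})$ has a one-dimensional contribution (spanned by $\xs^{k+2i-m}$) for all $i$ sufficiently large, while subsequent transition maps act as multiplication by the nonzero scalars $c_{k+2i}$; in every other graded degree the $i$-th term is zero for every $i$. Hence the colimit is one-dimensional in each non-positive even degree and vanishes elsewhere, giving $\Pol(\kirby_k) \cong \bigoplus_{m \geq 0} q^{-2m}\K$. Under this identification, the canonical map $q^{-k}\symobj_k \to \kirby_k$ contributes precisely the basis vectors in degrees $0, -2, \ldots, -2k$, realizing the stated inclusion of the first $k+1$ summands. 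The one technical point of the argument is the nonvanishing of $c_n$; once that is established the colimit is routine.
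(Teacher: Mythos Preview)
Your proof is correct and follows the same approach as the paper: compute each $\Pol(q^{-k-2i}\symobj_{k+2i})$ via Proposition~\ref{prop:Pol of symobj}, verify that the transition maps $\Pol(U_n)$ are injective, and conclude that the colimit is the union. The paper simply asserts that $\Pol(U_n)$ is ``full rank'' and leaves the verification as straightforward; you have supplied that verification explicitly by computing $\Pol(U_n)(1) = -\tfrac{1}{(n+1)(n+2)}\xs^2$, which is a nice addition.
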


\begin{proof}
Proposition \ref{prop:Pol of symobj} gives that
\[
\Pol(\symobj_{k}) \cong q^{-k}\K[\xs]/(\xs^{k+1})
\]
which has graded dimension 
$\dim_q(\Pol(\symobj_{k})) = [k+1] = q^k + q^{k-2} + \cdots + q^{-k}$. 
It is straightforward to verify that
\[
q^{-k}\Pol(\symobj_{k}) \xrightarrow{\Pol(U_k)} q^{-k-2} \Pol(\symobj_{k+2}) 
\] 
is full rank, hence the colimit of the system 
$\big( \cdots \to q^{-k}\Pol(\symobj_{k}) \to q^{-k-2}\Pol(\symobj_{k+2}) \to \cdots \big)$ 
is the union of these graded vector spaces.
\end{proof}

\begin{remark}\label{rem:Verma} 
Proposition~\ref{prop:kirbyunknot} suggests a 
representation-theoretic interpretation of the Kirby objects: 
for $k \geq 0$ the graded dimension of $q^k\Pol(\kirby_k)$ coincides 
with the character of the (dual) Verma module for $\slnn{2}$ of highest weight $k$. 
Since there is a degree zero morphism $\JW_k \to q^k \kirby_k$ induced by the inclusion 
of the former in the directed system defining the latter, 
we see that $q^k\Pol(\kirby_k)$ is most-naturally viewed in this regard 
as the dual Verma module $\nabla(k)$, which has the irreducible $\slnn{2}$ representation 
of highest weight $k$ as a submodule.
In Corollary~\ref{cor:endkirby} below, 
we show that $\End_{\dTL}(\kirby_k) \cong \K[z]$ for a degree two element $z$.
As in Remark~\ref{rem:dTLrepth}, 
the action of this generator on $q^k\Pol(\kirby_k)$ 
again corresponds to the action of the Chevalley generator $E$ on $\nabla(k)$.
(Note that $E$ does not act nilpotently on $\nabla(k)$.)
\end{remark}

\subsection{The punctured Bar-Natan categories}
\label{ss:punctured BN}
In order to state (and prove) the handle slide invariance of $\kirby$, 
we now further discuss the (annular) Bar-Natan categories 
$\BN(S^1\times [0,1] ; \mathbf{p})$ from Definition \ref{def:BN}
in the case when $\mathbf{p} \neq \varnothing$.
To facilitate the description of additional algebraic structure on these categories, 
we will work with a punctured square $[0,1]^2 \smallsetminus \{ (\frac{1}{2},\frac{1}{2}) \}$ 
instead of the annulus.

\begin{definition}
For $n\in \N$ let $\mathbf{p}_n \subset (0,1)$ denote a chosen set of $n$ distinct points. 
Let 
\[
\BN_m^n = \BN \big( [0,1]^2 ; \mathbf{p}_m \times \{0\} \cup \mathbf{p}_n \times \{1\} \big) 
\]
and
\[
\PBN_m^n = \BN \big( [0,1]^2 \smallsetminus \{ {\textstyle (\frac{1}{2},\frac{1}{2} )} \} ; 
\mathbf{p}_m \times \{0\} \cup \mathbf{p}_n \times \{1\} \big)
\]
be the Bar-Natan categories of the square and punctured-square 
with marked points at the bottom and top.
\end{definition}

\begin{remark}
In \S \ref{ss:MdTL} below we will continue to draw curves in 
$[0,1]^2 \smallsetminus \{ (\frac{1}{2},\frac{1}{2}) \}$ as curves in $S^1\times [0,1]$.
The arcs of the outer boundary $S^1\times\{1\}$ 
corresponding to the top and bottom of the square $[0,1]^2$
will be clear from context (or entirely irrelevant).
\end{remark}

\newcommand{\tikzcube}[3]{
	\begin{scope}[shift={(#1,#2)}]
	\fill[#3, opacity=.2]  (0,0) to (-.5,.5) to (.5,.5) to (1,0) to (0,0);
	\draw[gray] (0,0) rectangle (1,1);
	\draw[gray] (-.5,.5) rectangle (.5,1.5);
	\draw[gray] (0,0) to (-.5,.5);
	\draw[gray] (0,1) to (-.5,1.5);
	\draw[gray] (1,0) to (.5,.5);
	\draw[gray] (1,1) to (.5,1.5);
	\end{scope}
}

We have the following structures on these categories, 
which we describe via glueing operations on the products 
of (punctured) squares with the interval $[0,1]$
(i.e.~the space within which Bar-Natan cobordisms are embedded):
\begin{enumerate}
	\item A composition functor 
	\[\star \colon  \BN_m^n\times \BN_k^m\rightarrow \BN^n_k \, , \quad 	
	\begin{tikzpicture}[anchorbase,scale=.7]
		\tikzcube{0}{0}{red}
		\node at (-.2,.65) {\tiny$n$};
		\node at (0.5,-.15) {\tiny$m$};
		\end{tikzpicture} 
		 \times
		 \begin{tikzpicture}[anchorbase,scale=.7]
			\tikzcube{0}{0}{blue}
			\node at (-.2,.65) {\tiny$m$};
			\node at (0.5,-.15) {\tiny$k$};
			\end{tikzpicture}  
			\mapsto
			\begin{tikzpicture}[anchorbase,scale=.7]
			\tikzcube{0}{0}{blue}
			\tikzcube{-.5}{.5}{red}
			\node at (-.7,1.15) {\tiny$n$};
			\node at (0.5,-.15) {\tiny$k$};
			\end{tikzpicture} \, .\]
	\item An external tensor product functor 
	\[\boxtimes \colon \BN_{m_1}^{n_1} \times \BN_{m_2}^{n_2} \rightarrow \BN_{m_1+m_2}^{n_1+n_2} \, , \quad 
	\begin{tikzpicture}[anchorbase,scale=.7]
		\tikzcube{0}{0}{red}
		\node at (-.2,.65) {\tiny$n_1$};
		\node at (0.5,-.15) {\tiny$m_1$};
		\end{tikzpicture} 
		 \times
		 \begin{tikzpicture}[anchorbase,scale=.7]
			\tikzcube{0}{0}{blue}
			\node at (-.2,.65) {\tiny$n_2$};
			\node at (0.5,-.15) {\tiny$m_2$};
			\end{tikzpicture}  
			\mapsto
			\begin{tikzpicture}[anchorbase,scale=.7]
			\tikzcube{0}{0}{blue}
			\tikzcube{-1}{0}{red}
			\node at (-.75,.65) {\tiny$n_1+n_2$};
			\node at (0,-.15) {\tiny$m_1+m_2$};
			\end{tikzpicture}  \, .
	\]
	\item A top and bottom action of $\BN$ on $\PBN$, which is to say a functor
	\[
	\BN^{n_2}_{n_1}\times \PBN^{n_1}_{m_1}\times \BN^{m_1}_{m_2} \rightarrow \PBN^{n_2}_{m_2} \, , \quad
	\begin{tikzpicture}[anchorbase,scale=.7]
		\tikzcube{0}{0}{red}
		\node at (-.2,.65) {\tiny$n_2$};
		\node at (0.5,-.15) {\tiny$n_1$};
		\end{tikzpicture} 
		\times
		\begin{tikzpicture}[anchorbase,scale=.7]
		 \tikzcube{0}{0}{gray}
		 \node at (-.2,.65) {\tiny$n_1$};
		 \node at (0.5,-.15) {\tiny$m_1$};
		 \node[gray] at (.25,.25) {$\bullet$};
		 \draw[gray,ultra thick] (.25,.25) to (.25,1.25);
		 \node[gray] at (.25,1.25) {$\bullet$};
		 \end{tikzpicture} 
			\times
		 \begin{tikzpicture}[anchorbase,scale=.7]
			\tikzcube{0}{0}{blue}
			\node at (-.2,.65) {\tiny$m_1$};
			\node at (0.5,-.15) {\tiny$m_2$};
			\end{tikzpicture}  
			\mapsto
			\begin{tikzpicture}[anchorbase,scale=.7]
			\tikzcube{-.5}{.5}{red}
			\tikzcube{0}{0}{gray}
			\tikzcube{.5}{-.5}{blue}
			\node[gray] at (.25,.25) {$\bullet$};
			\draw[gray,ultra thick] (.25,.25) to (.25,1.25);
			\node[gray] at (.25,1.25) {$\bullet$};
			\node at (-.7,1.15) {\tiny$n_2$};
			\node at (1,-.65) {\tiny$m_2$};
			\end{tikzpicture} 
	\]
	denoted $(X,M,Y)\mapsto X\star M\star Y$. 
	\item A left and right action of $\BN$ on $\PBN$,  which is to say a functor
	\[
	\BN^{n_1}_{m_1}\times \PBN^{n_2}_{m_2}\times \BN^{n_3}_{m_3} \rightarrow \PBN^{n_1+n_2+n_3}_{m_1+m_2+m_3} \, , \quad
	\begin{tikzpicture}[anchorbase,scale=.7]
		\tikzcube{0}{0}{red}
		\node at (-.2,.65) {\tiny$n_1$};
		\node at (0.5,-.15) {\tiny$m_1$};
		\end{tikzpicture} 
		\times
		\begin{tikzpicture}[anchorbase,scale=.7]
		 \tikzcube{0}{0}{gray}
		 \node at (-.2,.65) {\tiny$n_2$};
		 \node at (0.5,-.15) {\tiny$m_2$};
		 \node[gray] at (.25,.25) {$\bullet$};
		 \draw[gray,ultra thick] (.25,.25) to (.25,1.25);
		 \node[gray] at (.25,1.25) {$\bullet$};
		 \end{tikzpicture} 
			\times
		 \begin{tikzpicture}[anchorbase,scale=.7]
			\tikzcube{0}{0}{blue}
			\node at (-.2,.65) {\tiny$n_3$};
			\node at (0.5,-.15) {\tiny$m_3$};
			\end{tikzpicture}  
			\mapsto
			\begin{tikzpicture}[anchorbase,scale=.7]
			\node at (-.25,.65) {\tiny$n_1+n_2+n_3$};
			\tikzcube{1}{0}{blue}
			\tikzcube{0}{0}{gray}
			\tikzcube{-1}{0}{red}
			\node[gray] at (.25,.25) {$\bullet$};
			\draw[gray,ultra thick] (.25,.25) to (.25,1.25);
			\node[gray] at (.25,1.25) {$\bullet$};
			\node at (.5,-.15) {\tiny$m_1+m_2+m_3$};
			\end{tikzpicture} 
	\]
	denoted $(X,M,Y)\mapsto X\boxtimes M\boxtimes Y$. 
	\item An action of $\ABN$ on $\PBN$, which is to say a functor
	\[
		\ABN\times \PBN^{n}_{m} \rightarrow \PBN^{n}_{m} \, , \quad
		\begin{tikzpicture}[anchorbase,scale=.7]
			\node[gray] at (0,0) {$\bullet$};
			\fill[green, opacity=.2] (0,0) ellipse (.5 and 0.1875);
			\draw[gray] (0,0) ellipse (.5 and 0.1875);
			\draw[gray] (-.5,0) to (-.5,1);
			\draw[gray] (.5,0) to (.5,1);
			\draw[gray,ultra thick] (0,0) to (0,1);
			\node[gray] at (0,1) {$\bullet$};
			\draw[gray] (0,1) ellipse (.5 and 0.1875);
			\end{tikzpicture} 
			\times 
		\begin{tikzpicture}[anchorbase,scale=.7]
		 \tikzcube{0}{0}{gray}
		 \node at (-.2,.65) {\tiny$n$};
		 \node at (0.5,-.15) {\tiny$m$};
		 \node[gray] at (.25,.25) {$\bullet$};
		 \draw[gray,ultra thick] (.25,.25) to (.25,1.25);
		 \node[gray] at (.25,1.25) {$\bullet$};
		 \end{tikzpicture} 
			\mapsto
			\begin{tikzpicture}[anchorbase,scale=.7]
				\fill[gray, opacity=.2]   (-.125,-.125) to (-.875,.625) to (.625,.625) to (1.375,-.125) to (-.125,-.125);
				\draw[gray] (-.125,-.125) rectangle (1.375,0.875);
				\draw[gray] (-.875,.625) rectangle (.625,1.625);
				\draw[gray] (-.125,-.125) to (-.875,.625);
				\draw[gray] (-.125,.875) to (-.875,1.625);
				\draw[gray] (1.375,-.125) to (.625,.625);
				\draw[gray] (1.375,.875) to (.625,1.625);
				\node at (-.3,.775) {\tiny$n$};
				\node at (0.625,-.275) {\tiny$m$};
				\fill[green, opacity=.2] (.25,.25) ellipse (.5 and 0.1875);
				\node[gray] at (.25,.25) {$\bullet$};
				\draw[gray,ultra thick] (.25,.25) to (.25,1.25);
				\node[gray] at (.25,1.25) {$\bullet$};
				\end{tikzpicture} 
	\]
	denoted $(C,M)\mapsto C\bullet M$, defined by inserting $C$ in a neighborhood of the puncture.
	\end{enumerate}

The operations (1) and (2) make $\BN$ into a monoidal bicategory 
with 1-morphism categories $\BN_n^m$.  
The operations (3) and (4) make $\PBN$ into a bimodule over $\BN$ in two ways, 
corresponding to the two binary operations $\star$ and $\boxtimes$ on $\BN$. 
The operation (5) makes $\PBN$ into a left module over
$\ABN$ in a way which commutes with all the aforementioned structures; 
if we identify $\PBN_0^0 \cong \ABN$, then this recovers the monoidal structure $\otimes$ 
on $\ABN$ from Remark \ref{rem:ABNmonoidal}.
There are various relationships between these actions, 
all of which are apparent from the above descriptions, e.g.
\[
(X_1\boxtimes Y_1\boxtimes Z_1)\star (X_2\boxtimes M\boxtimes Z_2)\star(X_3\boxtimes Y_3\boxtimes Z_3)
\cong (X_1\star X_2\star X_3)\boxtimes (Y_1\star M\star Y_3) \boxtimes (Z_1\star Z_2\star Z_3) \, .
\]

\begin{remark}
All of the above structure persists upon proceeding to the completion $\cPBN:= \Ind(\Mat(\Kar(\PBN)))$.
\end{remark}

\subsection{The 2-point category}
	\label{ss:MdTL}

We now extend the Temperley-Lieb type presentation of $\ABN$ 
to a similar presentation of its module category $\PBN^1_1$.

\begin{definition}
	\label{def:MdTL}
Let $\tpc$ be the $\Z$-graded $\K$-linear category 
with objects pairs $(n,Z) \in \N \times \{L,R\}$ 
and generated as a module category over $\dTL$ 
(acting on the left)
by morphisms
\[
\begin{tikzpicture}[anchorbase,scale=1]
\draw[ultra thick,blue] (0,0) node[below=-2pt]{$Z$} to 
	node[black]{$\bullet$} (0,1) node[above=-2pt]{$Z$};
\end{tikzpicture}
\in \End^2\!\big( (0,Z) \big)
\; , \quad
\begin{tikzpicture}[anchorbase,xscale=-1]
\draw[very thick] (.5,0) to (0,.5);
\draw[ultra thick,blue] (0,0) node[below=-2pt]{$Y$} to (0,1) node[above=-2pt]{$Z$};
\end{tikzpicture}
\in \Hom^1\!\big( (1,Y) , (0,Z) \big)
\]
for $Y,Z \in \{L,R\}$ with $Y\neq Z$, modulo local relations. Setting
\[
\begin{tikzpicture}[anchorbase,xscale=-1]
\draw[very thick] (.5,1) to (0,.5);
\draw[ultra thick,blue] (0,0) node[below=-2pt]{$Y$} to (0,1) node[above=-2pt]{$Z$};
\end{tikzpicture}
:=
\begin{tikzpicture}[anchorbase,xscale=-1]
\draw[very thick] (0,.5) to (.125,.375) to [out=315,in=270] (.5,1);
\draw[ultra thick,blue] (0,0) node[below=-2pt]{$Y$} to (0,1) node[above=-2pt]{$Z$};
\end{tikzpicture}
\]
the relations are
\begin{equation}
	\label{eq:MdTLrels}
\begin{tikzpicture}[anchorbase,xscale=-1]
\draw[very thick] (.5,0) to (0,.375);
\draw[very thick] (0,.625) to (.5,1);
\draw[ultra thick,blue] (0,0) node[below=-2pt]{$Z$} to (0,1) node[above=-2pt]{$Z$};
\end{tikzpicture}
=
\begin{tikzpicture}[anchorbase,xscale=-1]
\draw[ultra thick,blue] (0,0) node[below=-2pt]{$Z$} to node[black]{$\bullet$} 
	(0,1) node[above=-2pt]{$Z$};
\draw[very thick] (.5,0) to (.5,1);
\end{tikzpicture}
+
\begin{tikzpicture}[anchorbase,xscale=-1]
\draw[ultra thick,blue] (0,0) node[below=-2pt]{$Z$} to (0,1) node[above=-2pt]{$Z$};
\draw[very thick] (.5,0) to node {$\bullet$} (.5,1);
\end{tikzpicture}
\: , \quad
\begin{tikzpicture}[anchorbase,xscale=-1]
\draw[very thick] (.5,0) to (0,.5);
\draw[ultra thick,blue] (0,0) node[below=-2pt]{$Y$} to
	node[pos=.25,black]{$\bullet$} (0,1) node[above=-2pt]{$Z$};
\end{tikzpicture}
=
\begin{tikzpicture}[anchorbase,xscale=-1]
\draw[very thick] (.5,0) to node{$\bullet$} (0,.5);
\draw[ultra thick,blue] (0,0) node[below=-2pt]{$Y$} to (0,1) node[above=-2pt]{$Z$};
\end{tikzpicture}
=
\begin{tikzpicture}[anchorbase,xscale=-1]
\draw[very thick] (.5,0) to (0,.5);
\draw[ultra thick,blue] (0,0) node[below=-2pt]{$Y$} to
	node[pos=.75,black]{$\bullet$} (0,1) node[above=-2pt]{$Z$};
\end{tikzpicture}
\; , \quad
\begin{tikzpicture}[anchorbase,xscale=-1]
\draw[ultra thick,blue] (0,0) node[below=-2pt]{$Z$} to node[black]{$\bullet$} 
	node[black,right=-1pt,yshift=3pt]{\scriptsize$2$} (0,1) node[above=-2pt]{$Z$};
\end{tikzpicture}
=0 \, .
\end{equation}
The module structure is given on objects by $\gen^m\otimes (n,Z) = (m+n,Z)$ 
and given on morphisms by placing dotted Temperley--Lieb diagrams on the left.
\end{definition}

\begin{thm}
	\label{thm:MdTL}
The assignments
\[
L \mapsto 
\begin{tikzpicture}[anchorbase,scale=1]
\node[gray] at (0,0) {$\bullet$};
\draw[gray] (0,0) circle (.5);
\draw[very thick] (0,.5) to [out=270,in=90] (-.25,0) to [out=270,in=90] (0,-.5);
\end{tikzpicture}
\: , \quad
R \mapsto
\begin{tikzpicture}[anchorbase,scale=1]
\node[gray] at (0,0) {$\bullet$};
\draw[gray] (0,0) circle (.5);
\draw[very thick] (0,.5) to [out=270,in=90] (.25,0) to [out=270,in=90] (0,-.5);
\end{tikzpicture}
\]
and
\[
\begin{tikzpicture}[anchorbase,xscale=-1]
\draw[ultra thick,blue] (0,0) node[below=-2pt]{$L$} to 
	node[black]{$\bullet$} (0,1) node[above=-2pt]{$L$};
\end{tikzpicture}
\mapsto
\begin{tikzpicture}[anchorbase,scale=1]
\node[gray] at (0,0) {$\bullet$};
\draw[gray] (0,0) ellipse (1 and 0.375);
\draw[gray] (-1,0) to (-1,1);
\draw[gray] (1,0) to (1,1);
\draw[very thick] (-.205,.367) to (-.205,1.367);
\draw[very thick] (-.205,.367) to [out=240,in=90] (-.5,0) to
	[out=270,in=150] (.205,-.367);
\draw[gray,ultra thick] (0,0) to (0,1);
\path[fill=red,opacity=.2] (-.205,.367) to [out=240,in=90] (-.5,0)
	to (-.5,1) to [out=90,in=240] (-.205,1.367);
\path[fill=red,opacity=.2] (-.5,0) to [out=270,in=150] (.205,-.367)
	to (.205,.633) to [out=150,in=270] (-.5,1);
\draw[very thick] (-.205,1.367) to [out=240,in=90] (-.5,1) to
	[out=270,in=150] (.205,.633);
\draw[very thick] (.205,-.367) to (.205,.633);
\node at (-.375,.5) {$\bullet$};
\node[gray] at (0,1) {$\bullet$};
\draw[gray] (0,1) ellipse (1 and 0.375);
\end{tikzpicture} 
\; , \quad
\begin{tikzpicture}[anchorbase,scale=1]
\draw[ultra thick,blue] (0,0) node[below=-2pt]{$R$} to 
	node[black]{$\bullet$} (0,1) node[above=-2pt]{$R$};
\end{tikzpicture}
\mapsto
\begin{tikzpicture}[anchorbase,scale=1]
\node[gray] at (0,0) {$\bullet$};
\draw[gray] (0,0) ellipse (1 and 0.375);
\draw[gray] (-1,0) to (-1,1);
\draw[gray] (1,0) to (1,1);
\draw[very thick] (-.205,.367) to (-.205,1.367);
\draw[very thick] (-.205,.367) to [out=300,in=90] (.5,0) to
	[out=270,in=60] (.205,-.367);
\path[fill=red,opacity=.2] (-.205,.367) to [out=300,in=90] (.5,0)
	to (.5,1) to [out=90,in=300] (-.205,1.367);
\path[fill=red,opacity=.2] (.5,0) to [out=270,in=60] (.205,-.367)
	to (.205,.633) to [out=60,in=270] (.5,1);
\draw[gray,ultra thick] (0,0) to (0,1);
\draw[very thick] (-.205,1.367) to [out=300,in=90] (.5,1) to
	[out=270,in=60] (.205,.633);
\draw[very thick] (.205,-.367) to (.205,.633);
\node at (.375,.5) {$\bullet$};
\node[gray] at (0,1) {$\bullet$};
\draw[gray] (0,1) ellipse (1 and 0.375);
\end{tikzpicture}
\; , \quad
\begin{tikzpicture}[anchorbase,xscale=-1]
\draw[very thick] (.5,0) to (0,.5);
\draw[ultra thick,blue] (0,0) node[below=-2pt]{$L$} to (0,1) node[above=-2pt]{$R$};
\end{tikzpicture}
\mapsto
\begin{tikzpicture}[anchorbase,scale=1]
\node[gray] at (0,0) {$\bullet$};
\draw[gray] (0,0) ellipse (1 and 0.375);
\draw[gray] (-1,0) to (-1,1);
\draw[gray] (1,0) to (1,1);
\draw[very thick] (-.404,.343) to (-.404,1.343);
\draw[very thick] (-.404,.343) to [out=255,in=90] (-.625,0) to
	[out=270,in=135] (.404,-.343);
\draw[very thick] (0,0) ellipse (.25 and 0.09375);
\path[fill=red,opacity=.2] (-.625,0) to [out=60,in=180] (-.375,.125)
	to [out=0,in=90] (-.25,0) arc[start angle=180, end angle=0,x radius=.25,y radius=.09375]
	to (.625,1) to [out=90,in=315] (-.404,1.343) to (-.404,.343) to [out=255,in=90] (-.625,0);
\draw[gray,ultra thick] (0,0) to (0,1);
\path[fill=red,opacity=.2] (-.625,0) to [out=60,in=180] (-.375,.125)
	to [out=0,in=90] (-.25,0) arc[start angle=180, end angle=360,x radius=.25,y radius=.09375]
	to (.625,1) to [out=270,in=75] (.404,.657) to (.404,-.343) to [out=135,in=270] (-.625,0);
\draw[very thick] (-.404,1.343) to [out=315,in=90] (.625,1) to
	[out=270,in=75] (.404,.657);
\draw[very thick] (.404,-.343) to (.404,.657);
\draw (-.55,-0.17) to [out=90,in=190] (-.4075,.1) to [out=10,in=90] (-.25,-.03); 
\draw (.25,0) to (.625,1); 
\node[gray] at (0,1) {$\bullet$};
\draw[gray] (0,1) ellipse (1 and 0.375);
\end{tikzpicture} 
\; , \quad
\begin{tikzpicture}[anchorbase,xscale=-1]
\draw[very thick] (.5,0) to (0,.5);
\draw[ultra thick,blue] (0,0) node[below=-2pt]{$R$} to (0,1) node[above=-2pt]{$L$};
\end{tikzpicture}
\mapsto
\begin{tikzpicture}[anchorbase,scale=1]
\node[gray] at (0,0) {$\bullet$};
\draw[gray] (0,0) ellipse (1 and 0.375);
\draw[gray] (-1,0) to (-1,1);
\draw[gray] (1,0) to (1,1);
\draw[very thick] (-.404,.343) to (-.404,1.343);
\draw[very thick] (-.404,.343) to [out=315,in=90] (.625,0) to
	[out=270,in=75] (.404,-.343);
\draw[very thick] (0,0) ellipse (.25 and 0.09375);
\path[fill=red,opacity=.2] (-.404,.343) to (-.404,1.343) to [out=255,in=90] (-.625,1) to
	(-.25,0) arc[start angle=180, end angle=0, x radius=.25, y radius=0.09375] to 
	[out=45,in=135] (.625,0) to [out=90,in=315] (-.404,.343);
\draw[gray,ultra thick] (0,0) to (0,1);
\path[fill=red,opacity=.2] (.404,.657) to (.404,-.343) to [out=75,in=270] 
	(.625,0) to [out=135,in=45] (.25,0) 
	arc[start angle=360, end angle=180, x radius=.25, y radius=0.09375]
	to (-.625,1) to [out=270,in=135] (.404,.657);
\draw[very thick] (-.404,1.343) to [out=255,in=90] (-.625,1) to
	[out=270,in=135] (.404,.657);
\draw[very thick] (.404,-.343) to (.404,.657);
\draw (.25,0) to [out=45,in=135] (.625,0); 
\draw (-.25,0) to (-.625,1); 
\node[gray] at (0,1) {$\bullet$};
\draw[gray] (0,1) ellipse (1 and 0.375);
\end{tikzpicture} 
\]
determine a fully faithful functor $\Phi \colon \tpc
\hookrightarrow \PBN^1_1$ of module categories, i.e.~such that the diagram
\begin{equation}
	\label{eq:modcat}
\begin{tikzcd}
	\dTL\otimes \tpc \ar[rr] \ar[d,hook] & & \tpc \ar[d,hook] \\
	\ABN\otimes \PBN^1_1  \ar[rr] & & \PBN^1_1
\end{tikzcd}
\end{equation}
commutes.
The induced functor $\Mat(\tpc) \to \PBN^1_1$ is an equivalence of categories.
\end{thm}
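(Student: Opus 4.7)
The strategy is to adapt the proof of Russell's theorem (Proposition~\ref{prop:dTLABN}) to this module-category setting. As there, the proof breaks into verifying well-definedness of $\Phi$, establishing essential surjectivity of the induced functor $\Mat(\tpc) \to \PBN^1_1$, and proving full faithfulness. The compatibility diagram \eqref{eq:modcat} is essentially immediate from the construction: the $\dTL$-action on $\tpc$ is by left stacking of planar diagrams, and under $\Phi$ this becomes the restriction to $\PBN^1_1$ of the $\ABN$-action on $\PBN$ described in \S\ref{ss:punctured BN}, composed with Russell's embedding $\dTL \hookrightarrow \ABN$.

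For well-definedness, I would check that each relation in \eqref{eq:MdTLrels} holds in $\PBN^1_1$ as a consequence of the Bar-Natan relations \eqref{eq:BNrels}. The first (``double-crossing'') relation arises from applying the neck-cutting relation to the tube formed by two copies of the puncture-crossing arc cobordism; the middle dot-sliding relations follow from planar isotopy combined with neck-cutting across small discs straddling the puncture curve, together with the sphere evaluation in \eqref{eq:BNrels}; and the final relation $\bullet^2 = 0$ is the two-dots relation in \eqref{eq:BNrels}.

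Essential surjectivity after $\Mat$ follows from isotopy together with the circle-removal isomorphism \eqref{eq:circle}. Any object of $\PBN^1_1$ is a smoothly embedded 1-manifold in the punctured square meeting the outer boundary at the two marked points. Null-homotopic circles can be removed via \eqref{eq:circle}, and the remaining configuration is, up to isotopy in the punctured square, a disjoint union of $n$ concentric essential circles together with a single arc joining the two marked points that passes on either the left or right side of the puncture; these two options realize $\Phi(n,L)$ and $\Phi(n,R)$. Fullness is then obtained by applying neck-cutting iteratively to any cobordism with corners embedded in the thickened punctured square, writing it as a $\K$-linear combination of compositions of the generating morphisms in the image of $\Phi$.

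The main obstacle is faithfulness. Here I would mimic Russell's argument of \cite{Russell}: first, use the relations in \eqref{eq:MdTLrels} together with the $\dTL$-relations to produce an explicit spanning set for each $\Hom_{\tpc}\bigl((m,Y),(n,Z)\bigr)$ in a normal form analogous to Russell's cap/cup/dot normal form (with the $L/R$ decoration tracking on which side of the puncture the single through-arc passes, and the sign normalization of Convention~\ref{conv:End} absorbing the sign ambiguity in moving dots across the puncture); second, show that these spanning morphisms map to linearly independent cobordisms in $\PBN^1_1$. For the latter, the natural approach is to extend the polynomial representation $\Pol$ of \S\ref{ss:PolyRep} to a $\dTL$-linear functor on $\tpc$ (and compatibly on $\PBN^1_1$ via Khovanov homology of a chosen capping tangle in the disk, in the spirit of Theorem~\hyperlink{introthm:cabling}{B}) and verify that the proposed normal-form basis acts on a suitable graded vector space by operators that are unitriangular with respect to a filtration by the number of dots. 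This reduces the independence question to a bookkeeping argument matching the $\Hom$-space dimensions on both sides, analogous to the proof of Proposition~\ref{prop:PolyRep}.
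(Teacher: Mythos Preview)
Your outline for well-definedness, the module-category compatibility \eqref{eq:modcat}, and essential surjectivity after $\Mat$ matches the paper's argument essentially verbatim. The divergence is in how full faithfulness is established.

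The paper does \emph{not} build an extension of the polynomial representation to $\tpc$ and rerun a unitriangularity argument. Instead, it reduces directly to Russell's theorem via a topological ``sliding'' trick. Concretely: a morphism in $\Hom_{\PBN^1_1}\bigl(\Phi(m,Z),\Phi(n,Z')\bigr)$ is a (dotted) cobordism in the thickened punctured square, i.e.\ a surface in the solid torus whose boundary consists of $m+n$ longitudes together with one arc-pair coming from the boundary strands. Sliding the bottom boundary arc and the two vertical segments up to the top converts that arc-pair into either a null-homotopic circle (when $Z=Z'$) or an additional longitude (when $Z\neq Z'$). This identifies the $\PBN^1_1$ Hom-space, up to a $q$-shift, with a Hom-space in $\ABN$ whose dimension is already known from Proposition~\ref{prop:dTLABN}. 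The same spanning sets you describe are then shown to map \emph{onto} these known spaces, so the dimension inequalities \eqref{eq:ZZdim}, \eqref{eq:YZdim} are forced to be equalities, and $\Phi$ is fully faithful.

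Your route would also need to produce those spanning sets, but then asks for new work to prove their images are linearly independent. One caution: a ``capping tangle in the disk'' literally fills the puncture, collapsing $L$ and $R$ to isomorphic objects and losing exactly the information you need to separate morphisms; you would have to use at least two different cappings (or an annular evaluation) to recover enough invariants. This is fixable, but it is strictly more work than the paper's reduction, which leverages the already-proved faithfulness of $\dTL\hookrightarrow\ABN$ rather than reproving an analogue from scratch.
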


In the dimensionally-reduced calculus of $\tpc$, the left side of diagrams
corresponds to the inside of the annulus (close to the puncture) and the right side to the outside boundary. 

\begin{proof}
It is straightforward to check that the images of the relations in \eqref{eq:MdTLrels} 
are satisfied in $\PBN^1_1$, e.g.~the image of the first relation holds by neck cutting. 
Hence, they determine a functor that satisfies \eqref{eq:modcat} by definition.

In order to show that this functor is fully faithful, 
we will establish isomorphisms between $\Hom$-spaces in $\tpc$ and $\PBN^1_1$ 
and certain $\Hom$-spaces in $\dTL$ and $\ABN$.
First, note that the relations in \eqref{eq:MdTLrels} imply that spanning sets for 
$\Hom_{\tpc} \big( (m,Z) , (n,Z) \big)$ and 
$\Hom_{\tpc}\big( (m,Y) , (n,Z) \big)$ for $Y \neq Z$ are given by
\[
\left\{
\begin{tikzpicture}[anchorbase,xscale=-.75,yscale=.75]
\draw[ultra thick,blue] (-.75,-.75) node[below=-2pt]{$Z$} to node[black]{$\bullet$} 
	node[black,right=-1pt]{\scriptsize $i$} (-.75,.75) node[above=-2pt]{$Z$};
\draw[very thick] (-.25,-.75) to (-.25,.75);
\node at (0,-.5){\mysdots};
\node at (0,.5){\mysdots};
\draw[very thick] (.25,-.75) to (.25,.75);
\filldraw[white] (-.5,.375) rectangle (.5,-.375); 
\draw[very thick] (-.5,.375) rectangle (.5,-.375);
\node at (0,0) {$D$};
\end{tikzpicture}
\Bigm|
D \in \Hom_{\dTL}(\gen^m,\gen^n)
\, , \; i=0,1
\right\}
\quad
\text{and}
\quad
\left\{
\begin{tikzpicture}[anchorbase,xscale=-.75,yscale=.75]
\draw[ultra thick,blue] (-1,-1) node[below=-2pt]{$Y$} to (-1,1) node[above=-2pt]{$Z$};
\draw[very thick] (-.25,-1) to (-.25,1);
\node at (.125,-.875){\mysdots};
\node at (.125,.5){\mysdots};
\draw[very thick] (.5,-1) to (.5,1);
\draw[very thick] (-.5,.125) to (-1,.5);
\filldraw[white] (-.75,.125) rectangle (.75,-.625); 
\draw[very thick] (-.75,.125) rectangle (.75,-.625);
\node at (0,-.25) {$D$};
\end{tikzpicture}
\Bigm|
D \in \Hom_{\dTL}(\gen^m,\gen^{n+1})
\right\}
\, .
\]
Thus,
\begin{equation}
	\label{eq:ZZdim}
\dim \Big( \Hom_{\tpc} \big( (m,Z) , (n,Z) \big) \Big) \leq 
2 \dim\Big( \Hom_{\dTL}(\gen^m,\gen^n) \Big)
\end{equation}
and
\begin{equation}
	\label{eq:YZdim}
\dim\Big( \Hom_{\tpc} \big( (m,Y) , (n,Z) \big) \Big) \leq 
\dim\Big( \Hom_{\dTL}(\gen^m,\gen^{n+1}) \Big) \, .
\end{equation}

We now restrict to the case when $Z=L$, since the $Z=R$ case is analogous.
Consider the composition of homogeneous linear maps
\begin{equation}
	\label{eq:ZZiso}
\begin{aligned}
\Hom_{\tpc} \big( (m,L) , (n,L) \big) \to&
\Hom_{\PBN^1_1}\Big(
\begin{tikzpicture}[anchorbase,scale=1]
\node[gray] at (0,0) {$\bullet$};
\draw[gray] (0,0) circle (.5);
\draw[very thick] (0,.5) to [out=270,in=90] (-.375,0) to [out=270,in=90] (0,-.5);
\draw[double, very thick] (0,0) circle (.1875);
\node at (.25,.25) {\tiny$m$};
\end{tikzpicture} 
\, , \, 
\begin{tikzpicture}[anchorbase,scale=1]
\node[gray] at (0,0) {$\bullet$};
\draw[gray] (0,0) circle (.5);
\draw[very thick] (0,.5) to [out=270,in=90] (-.375,0) to [out=270,in=90] (0,-.5);
\draw[double, very thick] (0,0) circle (.1875);
\node at (.25,.25) {\tiny$n$};
\end{tikzpicture} 
\Big)
\cong
q \Hom_{\ABN}\Big(
\begin{tikzpicture}[anchorbase,scale=1.25]
\node[gray] at (0,0) {$\bullet$};
\draw[gray] (0,0) circle (.5);
\draw[double, very thick] (0,0) circle (.1875);
\node at (.25,.25) {\tiny$m$};
\end{tikzpicture} 
\, , \, 
\begin{tikzpicture}[anchorbase,scale=1.25]
\node[gray] at (0,0) {$\bullet$};
\draw[gray] (0,0) circle (.5);
\draw[very thick] (-.375,0) circle (.0625); 
\draw[double, very thick] (0,0) circle (.1875);
\node at (.25,.25) {\tiny$n$};
\end{tikzpicture} 
\Big) \\
&\cong 
q \Hom_{\ABN}\Big(
\begin{tikzpicture}[anchorbase,scale=1.25]
\node[gray] at (0,0) {$\bullet$};
\draw[gray] (0,0) circle (.5);
\draw[double, very thick] (0,0) circle (.1875);
\node at (.25,.25) {\tiny$m$};
\end{tikzpicture} 
\, , \, 
q^{-1} \,
\begin{tikzpicture}[anchorbase,scale=1.25]
\node[gray] at (0,0) {$\bullet$};
\draw[gray] (0,0) circle (.5);
\draw[double, very thick] (0,0) circle (.1875);
\node at (.25,.25) {\tiny$n$};
\end{tikzpicture} 
\oplus
q \,
\begin{tikzpicture}[anchorbase,scale=1.25]
\node[gray] at (0,0) {$\bullet$};
\draw[gray] (0,0) circle (.5);
\draw[double, very thick] (0,0) circle (.1875);
\node at (.25,.25) {\tiny$n$};
\end{tikzpicture}
\Big) \\
&\cong 
\Hom_{\dTL}(\gen^m,\gen^n) \oplus q^2 \Hom_{\dTL}(\gen^m,\gen^n) \, .
\end{aligned}
\end{equation}
Here, the first isomorphism exists because both $\Hom$-spaces describe the
``Bar-Natan skein module'' spanned by 
(dotted) surfaces in the solid torus with boundary consisting of
$m+n$ longitudes and one null-homotopic circle. (Informally, the isomorphism can
be implemented by ``sliding'' the union of the strand at the bottom (i.e.~source
side) of the cobordism that meets the boundary and the vertical segments to the
top (i.e.~target side) of the cobordism.) Since the composition of these maps
sends
\[
\begin{tikzpicture}[anchorbase,xscale=-.75,yscale=.75]
\draw[ultra thick,blue] (-.75,-.75) node[below=-2pt]{$L$} to  
	(-.75,.75) node[above=-2pt]{$L$};
\draw[very thick] (-.25,-.75) to (-.25,.75);
\node at (0,-.5){\mysdots};
\node at (0,.5){\mysdots};
\draw[very thick] (.25,-.75) to (.25,.75);
\filldraw[white] (-.5,.375) rectangle (.5,-.375); 
\draw[very thick] (-.5,.375) rectangle (.5,-.375);
\node at (0,0) {$D$};
\end{tikzpicture}
\mapsto
\left(0,
\begin{tikzpicture}[anchorbase,scale=.75]
\draw[very thick] (-.25,-.75) to (-.25,.75);
\node at (0,-.5){\mysdots};
\node at (0,.5){\mysdots};
\draw[very thick] (.25,-.75) to (.25,.75);
\filldraw[white] (-.5,.375) rectangle (.5,-.375); 
\draw[very thick] (-.5,.375) rectangle (.5,-.375);
\node at (0,0) {$D$};
\end{tikzpicture} \,
\right)
\; , \quad
\begin{tikzpicture}[anchorbase,xscale=-.75,yscale=.75]
\draw[ultra thick,blue] (-.75,-.75) node[below=-2pt]{$L$} to node[black]{$\bullet$} 
	(-.75,.75) node[above=-2pt]{$L$};
\draw[very thick] (-.25,-.75) to (-.25,.75);
\node at (0,-.5){\mysdots};
\node at (0,.5){\mysdots};
\draw[very thick] (.25,-.75) to (.25,.75);
\filldraw[white] (-.5,.375) rectangle (.5,-.375); 
\draw[very thick] (-.5,.375) rectangle (.5,-.375);
\node at (0,0) {$D$};
\end{tikzpicture}
\mapsto
\left(
\begin{tikzpicture}[anchorbase,scale=.75]
\draw[very thick] (-.25,-.75) to (-.25,.75);
\node at (0,-.5){\mysdots};
\node at (0,.5){\mysdots};
\draw[very thick] (.25,-.75) to (.25,.75);
\filldraw[white] (-.5,.375) rectangle (.5,-.375); 
\draw[very thick] (-.5,.375) rectangle (.5,-.375);
\node at (0,0) {$D$};
\end{tikzpicture} \,
,0
\right)
\]
the first map in \eqref{eq:ZZiso} is surjective, 
thus an isomorphism by \eqref{eq:ZZdim}.

Similarly, the composition
\begin{equation}
	\label{eq:YZiso}
\begin{aligned}
\Hom_{\tpc} \big( (m,R) , (n,L) \big) \to
\Hom_{\PBN^1_1}\Big(
\begin{tikzpicture}[anchorbase,scale=1,rotate=180]
\node[gray] at (0,0) {$\bullet$};
\draw[gray] (0,0) circle (.5);
\draw[very thick] (0,.5) to [out=270,in=90] (-.375,0) to [out=270,in=90] (0,-.5);
\draw[double, very thick] (0,0) circle (.1875);
\node at (.25,-.25) {\tiny$m$};
\end{tikzpicture} 
\, , \, 
\begin{tikzpicture}[anchorbase,scale=1]
\node[gray] at (0,0) {$\bullet$};
\draw[gray] (0,0) circle (.5);
\draw[very thick] (0,.5) to [out=270,in=90] (-.375,0) to [out=270,in=90] (0,-.5);
\draw[double, very thick] (0,0) circle (.1875);
\node at (.25,.25) {\tiny$n$};
\end{tikzpicture} 
\Big)
&\cong
q \Hom_{\ABN}\Big(
\begin{tikzpicture}[anchorbase,scale=1]
\node[gray] at (0,0) {$\bullet$};
\draw[gray] (0,0) circle (.5);
\draw[double, very thick] (0,0) circle (.1875);
\node at (.25,.25) {\tiny$m$};
\end{tikzpicture} 
\, , \, 
\begin{tikzpicture}[anchorbase,scale=1]
\node[gray] at (0,0) {$\bullet$};
\draw[gray] (0,0) circle (.5);
\draw[double, very thick] (0,0) circle (.1875);
\node at (0,.375) {\tiny$n{+}1$};
\end{tikzpicture} 
\Big) \\
&\cong 
q\Hom_{\dTL}(\gen^m,\gen^{n+1})
\end{aligned}
\end{equation}
(wherein the first isomorphism can be seen as before 
by ``sliding'' the bottom and vertical segments in a cobordism to the top)
sends 
\[
\begin{tikzpicture}[anchorbase,xscale=-.75,yscale=.75]
\draw[ultra thick,blue] (-1,-1) node[below=-2pt]{$R$} to (-1,1) node[above=-2pt]{$L$};
\draw[very thick] (-.25,-1) to (-.25,1);
\node at (.125,-.875){\mysdots};
\node at (.125,.5){\mysdots};
\draw[very thick] (.5,-1) to (.5,1);
\draw[very thick] (-.5,.125) to (-1,.5);
\filldraw[white] (-.75,.125) rectangle (.75,-.625); 
\draw[very thick] (-.75,.125) rectangle (.75,-.625);
\node at (0,-.25) {$D$};
\end{tikzpicture}
\mapsto 
\begin{tikzpicture}[anchorbase,scale=.75]
\draw[very thick] (-.25,-.75) to (-.25,.75);
\node at (0,-.5){\mysdots};
\node at (0,.5){\mysdots};
\draw[very thick] (.25,-.75) to (.25,.75);
\filldraw[white] (-.5,.375) rectangle (.5,-.375); 
\draw[very thick] (-.5,.375) rectangle (.5,-.375);
\node at (0,0) {$D$};
\end{tikzpicture} \, .
\]
Therefore, the first map in \eqref{eq:YZdim} is surjective, 
hence an isomorphism by \eqref{eq:YZdim}. 
Thus, $\Phi \colon \tpc \hookrightarrow \PBN^1_1$ is fully faithful.

Finally, the induced functor $\Phi \colon \Mat(\tpc) \hookrightarrow \PBN^1_1$
is essentially surjective by the circle removal isomorphism \eqref{eq:circle}.
\end{proof}

\subsection{Invariance under handle slide}
\label{sec:handleslide}
We are now ready to establish an isomorphism 
\[
\begin{tikzpicture}[anchorbase,scale=.75,smallnodes]
	\node[gray] at (0,0) {$\bullet$};
	\draw[gray] (0,0) circle (2);
	\draw[very thick] (0,2) to [out=270,in=90] (-1.5,0) to [out=270,in=90] (0,-2);
	\draw[very thick, double] (0,0) circle (.75);
	\filldraw[white] (.375,.25) rectangle (1.125,-.25); 
	\draw[very thick] (.375,.25) rectangle (1.125,-.25);
	\node at (.75,0) {\scriptsize $\kirby_k$};
	\end{tikzpicture}
	\cong
	\begin{tikzpicture}[anchorbase,rotate=180,scale=.75,smallnodes]
	\node[gray] at (0,0) {$\bullet$};
	\draw[gray] (0,0) circle (2);
	\draw[very thick] (0,2) to [out=270,in=90] (-1.5,0) to [out=270,in=90] (0,-2);
	\draw[very thick, double] (0,0) circle (.75);
	\filldraw[white] (-.25,.25) rectangle (-1.25,-.25); 
	\draw[very thick] (-.25,.25) rectangle (-1.25,-.25);
	\node at (-.75,0) {$\kirby_{k+1}$};
	\end{tikzpicture}
\]
of directed systems in $\cPBN^1_1$, which we describe 
in the equivalent category 
\[
\ctpc := \Ind(\Mat(\Kar(\tpc))) \, .
\]
For notational convenience, we will use the involution $\tau\colon \tpc\rightarrow \tpc$ 
sending $L \mapsto R$ and $\tau(X\bullet Z)=X\bullet \tau(Z)$ for all $X\in \dTL$. 

\begin{lem}[Elementary Handle Slide] 
	\label{lem:handleslide} 
Let $Z\in \{L,R\}$ and $k \in \Z$. There is an isomorphism
\[
\kirby_k \bullet Z \cong  \kirby_{k+1}\bullet \tau(Z) \,.
\]
\end{lem}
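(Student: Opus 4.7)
The plan is to work in the equivalent diagrammatic category $\ctpc$ afforded by Theorem~\ref{thm:MdTL} and construct the isomorphism by interleaving the two directed systems defining $\kirby_k\bullet Z$ and $\kirby_{k+1}\bullet \tau(Z)$ into a single zig-zag system; a cofinality argument will then give the isomorphism of colimits.

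For each $i \ge 0$ I will produce degree-zero morphisms
\[
\phi_i \colon q^{-k-2i}\,\symobj_{k+2i}\bullet Z \longrightarrow q^{-k-2i-1}\,\symobj_{k+2i+1}\bullet \tau(Z), \qquad
\psi_i \colon q^{-k-2i-1}\,\symobj_{k+2i+1}\bullet \tau(Z) \longrightarrow q^{-k-2i-2}\,\symobj_{k+2i+2}\bullet Z
\]
by sandwiching the degree-one ``strand emission'' generator of $\tpc$ (the top-right variant of the crossing morphism introduced in Definition~\ref{def:MdTL}) between Jones--Wenzl projectors on the appropriate number of strands. The grading shifts exactly absorb the degree of this generator, so $\phi_i, \psi_i$ are degree zero. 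Concatenated, they assemble into a single $\N$-indexed directed system, whose colimit I denote $\gamma \in \ctpc$.

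The core technical claim is that $\psi_i \circ \phi_i = c_i \cdot (U_{k+2i}\otimes \id_Z)$ and $\phi_{i+1}\circ \psi_i = c'_i\cdot (U_{k+2i+1}\otimes \id_{\tau(Z)})$ for nonzero scalars $c_i, c'_i \in \K^\times$. Given this, one can absorb the scalars into the $\phi_i$'s (or, equivalently, into the structure maps of the interleaved system) so that the even-indexed subsystem becomes exactly the directed system defining $\kirby_k\bullet Z$, and the odd-indexed subsystem exactly that defining $\kirby_{k+1}\bullet \tau(Z)$. Both inclusions are cofinal by Lemma~\ref{lem:final}, so both $\kirby_k\bullet Z$ and $\kirby_{k+1}\bullet \tau(Z)$ are canonically isomorphic to $\gamma$, furnishing the desired handle slide isomorphism.

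The main obstacle is verifying the scaled equalities. As cobordisms in the thickened annulus, $\psi_i \circ \phi_i$ corresponds to a connected ``double-pants'' surface with four boundary circles, whereas $U_{k+2i}\otimes \id_Z$ corresponds (up to Jones--Wenzl symmetrization) to the disjoint union of a cylinder and a disk. These topologically distinct cobordisms are only made equal by Bar--Natan relations. I plan to reduce the double-pants to a linear combination of simpler dotted cobordisms via the neck-cutting relation \eqref{eq:BNrels}, then to collapse this combination to a scalar multiple of $U_{k+2i}\otimes \id_Z$ using the Jones--Wenzl absorption identities of Lemma~\ref{lem:JWrels} alongside the relations \eqref{eq:MdTLrels} particular to $\tpc$. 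Tracking the resulting scalar and confirming it is nonzero---using the inductive formula \eqref{eq:JWrecur} as a guide---is the most delicate step, but invertibility is all that is needed.
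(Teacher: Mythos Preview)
Your approach is essentially the paper's: both build the map from the strand-emission morphisms $\phi_i$ between consecutive symmetrizers and verify that the double composition recovers the dotted-cup transition map $U$. The paper packages this as a commuting ladder (a natural transformation of directed systems, with commutativity checked via \eqref{eq:dAwd}) rather than an interleaved zig-zag plus cofinality, but the two framings are equivalent. One simplification you are missing: the composition is computed entirely within $\tpc$ using the first relation of \eqref{eq:MdTLrels}---which \emph{is} the neck-cutting relation under the equivalence of Theorem~\ref{thm:MdTL}---and after the undotted-cup term is annihilated by the Jones--Wenzl projector one finds $\psi_i\circ\phi_i = U_{k+2i}\otimes\id_Z$ on the nose; the scalars $c_i,c'_i$ are all equal to $1$, so the delicate tracking step you anticipate never materializes.
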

\begin{proof}
We may assume that $k\geq0$.
Consider the following diagram:
\[
	\begin{tikzpicture}[anchorbase, scale=.875]
		\node at (6,3) {$q^{-n-1} \symobj_{n+1}\bullet \tau(Z)$};
		\node at (12,3) {$q^{-n-3} \symobj_{n+3}\bullet \tau(Z)$};
		\node at (6,0) {$q^{-n}\symobj_n \bullet Z$};
		\node at (12,0) {$q^{-n-2}\symobj_{n+2}\bullet Z$};
		\node at (9,3.875) {
			\begin{tikzpicture}[anchorbase,xscale=-.7,yscale=.7]
				\draw[ultra thick,blue] (-1.25,-.75) node[below=-2pt]{\scs$\tau(Z)$} to (-1.25,.75);
				\draw[very thick] (.75,-.75) to (.75,.75);
				\node at (.5,-.5){\mysdots};
				\node at (.5,.5){\mysdots};
				\draw[very thick] (.25,-.75) to (.25,.75);
				\draw[very thick] (-.75,.75) to (-.75,-.25) to [out=270,in=180] (-.5,-.625)
					node{$\bullet$} to [out=0,in=270] (-.25,-.25) to (-.25,.75);
				\filldraw[white] (-1,.25) rectangle (1,-.25); 
				\draw[very thick] (-1,.25) rectangle (1,-.25);
				\node at (0,0) {\scriptsize$\JW_{n+3}$};
			\end{tikzpicture}
		};
		\node at (9,.875) {
			\begin{tikzpicture}[anchorbase,xscale=-.7,yscale=.7]
				\draw[ultra thick,blue] (-1.25,-.75) node[below=-2pt]{\scs$Z$} to (-1.25,.75);
				\draw[very thick] (.75,-.75) to (.75,.75);
				\node at (.5,-.5){\mysdots};
				\node at (.5,.5){\mysdots};
				\draw[very thick] (.25,-.75) to (.25,.75);
				\draw[very thick] (-.75,.75) to (-.75,-.25) to [out=270,in=180] (-.5,-.625)
					node{$\bullet$} to [out=0,in=270] (-.25,-.25) to (-.25,.75);
				\filldraw[white] (-1,.25) rectangle (1,-.25); 
				\draw[very thick] (-1,.25) rectangle (1,-.25);
				\node at (0,0) {\scriptsize$\JW_{n+2}$};
			\end{tikzpicture}
		};
		\node at (4.75,1.5) {
			\begin{tikzpicture}[anchorbase,xscale=-.7,yscale=.7]
			\draw[very thick] (-1.25,-.625) to (-.75,-.25);
			\draw[ultra thick,blue] (-1.25,-.875) node[below=-2pt]{\scs$Z$}  to (-1.25,0) 
				to (-1.25,.75) node[above=-2pt]{\scs$\tau(Z)$} ;
			\draw[very thick] (.25,-.875) to (.25,.75);
			\node at (0,-.5){\mysdots};
			\node at (0,.5){\mysdots};
			\draw[very thick] (-.25,-.875) to (-.25,.75);
			\draw[very thick] (-.75,0) to (-.75,.75);
			\filldraw[white] (-1,.25) rectangle (1,-.25); 
			\draw[very thick] (-1,.25) rectangle (1,-.25);
			\node at (0,0) {\scriptsize$\JW_{n+1}$};
			\end{tikzpicture}
		};
		\node at (13.25,1.5) {
			\begin{tikzpicture}[anchorbase,xscale=-.7,yscale=.7]
			\draw[very thick] (-1.25,-.625) to (-.75,-.25);
			\draw[ultra thick,blue] (-1.25,-.875) node[below=-2pt]{\scs$Z$}  to (-1.25,0) 
				to (-1.25,.75) node[above=-2pt]{\scs$\tau(Z)$} ;
			\draw[very thick] (.25,-.875) to (.25,.75);
			\node at (0,-.5){\mysdots};
			\node at (0,.5){\mysdots};
			\draw[very thick] (-.25,-.875) to (-.25,.75);
			\draw[very thick] (-.75,0) to (-.75,.75);
			\filldraw[white] (-1,.25) rectangle (1,-.25); 
			\draw[very thick] (-1,.25) rectangle (1,-.25);
			\node at (0,0) {\scriptsize$\JW_{n+3}$};
			\end{tikzpicture}
		};
		\draw[->] (6,.375) to (6,2.625);
		\draw[->] (12,.375) to (12,2.625);
		\draw[->] (2.5,3) to (4.1,3);
		\draw[->] (2.5,0) to (4.6,0);
		\draw[->] (7.8,3) to (10.2,3);
		\draw[->] (7.2,0) to (10.4,0);
		\draw[->] (14,3) to (15.5,3);
		\draw[->] (13.8,0) to (15.5,0);
		\node at (2,0) {$\dots$};
		\node at (2,3) {$\dots$};
		\node at (16,0) {$\dots$};
		\node at (16,3) {$\dots$};
		\end{tikzpicture}	
\]
which commutes by \eqref{eq:dAwd}.
This constitutes a natural transformation of directed systems 
and hence determines a morphism $\kirby_k\bullet Z \rightarrow \kirby_{k+1}\bullet \tau(Z)$
in $\ctpc$; see Remark~\ref{rmk:C^I to Ind(C)}.
The composition of two such maps is computed component-wise as follows:
\[
	\begin{tikzpicture}[anchorbase,xscale=-.7,yscale=.7]
		\draw[very thick] (-1.25,-.625) to (-.75,-.25);
		\draw[very thick] (-1.25,.375) to (-.75,.75) to (-.75,1.75);
		\draw[ultra thick,blue] (-1.25,-.875) node[below=-2pt]{\scs$Z$} to (-1.25,0.125) 
			node[right=-2pt,yshift=-4pt]{\scs$\tau(Z)$} to (-1.25,1.75) node[above=-2pt]{\scs$Z$};
		\draw[very thick] (.25,-.875) to (.25,0.25) to (.75,.75) to (.75,1.75);
		\node at (0,-.5){\mysdots};
		\node at (0.25,.5){\mysdots};
		\node at (0.5,1.5){\mysdots};
		\draw[very thick] (-.25,-.875) to (-.25,0.25) to (.25,.75) to (.25,1.75);
		\draw[very thick]  (-.875,0.25) to (-.25,.75) to (-.25,1.75);
		\filldraw[white] (-1,.25) rectangle (1,-.25); 
		\draw[very thick] (-1,.25) rectangle (1,-.25);
		\node at (0,0) {\scriptsize$\JW_{n+1}$};
		\filldraw[white] (-1,1.25) rectangle (1,.75); 
		\draw[very thick] (-1,1.25) rectangle (1,.75);
		\node at (0,1) {\scriptsize$\JW_{n+2}$};
		\end{tikzpicture}	
		=	
		\begin{tikzpicture}[anchorbase,xscale=-.7,yscale=.7]
		\draw[very thick] (-1.25,-.625) to (-.25,.75);
		\draw[very thick] (-1.25,.375) to (-.75,.75) to (-.75,1.75);
		\draw[ultra thick,blue] (-1.25,-.875) node[below=-2pt]{\scs$Z$}  to (-1.25,0.125) 
			node[right=-2pt,yshift=-4pt]{\scs$\tau(Z)$} to (-1.25,1.75) node[above=-2pt]{\scs$Z$};
		\draw[very thick] (.75,-.875) to (.75,1.75);
		\node at (0.5,0){\mysdots};
		\node at (0.5,1.5){\mysdots};
		\draw[very thick] (.25,-.875)  to (.25,1.75);
		\draw[very thick]  (-.25,.75) to (-.25,1.75);
		\filldraw[white] (-1,1.25) rectangle (1,.75); 
		\draw[very thick] (-1,1.25) rectangle (1,.75);
		\node at (0,1) {\scriptsize$\JW_{n+2}$};
		\end{tikzpicture}
		\stackrel{\eqref{eq:MdTLrels}}{=}
		\begin{tikzpicture}[anchorbase,xscale=-.7,yscale=.7]
			\draw[ultra thick,blue] (-1.25,-.75) node[below=-2pt]{\scs$Z$} to node[black]{$\bullet$} (-1.25,.75);
			\draw[very thick] (.75,-.75) to (.75,.75);
			\node at (.5,-.5){\mysdots};
			\node at (.5,.5){\mysdots};
			\draw[very thick] (.25,-.75) to (.25,.75);
			\draw[very thick] (-.75,.75) to (-.75,-.25) to [out=270,in=180] (-.5,-.625)
				 to [out=0,in=270] (-.25,-.25) to (-.25,.75);
			\filldraw[white] (-1,.25) rectangle (1,-.25); 
			\draw[very thick] (-1,.25) rectangle (1,-.25);
			\node at (0,0) {\scriptsize$\JW_{n+2}$};
		\end{tikzpicture}
		+ \
		\begin{tikzpicture}[anchorbase,xscale=-.7,yscale=.7]
			\draw[ultra thick,blue] (-1.25,-.75) node[below=-2pt]{\scs$Z$} to (-1.25,.75);
			\draw[very thick] (.75,-.75) to (.75,.75);
			\node at (.5,-.5){\mysdots};
			\node at (.5,.5){\mysdots};
			\draw[very thick] (.25,-.75) to (.25,.75);
			\draw[very thick] (-.75,.75) to (-.75,-.25) to [out=270,in=180] (-.5,-.625)
				node{$\bullet$} to [out=0,in=270] (-.25,-.25) to (-.25,.75);
			\filldraw[white] (-1,.25) rectangle (1,-.25); 
			\draw[very thick] (-1,.25) rectangle (1,-.25);
			\node at (0,0) {\scriptsize$\JW_{n+2}$};
		\end{tikzpicture}
		= \
		\begin{tikzpicture}[anchorbase,xscale=-.7,yscale=.7]
			\draw[ultra thick,blue] (-1.25,-.75) node[below=-2pt]{\scs$Z$} to (-1.25,.75);
			\draw[very thick] (.75,-.75) to (.75,.75);
			\node at (.5,-.5){\mysdots};
			\node at (.5,.5){\mysdots};
			\draw[very thick] (.25,-.75) to (.25,.75);
			\draw[very thick] (-.75,.75) to (-.75,-.25) to [out=270,in=180] (-.5,-.625)
				node{$\bullet$} to [out=0,in=270] (-.25,-.25) to (-.25,.75);
			\filldraw[white] (-1,.25) rectangle (1,-.25); 
			\draw[very thick] (-1,.25) rectangle (1,-.25);
			\node at (0,0) {\scriptsize$\JW_{n+2}$};
		\end{tikzpicture} \, .
\]
Thus, the components of the composition 
$\kirby_k\bullet Z \to \kirby_{k+1}\bullet \tau(Z) \to \kirby_{k+2}\bullet Z$
are the transition maps in the directed system $\kirby_k\bullet Z$.
This is precisely the inverse to the isomorphism 
$\big(\kirby_{k+2} \xrightarrow{\cong} \kirby_k \big) \bullet \id_Z$
induced from Lemma~\ref{lem:shift}.
\end{proof}

For the following, we consider the (completed)
$4$-point category $\cPBN^2_2$ and its objects:
\[
LL :=
\begin{tikzpicture}[anchorbase,scale=1]
\node[gray] at (0,0) {$\bullet$};
\draw[gray] (0,0) circle (.5);
\draw[very thick] (0.15,.46) to [out=270,in=90] (-.15,0) to [out=270,in=90] (0.15,-.46);
\draw[very thick] (-.15,.46) to [out=270,in=90] (-.3,0) to [out=270,in=90] (-.15,-.46);
\end{tikzpicture}
\; , \quad
CC :=
\begin{tikzpicture}[anchorbase,scale=1]
\node[gray] at (0,0) {$\bullet$};
\draw[gray] (0,0) circle (.5);
\draw[very thick] (-0.15,.46) to [out=270,in=180] (0,.2) to [out=0,in=270] (0.15,.46);
\draw[very thick] (-0.15,-.46) to [out=90,in=180] (0,-.2) to [out=0,in=90] (0.15,-.46);
\end{tikzpicture}
\; , \quad
RR :=
\begin{tikzpicture}[anchorbase,scale=1]
\node[gray] at (0,0) {$\bullet$};
\draw[gray] (0,0) circle (.5);
\draw[very thick] (0.15,.46) to [out=270,in=90] (.3,0) to [out=270,in=90] (0.15,-.46);
\draw[very thick] (-.15,.46) to [out=270,in=90] (.15,0) to [out=270,in=90] (-.15,-.46);
\end{tikzpicture}
\; , \quad
LR :=
\begin{tikzpicture}[anchorbase,scale=1]
\node[gray] at (0,0) {$\bullet$};
\draw[gray] (0,0) circle (.5);
\draw[very thick] (0.15,.46) to [out=270,in=90] (.15,0) to [out=270,in=90] (0.15,-.46);
\draw[very thick] (-.15,.46) to [out=270,in=90] (-.15,0) to [out=270,in=90] (-.15,-.46);
\end{tikzpicture}
\; .
\]

\begin{lem}
	\label{lem:naturalhandleslide} 
There are commutative diagrams
\[
\begin{tikzcd}
	\kirby_k\bullet LL \arrow{r}{\cong}\arrow{d}{s} &  \kirby_{k+1}\bullet LR \arrow{r}{\cong}  & \kirby_{k+2} \bullet RR \arrow{d}{s} \\
	\kirby_k \bullet CC \arrow{rr}{\cong}  & & \kirby_{k+2} \bullet CC
	\end{tikzcd}
	\ , \qquad
\begin{tikzcd}
	\kirby_k \bullet L \arrow{r}{\cong}\arrow{d}{\id_\kirby \bullet x_L} & \kirby_{k+1}\bullet R \arrow{d}{\id_\kirby \bullet x_R} \\
	\kirby_k \bullet L \arrow{r}{\cong}  & \kirby_{k+1} \bullet R 
	\end{tikzcd}
\]
wherein all horizontal maps are 
elementary handle slide isomorphisms, 
$s$ denotes saddle cobordisms, and $x_L$ and $x_R$ 
are the dot endomorphisms of $L$ and $R$ illustrated in Theorem~\ref{thm:MdTL}.
 \end{lem}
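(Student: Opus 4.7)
The plan is to verify both commutative diagrams component-wise on the directed systems defining the Kirby objects, reducing each to a local diagrammatic check using relations analogous to \eqref{eq:MdTLrels}. The overall structure mirrors the proof of Lemma~\ref{lem:handleslide}, but now carried out in the (completed) 4-point category $\cPBN^2_2$ and the 2-point category $\cPBN^1_1 \simeq \ctpc$ respectively.

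For the first diagram, the handle slide $\kirby_k \bullet LL \to \kirby_{k+1} \bullet LR$ is defined by componentwise application of the elementary handle slide construction from Lemma~\ref{lem:handleslide} to one of the two parallel strands; analogously for $\kirby_{k+1} \bullet LR \to \kirby_{k+2}\bullet RR$. Thus at level $n$, the composite map $q^{-n}\JW_n \bullet LL \to q^{-n-2}\JW_{n+2} \bullet RR$ in the directed system is given by the diagram in $\cPBN^2_2$ in which both strands have been swung around the puncture via dotted cups in $\JW_{n+2}$. The saddle $s$ is a cobordism supported away from the puncture, so it commutes (at each level of the directed system) with these handle slide components. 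Explicitly, I would introduce a 4-point analog of $\tpc$ (mirroring Theorem~\ref{thm:MdTL}) in which the required check amounts to sliding the saddle past a pair of dotted cup insertions, and this is immediate from the local relations (the saddle touches no blue strands or dots, only the black $\JW_n$-part which is unchanged).

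For the second diagram, at level $n$ the elementary handle slide component is (by the proof of Lemma~\ref{lem:handleslide}) the morphism
\[
\begin{tikzpicture}[anchorbase,xscale=-.7,yscale=.7]
\draw[ultra thick,blue] (-1.25,-.75) node[below=-2pt]{\scs$L$} to (-1.25,.75) node[above=-2pt]{\scs$R$};
\draw[very thick] (.75,-.75) to (.75,.75);
\node at (.5,-.5){\mysdots};
\node at (.5,.5){\mysdots};
\draw[very thick] (.25,-.75) to (.25,.75);
\draw[very thick] (-.75,.75) to (-.75,-.25) to [out=270,in=180] (-.5,-.625)
    node{$\bullet$} to [out=0,in=270] (-.25,-.25) to (-.25,.75);
\filldraw[white] (-1,.25) rectangle (1,-.25);
\draw[very thick] (-1,.25) rectangle (1,-.25);
\node at (0,0) {\scriptsize$\JW_{n+1}$};
\end{tikzpicture}
\]
in $\tpc$. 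Post-composing with $x_R$ places a dot on the blue strand on top, while pre-composing with $x_L$ places a dot on the blue strand at the bottom. Using the second relation in \eqref{eq:MdTLrels} (dots slide through the mixed cup $L$--$R$ without sign), followed by \eqref{eq:JWrecur}-style absorption of a dot into the $\JW_{n+1}$ box via \eqref{eq:dAwd}, both expressions collapse to the same diagram with an extra dot on the dotted cup (equivalently, a doubled dot on a single strand under $\JW_{n+1}$). The same computation done at every level $n$ assembles into a natural transformation of directed systems, yielding the desired commutativity of the second diagram.

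The main obstacle is mostly bookkeeping: one must set up a 4-point generalization of Theorem~\ref{thm:MdTL} carefully enough to justify the claim that the composite handle slides $LL \to LR \to RR$ coincide, levelwise, with the componentwise application of the 2-point handle slide to each strand in turn. Once this diagrammatic framework is in place (and one verifies that the relations of \eqref{eq:MdTLrels} extend to handle the two blue strands), both diagrams reduce to the local checks sketched above.
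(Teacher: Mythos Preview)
Your high-level strategy matches the paper's: the second diagram follows from the second relation in \eqref{eq:MdTLrels}, and the first amounts to commuting distant cobordisms. However, there are two issues worth flagging.

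For the second diagram, the morphism you display as ``the elementary handle slide component'' is not the one from the proof of Lemma~\ref{lem:handleslide}. The vertical component there is the \emph{junction} morphism (a single black strand merging into the blue strand, then a $\JW_{n+1}$ box), not a dotted cup. With the correct component, the check is a one-liner: a dot on the blue $L$-strand slides through the junction to a dot on the blue $R$-strand by the second relation in \eqref{eq:MdTLrels}, and that is the end of the argument. Your subsequent steps invoking \eqref{eq:dAwd} and ``doubled dots'' are not needed and do not correspond to what actually happens.

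For the first diagram, the paper avoids building any 4-point diagrammatic presentation. One simply passes to the cobordism picture via Theorem~\ref{thm:MdTL}: the elementary handle slide is itself a saddle cobordism (near the puncture), and $s$ is another saddle cobordism (away from the puncture). These are supported on disjoint regions of the thickened annulus, so they commute by isotopy. Your instinct that ``$s$ is supported away from the puncture'' is exactly right; the extra machinery you propose is unnecessary overhead.
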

 
In other words, the handle slide isomorphisms from Lemma~\ref{lem:handleslide} are
natural with respect to saddles and dot morphisms in $\PBN$. 

\begin{proof} 
Commutativity of the second diagram is clear from the second relation in 
\eqref{eq:MdTLrels}. 
Commutativity of the first is best seen when expressed in terms of
cobordisms as in Theorem~\ref{thm:MdTL},
where it corresponds to changing the order in which distant saddle cobordisms are applied.
\end{proof}

Recall that, for an $\mathcal{M}$-bimodule category $\mathcal{B}$ over a
monoidal category $\mathcal{M}$, the Drinfeld center of $\mathcal{B}$ is the
category, whose objects are pairs $(B,\tau_B)$ 
consisting of an object $B \in \mathcal{B}$
and a natural isomorphism 
$\tau_B \colon B \boxtimes - \Longrightarrow - \boxtimes B$ 
satisfying the usual coherence conditions of a half-braiding. The
following theorem uses the analogous (via categorification) concept of the
Drinfeld center of a bimodule over a monoidal \emph{bi}category. As we will not
use this formulation in the rest of the paper, we omit a detailed description
and study of such Drinfeld centers.

\begin{thm}[Handle Slide] \label{thm:handleslide}
The Kirby color $\omega$, viewed as an object of $\cPBN$, 
together with half-braidings assembled from the isomorphisms in Lemma~\ref{lem:handleslide}, 
defines an object of the Drinfeld center of the $\BN$-bimodule $\cPBN$.
\end{thm}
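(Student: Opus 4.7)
The plan is to package the elementary handle slide isomorphisms from Lemma~\ref{lem:handleslide} into a half-braiding on $\kirby$ and then verify that this half-braiding satisfies the coherence axioms for an object of the Drinfeld center. The key structural input is the explicit generators-and-relations presentation of $\PBN^1_1$ given by Theorem~\ref{thm:MdTL}, together with its evident generalization to $\PBN^n_m$ (and its completion), which reduces the verification to a finite set of checks on generating $1$- and $2$-morphisms.

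First I would establish the half-braiding on generating objects. Under the equivalence $\Mat(\tpc) \simeq \PBN^1_1$, every generating $1$-morphism of $\PBN$ is built by $\boxtimes$-products and $\star$-compositions of $L$, $R$, and $1$-morphisms coming from $\BN$ via the left/right actions. The instances $\kirby \bullet L \cong \kirby \bullet R$ of Lemma~\ref{lem:handleslide} (together with the summand description $\kirby = \kirby_0 \oplus \kirby_1$) give the required isomorphism on the objects $L$ and $R$; the general half-braiding $\tau_X \colon \kirby \bullet X \cong X \bullet \kirby$ is then defined by induction on the complexity of $X$, using functoriality of $\boxtimes$ and $\star$ and the fact that these actions commute with the action of $\cABN$ on $\cPBN$ in which $\kirby$ lives. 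Concretely, for $X = X_1 \star X_2$ one composes two elementary handle slides, and for $X$ an external tensor product one uses the compatibility with $\boxtimes$.

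Next I would check naturality and the hexagon/coherence axioms. Naturality of $\tau$ with respect to the generating $2$-morphisms (dot endomorphisms, saddles, and the cup/cap morphisms that interconvert $L$ and $R$) is precisely the content of Lemma~\ref{lem:naturalhandleslide}. Naturality with respect to morphisms coming from $\BN$ via the left/right actions is automatic, since those morphisms commute (up to isotopy in the thickened annulus) with any cobordism supported near the puncture, and in particular with the representatives of the handle slide isomorphism. The coherence hexagons for a half-braiding amount to the two ways of slicing a multi-strand handle slide into elementary ones agreeing; via Theorem~\ref{thm:MdTL} this reduces to the verification that the associator-type diagrams built from the first and second diagrams of Lemma~\ref{lem:naturalhandleslide} commute, which is a combinatorial check on dotted diagrams in $\tpc$.

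Finally, I would extend the half-braiding from the full subcategory of objects coming from $\dTL$ (and generators on the $\PBN$ side) to all of $\cPBN$ using that $\kirby \bullet -$ and $- \bullet \kirby$ both preserve direct sums, grading shifts, and filtered colimits (cf.\ Remark~\ref{rem:Indmonoidal} and Remark~\ref{rem:compact}), so that the half-braiding extends uniquely and remains natural. The main obstacle I anticipate is organizing the coherence check for the half-braiding in the bimodule-over-a-bicategory setting: one must show that the handle slide is simultaneously compatible with the two distinct actions of $\BN$ on $\cPBN$ (via $\star$ and $\boxtimes$) and with the $\cABN$-action, which requires a careful bookkeeping of which strands are being slid over in a given composite. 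I expect this to reduce, via the module-category structure in diagram \eqref{eq:modcat} and the quiver-type presentation in Theorem~\ref{thm:KardTL}, to the commutativity statements already packaged in Lemma~\ref{lem:naturalhandleslide} together with the identification $\kirby_k \cong \kirby_{k+2}$ of Lemma~\ref{lem:shift} used to close up composites of elementary slides.
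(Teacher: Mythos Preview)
Your proposal is correct and follows essentially the same approach as the paper's proof sketch: assemble the half-braiding from the elementary handle slides of Lemma~\ref{lem:handleslide} on generating $1$-morphisms (the paper phrases this as cup, cap, and identity tangles generating $\BN$, with only the identity tangles requiring nontrivial input), and then invoke Lemma~\ref{lem:naturalhandleslide} for the coherence conditions. Your version is considerably more detailed about the extension to the completion and the bookkeeping for the two bimodule actions, but the underlying strategy is the same.
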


Here we view $\cPBN$ as a $\BN$-bimodule under the operation $\boxtimes$
described in item (4) of \S \ref{ss:punctured BN}.

\begin{proof}[Proof sketch] The objects of $\BN$ are generated 
(as $1$-morphisms in a monoidal bicategory, and under direct sums and shifts) 
by cup, cap, and identity tangles. 
The half-braidings can be assembled accordingly, and
their building blocks are only interesting for identity tangles, 
where we use the elementary handle slides from Lemma~\ref{lem:handleslide} 
(and their inverses). 
Lemma~\ref{lem:naturalhandleslide} implies that these candidate
half-braidings satisfy the requisite coherence conditions. 
\end{proof}

\section{Diagrammatic presentation}
\label{sec:diag}
In this section, we study the monoidal category obtained from $\dTL$ by adjoining
the objects $\kirby_0$ and $\kirby_1$.
Precisely, in Theorem \ref{thm:diagpres} below we give an explicit diagrammatic 
description of the full monoidal subcategory of $\cdTL$ 
generated by $\dTL$ and the Kirby objects $\kirby_0, \kirby_1$. 

\subsection{Kirby colored diagrammatics: first approximation}
\label{ss:ff1}
To begin, we define a strict monoidal category $\KdTL'$ via generators and relations and a monoidal functor $\KdTL'\rightarrow \cdTL$. 
This will give a useful framework for studying the Kirby objects in $\cdTL$.

\begin{definition}
	\label{def:KdTL prime}
Let $\KdTL'$ be the $\Z$-graded $\K$-linear monoidal category given by 
adjoining to $\dTL$ two new generating objects $\kirby_{i}'$ for $i\in \Z/2\Z$ 
(abbreviated in the diagrams below simply as $[i]$)
and morphisms
\begin{gather*}
\incl_n:= 
\begin{tikzpicture}[anchorbase,xscale=-1]
		\draw[very thick] (.35,0)  to [out=90,in=-45] (0,.5);
		\draw[very thick] (-.35,0)  to [out=90,in=-135] (0,.5);
		\draw[kirbystyle] (0,0.5) to (0,1) node[above=-2pt]{\scs$[n]$};
		\node at (0,.15) {$\mydots$};
		\draw[thick,decoration={brace},decorate] (-.4,-.1) -- node [below] {\scriptsize $n$} (.4,-.1);
\end{tikzpicture}
		\in \Hom^{-n}\!\big( \gen^n, \kirby'_n \big)
	\; , \quad
		\begin{tikzpicture}[anchorbase,scale=1]
			\draw[kirbystyle] (0,0) node[below=-2pt]{\scs$[i]$} to [out=90,in=225] (0.5,0.5) 
			to (0.5,1) node[above=-2pt]{\scs$[i{+}j]$};
			\draw[kirbystyle] (1,0) node[below=-2pt]{\scs$[j]$} to [out=90,in=315] (0.5,0.5);
		\end{tikzpicture}
	\in \Hom^0\!\big( \kirby'_i \otimes \kirby'_j , \kirby'_{i+j}\big)
	\; , \quad
		\begin{tikzpicture}[anchorbase,scale=1]
			\draw[kirbystyle] (0,0) node[below=-2pt]{\scs$[i]$} to 
			node[black]{$\bullet$} (0,1) node[above=-2pt]{\scs$[i]$};
		\end{tikzpicture}
	\in \End^2\!\big( \kirby'_i \big)
\end{gather*}
for $n\geq 0$ and $i,j \in \Z/2\Z$, modulo the defining relations
\begin{subequations}
\begin{gather}
	\label{eq:red algebra rels}
			\begin{tikzpicture}[anchorbase,scale=1]
				\draw[kirbystyle] (0,0)node{$\bullet$}  to [out=90,in=225] (0.5,0.5) 
				to (0.5,1) node[above=-2pt]{\scriptsize $[i]$};
				\draw[kirbystyle] (1,0) node[below=-2pt]{\scriptsize $[i]$} to [out=90,in=315] (0.5,0.5);
			\end{tikzpicture}
		=
			\begin{tikzpicture}[anchorbase,xscale=-1]
				\draw[kirbystyle] (0,0)node{$\bullet$}  to [out=90,in=225] (0.5,0.5) 
				to (0.5,1) node[above=-2pt]{\scriptsize $[i]$};
				\draw[kirbystyle] (1,0) node[below=-2pt]{\scriptsize $[i]$} to [out=90,in=315] (0.5,0.5);
			\end{tikzpicture}
		=
			\begin{tikzpicture}[anchorbase,xscale=-1]
				\draw[kirbystyle] (0,0) node[below=-2pt]{\scriptsize $[i]$} to (0,1) node[above=-2pt]{\scriptsize $[i]$};
			\end{tikzpicture}
		\; , \qquad
			\begin{tikzpicture}[anchorbase,xscale=.5]
				\draw[kirbystyle] (0,0) node[below=-2pt]{\scriptsize $[i]$} to [out=90,in=225] (0.5,0.3) 
				to [out=90,in=225] (1,.7) to (1,1) node[above=-2pt]{\scriptsize $[i{+}j{+}k]$};
				\draw[kirbystyle] (1,0) node[below=-2pt]{\scriptsize $[j]$} to [out=90,in=315] (0.5,0.3);
				\draw[kirbystyle] (2,0) node[below=-2pt]{\scriptsize $[k]$} to [out=90,in=315] (1,0.7);
			\end{tikzpicture}
		=
			\begin{tikzpicture}[anchorbase,xscale=-.5]
				\draw[kirbystyle] (0,0) node[below=-2pt]{\scriptsize $[k]$} to [out=90,in=225] (0.5,0.3) 
				to [out=90,in=225] (1,.7) to (1,1) node[above=-2pt]{\scriptsize $[i{+}j{+}k]$};
				\draw[kirbystyle] (1,0) node[below=-2pt]{\scriptsize $[j]$} to [out=90,in=315] (0.5,0.3);
				\draw[kirbystyle] (2,0) node[below=-2pt]{\scriptsize $[i]$} to [out=90,in=315] (1,0.7);
			\end{tikzpicture}
\\
	\label{eq:dotslideKdTL}
		\begin{tikzpicture}[anchorbase,scale=1]
			\draw[kirbystyle] (0,0) node[below=-2pt]{\scriptsize $[i]$} to [out=90,in=225] (0.5,0.5) to (.5,.7) node[black]{$\bullet$} 
			to (0.5,1) node[above=-2pt]{\scriptsize $[i{+}j]$};
			\draw[kirbystyle] (1,0) node[below=-2pt]{\scriptsize $[j]$} to [out=90,in=315] (0.5,0.5);
		\end{tikzpicture}
		\!\!=\!\!
		\begin{tikzpicture}[anchorbase,scale=1]
			\draw[kirbystyle] (0,0) node[below=-2pt]{\scriptsize $[i]$} to [out=90,in=225] (0.5,0.5)  
			to (0.5,1) node[above=-2pt]{\scriptsize $[i{+}j]$};
			\draw[kirbystyle] (1,0) node[below=-2pt]{\scriptsize $[j]$} to [out=90,in=315] (0.5,0.5);
			\node at (.25,.3) {$\bullet$};
		\end{tikzpicture}
		\!\!\!+(-1)^i\!\!\!
		\begin{tikzpicture}[anchorbase,scale=1]
			\draw[kirbystyle] (0,0) node[below=-2pt]{\scriptsize $[i]$} to [out=90,in=225] (0.5,0.5)  
			to (0.5,1) node[above=-2pt]{\scriptsize $[i{+}j]$};
			\draw[kirbystyle] (1,0) node[below=-2pt]{\scriptsize $[j]$} to [out=90,in=315] (0.5,0.5);
			\node at (.75,.3) {$\bullet$};
		\end{tikzpicture}
	\; , \qquad		
\begin{tikzpicture}[anchorbase,xscale=-1]
	\draw[very thick] (.35,0)  to [out=90,in=-45] (0,.5);
	\draw[very thick] (-.35,0)  to [out=90,in=-135] (0,.5);
	\draw[kirbystyle] (0,0.5) to (0,1) node[above=-2pt]{\scriptsize $[n]$};
	\node at (0,.15) {$\mydots$};
	\node at(0,.75) {$\bullet$};
	\draw[thick,decoration={brace},decorate] (-.4,-.1) -- node [below] {\scriptsize $n$} (.4,-.1);
\end{tikzpicture}
=
\begin{tikzpicture}[anchorbase,xscale=-1]
	\draw[very thick] (.35,-.5) to (.35,0)  to [out=90,in=-45] (0,.5);
	\draw[very thick] (-.35,-.5) to (-.35,0)  to [out=90,in=-135] (0,.5);
	\draw[kirbystyle] (0,0.5) to (0,1) node[above=-2pt]{\scriptsize $[n]$};
	\node at (0,.15) {$\mydots$};
	\filldraw[white] (-.5,-.3) rectangle (.5,0); 
	\draw[very thick] (-.5,-.3) rectangle (.5,0); 
	\node at (0,-.15) {\scriptsize$\xs_{n}$};
\end{tikzpicture}
\\
\label{eq:red relation c}
\begin{tikzpicture}[anchorbase,scale=1]
	\draw[very thick] (-1.05,-.5)  to [out=90,in=-155] (0,.5);
	\draw[very thick] (-.4,-.5)  to [out=90,in=-135] (0,.5);
	\draw[very thick] (.4,-.5)  to [out=90,in=-45] (0,.5);
	\draw[very thick] (1.05,-.5)  to [out=90,in=-25] (0,.5);
	\draw[very thick] (0,.5) to [out=-115,in=90] (-.2,-.1)   to[out=-90,in=180] (0,-.3) to[out=0,in=-90](.2,-.1) to [out=90,in=-65] (0,.5);
	\draw[kirbystyle] (0,0.5) to (0,1) node[above=-2pt]{\scriptsize $[i{+}j{+}2]$};
	\node at (-.7,-.25) {$\mydots$};
	\node at (.7,-.25) {$\mydots$};
	\draw[thick,decoration={brace,mirror},decorate] (-1.1,-.6) -- node [below] {\scriptsize $i$} (-.35,-.6);
	\draw[thick,decoration={brace},decorate] (1.1,-.6) -- node [below] {\scriptsize $j$} (.35,-.6);
\end{tikzpicture}
= 0
\; , \qquad	
\begin{tikzpicture}[anchorbase,scale=1]
	\draw[very thick] (-1.05,-.5)  to [out=90,in=-155] (0,.5);
	\draw[very thick] (-.4,-.5)  to [out=90,in=-135] (0,.5);
	\draw[very thick] (.4,-.5)  to [out=90,in=-45] (0,.5);
	\draw[very thick] (1.05,-.5)  to [out=90,in=-25] (0,.5);
	\draw[very thick] (0,.5) to [out=-115,in=90] (-.2,-.1)   to[out=-90,in=180] (0,-.3)node{$\bullet$} to[out=0,in=-90](.2,-.1) to [out=90,in=-65] (0,.5);
	\draw[kirbystyle] (0,0.5) to (0,1) node[above=-2pt]{\scriptsize $[i{+}j{+}2]$};
	\node at (-.7,-.25) {$\mydots$};
	\node at (.7,-.25) {$\mydots$};
	\draw[thick,decoration={brace,mirror},decorate] (-1.1,-.6) -- node [below] {\scriptsize $i$} (-.35,-.6);
	\draw[thick,decoration={brace},decorate] (1.1,-.6) -- node [below] {\scriptsize $j$} (.35,-.6);
\end{tikzpicture}
=
\begin{tikzpicture}[anchorbase,xscale=-1]
		\draw[very thick] (.35,0)  to [out=90,in=-45] (0,.5);
		\draw[very thick] (-.35,0)  to [out=90,in=-135] (0,.5);
		\draw[kirbystyle] (0,0.5) to (0,1) node[above=-2pt]{\scriptsize $[i{+}j]$};
		\node at (0,.15) {$\mydots$};
	\end{tikzpicture}
	\; , \qquad	
\begin{tikzpicture}[anchorbase,scale=1]
	\draw[kirbystyle] (0,0) to [out=90,in=225] (0.5,0.5) 
		to (0.5,1) node[above=-2pt]{\scriptsize $[i{+}j]$};
	\draw[kirbystyle] (1,0) to [out=90,in=315] (0.5,0.5);
	\draw[very thick] (.35,-.5)  to [out=90,in=-45] (0,0);
	\draw[very thick] (-.35,-.5)  to [out=90,in=-135] (0,0);
	\node at (0,-.35) {$\mydots$};
	\draw[very thick] (1.35,-.5)  to [out=90,in=-45] (1,0);
	\draw[very thick] (.65,-.5)  to [out=90,in=-135] (1,0);
	\node at (1,-.35) {$\mydots$};	
	\draw[thick,decoration={brace,mirror},decorate] (-.4,-.6) -- node [below] {\scriptsize $i$} (.4,-.6);
	\draw[thick,decoration={brace},decorate] (1.4,-.6) -- node [below] {\scriptsize $j$} (.6,-.6);
\end{tikzpicture}
=
\begin{tikzpicture}[anchorbase,xscale=-1]
		\draw[very thick] (.35,0)  to [out=90,in=-45] (0,.5);
		\draw[very thick] (-.35,0)  to [out=90,in=-135] (0,.5);
		\draw[kirbystyle] (0,0.5) to (0,1) node[above=-2pt]{\scriptsize $[i{+}j]$};
		\node at (0,.15) {$\mydots$};
	\end{tikzpicture}
\end{gather}
\end{subequations}
for all integers $i,j,n \geq 0$.
Here (for $\incl_0$) and below (for $\incl_1$), we use the notation
\[
\incl_0 :=
\begin{tikzpicture}[anchorbase,xscale=1]
\draw[kirbystyle] (0,0)node{$\bullet$} to (0,.5) node[above=-2pt]	{\scriptsize $[0]$};
\end{tikzpicture}
\, , \quad
\incl_1 :=
\begin{tikzpicture}[anchorbase,xscale=1]
\draw[very thick] (0,-.5) to (0,0);
\draw[kirbystyle] (0,0) node{$\bullet$} to (0,.5) node[above=-2pt]	{\scriptsize $[0]$};
\end{tikzpicture} \, .
\]
\end{definition}

Since all of the new generating morphisms have codomain $\kirby'_i$ 
and all new relations involve such morphisms, 
it follows that there is a fully faithful inclusion $\dTL\rightarrow \KdTL'$.  
An object of $\Kar(\KdTL')$ will be called \emph{finite} if it is in the image of $\Kar(\dTL)$ 
and will be called \emph{infinite} otherwise. 
In other words, an infinite object has at least one tensor factor of the form $\kirby'_i$.

\begin{remark}
Relations \eqref{eq:red algebra rels} say that $\kirby'_0\oplus \kirby'_1$ 
has the structure of a $\Z/2\Z$-graded algebra object in $\Mat(\Kar(\KdTL'))$.
Further, note that the first relation in \eqref{eq:red relation c} pairs with \eqref{eq:JWrecur}
to show that 
\[
\begin{tikzpicture}[anchorbase,xscale=-1]
	\draw[very thick] (.35,0)  to [out=90,in=-45] (0,.5);
	\draw[very thick] (-.35,0)  to [out=90,in=-135] (0,.5);
	\draw[kirbystyle] (0,0.5) to (0,1) node[above=-2pt]{\scriptsize $[n]$};
	\node at (0,.15) {$\mydots$};
	\draw[thick,decoration={brace},decorate] (-.4,-.1) -- node [below] {\scriptsize $n$} (.4,-.1);
\end{tikzpicture}
=
\begin{tikzpicture}[anchorbase,xscale=-1]
	\draw[very thick] (.35,-.5) to (.35,0)  to [out=90,in=-45] (0,.5);
	\draw[very thick] (-.35,-.5) to (-.35,0)  to [out=90,in=-135] (0,.5);
	\draw[kirbystyle] (0,0.5) to (0,1) node[above=-2pt]{\scriptsize $[n]$};
	\node at (0,.2) {$\mydots$};
	\filldraw[white] (-.5,-.35) rectangle (.5,.05); 
	\draw[very thick] (-.5,-.35) rectangle (.5,.05); 
	\node at (0,-.15) {\scriptsize$\JW_{n}$};
\end{tikzpicture} \, .
\]
\end{remark}

\begin{remark}\label{rem:involution} 
The category $\KdTL'$ admits a $\otimes$-contravariant involution
defined on diagrams by reflecting in a vertical line and multiplying every dot by the sign 
$(-1)^{i-1}$ where $i$ is the parity of the strand: 
$1$ for black strands and $i$ for a \kirbcolword strand with label $[i]$. 
\end{remark}

The following makes the connection between the formally defined category
$\KdTL'$ and $\cdTL$.

\begin{thm}
	\label{thm:from diag kirby to kirby}
	There exists a unique monoidal functor $\phi \colon \KdTL' \to \cdTL$ such that:
	\begin{itemize}
	\item $\phi$ extends the embedding $\dTL \to \cdTL$
	\item $\phi(\kirby'_{i}) = \kirby_{i}$.
	\item $\phi$ is given on the generating morphisms of $\KdTL'$ as follows:
	\begin{gather*}
		\begin{tikzpicture}[anchorbase,xscale=-1]
		\draw[very thick] (.35,0)  to [out=90,in=-45] (0,.5);
		\draw[very thick] (-.35,0)  to [out=90,in=-135] (0,.5);
		\draw[kirbystyle] (0,0.5) to (0,1) node[above=-2pt]{\scriptsize $[m]$};
		\node at (0,.15) {$\mydots$};
		\draw[thick,decoration={brace},decorate] (-.4,-.1) -- node [below] {\scriptsize $m$} (.4,-.1);
\end{tikzpicture}
\xmapsto{\phi}
	\left[
\begin{tikzpicture}[anchorbase,scale=1]
	\draw[very thick] (-.25,-.75) to (-.25,.75);
	\node at (0,-.5){\mysdots};
	\node at (0,.5){\mysdots};
	\draw[very thick] (.25,-.75) to (.25,.75);
	\filldraw[white] (-.5,.25) rectangle (.5,-.25); 
	\draw[very thick] (-.5,.25) rectangle (.5,-.25);
	\node at (0,0) {\scriptsize$\JW_{m}$};
\end{tikzpicture}
		\right]
	\, , \quad
	\begin{tikzpicture}[anchorbase,scale=1]
		\draw[kirbystyle] (0,0) node[below=-2pt]{\scriptsize $[i]$} to [out=90,in=225] (0.5,0.5) 
		to (0.5,1) node[above=-2pt]{\scriptsize $[i{+}j]$};
		\draw[kirbystyle] (1,0) node[below=-2pt]{\scriptsize $[j]$} to [out=90,in=315] (0.5,0.5);
	\end{tikzpicture} \xmapsto{\phi}
	\left\{ \left[
	\begin{tikzpicture}[anchorbase,scale=1]
		\draw[very thick] (-.75,-1.5) to (-.75,.5);
		\draw[very thick] (.75,-1.5) to (.75,.5);
		\node at (-.5,-1.375){\mysdots};
		\node at (-.5,-.5){\mysdots};
		\node at (-.5,.375){\mysdots};
		\node at (.5,-1.375){\mysdots};
		\node at (.5,-.5){\mysdots};
		\node at (.5,.375){\mysdots};
		\draw[very thick] (-.25,-1.5) to (-.25,.5);
		\draw[very thick] (.25,-1.5) to (.25,.5);
		\filldraw[white] (-1,.25) rectangle (1,-.25); 
		\draw[very thick] (-1,.25) rectangle (1,-.25);
		\node at (0,0) {\scriptsize$\JW_{i+j+4n}$};
		\filldraw[white] (-1,-.75) rectangle (-.1,-1.25); 
		\draw[very thick] (-1,-.75) rectangle (-.1,-1.25);
		\node at (-.55,-1) {\scriptsize$\JW_{i+2n}$};
		\filldraw[white] (1,-.75) rectangle (0.1,-1.25); 
		\draw[very thick] (1,-.75) rectangle (0.1,-1.25);
		\node at (.55,-1) {\scriptsize$\JW_{j+2n}$};
	\end{tikzpicture}
	\right]\right\}_{n\in \N}
	\, , \quad
	\begin{tikzpicture}[anchorbase,scale=1]
		\draw[kirbystyle] (0,0) node[below=-2pt]{\scriptsize $[i]$} to 
		node[black]{$\bullet$} (0,1) node[above=-2pt]{\scriptsize $[i]$};
	\end{tikzpicture}
	\xmapsto{\phi} 
	\{[\xs_{i+2n}]\}_{n\in \N} \, .
	\end{gather*}
	\end{itemize}
	\end{thm}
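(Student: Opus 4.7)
The plan is to prove existence by specifying $\phi$ on generators as in the statement, verifying that these specifications yield well-defined morphisms in $\cdTL$, and checking that each defining relation of $\KdTL'$ is satisfied; uniqueness is automatic since $\KdTL'$ is generated (as a $\Z$-graded $\K$-linear monoidal category) by $\dTL$ together with the three new generating families, on which $\phi$ is prescribed.

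First, I will confirm that each assigned image lands in $\cdTL$ as a genuine morphism. For $\incl_m$, the image is the $0$-th structure map of the directed system \eqref{eq:dirsys}. For the dot $\phi(\bullet) = \{[\xs_{i+2n}]\}_{n\in\N}$, well-definedness reduces to checking $\xs_{i+2n+2}\circ U_{i+2n} = U_{i+2n}\circ \xs_{i+2n}$, which is the third relation of Lemma~\ref{lem:KarRels}; this means the family descends to an endomorphism of the colimit $\kirby_i$. For the multiplication, one obtains for each $n \in \N$ a degree-zero morphism
\[
q^{-i-2n}\JW_{i+2n}\otimes q^{-j-2n}\JW_{j+2n} \;\longrightarrow\; q^{-(i+j)-4n}\JW_{i+j+4n} ,
\]
and as $n$ varies these assemble into a natural transformation from the diagonal subsystem of $\kirby_i\otimes\kirby_j$ (which is final in the product indexing category, by Lemma~\ref{lem:final} applied to the diagonal $\N \hookrightarrow \N\times\N$) to a cofinal subsystem of $\kirby_{i+j}$. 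Compatibility with the transition maps $U_{i+2n}\otimes U_{j+2n}$ in the source and $U_{i+j+4n+2}\circ U_{i+j+4n}$ in the target follows from repeated application of the Jones--Wenzl absorption identity \eqref{eq:absorb}, since both composites realize the canonical inclusion of $\JW_{i+2n+2}\otimes \JW_{j+2n+2}$ into $\JW_{i+j+4n+4}$.

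Next, I will verify the defining relations of $\KdTL'$. The unit and associativity portions of \eqref{eq:red algebra rels} both reduce to absorption: the unit images $\phi(\incl_0)$ and $\phi(\incl_1)$ are absorbed by the outer $\JW_{i+2n}$ in the multiplication diagram, and the two ways of fusing three factors both produce the triple inclusion $\JW_{i+2n}\otimes \JW_{j+2n}\otimes \JW_{k+2n} \hookrightarrow \JW_{i+j+k+6n}$. The dot-slide relation \eqref{eq:dotslideKdTL} is verified component-wise: using Lemma~\ref{lemma:center of dTL} one writes $\xs_{i+j+4n} = \sum_{\ell} \sx_\ell$ as a sum of signed dots, and the Jones--Wenzl sandwich relation \eqref{eq:PxP} collapses terms within each block; the sign $(-1)^i$ appears because the first strand of the second Kirby factor sits in position $i+2n+1$ and contributes $(-1)^{i+2n}=(-1)^i$ under Convention~\ref{conv:End}. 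The dot-compatibility with $\incl_n$ follows from \eqref{eq:PxxxP} applied to the unique configuration $\ux_1\cdots\ux_n = \pm\sx_1\cdots\sx_n$. For \eqref{eq:red relation c}, the undotted-cup relation vanishes since $\JW_m$ kills turnbacks; the dotted-cup relation follows from \eqref{eq:tracedot}; and the final relation (absorbing $\incl$'s through the multiplication) is again an instance of \eqref{eq:absorb}.

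The hardest part, I expect, will not be any individual relation but the uniform bookkeeping required across all three generating families simultaneously: all grading shifts of the form $q^{-k-2n}$ must align, Jones--Wenzl indices must match up under iterated absorption, and the signs in the dot-slide relation must be traced carefully through the identification $\xs_n = \sx_1 + \cdots + \sx_n$. Each individual step is a direct application of Lemma~\ref{lem:JWrels} or Lemma~\ref{lem:KarRels}, but threading them together across the whole ind-structure while maintaining naturality is the delicate aspect of the argument.
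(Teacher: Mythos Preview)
Your approach is essentially the paper's own: uniqueness is immediate from the generators-and-relations presentation, and existence amounts to unpacking the ind-notation and checking that each defining relation of $\KdTL'$ holds after applying $\phi$. The paper is terser than you are (it simply asserts the relations ``follow easily'' and focuses on explaining the ind-notation), but the logical content is the same.

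Two small citation errors to fix. First, the dotted-cup absorption in \eqref{eq:red relation c} does not follow from \eqref{eq:tracedot} (which concerns a partial trace, i.e.\ closing off a strand); rather, use \eqref{eq:dAwd} to slide the dotted cup to the rightmost position, recognize the result as the transition map $U_{i+j}$, and then invoke the colimit identification $[U_{i+j}\circ \JW_{i+j}] = [\JW_{i+j}]$. Second, the compatibility of the dot with $\incl_n$ in \eqref{eq:dotslideKdTL} has nothing to do with \eqref{eq:PxxxP} (which concerns products $\ux_1\cdots\ux_k$); it is simply the centrality of $z_n$ together with $\JW_n z_n = z_n\JW_n$, i.e.\ Lemma~\ref{lemma:center of dTL}. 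With these corrections your argument goes through.
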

\begin{proof}
 Uniqueness is clear, since any such $\phi$ extends the embedding of
$\dTL$ and the images of the additional generators of $\KdTL'$ are explicitly
specified. It thus suffices to check that these assignments $\phi$ actually
define a functor $\KdTL' \to \cdTL$, i.e.~that $\phi$ respects all defining
relations from Definition~\ref{def:KdTL prime}. This follows easily, once we unpack the notation for
the images of the generating morphisms.

For the first, note that 
\[
\Hom_{\cdTL}(\gen^m , \kirby_m) = \Hom_{\cdTL}\big(\gen^m , \colim_{k \in \N} q^{-m-2k} \JW_{m+2k} \big)
= \colim_{k \in \N} \Hom_{\dTL}(\gen^m , q^{-m-2k} \JW_{m+2k})
\]
so $[ \JW_m ]$ simply denotes the equivalence class in the colimit 
of the map $\JW_m \in \Hom_{\dTL}(\gen^m , q^{-m} \JW_m)$.
For the second, it follows from Remark \ref{rem:Indmonoidal} and Lemma \ref{lem:final} that
\[
\kirby_i \otimes \kirby_j \cong \colim_{(k,\ell) \in \N^2} q^{-i-2k} \JW_{i+2k} \otimes q^{-j-2\ell} \JW_{j+2\ell}
\cong \colim_{n \in \N} q^{-i-j-4n} \JW_{i+2n} \otimes \JW_{j+2n} 
\]
so
\[
\Hom_{\cdTL}(\kirby_i \otimes \kirby_j , \kirby_{i+j}) 
\cong \lim_{n \in \N} \colim_{k \in \N} \Hom_{\dTL}(q^{-i-j-4n} \JW_{i+2n} \otimes \JW_{j+2n} , q^{-i-j-4k} \JW_{i+j+4k} ) \, .
\]
(Here, we additionally use that $m+4\N \hookrightarrow m+2\N$ is final.)
Elements in this limit are given by a stable family (indexed by $n \in \N$) 
of morphisms in 
\[
\colim_{k \in \N} \Hom_{\dTL}(q^{-i-j-4n} \JW_{i+2n} \otimes \JW_{j+2n} , q^{-i-j-4k} \JW_{i+j+4k} )\, ,
\]
and morphisms in the latter can be exhibited as the equivalence class of a single morphism in 
\[
\Hom_{\dTL}(q^{-i-j-4n} \JW_{i+2n} \otimes \JW_{j+2n} , q^{-i-j-4k} \JW_{i+j+4n} ) \, .
\] 
The notation above denotes such a stable family of equivalence classes.
The image of the third generator is depicted analogously.
	\end{proof}

The diagrammatic category $\KdTL'$ is insufficient to describe $\cdTL$.
Indeed, it is a consequence of Corollary \ref{cor:kirby squared} below that 
the functor $\phi \colon \KdTL' \to \cdTL$ is not full. 
For example, that corollary shows that $\Hom_{\cdTL}(\kirby_0, \kirby_0 \otimes \kirby_0)$
is uncountably infinite-dimensional, 
while it is straightforward to see that $\Hom_{\KdTL'}(\kirby_0, \kirby_0 \otimes \kirby_0)$
is countably infinite dimensional.
This issue will be remedied in \S \ref{ss:completion}, where we extend $\KdTL'$ to allow
certain infinite sums of diagrams.

\subsection{More diagrammatic relations}
\label{ss:more rels}
Introduce the following shorthand:
\[
\begin{tikzpicture}[anchorbase,scale=1]
	\draw[very thick] (.5,.5) to (0,.5);
	\draw[kirbystyle] (0,0) to (0,1) ;
\end{tikzpicture}
 = 
\begin{tikzpicture}[anchorbase,scale=1]
	\draw[very thick] (.25,.5) to [out=-45,in=180] (.5,.35) to [out=0,in=180] (1,.65);
	\draw[kirbystyle] (0,0) node[below=-2pt]{\scriptsize $[i]$} to (0,1) node[above=-2pt]{\scriptsize $[i{+}1]$};
	\draw[kirbystyle] (0,.75) to (.25,.5) node{$\bullet$};
\end{tikzpicture}
\, , \quad
\begin{tikzpicture}[anchorbase,xscale=-1]
	\draw[very thick] (.5,.5) to (0,.5);
	\draw[kirbystyle] (0,0) to (0,1) ;
\end{tikzpicture}
 = 
\begin{tikzpicture}[anchorbase,xscale=-1]
	\draw[very thick] (.25,.5) to [out=-45,in=180] (.5,.35) to [out=0,in=180] (1,.65);
	\draw[kirbystyle] (0,0) node[below=-2pt]{\scriptsize $[i]$} to (0,1) node[above=-2pt]{\scriptsize $[i{+}1]$};
	\draw[kirbystyle] (0,.75) to (.25,.5) node{$\bullet$};
\end{tikzpicture} \, .
\]

\begin{lemma}\label{lemma:secondaryrels} 
The following relations, as well as the images of these under the involution from Remark \ref{rem:involution}, 
hold in $\KdTL'$:
\begin{subequations}
\begin{gather}
\label{eq:red-black associativity}
\begin{tikzpicture}[anchorbase,scale=1]
	\draw[very thick] (.5,0) to (0.2,.3);
	\draw[kirbystyle] (0,0) node[below=-2pt]{\scriptsize $[i]$} to [out=90,in=225] (0.5,0.5) 
		to (0.5,1) node[above=-2pt]{\scriptsize $[i{+}j{+}1]$};
	\draw[kirbystyle] (1,0) node[below=-2pt]{\scriptsize $[j]$} to [out=90,in=315] (0.5,0.5);
\end{tikzpicture}
		=
\begin{tikzpicture}[anchorbase,scale=1]
	\draw[very thick] (.5,0) to (0.8,.3);
	\draw[kirbystyle] (0,0) node[below=-2pt]{\scriptsize $[i]$} to [out=90,in=225] (0.5,0.5) 
		to (0.5,1) node[above=-2pt]{\scriptsize $[i{+}j{+}1]$};
	\draw[kirbystyle] (1,0) node[below=-2pt]{\scriptsize $[j]$} to [out=90,in=315] (0.5,0.5);
\end{tikzpicture}
\, , \quad
\begin{tikzpicture}[anchorbase,scale=1]
	\draw[very thick] (-.5,.3) to  (0.2,.3);
	\draw[kirbystyle] (0,0) node[below=-2pt]{\scriptsize $[i]$} to [out=90,in=225] (0.5,0.5) 
		to (0.5,1) node[above=-2pt]{\scriptsize $[i{+}j{+}1]$};
	\draw[kirbystyle] (1,0) node[below=-2pt]{\scriptsize $[j]$} to [out=90,in=315] (0.5,0.5);
\end{tikzpicture}
=
\begin{tikzpicture}[anchorbase,scale=1]
	\draw[very thick] (-.5,.7) to (.5,.7);
	\draw[kirbystyle] (0,0) node[below=-2pt]{\scriptsize $[i]$} to [out=90,in=225] (0.5,0.5) 
		to (0.5,1) node[above=-2pt]{\scriptsize $[i{+}j{+}1]$};
	\draw[kirbystyle] (1,0) node[below=-2pt]{\scriptsize $[j]$} to [out=90,in=315] (0.5,0.5);
\end{tikzpicture}
\\
\label{eq:more dotsliding}
\begin{tikzpicture}[anchorbase,scale=1]
	\draw[very thick] (.5,.5) to (0,.5);
	\draw[kirbystyle] (0,0) node[below=-2pt]{\scriptsize $[i]$} to (0,1) node[above=-2pt]{\scriptsize $[i{+}1]$};
	\node at (0,.75) {$\bullet$};
\end{tikzpicture}
\  =
\begin{tikzpicture}[anchorbase,scale=1]
	\draw[very thick] (.5,.5) to (0,.5);
	\draw[kirbystyle] (0,0) node[below=-2pt]{\scriptsize $[i]$} to (0,1) node[above=-2pt]{\scriptsize $[i{+}1]$};
	\node at (0,.25) {$\bullet$};
\end{tikzpicture}
+(-1)^i\!\!\!\!
\begin{tikzpicture}[anchorbase,scale=1]
	\draw[very thick] (.5,.5) to (0,.5);
	\draw[kirbystyle] (0,0) node[below=-2pt]{\scriptsize $[i]$} to (0,1) node[above=-2pt]{\scriptsize $[i{+}1]$};
	\node at (0.25,.5) {$\bullet$};
\end{tikzpicture}
\\
\label{eq:belly}
 \begin{tikzpicture}[anchorbase,scale=1]
	\draw[very thick] (.7,.65) to [out=180,in=90] (.4,.5) to [out=270,in=180](.7,.35);
	\draw[kirbystyle] (.7,0) to (.7,1);
\end{tikzpicture}
=0
\,  , \quad
 \begin{tikzpicture}[anchorbase,scale=1]
	\draw[very thick] (.7,.65) to [out=180,in=90] (.4,.5) to [out=270,in=180](.7,.35);
	\draw[kirbystyle] (.7,0) to (.7,1);
	\node at (.4,.48) {$\bullet$};
\end{tikzpicture}
=
\begin{tikzpicture}[anchorbase,scale=1]
	\draw[kirbystyle] (.7,0) to (.7,1);
\end{tikzpicture}
\\
\label{eq:red line and poly rep}
\begin{tikzpicture}[anchorbase,scale=1]
	\draw[very thick] (0,.65) to [out=0,in=90] (.25,.5) to[out=270,in=0] (0,.35);
	\draw[kirbystyle] (.5,0) to (.5,1);
\end{tikzpicture}
\ = \
\begin{tikzpicture}[anchorbase,scale=1]
	\draw[very thick] (0,.65) to(.5,0.65);
	\node at (0.25,.63) {$\bullet$};
	\draw[very thick] (0,.35) to  (.5,0.35);
	\draw[kirbystyle] (.5,0) to (.5,1);
\end{tikzpicture}
+
\begin{tikzpicture}[anchorbase,scale=1]
	\draw[very thick] (0,.65) to  (.5,0.65);
	\draw[very thick] (0,.35) to (.5,0.35);
	\node at (0.25,.33) {$\bullet$};
	\draw[kirbystyle] (.5,0) to (.5,1);
\end{tikzpicture}
\\
\label{eq:dotted rung rels}
\begin{tikzpicture}[anchorbase,scale=1]
	\draw[very thick] (0,.65) to  (.5,0.65);
	\node at (0.25,.63) {$\bullet$};
	\draw[very thick] (0,.35) to (.5,0.35);
	\node at (0.25,.33) {$\bullet$};
	\draw[kirbystyle] (0,0) to (0,1);
	\draw[kirbystyle] (.5,0) to (.5,1);
\end{tikzpicture}
 = 
\begin{tikzpicture}[anchorbase,scale=1]
	\draw[kirbystyle] (0,0) to (0,1);
	\draw[kirbystyle] (.5,0) to (.5,1);
\end{tikzpicture}
\ , \quad
\begin{tikzpicture}[anchorbase,scale=1]
	\draw[very thick] (0,.65) to(.5,0.65);
	\node at (0.25,.63) {$\bullet$};
	\draw[very thick] (0,.35) to  (.5,0.35);
	\draw[kirbystyle] (0,0) to (0,1);
	\draw[kirbystyle] (.5,0) to (.5,1);
\end{tikzpicture}
 + 
\begin{tikzpicture}[anchorbase,scale=1]
	\draw[very thick] (0,.65) to  (.5,0.65);
	\draw[very thick] (0,.35) to (.5,0.35);
	\node at (0.25,.33) {$\bullet$};
	\draw[kirbystyle] (0,0) to (0,1);
	\draw[kirbystyle] (.5,0) to (.5,1);
\end{tikzpicture}
 = 0 \, .
\end{gather}
\end{subequations}
\end{lemma}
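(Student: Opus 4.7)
My plan is to unfold each relation using the rung shorthand from the start of \S\ref{ss:more rels}---a rung attached to a kirby strand $[i]$ is the composite $[i]\otimes \gen \xrightarrow{\id\otimes\incl_1}[i]\otimes[1]\xrightarrow{\text{merge}}[i+1]$, with the external black end being the strand below $\incl_1$---and then apply the defining relations of $\KdTL'$ from Definition~\ref{def:KdTL prime} together with the $\dTL$-relations in \eqref{eq:dTLrels}. It suffices to prove the relations as stated, since their reflections follow from the involution of Remark~\ref{rem:involution}.

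The first two families of relations are direct consequences of the merge axioms. The identities in \eqref{eq:red-black associativity} amount to reparenthesizing an iterated merge of three kirby objects (one of them coming from $\incl_1$), which is handled by the associativity of merge in \eqref{eq:red algebra rels}. For \eqref{eq:more dotsliding}, a dot above a rung, sitting on the merged $[i+1]$-output, splits via the first equation of \eqref{eq:dotslideKdTL} into a dot below on the $[i]$-input plus $(-1)^i$ times a dot on the intermediate $[1]$-branch; the latter transfers through $\incl_1$ to the external black strand by the third equation of \eqref{eq:dotslideKdTL} with $n=1$.

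For the belly relations \eqref{eq:belly}, the key step is to expand the belly on a kirby $[i]$ as the composite
\[
\kirby'_i \xrightarrow{\id\otimes\text{cup}} \kirby'_i\otimes\gen^2 \xrightarrow{\id\otimes(\incl_1\otimes\incl_1)} \kirby'_i\otimes[1]\otimes[1] \xrightarrow{\id\otimes\text{merge}} \kirby'_i\otimes[2] \xrightarrow{\text{merge}} [i+2],
\]
with the cup (dotted or undotted) encoding the closed black loop. The last relation in \eqref{eq:red relation c} collapses $\text{merge}\circ(\incl_1\otimes\incl_1)$ into $\incl_2$, while the first two relations in \eqref{eq:red relation c} with $i=j=0$ give $\incl_2\circ(\text{undotted cup})=0$ and $\incl_2\circ(\text{dotted cup})=\incl_0$; the unit law in \eqref{eq:red algebra rels} converts the latter to the identity. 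The same strategy handles \eqref{eq:red line and poly rep}: the RHS expands to $\text{merge}\circ(\incl_2\otimes\id)\circ((x_1+x_2)\otimes\id)$, the $\dTL$ dot-slide in \eqref{eq:dTLrels} rewrites $x_1+x_2$ as the hourglass sum $c^+c_- + c^-c_+$, and the same two relations in \eqref{eq:red relation c} kill the first summand and collapse the second (via the unit law) to the undotted cap tensored with the identity on $\kirby'_i$, matching the LHS.

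The ladder relations \eqref{eq:dotted rung rels} will be the main obstacle, as their proof requires tracking two independent $\incl_1+$merge structures on each of the two kirby strands simultaneously. My plan is to use associativity of merge to combine the two rungs attached to each individual kirby into a single $\incl_2+$merge, thereby reducing the two-rung picture to one governed by the combined structure of the two cups on the four intermediate black strands. The vanishing of the singly-dotted sum (the second identity) will then follow by applying the $\dTL$ dot-slide across both cups and using the first relation of \eqref{eq:red relation c} to kill the resulting undotted cup-$\incl$ contributions. The doubly-dotted identity (the first identity) will follow either by combining this vanishing with the dotted case of \eqref{eq:belly} applied to each kirby, or more directly by applying \eqref{eq:red line and poly rep} in sequence on each kirby strand. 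The principal technical challenge will be careful sign bookkeeping through \eqref{eq:dotslideKdTL} and \eqref{eq:dTLrels} across this two-kirby configuration.
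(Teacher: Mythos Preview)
Your proposal is correct, and for \eqref{eq:red-black associativity}--\eqref{eq:red line and poly rep} it matches the paper's approach (which leaves the first three as exercises; your argument for \eqref{eq:red line and poly rep} is the same computation as the paper's, read from right to left rather than left to right).

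For \eqref{eq:dotted rung rels}, however, the paper takes a shorter route than yours. For the first relation (both rungs dotted $=$ identity), the paper applies the two-dot-switch \eqref{eq:twodotswitch} directly to the pair of dotted black rungs, turning them into a pair of dotted bellies---one on each Kirby strand---each of which equals the identity by \eqref{eq:belly}. For the second relation (singly-dotted sum $=0$), the paper applies the already-proven \eqref{eq:red line and poly rep} on the right Kirby strand: this collapses the sum of singly-dotted rungs to a cap, leaving an undotted belly on the left Kirby strand, which vanishes by \eqref{eq:belly}. Your plan to unfold both rungs into $\incl_2$-plus-merge structures and track signs through nested cups would also succeed, but is unnecessarily laborious; and your suggestion to ``apply \eqref{eq:red line and poly rep} in sequence'' for the doubly-dotted case is imprecise, since \eqref{eq:red line and poly rep} concerns the singly-dotted \emph{sum}, not a doubly-dotted configuration. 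The paper's trick of using \eqref{eq:twodotswitch} here avoids all of the sign bookkeeping you were anticipating.
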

Observe that the only relation that picks up a sign under the left-right reflection is \eqref{eq:more dotsliding}, 
whose image yields the relation
\[
\begin{tikzpicture}[anchorbase,xscale=-1]
	\draw[very thick] (.5,.5) to (0,.5);
	\draw[kirbystyle] (0,0) node[below=-2pt]{\scriptsize $[i]$} to (0,1) node[above=-2pt]{\scriptsize $[i{+}1]$};
	\node at (0,.75) {$\bullet$};
\end{tikzpicture}
=
-
\begin{tikzpicture}[anchorbase,xscale=-1]
	\draw[very thick] (.5,.5) to (0,.5);
	\draw[kirbystyle] (0,0) node[below=-2pt]{\scriptsize $[i]$} to (0,1) node[above=-2pt]{\scriptsize $[i{+}1]$};
	\node at (0,.25) {$\bullet$};
\end{tikzpicture}
+ 
\begin{tikzpicture}[anchorbase,xscale=-1]
	\draw[very thick] (.5,.5) to (0,.5);
	\draw[kirbystyle] (0,0) node[below=-2pt]{\scriptsize $[i]$} to (0,1) node[above=-2pt]{\scriptsize $[i{+}1]$};
	\node at (0.25,.5) {$\bullet$};
\end{tikzpicture} \, .
\]

\begin{proof}
Relations \eqref{eq:red-black associativity}, \eqref{eq:more dotsliding}, and \eqref{eq:belly} are easy to check, 
thus we leave this as an exercise. 
Relation \eqref{eq:red line and poly rep} follows from
\[
\begin{tikzpicture}[anchorbase,scale=1]
	\draw[very thick] (0,.65) to [out=0,in=90] (.25,.5) to[out=270,in=0] (0,.35);
	\draw[kirbystyle] (.5,0) to (.5,1);
\end{tikzpicture}
 = 
\begin{tikzpicture}[anchorbase,scale=1]
	\draw[very thick] (-.2,.65) to [out=0,in=90] (.1,.5) to [out=270,in=0] (-.2,.35);
	\draw[very thick] (.7,.65) to [out=180,in=90] (.4,.5) to [out=270,in=180](.7,.35);
	\node at (.4,.48) {$\bullet$};
	\draw[kirbystyle] (.7,0) to (.7,1);
\end{tikzpicture}
 = 
\begin{tikzpicture}[anchorbase,scale=1]
	\draw[very thick] (0,.65) to(.5,0.65);
	\node at (0.25,.63) {$\bullet$};
	\draw[very thick] (0,.35) to  (.5,0.35);
	\draw[kirbystyle] (.5,0) to (.5,1);
\end{tikzpicture}
 + 
\begin{tikzpicture}[anchorbase,scale=1]
	\draw[very thick] (0,.65) to  (.5,0.65);
	\draw[very thick] (0,.35) to (.5,0.35);
	\node at (0.25,.33) {$\bullet$};
	\draw[kirbystyle] (.5,0) to (.5,1);
\end{tikzpicture}
 -  \begin{tikzpicture}[anchorbase,scale=1]
	\draw[very thick] (-.2,.65) to [out=0,in=90] (.1,.5) to [out=270,in=0] (-.2,.35);
	\draw[very thick] (.7,.65) to [out=180,in=90] (.4,.5) to [out=270,in=180](.7,.35);
	\node at (.1,.48) {$\bullet$};
	\draw[kirbystyle] (.7,0) to (.7,1);
\end{tikzpicture}
\]
together with cup annihilation relation in \eqref{eq:belly}.  The first relation in \eqref{eq:dotted rung rels} follows from
\[
\begin{tikzpicture}[anchorbase,scale=1]
	\draw[very thick] (0,.65) to  (.5,0.65);
	\node at (0.25,.63) {$\bullet$};
	\draw[very thick] (0,.35) to (.5,0.35);
	\node at (0.25,.33) {$\bullet$};
	\draw[kirbystyle] (0,0) to (0,1);
	\draw[kirbystyle] (.5,0) to (.5,1);
\end{tikzpicture}
\ \ =\ \ 
\begin{tikzpicture}[anchorbase,scale=1]
	\draw[very thick] (-.2,.65) to [out=0,in=90] (.1,.5) to [out=270,in=0] (-.2,.35);
	\draw[very thick] (.7,.65) to [out=180,in=90] (.4,.5) to [out=270,in=180](.7,.35);
	\node at (.1,.48) {$\bullet$};
	\node at (.4,.48) {$\bullet$};
	\draw[kirbystyle] (-.2,0) to (-.2,1);
	\draw[kirbystyle] (.7,0) to (.7,1);
\end{tikzpicture}
\]
followed by dotted cup absorption \eqref{eq:belly}. 
The second relation in \eqref{eq:dotted rung rels} follows from \eqref{eq:red line and poly rep} 
together with the undotted cup annihilation \eqref{eq:belly}.  
\end{proof}

\begin{definition}\label{def:shorthand}
For $n\in \N$ set:
\[
\begin{tikzpicture}[anchorbase,scale=1]
	\draw[kirbystyle] (0,0)node[below=-2pt]{\scriptsize $[k]$} to (0,1) node[above=-2pt]{\scriptsize $[k{+}i]$};
	\draw[kirbystyle] (1,0) node[below=-2pt]{\scriptsize $[\ell]$} to (1,1) node[above=-2pt]{\scriptsize $[\ell{+}i]$};
	\draw[kirbystyle] (0,.5) to (1,.5);
	\draw[fill=white] (.5,.5) circle (.15);
	\node at (.5,.2) {\small $n$};
	\node[kirb] at (.2,.3) {\scriptsize $[i]$};
\end{tikzpicture}
:= 
\begin{tikzpicture}[anchorbase,xscale=1]
	\draw[kirbystyle] (-.25,-.5)node[below=-2pt]{\scriptsize $[k]$} to (-.25,0) to[out=90,in=225] (0,.25) 
		to (0,.5)node[above=-2pt]{\scriptsize $[k{+}i]$};
	\draw[kirbystyle] (1.25,-.5)node[below=-2pt]{\scriptsize $[\ell]$} to(1.25,0) to [out=90,in=315] (1,.25) 
		to (1,.5)node[above=-2pt]{\scriptsize $[\ell{+}i]$};
	\draw[kirbystyle] (0,.25)[out=315,in=180] to (.5,0) to[out=0,in=225] (1,.25);
	\draw[fill=white] (.5,0) circle (.15);
	\node[kirb] at (.12,-.1) {\scriptsize$[i]$};
	\node at (.5,-.3) {\small $n$};
\end{tikzpicture}
\, ,  \quad
\begin{tikzpicture}[anchorbase,scale=1]
\draw[kirbystyle] (0,1)node[above=-2pt]{\scriptsize $[i]$} \dr (.4,.5) \ru (.8,1) node[above=-2pt]{\scriptsize $[i]$};
\draw[fill=white] (.4,.5) circle (.15);
\node at (.4,.2) {\small $n$};
\end{tikzpicture}
:=
\begin{cases}
\begin{tikzpicture}[anchorbase,scale=1]
\draw[kirbystyle] (0,0) to (0,.5)node[above=-2pt]{\scriptsize $[i]$};
\draw[kirbystyle] (1,0) to (1,.5)node[above=-2pt]{\scriptsize $[i]$};
\draw[very thick] (0,0) to [out=315,in=110] (.25,-.25) to[out=-70,in=180] (.5,-.4) to[out=0,in=250] (.75,-.25) to (1,0);
\draw[very thick] (0,0) to [out=225,in=90] (-.25,-.25) to[out=-90,in=180] (.5,-.8) to[out=0,in=270] (1.25,-.25)
 to[in=-45,out=90] (1,0);
\node[rotate=90] at (.5,-.6) {$\mydots$};
\node at (0,-.25) {$\mydots$};
\node at (1,-.25) {$\mydots$};
\end{tikzpicture}
& \text{ if $n\equiv i$ mod 2} \\
\begin{tikzpicture}[anchorbase,scale=1]
	\draw[kirbystyle] (0,0) to (0,.5)node[above=-2pt]{\scriptsize $[i]$};
	\draw[kirbystyle] (1,0) to (1,.5)node[above=-2pt]{\scriptsize $[i]$};
	\draw[very thick] (0,0) to [out=315,in=110] (.25,-.25) to[out=-70,in=180] (.5,-.4) 
		to[out=0,in=250] (.75,-.25) to (1,0);
	\draw[very thick] (0,0) to [out=225,in=90] (-.25,-.25) to[out=-90,in=180] (.5,-.8) to[out=0,in=270] (1.25,-.25)
		 to[in=-45,out=90] (1,0);
	 \draw[very thick] (0,0) to [out=200,in=90] (-.4,-.25) to[out=-95,in=180] (.5,-1) to[out=0,in=275] (1.4,-.25)
		 to[in=-20,out=90] (1,0);
	\node[rotate=90] at (.5,-.6) {$\mydots$};
	\node at (0,-.25) {$\mydots$};
	\node at (1,-.25) {$\mydots$};
	\node at (.5,-1) {$\bullet$};
\end{tikzpicture}
& \text{ if $n\not\equiv i$ mod 2}
\end{cases}
\]
where in both instances of the second formula there are $n$ undotted black strands running between the \kirbcolword strands. 
\end{definition}

Informally, the $n$-labeled hollow circle can be interpreted as indicating $n$ \emph{dots missing}. We record some relations involving these morphisms.

\begin{lemma}\label{lemma:rungs}
\begin{subequations}
\begin{gather}
\label{eq:holes eat dots}
\begin{tikzpicture}[anchorbase,xscale=-1]
\draw[kirbystyle] (0,1)node[above=-2pt]{\scriptsize $[i]$} \dr (.4,.5) \ru (.8,1) node[above=-2pt]{\scriptsize $[i]$};
\draw[fill=white] (.4,.5) circle (.15);
\node at (.4,.2) {\small $n$};
\node at (.02,.8) {$\bullet$};
\end{tikzpicture}
\ = \
(-1)^{n-1} n\cdot \begin{tikzpicture}[anchorbase,xscale=1]
\draw[kirbystyle] (0,1)node[above=-2pt]{\scriptsize $[i]$} \dr (.4,.5) \ru (.8,1) node[above=-2pt]{\scriptsize $[i]$};
\draw[fill=white] (.4,.5) circle (.15);
\node at (.4,.2) {\small $n{-}1$};
\end{tikzpicture}
\ = \
(-1)^{i-1}\begin{tikzpicture}[anchorbase,xscale=1]
\draw[kirbystyle] (0,1)node[above=-2pt]{\scriptsize $[i]$} \dr (.4,.5) \ru (.8,1) node[above=-2pt]{\scriptsize $[i]$};
\draw[fill=white] (.4,.5) circle (.15);
\node at (.4,.2) {\small $n$};
\node at (.02,.8) {$\bullet$};
\end{tikzpicture}
\\
\label{eq:dotted teardrop}
\begin{tikzpicture}[anchorbase,xscale=-1]
\draw[kirbystyle] (.4,1.5) to[out=225,in=90] (0,1) \dr (.4,.5) \ru (.8,1) to[out=90,in=-45] (.4,1.5) to (.4,1.75)node[above=-2pt]{\scriptsize $[0]$};
\draw[fill=white] (.4,.5) circle (.15);
\node at (.4,.2) {\small $n$};
\node at (.02,.8) {$\bullet$};
\node at (-.3,.8) {\small $m$};
\node[kirb] at (-.15,1.2) {\scriptsize$[i]$};
\node[kirb] at (.95,1.2) {\scriptsize$[i]$};
\end{tikzpicture}
\ = \ 
\d_{n,m}\cdot (-1)^{\binom{n}{2}}n! \cdot 
\begin{tikzpicture}[anchorbase,scale=1]
	\draw[kirbystyle] (0,0)node{$\bullet$} to (0,.5)node[above=-2pt]{\scriptsize $[0]$};
\end{tikzpicture}
\\
\label{eq:z on rungs}
\begin{tikzpicture}[anchorbase,scale=1]
	\draw[kirbystyle] (-.75,-.1)node[below]{\scriptsize $[i]$} to (-.75,1.1)node[above]{\scriptsize $[i{+}n]$};
	\draw[kirbystyle] (.75,-.1)node[below]{\scriptsize $[j]$} to (.75,1.1)node[above]{\scriptsize $[j{+}n]$};
	\draw[very thick] (-.75,.3) to (.75,.3);
	\draw[very thick] (-.75,.7) to (.75,.7);
	\filldraw[white] (-.35,0) rectangle (.35,1); 
	\draw[very thick] (-.35,0) rectangle (.35,1); 
	\node[rotate=90] at (0,.5) {$\xs_n^{n-k}$};
	\node[rotate=90] at (-.55,.5) {$\mydots$};
	\node[rotate=90] at (.55,.5) {$\mydots$};
\end{tikzpicture}
\ = \ 
(-1)^{\binom{n-k}{2}}\frac{n!}{k!}\begin{tikzpicture}[anchorbase,scale=1]
	\draw[kirbystyle] (-.5,.5) to (.5,0.5);
	\draw[kirbystyle] (-.5,0)node[below=-2pt]{\scriptsize $[i]$} to (-.5,1)node[above=-2pt]{\scriptsize $[i{+}n]$};
	\draw[kirbystyle] (.5,0)node[below=-2pt]{\scriptsize $[j]$} to (.5,1)node[above=-2pt]{\scriptsize $[j{+}n]$};
	\draw[fill=white] (0,.5) circle (.15);
	\node at (0,.2) {\small $k$};
\end{tikzpicture}
\ = \ 
(-1)^{k(n-k)}\begin{tikzpicture}[anchorbase,scale=1]
	\draw[kirbystyle] (-.75,-.1)node[below]{\scriptsize $[i]$} to (-.75,1.1)node[above]{\scriptsize $[i{+}n]$};
	\draw[kirbystyle] (.75,-.1)node[below]{\scriptsize $[j]$} to (.75,1.1)node[above]{\scriptsize $[j{+}n]$};
	\draw[very thick] (-.75,.3) to (.75,.3);
	\draw[very thick] (-.75,.7) to (.75,.7);
	\filldraw[white] (-.35,0) rectangle (.35,1); 
	\draw[very thick] (-.35,0) rectangle (.35,1); 
	\node[rotate=-90] at (0,.5) {$\xs_n^{n-k}$};
	\node[rotate=90] at (-.55,.5) {$\mydots$};
	\node[rotate=90] at (.55,.5) {$\mydots$};
\end{tikzpicture}
\end{gather}
\end{subequations}
\end{lemma}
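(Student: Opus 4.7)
The plan is to derive all three identities from the definition of the hollow-circle notation in Definition~\ref{def:shorthand} and the relations of $\KdTL'$ established above, particularly the dot-slide relations \eqref{eq:dotslideKdTL} and \eqref{eq:more dotsliding}, the cup relations \eqref{eq:belly}, and the Jones--Wenzl dot-absorption identity \eqref{eq:PxxxP} from Lemma~\ref{lem:JWrels}.

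For \eqref{eq:holes eat dots}, I would first unfold the hollow-$n$ circle into its defining $n$-strand pattern connecting the two $[i]$-labeled Kirby strands through $\incl_n$. The third identity from \eqref{eq:dotslideKdTL} shows that dots on a Kirby strand can be absorbed into the corresponding $\JW_n$ pattern; applying \eqref{eq:more dotsliding} then slides the dot on the top-left Kirby strand down across the merge, producing a sum where each term places the dot on one of the $n$ absorbed black strands (with an alternating sign pattern coming from iterated use of \eqref{eq:dotslideKdTL}). Using the $\mathfrak{S}_n$-invariance of $\JW_n$ (absorbing permutations), all $n$ terms are equal, and applying \eqref{eq:PxxxP} with $k=1$ identifies the result with an $(n-1)$-hole configuration, up to the overall factor $(-1)^{n-1}n$. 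The second equality of \eqref{eq:holes eat dots} follows from the same reasoning applied on the other side of the rung, combined with the sign discrepancy $(-1)^{i-1}$ that appears when pulling a dot through the merge on the parity-$i$ side via \eqref{eq:dotslideKdTL}.

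For \eqref{eq:dotted teardrop}, the strategy is to iteratively eliminate the $m$ dots using \eqref{eq:holes eat dots}. Each application reduces the number of missing dots by one and multiplies by $(-1)^{n-j-1}(n-j)$ at the $j$th step; after $m$ applications one is left with either a diagram with $n-m$ missing dots (if $n > m$) or, when $n=m$, a teardrop built from $\incl_0$ alone. In the first case, unfolding the remaining hollow circle and capping it off as a teardrop produces undotted cups of black strands which vanish by \eqref{eq:belly}. Dually, if $m>n$, the excess dots annihilate after being slid onto a single black strand by nilpotency of $x$ (a consequence of the second relation in \eqref{eq:dTLrels}). The $n=m$ case accumulates the product $\prod_{j=0}^{n-1}(-1)^{n-j-1}(n-j) = (-1)^{\binom{n}{2}}n!$, and the surviving teardrop reduces to the identity $[0]$-strand by the first relation in \eqref{eq:red algebra rels}.

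For \eqref{eq:z on rungs}, I would first rewrite $\xs_n^{n-k}$ in the central box using \eqref{eq:PxxxP}: inside $\JW_n\cdots\JW_n$ one has $z_n^{n-k}\JW_n = \tfrac{(-1)^{\binom{n-k}{2}}n!}{k!}\JW_n \ux_1\cdots \ux_{n-k}\JW_n$. Absorbing the resulting $n-k$ single dots across the upper $\JW_n$ through the inclusions $\incl_n$ (using \eqref{eq:dotslideKdTL} read in reverse) places them onto $n-k$ of the connecting black strands; by the $\mathfrak{S}_n$-invariance this matches exactly the hollow-$k$ pattern (which has $k$ \emph{missing} dots among $n$ strands), yielding the first equality. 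The second equality is the reflection of the first through the vertical axis, applied using the involution from Remark~\ref{rem:involution}: this preserves $z_n$ (up to the sign $(-1)^{n-1}$ per Remark~\ref{rmk:involution sign} which matches on both sides) while inverting the role of the hollow circle; systematic bookkeeping of the $(-1)^{i-1}$ signs per dotted strand swapped yields exactly $(-1)^{k(n-k)}$.

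The main obstacle throughout is the careful sign bookkeeping: the asymmetric dot-slide \eqref{eq:dotslideKdTL} introduces $(-1)^i$ on one side, and combining this with the alternating signs in the Convention~\ref{conv:End} variables $\sx_i$ and the binomial-coefficient signs from \eqref{eq:PxxxP} requires some care to match the targets $(-1)^{n-1}$, $(-1)^{\binom{n}{2}}$, $(-1)^{\binom{n-k}{2}}$, and $(-1)^{k(n-k)}$ in the three identities.
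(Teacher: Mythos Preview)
Your proposal is correct and aligns with the paper's (very terse) proof. The paper simply declares \eqref{eq:holes eat dots} an exercise, deduces \eqref{eq:dotted teardrop} by iterating \eqref{eq:holes eat dots} to decrease $m$ and $n$ simultaneously (exactly your argument), and says \eqref{eq:z on rungs} follows similarly by iteration of \eqref{eq:holes eat dots}.

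The only genuine difference is your route for \eqref{eq:z on rungs}: you expand $z_n^{n-k}$ via \eqref{eq:PxxxP} and then match the resulting dot pattern to the hollow-$k$ diagram, whereas the paper instead pushes $z_n^{n-k}$ onto the Kirby strand as $n-k$ dots (second relation in \eqref{eq:dotslideKdTL}) and then applies \eqref{eq:holes eat dots} $n-k$ times to reduce hollow-$n$ to hollow-$k$. The paper's route is slightly more economical because the sign bookkeeping is already packaged in \eqref{eq:holes eat dots}; your route requires separately tracking the signs from \eqref{eq:PxxxP} and from reducing $n$ rungs to $k$ rungs via \eqref{eq:dotted rung rels}, which is where the $(-1)^{k(n-k)}$ discrepancy between the two orientations of the $z_n^{n-k}$ box naturally appears. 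Both approaches are valid and yield the same result.
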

In Equation \eqref{eq:holes eat dots},  it is understood that the expression in the middle is zero for $n=0$.
\begin{proof}
Relation \eqref{eq:holes eat dots} is an exercise 
(note one equation implies the other via the involution from Remark \ref{rem:involution}). 
Relation \eqref{eq:dotted teardrop} is immediate if $n=0$ or $m=0$. 
Otherwise we use \eqref{eq:holes eat dots} to decrease both $m$ and $n$ and the result follows by induction. 
Relation \eqref{eq:z on rungs} can be proved similarly by iterating \eqref{eq:holes eat dots}.
\end{proof}

\subsection{The polynomial representation, revisited}
\label{ss:poly redux}
\label{ss:tensor with kirby}

The diagrammatic category $\KdTL'$ allows for straightforward proofs of 
isomorphisms in $\cdTL$.
For instance, in light of relation \eqref{eq:red line and poly rep}, 
we may view the graphical calculus in $\KdTL'$ as an extension of 
the graphical calculus for the polynomial representation from Remark \ref{rem:diagrammatic poly}.
The following is a key consequence. 
Recall that we denote $\kirby = \kirby_0 \oplus \kirby_1 \in \cdTL$.

\begin{thm}
	\label{thm:tensor with kirby}
There is an isomorphism 
$\kirby \otimes - \cong \Pol(-)\otimes \kirby$ of functors $\cdTL\rightarrow \cdTL$.
\end{thm}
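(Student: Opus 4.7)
The plan is to construct the natural isomorphism explicitly on compact objects using the diagrammatic calculus of $\KdTL'$, and then extend by cocontinuity.

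First, observe that both $\kirby \otimes -$ and $\Pol(-)\otimes \kirby$ preserve filtered colimits: the former by Remark~\ref{rem:Indmonoidal}, and the latter because $\Pol \colon \cdTL \to \Vect^\Z$ preserves filtered colimits by Remark~\ref{rem:compact}, while the copower with a fixed object preserves colimits in its first argument. Since every object of $\cdTL$ is a filtered colimit of objects of $\Mat(\Kar(\dTL))$, and both functors are additive and commute with grading shifts, it suffices to build a natural isomorphism on the generating objects $\gen^n \in \dTL$.

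For $X = \gen^n$, the monomial basis $\{x_I\}_{I \subset \{1,\ldots,n\}}$ of $\Pol(\gen^n) = q^{-n}\K[x_1,\ldots,x_n]/(x_i^2)$ yields
\[
\Pol(\gen^n)\otimes \kirby \;\cong\; \bigoplus_{I \subset \{1,\ldots,n\}} q^{-n+2|I|}\kirby.
\]
For each subset $I$, I would construct structure maps $\sigma_I$ and $\pi_I$ in $\KdTL'$: the projection $\pi_I \colon \kirby \otimes \gen^n \to q^{-n+2|I|}\kirby$ first places a dot on each strand indexed by $I$ and then absorbs all $n$ strands into the Kirby via iterated application of the multiplication from \eqref{eq:red algebra rels}, while $\sigma_I$ is defined dually by unzipping. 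To verify these exhibit a biproduct decomposition via Lemma~\ref{lemma:finite biproduct recognition}, I would check orthogonality $\pi_I \circ \sigma_J = \delta_{IJ}\,\id$ via the dotted teardrop relation \eqref{eq:dotted teardrop}, and completeness $\sum_I \sigma_I \circ \pi_I = \id_{\kirby \otimes \gen^n}$ by iteratively applying the identity decomposition \eqref{eq:red line and poly rep} to each of the $n$ strands in turn.

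Naturality in a morphism $D \colon \gen^m \to \gen^n$ of $\dTL$ reduces, via the monoidal generators of $\dTL$, to checking compatibility separately for cups, caps, and dots. This is essentially built into the construction: for a cap, naturality is exactly the content of \eqref{eq:red line and poly rep}, which says that pulling a cap through a Kirby strand produces the same dotted/undotted sum as the cap's action on the monomial basis of $\Pol$. The analogous identities for cups and dots follow from \eqref{eq:belly} and \eqref{eq:dotslideKdTL}, reflecting the fact that $\Pol$ was defined precisely to encode these diagrammatic absorption rules.

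The hard part will be the bookkeeping required to establish the completeness identity $\sum_I \sigma_I \circ \pi_I = \id_{\kirby \otimes \gen^n}$: one must carefully iterate \eqref{eq:red line and poly rep} across all $n$ strands and keep track of the $2^n$ resulting idempotents indexed by subsets $I$. Although conceptually routine, organizing this combinatorial argument cleanly within the graphical calculus of $\KdTL'$ is the only real work in the proof.
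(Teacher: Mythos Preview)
Your approach is correct and essentially matches the paper's. The one simplification you miss: since $\Pol$ is monoidal, the paper reduces to establishing the isomorphism only on the single generating object $\gen$ (not all $\gen^n$), after which the tensor-product isomorphisms $\kirby'\otimes \gen^n \cong \Pol(\gen)^{\otimes n}\otimes \kirby' \cong \Pol(\gen^n)\otimes \kirby'$ come for free. This collapses the ``hard part'' you flag --- the $2^n$ bookkeeping for completeness --- to the trivial $n=1$ case, namely the two-term decomposition in \eqref{eq:V kirby idemp}, with orthogonality following from \eqref{eq:belly} rather than \eqref{eq:dotted teardrop}. Naturality is then checked only against a single cup, cap, and dot, exactly as you outline.
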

\begin{proof}
Remarks \ref{rem:compact} and \ref{rem:Indmonoidal} show that both
$\kirby\otimes -$ and $\Pol(-)\otimes \kirby$ preserve filtered 
colimits,
so Remark \ref{rem:absolute} implies that it suffices to give an isomorphism of restricted functors 
\[
(\kirby \otimes -)|_{\dTL}  \cong( \Pol(-)\otimes \kirby)|_{\dTL} \colon \dTL\rightarrow \cdTL \, .
\]  
Since functors preserve isomorphisms and using Theorem~\ref{thm:from diag kirby
to kirby}, it further suffices to show that there is an isomorphism $\kirby'
\otimes - \cong \Pol(-)\otimes \kirby'$ of functors $\dTL\rightarrow
\Mat(\Kar(\KdTL'))$, where $\kirby' := \kirby'_0 \oplus \kirby'_1$. Since $\Pol$
is monoidal, we need only establish the isomorphism on generating
objects/morphisms. Equation \eqref{eq:red line and poly rep} gives that 
\begin{equation}\label{eq:V kirby idemp}
\begin{tikzpicture}[anchorbase,xscale=1]
	\draw[very thick] (.5,1) to (.5,0);
	\draw[kirbystyle] (0,0) to (0,1);
\end{tikzpicture}
=
\begin{tikzpicture}[anchorbase,xscale=1]
	\draw[very thick] (.5,1) to (0,.6);
	\draw[very thick] (.5,0) to (0,.4);
	\draw[kirbystyle] (0,0) to (0,1);
	\node at (0.25,.2) {$\bullet$};
\end{tikzpicture}
+ \
\begin{tikzpicture}[anchorbase,xscale=1]
	\draw[very thick] (.5,1) to (0,.6);
	\draw[very thick] (.5,0) to (0,.4);
	\draw[kirbystyle] (0,0) to (0,1);
	\node at (0.25,.8) {$\bullet$};
\end{tikzpicture} \, ,
\end{equation}
and \eqref{eq:belly} implies that the summands on the right-hand side are
orthogonal idempotents. Hence, we have the following direct sum decomposition in
$\Mat(\Kar(\KdTL'))$:
\[
\kirby' \otimes \gen \cong q\inv \kirby' \oplus q \kirby' 
\cong q^{-1} \K[x]/(x^2) \otimes \kirby' = \Pol(\gen) \otimes \kirby' \, .
\]
It is straightforward to check that 
the induced isomorphisms
$\kirby' \otimes \gen^n \cong \Pol(\gen)^{\otimes n} \otimes \kirby' \cong \Pol(\gen^n) \otimes \kirby'$ 
are natural with respect to the generating morphisms 
in $\dTL$, essentially by Remark \ref{rem:diagrammatic poly}.
For example, the components of the isomorphism 
$\Pol(\gen) \otimes \kirby' \xrightarrow{\cong} \gen \otimes \kirby'$ 
corresponding to the subspaces $\K \cdot 1$ and $\K \cdot x$ in $\Pol(\gen)$ are
\[
\begin{tikzpicture}[anchorbase,xscale=1]
	\draw[very thick] (.5,1) to (0,.5);
	\draw[kirbystyle] (0,.25) to (0,1);
\end{tikzpicture}
\quad \text{and} \quad
\begin{tikzpicture}[anchorbase,xscale=1]
	\draw[very thick] (.5,1) to node{$\bullet$} (0,.5);
	\draw[kirbystyle] (0,.25) to (0,1);
\end{tikzpicture}
\]
and it follows from \eqref{eq:V kirby idemp}
that the dot generator in $\dTL$ maps the first component to the second 
and the second to zero, as desired.
\end{proof}

\begin{remark} 
Continuing along the lines of Remark~\ref{rem:dTLrepth},
Theorem \ref{thm:tensor with kirby} shows that 
in the presence of the Kirby color, 
the generating object $c$ of $\dTL$ acts as the multiplicity space $\Pol(c)=V$, 
which we can identify with 
the vector representation of $\slnn{2}$. 
In particular, \emph{any} linear endomorphism $\phi\in\End_{\K}(V)$ 
can be expressed diagrammatically as an element of $\End(\kirby_k \otimes c)$, 
e.g.~\eqref{eq:V kirby idemp} reflects the identity
\[\id_V=F\circ E + E \circ F \in \End_{\K}(V).\]
\end{remark}

The direct sum decomposition from the proof of Theorem~\ref{thm:tensor with
kirby} generalizes as follows.

\begin{proposition}\label{prop:Pn times kirby decomp}
The identity morphism of $\kirby \otimes \JW_n$ 
has the following decomposition into orthogonal idempotents in $KdTL'$:
\begin{equation}
	\label{eq:Pn times kirby decomp}
\begin{tikzpicture}[anchorbase,xscale=-1.2]
	\draw[kirbystyle] (.8,0) to (.8,2);
	\draw[very thick] (-.25,0) to (-.25,2);
	\draw[very thick] (.25,0) to (.25,2);
	\node at (0,.375) {$\mydots$};
	\node at (0,1.625) {$\mydots$};
	\filldraw[white] (-.4,.75) rectangle (.4,1.25); 
	\draw[very thick] (-.4,.75) rectangle (.4,1.25);
	\node at (0,1) {$\JW_n$};
\end{tikzpicture}
\ \ = \ \ 
\frac{(-1)^{\binom{n}{2}}}{n!}\sum_{k+l=n}
\begin{tikzpicture}[anchorbase,xscale=-1.2]
	\draw[kirbystyle] (.8,0) to (.8,2.8) ;
	\draw[very thick] (-.25,0) to (-.25 ,.75) to[out=90,in=180] (.8,1.3);
	\draw[very thick] (.25,0) to (.25 ,.75) to[out=90,in=180] (.8,1);
	\draw[very thick] (-.25,2.8) to (-.25 ,2.05) to[out=270,in=180] (.8,1.5);
	\draw[very thick] (.25,2.8) to (.25 ,2.05) to[out=270,in=180] (.8,1.8);
	\node at (.05,1.9) {$\mydots$};
	\node at (.05,.9) {$\mydots$};
	\filldraw[white] (-.4,2.65) rectangle (.35,2.05);
	\draw[very thick] (-.4,2.65) rectangle (.35,2.05);
	\filldraw[white] (-.4,.15) rectangle (.4,.75);
	\draw[very thick] (-.4,.15) rectangle (.4,.75);
	\node at (0,2.35) {${z_{n}^l}$};
	\node at (0,.45) {${z_{n}^k}$};
\end{tikzpicture}
\end{equation}
\end{proposition}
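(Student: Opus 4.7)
The plan is to invoke Theorem~\ref{thm:tensor with kirby} combined with Proposition~\ref{prop:Pol of symobj} to establish the decomposition, and then match the resulting idempotents diagrammatically with the RHS. Under the natural isomorphism $\kirby \otimes c^n \cong \Pol(c^n) \otimes \kirby$ from Theorem~\ref{thm:tensor with kirby}, the endomorphism $\JW_n \otimes \id_\kirby$ corresponds to $\Pol(\JW_n) \otimes \id_\kirby$. By Proposition~\ref{prop:Pol of symobj}, $\Pol(\JW_n)$ is the projector onto the $(n+1)$-dimensional $\Sym^n V \cong \K[z]/z^{n+1}$ with basis $\{z^k\}_{k=0}^n$, so it decomposes canonically as a sum of $n+1$ pairwise orthogonal rank-one idempotents $\pi_k$. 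Pulling back via the inverse isomorphism therefore yields $\JW_n \otimes \id_\kirby = \sum_{k=0}^{n} E_k$ as a sum of $n+1$ pairwise orthogonal idempotents in $\End(\kirby \otimes c^n)$; it remains to identify $E_k$ with the $k$-th summand on the RHS of \eqref{eq:Pn times kirby decomp}.

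For the matching, I would iterate the basic decomposition \eqref{eq:V kirby idemp} strand-by-strand to write
\[
\id_{c^n \otimes \kirby} = \sum_{\epsilon \in \{+,-\}^n} X_\epsilon,
\]
where each $X_\epsilon$ is obtained by independently applying one of the two summands of \eqref{eq:V kirby idemp} to each strand, and corresponds under the iso of Theorem~\ref{thm:tensor with kirby} to the tensor monomial $v_{\epsilon_1}\otimes\cdots\otimes v_{\epsilon_n}$. Orthogonality and idempotence of the $X_\epsilon$ follow inductively from the corresponding properties of the two summands of \eqref{eq:V kirby idemp}, using the cup-absorption relations \eqref{eq:belly}. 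Sandwiching by $\JW_n \otimes \id_\kirby$ and using the $\mathfrak{S}_n$-invariance of $\JW_n$ (in the signed $\sx$-conventions of Convention~\ref{conv:End}), the expression $(\JW_n \otimes \id_\kirby)\, X_\epsilon\, (\JW_n \otimes \id_\kirby)$ depends only on $k := |\epsilon^{-1}(-)|$, so we group the $\binom{n}{k}$ terms with $k$ minuses to obtain $E_k$. Via \eqref{eq:PxxxP}, $k$ bottom dots sandwiched by $\JW_n$ convert to $\tfrac{(-1)^{\binom{k}{2}}(n-k)!}{n!}\, z_n^k \JW_n$, and the analogous top computation produces $\JW_n z_n^{n-k}$, matching the diagrammatic shape of the $k$-th summand on the RHS.

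The main technical obstacle is the careful bookkeeping of the overall scalar. Combining the multiplicity $\binom{n}{k}$, the two factorial factors $\tfrac{(n-k)!}{n!}$ and $\tfrac{k!}{n!}$ from \eqref{eq:PxxxP} applied at top and bottom, the signs $(-1)^{\binom{k}{2}}$ and $(-1)^{\binom{n-k}{2}}$, and a cross-term $(-1)^{k(n-k)}$ arising when reordering the absorb/emit operations between bottom and top (cf.~\eqref{eq:z on rungs}), and then invoking the identity $\binom{n}{2} = \binom{k}{2} + \binom{n-k}{2} + k(n-k)$, one checks that the overall scalar reduces to $\tfrac{(-1)^{\binom{n}{2}}}{n!}$ independently of $k$, reproducing the coefficient in \eqref{eq:Pn times kirby decomp}.
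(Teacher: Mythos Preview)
Your argument is correct and follows essentially the same route as the paper: iterate \eqref{eq:V kirby idemp} to decompose $\id_{\kirby\otimes\gen^n}$, sandwich by $\JW_n$, collect the $\binom{n}{k}$ terms with $k$ bottom dots, convert via \eqref{eq:PxxxP}, and finish with the identity $\binom{n}{2}=\binom{k}{2}+\binom{n-k}{2}+k(n-k)$. Two small remarks: the opening paragraph through Theorem~\ref{thm:tensor with kirby} and $\Pol$ is unnecessary for establishing the identity (the paper omits it entirely), and your attribution of the cross-sign $(-1)^{k(n-k)}$ to ``reordering the absorb/emit operations'' is imprecise---it actually comes from $\JW_n x_{k+1}\cdots x_n \JW_n = (-1)^{k(n-k)} \JW_n x_1\cdots x_{n-k}\JW_n$ (using that $\iota_n=\iota_n\circ\JW_n$ supplies the inner symmetrizer), which is what one needs before invoking \eqref{eq:PxxxP} on the top half.
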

\begin{proof}
Iterating \eqref{eq:V kirby idemp} gives the relation
\begin{equation}
	\label{eq:idempotent}
\begin{tikzpicture}[anchorbase,xscale=-1.2]
	\draw[kirbystyle] (.75,0) to (.75,1.5);
	\draw[very thick] (0,0) to (0,1.5);
	\node at (.25,.5) {$\mydots$};
	\draw[very thick] (.5,0) to (.5,1.5);
	\draw[thick,decoration={brace},decorate] (0,-.1) -- node [below] {\scriptsize $n$} (.5,-.1);
	\draw[thick,decoration={brace},decorate] (0,-.1) -- node [below] {\scriptsize $n$} (.5,-.1);
\end{tikzpicture}
=
\sum_{f} \
\begin{tikzpicture}[anchorbase,xscale=-1.2]
	\draw[kirbystyle] (.8,0) to (.8,2.8) ;
	\draw[very thick] (-.25,0) to (-.25 ,.75) to[out=90,in=180] (.8,1.3);
	\draw[very thick] (.25,0) to (.25 ,.75) to[out=90,in=180] (.8,1);
	\draw[very thick] (-.25,2.8) to (-.25 ,2.05) to[out=270,in=180] (.8,1.5);
	\draw[very thick] (.25,2.8) to (.25 ,2.05) to[out=270,in=180] (.8,1.8);
	\node at (.05,1.9) {$\mydots$};
	\node at (.05,.9) {$\mydots$};
	\filldraw[white] (-.4,2.65) rectangle (.35,2.05);
	\draw[very thick] (-.4,2.65) rectangle (.35,2.05);
	\filldraw[white] (-.4,.15) rectangle (.4,.75);
	\draw[very thick] (-.4,.15) rectangle (.4,.75);
	\node at (0,2.35) {$f$};
	\node at (0,.45) {$\hat{f}$};
\end{tikzpicture}
\end{equation}
where the sum ranges over all square-free monic monomials $f$ in the variables $\{x_1,\ldots,x_n\}$ and 
$\hat{f}$ denotes the square-free monic monomial complementary to $f$
(uniquely characterized by $f \cdot \hat{f} = x_1 \cdots x_n$). 

Placing a pair of symmetrizers on the top and bottom of \eqref{eq:idempotent} yields
\[
\begin{tikzpicture}[anchorbase,xscale=-1.2]
	\draw[kirbystyle] (.8,0) to (.8,2);
	\draw[very thick] (-.25,0) to (-.25,2);
	\draw[very thick] (.25,0) to (.25,2);
	\node at (0,.375) {$\mydots$};
	\node at (0,1.625) {$\mydots$};
	\filldraw[white] (-.4,.75) rectangle (.4,1.25); 
	\draw[very thick] (-.4,.75) rectangle (.4,1.25);
	\node at (0,1) {\scriptsize$\JW_n$};
\end{tikzpicture}
\ \ = \ \ 
\sum_{k=0}^n \binom{n}{k}
\begin{tikzpicture}[anchorbase,xscale=-1.2]
	\draw[kirbystyle] (.8,-.5) to (.8,3.3) ;
	\draw[very thick] (-.25,3.3) to (-.25 ,2.05) to[out=270,in=180] (.8,1.5);
	\draw[very thick] (.25,3.3) to (.25 ,2.05) to[out=270,in=180] (.8,1.8);
	\node at (.05,1.9) {$\mydots$};
	\filldraw[white] (-.55,2.55) rectangle (.55,2.05);
	\draw[very thick] (-.55,2.55) rectangle (.55,2.05);
	\filldraw[white] (-.55,2.55) rectangle (.55,3.05);
	\draw[very thick] (-.55,2.55) rectangle (.55,3.05);
	\node at (0,2.3) {\scriptsize$x_{k+1}\mydots x_{n}$};
	\node at (0,2.8) {\scriptsize$\JW_n$};
	\draw[very thick] (-.25,-.5) to (-.25 ,.75) to[out=90,in=180] (.8,1.3);
	\draw[very thick] (.25,-.5) to (.25 ,.75) to[out=90,in=180] (.8,1);
	\filldraw[white] (-.5,.25) rectangle (.5,.75);
	\draw[very thick] (-.5,.25) rectangle (.5,.75);
	\filldraw[white] (-.5,.25) rectangle (.5,-.25);
	\draw[very thick] (-.5,.25) rectangle (.5,-.25);
	\node at (.05,.9) {$\mydots$};
	\node at (0,.5) {\scriptsize$x_1\mydots x_k$};
	\node at (0,0) {\scriptsize$\JW_n$};
\end{tikzpicture}
\ \ = \ \ 
\frac{1}{n!}\sum_{k=0}^n
(-1)^{\binom{k}{2}+\binom{n-k}{2}+k(n-k)} 
\begin{tikzpicture}[anchorbase,xscale=-1.2]
	\draw[kirbystyle] (.8,0) to (.8,2.8) ;
	\draw[very thick] (-.25,0) to (-.25 ,.75) to[out=90,in=180] (.8,1.3);
	\draw[very thick] (.25,0) to (.25 ,.75) to[out=90,in=180] (.8,1);
	\draw[very thick] (-.25,2.8) to (-.25 ,2.05) to[out=270,in=180] (.8,1.5);
	\draw[very thick] (.25,2.8) to (.25 ,2.05) to[out=270,in=180] (.8,1.8);
	\node at (.05,1.9) {$\mydots$};
	\node at (.05,.9) {$\mydots$};
	\filldraw[white] (-.4,2.65) rectangle (.35,2.05);
	\draw[very thick] (-.4,2.65) rectangle (.35,2.05);
	\filldraw[white] (-.4,.15) rectangle (.4,.75);
	\draw[very thick] (-.4,.15) rectangle (.4,.75);
	\node at (0,2.35) {\scriptsize$z_{n}^{n-k}$};
	\node at (0,.45) {\scriptsize${z_{n}^k}$};
\end{tikzpicture} \, .
\]
Indeed, the first equality holds since 
we can simultaneously sort all the dotted strands to the left on the bottom and right on the top. 
All of the signs involved in this operation cancel: 
for every dotted strand slid left on the bottom, 
there is a complementary undotted strand 
which is slid to the left in the top half of the diagram. 
For the second equality, 
we first use that $\JW_n x_{k+1}\cdots x_n \JW_n = (-1)^{k(n-k)} \JW_n x_1\cdots x_{n-k} \JW_n$, 
and then apply \eqref{eq:PxxxP}.
The result then follows from the identity $\binom{k}{2}+\binom{n-k}{2} +k(n-k) = \binom{n}{2}$.
\end{proof}

Theorem \ref{thm:tensor with kirby} allows for the computation of 
$\Hom_{\cdTL}(X,\kirby)$ for any compact object $X \in \cdTL$. 
To begin, we have.

\begin{lemma}
	\label{lemma:hom empty to kirby} 
$\Hom_{\cdTL}(\JW_0, \kirby)\cong \K$.
\end{lemma}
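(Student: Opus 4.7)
The plan is to compute the two summands $\Hom_{\cdTL}(\JW_0,\kirby_0)$ and $\Hom_{\cdTL}(\JW_0,\kirby_1)$ separately, using compactness of $\JW_0$ to reduce to a colimit of hom spaces in $\Kar(\dTL)$ that have already been computed in Corollary~\ref{cor:HomPP}.

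First, since $\JW_0\in \Kar(\dTL)$ is compact when viewed as an object of $\cdTL$ (Remark~\ref{rem:compact}), hom out of it commutes with the filtered colimits defining $\kirby_k$, giving
\[
\Hom_{\cdTL}(\JW_0,\kirby_k) \;=\; \colim_{n\in\N}\Hom_{\Kar(\dTL)}\!\bigl(\JW_0,\,q^{-k-2n}\JW_{k+2n}\bigr),
\]
where the transition maps are post-composition with $U_{k+2n}$.

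Next, I will apply Corollary~\ref{cor:HomPP} with $m=0$ and $n=k+2n$. For $k=1$ (odd), the difference $m-n$ is odd and every term vanishes by the parity remark following Corollary~\ref{cor:HomPP}; hence $\Hom_{\cdTL}(\JW_0,\kirby_1)=0$. For $k=0$, the corollary gives $\Hom_{\Kar(\dTL)}(\JW_0,\JW_{2n})\cong q^{2n}\K$, so after the shift by $q^{-2n}$ each term is a copy of $\K$ concentrated in degree $0$. Moreover, Corollary~\ref{cor:HomPP} states explicitly that these isomorphisms are realised via composition with the maps $U_r$; hence the transition map of the directed system, which is post-composition with $U_{2n}$, is precisely the identification $\K\xrightarrow{\sim}\K$. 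Therefore the colimit is $\K$, and combining both cases yields $\Hom_{\cdTL}(\JW_0,\kirby)\cong \K$.

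The only possible obstacle is keeping the grading-shift bookkeeping straight so that the surviving hom space lands in degree zero; this amounts to checking that the shift $q^{-2n}$ in the directed system exactly cancels the shift $q^{2n}=q^{|0-2n|}$ coming from Corollary~\ref{cor:HomPP}, which is routine. No further ingredients are needed.
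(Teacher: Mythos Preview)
Your argument is correct and follows essentially the same approach as the paper: both use compactness of $\JW_0$ to pull the hom through the defining colimit, invoke parity to kill the $\kirby_1$ summand, and identify the $\kirby_0$ summand via Corollary~\ref{cor:HomPP} as a constant directed system of copies of $\K$ with identity transition maps. (One small notational slip: in ``$m=0$ and $n=k+2n$'' you use $n$ for two different things; otherwise the reasoning is clean.)
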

\begin{proof}
First, note that $\Hom_{\cdTL}(\JW_0, \kirby_1)=0$ for parity reasons,
so
\[
\Hom_{\cdTL}(\JW_0,\kirby_0\oplus \kirby_1) \cong \Hom_{\cdTL}(\JW_0,\kirby_0)\oplus \Hom_{\cdTL}(\JW_0,\kirby_1)  
= \Hom_{\cdTL}(\JW_0,\kirby_0) \, .
\]
Using \ref{cor:HomPP}, we then compute
\begin{align*}
\Hom_{\cdTL}(\JW_0,\kirby_0) 
&= \colim \big( \Hom_{\Kar(\dTL)}(\JW_0, \JW_0) \xrightarrow{\Hom(\JW_0,U_0)} 
	\Hom_{\Kar(\dTL)}(\JW_0, q^{-2} \JW_2) \xrightarrow{\Hom(\JW_0,U_2)} \cdots \big) \\
&\cong \colim \big( \K \xrightarrow{\id} \K \xrightarrow{\id} \cdots \big) \cong \K \, . \qedhere
\end{align*}
\end{proof}

\begin{proposition}
	\label{prop:kirby reps pol} 
There is an isomorphism of functors $\cdTL^{\op}\rightarrow \Vect^\Z$:
\begin{equation}\label{eq:representing pol}
\Pol^\ast \cong \Hom_{\cdTL}(-,\kirby),
\end{equation}
where $\Pol^\ast$ is $\Pol$ followed by the contravariant graded dual functor 
$\HOM(-,\K)\colon \Vect^\Z\rightarrow \Vect^\Z$.
\end{proposition}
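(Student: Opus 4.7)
The plan is to construct a natural transformation $\eta \colon \Hom_{\cdTL}(-, \kirby) \Rightarrow \Pol^*(-)$, reduce its being an isomorphism to the compact objects $\gen^n$, and verify it there via Theorem~\ref{thm:tensor with kirby}.

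First I would construct $\eta$ via ``evaluate and project''. By Proposition~\ref{prop:kirbyunknot}, the degree-zero part of $\Pol(\kirby) = \Pol(\kirby_0) \oplus \Pol(\kirby_1)$ is two-dimensional, with one summand from each parity component, giving a canonical degree-zero projection $\epsilon \colon \Pol(\kirby) \to \K$. For $f \in \Hom_{\cdTL}(X, \kirby)$ homogeneous of degree $d$ and $v \in \Pol(X)$ homogeneous, I set
\[
\eta_X(f)(v) := \epsilon\big( \Pol(f)(v) \big) \in \K.
\]
This yields a degree-$d$ element of $\Pol^*(X) = \HOM(\Pol(X), \K)$, and naturality in $X$ is immediate from functoriality of $\Pol$.

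Next I would reduce to showing $\eta_{\gen^n}$ is an isomorphism for all $n \geq 0$. Both functors carry colimits in $\cdTL$ to limits in $\Vect^\Z$: $\Hom_{\cdTL}(-, \kirby)$ by continuity of $\Hom$ in the second variable, and $\Pol^*$ because $\Pol$ preserves filtered colimits (Remark~\ref{rem:compact}) and is additive, while the graded dual $\HOM(-, \K)$ sends colimits to limits. Since $\cdTL = \Ind(\Mat(\Kar(\dTL)))$ is generated under shifts, finite biproducts, retracts, and filtered colimits by $\{\gen^n\}_{n \geq 0}$, and $\eta$ is compatible with all of these operations, the reduction is complete.

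For the identification on $\gen^n$, I would combine Theorem~\ref{thm:tensor with kirby} with pivotal self-duality of $\gen^n$ and Lemma~\ref{lemma:hom empty to kirby}:
\begin{align*}
\Hom_{\cdTL}(\gen^n, \kirby)
&\cong \Hom_{\cdTL}(\JW_0, \kirby \otimes \gen^n) \\
&\cong \Hom_{\cdTL}(\JW_0, \Pol(\gen^n) \otimes \kirby) \\
&\cong \Pol(\gen^n) \otimes \Hom_{\cdTL}(\JW_0, \kirby) \cong \Pol(\gen^n),
\end{align*}
where the penultimate step uses finite-dimensionality of $\Pol(\gen^n)$ to distribute the copower through $\Hom(\JW_0, -)$. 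Since $\Pol(\gen^n)$ has the palindromic graded dimension $(q + q^{-1})^n$, it is self-dual as a graded vector space, so $\Hom_{\cdTL}(\gen^n, \kirby)$ and $\Pol^*(\gen^n)$ have matching, degreewise-finite graded dimensions.

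The main obstacle is to promote this dimension match into an identification of $\eta_{\gen^n}$ as an isomorphism. The plan is to identify $\eta_{\gen^n}$ with the composition of the chain of isomorphisms above with the Frobenius self-duality $\Pol(\gen^n) \xrightarrow{\sim} \Pol^*(\gen^n)$ coming from the trace form on $\K[x_1, \ldots, x_n]/(x_i^2)$ that picks out the coefficient of $x_1 \cdots x_n$. This identification is carried out by tracing through the orthogonal idempotent decomposition \eqref{eq:V kirby idemp} defining the iso $\kirby \otimes \gen \cong \Pol(\gen) \otimes \kirby$ in the proof of Theorem~\ref{thm:tensor with kirby}, together with Proposition~\ref{prop:kirbyunknot} which specifies $\epsilon$. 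A direct computation on $\gen^1$ (where $\Hom_{\cdTL}(\gen, \kirby_1)$ has basis given by the canonical inclusion and its dot composition, mapping via $\eta$ to the dual basis of $\Pol^*(\gen)$) confirms the expected behavior, and the monoidal structure of $\Pol$ together with Remark~\ref{rem:Indmonoidal} extend this to general $\gen^n$. Since both factors of the composition are isomorphisms, so is $\eta_{\gen^n}$.
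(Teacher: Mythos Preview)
Your approach shares the paper's core computation --- the chain of isomorphisms through Theorem~\ref{thm:tensor with kirby} and Lemma~\ref{lemma:hom empty to kirby} --- but diverges in how naturality is handled, and this is where a gap appears.

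The paper does not construct an explicit $\eta$ at all. Instead, for $X \in \dTL$ it writes
\[
\Hom_{\cdTL}(X,\kirby) \cong \Hom_{\cdTL}(\JW_0, \kirby \otimes X^\vee) \cong \Pol(X^\vee) \otimes \Hom_{\cdTL}(\JW_0,\kirby) \cong \Pol(X^\vee) \cong \Pol(X)^*,
\]
where the last step uses that $\Pol$ is \emph{monoidal} and hence sends duals to duals. Each step is manifestly natural in $X$, so the composite is a natural isomorphism on $\dTL$, and extension to $\cdTL$ follows exactly as you argue. The point is that invoking $X^\vee$ rather than $X$ in the second slot converts the problem of identifying a Frobenius self-pairing into the trivial observation that monoidal functors preserve duals.

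Your route instead fixes $\eta$ explicitly and then must verify that $\eta_{\gen^n}$ agrees with the chain-plus-Frobenius composite. The step ``a direct computation on $\gen^1$ \ldots\ and the monoidal structure of $\Pol$ \ldots\ extend this to general $\gen^n$'' is not justified: $\Hom_{\cdTL}(-,\kirby)$ is not monoidal in any evident way, and you have not shown that $\eta$ is a monoidal transformation, so the $n=1$ case does not formally propagate. You would either need to carry out the idempotent trace-through for all $n$ directly, or --- much more efficiently --- adopt the paper's $X^\vee$ trick, after which your explicit $\eta$ becomes unnecessary.
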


\begin{proof}
We first restrict the domains of the left and right-hand sides of \eqref{eq:representing pol} to $\dTL$. 
If  $X \in \dTL$, then using duality in $\dTL$ and Theorem \ref{thm:tensor with kirby}, we compute
\begin{align*}
\Hom_{\cdTL}(X,\kirby) &\cong  \Hom_{\cdTL}(\JW_0,\kirby \otimes X^\vee ) \cong  \Hom_{\cdTL}(\gen^0,\Pol(X^\vee) \otimes \kirby)\\
& \cong  \Pol(X^\vee)\otimes  \Hom_{\cdTL}(\gen^0,\kirby) \cong  \Pol(X^\vee) \cong\Pol(X)^\ast \, . 
\end{align*}
This last isomorphism holds as $\Pol$ commutes with duals since it is monoidal.
Completing the domain with respect to direct sums, direct summands, and grading shifts then gives an isomorphism of functors
\begin{equation}\label{eq:representing restricted pol}
\Pol^\ast|_{\Mat(\Kar(\dTL))} \cong\Hom_{\cdTL}(-,\kirby) |_{\Mat(\Kar(\dTL))}.
\end{equation}
Both the left and right-hand sides of \eqref{eq:representing pol} send filtered colimits in $\cdTL$ 
to cofiltered limits in $\Vect^\Z$. Since every object of $\cdTL$ is by definition a filtered colimit in $\Mat(\Kar(\dTL))$,
the isomorphism of restrictions \eqref{eq:representing restricted pol} induces the isomorphism \eqref{eq:representing pol}.
\end{proof}

\begin{remark}
	\label{rem:adjoints}
Proposition \ref{prop:kirby reps pol} implies that the Kirby color $\kirby$ is a 
representing object for the functor $\Pol^\ast \colon \cdTL^{\op} \to \Vect^\Z$, 
i.e.
\[
\Pol^\ast \cong \Hom_{\cdTL^{\op}}(\kirby,-) \, .
\]
 Equivalently, 
using the above isomorphism, one can show
\[
	\Hom_{\Vect^\Z}(\Pol(-),V) \cong \Hom_{\cdTL}(-,V\otimes \kirby)
\] for any object $V\in \Vect^\Z$. This shows that $V\mapsto V \otimes \kirby$ is the right-adjoint to $\Pol$, and $\kirby$ is 
its value on the $1$-dimensional vector space $\K$ concentrated in degree zero.
\end{remark}

Using Proposition \ref{prop:kirby reps pol}, we can compute morphisms between Kirby objects.

\begin{cor}
	\label{cor:endkirby}
$\End_{\cdTL}(\kirby_i)\cong \K[\xs]$ 
where $\xs$ is an element of degree 2.
\end{cor}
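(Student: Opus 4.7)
The plan is to derive this from Propositions~\ref{prop:kirby reps pol} and \ref{prop:kirbyunknot}, with the algebra structure pinned down using the central element $z$ of Lemma~\ref{lemma:center of dTL}. First, I would observe that since $\kirby = \kirby_0 \oplus \kirby_1$ and morphisms $\JW_m \to \JW_n$ in $\Kar(\dTL)$ vanish whenever $m-n$ is odd (Corollary~\ref{cor:HomPP}), the corresponding filtered colimit also vanishes, giving $\Hom_{\cdTL}(\kirby_i, \kirby_{1-i}) = 0$. Hence $\End_{\cdTL}(\kirby_i) = \Hom_{\cdTL}(\kirby_i, \kirby)$.

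Applying Proposition~\ref{prop:kirby reps pol} with $X = \kirby_i$ then yields an isomorphism of graded vector spaces $\End_{\cdTL}(\kirby_i) \cong \Pol^\ast(\kirby_i)$. Since Proposition~\ref{prop:kirbyunknot} gives $\Pol(\kirby_i) \cong \bigoplus_{j \geq 0} q^{-2j}\K$, taking the graded dual produces $\Pol^\ast(\kirby_i) \cong \bigoplus_{j \geq 0} q^{2j}\K$, which matches the underlying graded vector space of $\K[\xs]$ with $\deg \xs = 2$.

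To upgrade this graded-vector-space isomorphism to an algebra isomorphism, I would construct an explicit degree-$2$ endomorphism of $\kirby_i$. The central element $z \in Z(\dTL)$ of Lemma~\ref{lemma:center of dTL} restricts to $\xs_{i+2n} \in \End(\JW_{i+2n})$ on each object of the directed system defining $\kirby_i$; by the commutation relation $\xs_n U_{n-2} = U_{n-2} \xs_{n-2}$ from Lemma~\ref{lem:KarRels}, these are compatible with the transition maps and hence assemble into a single endomorphism $\xs \in \End^2_{\cdTL}(\kirby_i)$. The resulting graded algebra map $\K[\xs] \to \End_{\cdTL}(\kirby_i)$ is injective (so an isomorphism, by the dimension count above), since for any $n$ with $i+2n \geq k$, the composition of the inclusion $\JW_{i+2n} \hookrightarrow \kirby_i$ with $\xs^k$ is represented by the nonzero element $\xs_{i+2n}^k \in \End(\JW_{i+2n})$.

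The main technical point is ensuring that the algebra structure aligns as claimed; this is handled entirely by the centrality of $z$ and the explicit commutation relation of Lemma~\ref{lem:KarRels}, so no essentially new argument is required. As a direct alternative, one could instead compute $\End_{\cdTL}(\kirby_i)$ as the double (co)limit $\lim_m \colim_n \Hom_{\Kar(\dTL)}(q^{-i-2m}\JW_{i+2m}, q^{-i-2n}\JW_{i+2n})$, identifying each piece with $\K[\xs]/(\xs^{i+2m+1})$ via Corollary~\ref{cor:HomPP}, noting the colimit maps in $n$ are identities (by the spanning description from the proof of Theorem~\ref{thm:KardTL}) and the limit maps in $m$ are canonical projections (again by $\xs_n U_{n-2} = U_{n-2} \xs_{n-2}$ together with $\xs_n^{n+1}=0$); the graded inverse limit is then manifestly $\K[\xs]$.
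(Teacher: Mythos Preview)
Your proposal is correct and follows essentially the same route as the paper: reduce to $\Hom_{\cdTL}(\kirby_i,\kirby)$ by parity, apply Proposition~\ref{prop:kirby reps pol} and Proposition~\ref{prop:kirbyunknot} to identify the graded vector space, and then use the central element $z$ (via Lemma~\ref{lem:KarRels}) to pin down the algebra structure. The paper phrases this last step slightly more tersely---noting that all the isomorphisms commute with the $Z(\dTL)$-action---while you spell out the construction of the endomorphism and its injectivity explicitly, and your alternative direct limit/colimit computation is a nice bonus.
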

\begin{proof}
If $i\not\equiv j$, then $\Hom_{\cdTL}(\kirby_i,\kirby_j)= 0$ for parity reasons.
Thus, we have the isomorphisms 
\[
\End_{\cdTL}(\kirby_i)\cong \Hom_{\cdTL}(\kirby_i,\kirby) \cong \Pol(\kirby_i)^\ast 
\cong \bigoplus_{n=0}^\infty q^{2n} \K
\cong \K[\xs]
\]
of graded vectors spaces.
Here, we have used Proposition \ref{prop:kirbyunknot} for the third isomorphism.
For the algebra structure, 
we note that all of the above isomorphisms commute with the action of the center $Z(\dTL)\cong \K[s,\xs]/(s^2=1)$, 
provided we let $s$ acts by the constant $(-1)^i$ on the right-hand side.
\end{proof}

Finally, we compute the morphisms from a Kirby object to any compact object.

\begin{lemma} \label{lem:morphismfromkirby} 
Let $X \in \Mat(\Kar(\dTL))$, then $\Hom_{\cdTL}(\kirby_i,X) = 0$.
\end{lemma}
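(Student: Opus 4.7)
\emph{Plan.} The plan is to compute $\Hom_{\cdTL}(\kirby_i,X)$ as an inverse limit in $\Vect^\Z$ and to show that each of its graded pieces vanishes. Every $X \in \Mat(\Kar(\dTL))$ is a finite direct sum of graded shifts of Jones--Wenzl idempotents by Theorem~\ref{thm:KardTL}, and $\Hom_{\cdTL}(\kirby_i,-)$ is additive (biproducts being absolute; Remark~\ref{rem:absolute}), so the problem reduces to the case $X = \JW_m$ for a single $m$. Since $\JW_m$ is compact in $\cdTL$ (Remark~\ref{rem:compact}) and $\kirby_i = \colim_n q^{-i-2n}\JW_{i+2n}$ is by definition a filtered colimit, the formula in Definition~\ref{def:Ind} gives
\[
\Hom_{\cdTL}(\kirby_i,\JW_m) \cong \lim_n \Hom_{\Kar(\dTL)}\bigl(q^{-i-2n}\JW_{i+2n},\JW_m\bigr),
\]
computed as a limit in $\Vect^\Z$, hence degree by degree.

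If $i \not\equiv m \pmod{2}$, then every term in this tower already vanishes by Corollary~\ref{cor:HomPP}, and the limit is zero. Otherwise, for $n$ large enough that $i+2n \geq m$, the same corollary identifies
\[
\Hom_{\Kar(\dTL)}\bigl(q^{-i-2n}\JW_{i+2n},\JW_m\bigr) \cong q^{2(i+2n)-m}\,\K[\xs]/\xs^{m+1},
\]
which has graded support in the interval $[2(i+2n)-m,\,2(i+2n)+m]$. This interval drifts to $+\infty$ as $n \to \infty$, so for any fixed degree $d$ the $d$-th graded piece of the tower is the zero vector space once $n$ is sufficiently large; its inverse limit in that degree therefore vanishes. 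As this holds in every degree, $\Hom_{\cdTL}(\kirby_i,\JW_m) = 0$.

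I do not expect a substantive obstacle, since the argument is essentially a graded dimension count once morphisms out of the colimit $\kirby_i$ have been rewritten as an inverse limit. The main conceptual point---by contrast with Proposition~\ref{prop:kirby reps pol}, where morphisms \emph{to} $\kirby$ form a filtered colimit that is finite-dimensional in each degree and assembles into $\Pol^\ast$---is that morphisms \emph{from} $\kirby$ to a compact object form an inverse limit whose graded supports escape to infinity, forcing vanishing.
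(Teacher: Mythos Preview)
Your proof is correct and follows essentially the same approach as the paper: reduce to $X=\JW_m$, express $\Hom_{\cdTL}(\kirby_i,\JW_m)$ as an inverse limit of the graded vector spaces $\Hom_{\Kar(\dTL)}(q^{-i-2n}\JW_{i+2n},\JW_m)$, identify these via Corollary~\ref{cor:HomPP}, and observe that their graded support tends to $+\infty$ so the limit vanishes degreewise. Your explicit support interval $[2(i+2n)-m,\,2(i+2n)+m]$ is slightly more precise than the paper's bound, but the argument is otherwise identical.
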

\begin{proof} 
It suffices to consider $X=\JW_m$ for some $m$. 
We compute $\Hom_{\cdTL}(\kirby_i,\JW_m) $ as the limit of the inverse system
\[
\Hom_{\cdTL}(q^{-i}\JW_i,\JW_m) \leftarrow
\cdots \leftarrow \Hom_{\cdTL}(q^{-i-2n} \JW_{i+2n},\JW_m) 
\leftarrow 
\Hom_{\cdTL}(q^{-i-2n-2} \JW_{i+2n+2},\JW_m) \leftarrow \cdots
\]
of $\Z$-graded vector spaces.
By Corollary~\ref{cor:HomPP}, each term is given by
\[
\Hom(q^{-i-2n}\symobj_{i+2n}, \symobj_m) \cong q^{i+2n} q^{|i+2n-m|} \K[\xs]/ (\xs)^{1+\min(m,i+2n)} \, .
\]
when $i+2n-m$ is even and is zero otherwise.
In the non-zero case, 
the $n^{th}$ term is supported in degrees $>i+2n$, hence the limit vanishes.
\end{proof}

\begin{remark}
	\label{rem:Verma2}
The vanishing in Lemma~\ref{lem:morphismfromkirby} further supports our 
assertion from Remark \ref{rem:Verma} that $q^k \Pol(\kirby_k)$ should be 
interpreted as the dual Verma module $\nabla(k)$, 
rather than the Verma module $\Delta(k)$.
\end{remark}

\subsection{Kirby colored diagrammatics: completion}
	\label{ss:completion}
	
As discussed at the end of \S \ref{ss:ff1}, the category $\KdTL'$ is not
sufficient to describe all the structure we have observed for the Kirby objects
$\kirby_i$ in $\cdTL$. For example, it is an easy consequence of Theorem
\ref{thm:tensor with kirby} (and Remark~\ref{rem:Indmonoidal}) that $\kirby
\otimes \kirby \cong \coprod_{n=0}^{\infty} q^{-2n} \kirby$ in $\cdTL$, and in
Corollary \ref{cor:kirby squared} below we establish the stronger result that
$\kirby \otimes \kirby \cong \bigoplus_{n=0}^{\infty} q^{-2n} \kirby$. However,
this isomorphism cannot be described using $\KdTL'$. Indeed, this would require
specifying a countably-infinite number of inclusion and projection morphisms
giving the isomorphism, which would then express the identity morphism of
$\kirby' \otimes \kirby'$ as an infinite sum of idempotents projecting onto
various shifts of $\kirby'$. This is impossible in $\KdTL'$ for two reasons: 
\begin{itemize}
	\item The morphisms in $\KdTL'$ are \emph{finite} linear combinations of diagrams.
 	\item None of the relations \eqref{eq:red algebra rels}--\eqref{eq:belly} changes the
 	topology of the \kirbcolword part of the diagram. 
\end{itemize}

To accommodate this inadequacy, 
we will adjoin certain infinite sums of morphisms to $\KdTL'$. 
Informally, the allowed infinite sums are exactly those that get truncated to finite sums 
when precomposing with a morphism out of a finite object, 
and two infinite sums are considered equivalent if all of their finite truncations agree.
We now work towards making this precise.

\begin{definition}
	\label{def:canonical inclusions} 
Let $\Incl$ denote the smallest set of morphisms in $\Kar(\KdTL')$ that is
closed under tensor product and contains $\id_{\JW_n} = \JW_n \in
\End_{\Kar(\KdTL')}(\JW_n)$ and $\iota_n\in
\Hom_{\Kar(\KdTL')}(\JW_n,\kirby'_n)$ for all $n\in \N$. Further, given an
object $T \in \KdTL'$, let the set $\Incl(T)$ of \emph{canonical inclusions}
into $T$ be the set of all morphisms $\iota\in \Incl$ with codomain $T$.
\end{definition}

Before we proceed, we establish some useful facts about canonical inclusions.
First, note that by definition the domain of any canonical inclusion is a finite object in $\Kar(\KdTL')$.
There is a transitive, reflexive relation on $\Incl(T)$ given by declaring that
$\iota \leq \iota'$ provided there exists a morphism $\theta$ 
(necessarily in $\Kar(\dTL)$) such that $\iota = \iota' \circ \theta$.

\begin{lemma}
	\label{lemma:canonical maps}
Given canonical inclusions $\iota \colon X \to T$ and $\iota' \colon X' \to T$
with $\iota \leq \iota'$, then there is a unique morphism $\theta \in
\Hom_{\KdTL'}(X,X')$ with $\iota = \iota' \circ \theta$. Consequently, for all
objects $T$ in $\KdTL'$ the relation $\leq$ defines a partial order on
$\Incl(T)$.
\end{lemma}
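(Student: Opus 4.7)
The plan is to establish uniqueness of $\theta$ via the polynomial representation, and then derive the partial-order statement as a formal consequence.

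First, I would observe that the domain of any canonical inclusion is a tensor product of objects of the form $\JW_n$, hence lies in $\Mat(\Kar(\dTL))$. In particular, for $X, X'$ domains of canonical inclusions, any $\theta \in \Hom_{\KdTL'}(X,X')$ can be analyzed inside $\Hom_{\Mat(\Kar(\dTL))}(X,X')$. The strategy is then to apply the composite $\Pol \circ \phi \colon \KdTL' \to \Vect^\Z$, where $\phi$ is the monoidal functor from Theorem~\ref{thm:from diag kirby to kirby} and $\Pol$ is the polynomial representation extended to $\cdTL$ as in~\eqref{eq:eval}. Two ingredients are required: (i) $\Pol$ is faithful on $\Mat(\Kar(\dTL))$, which was established in the proof of Theorem~\ref{thm:KardTL} via explicit bases for the $\Hom$-spaces together with Proposition~\ref{prop:PolyRep}; and (ii) for every canonical inclusion $\iota' \in \Incl$, the linear map $\Pol(\phi(\iota'))$ is injective.

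For (ii), I would use that $\Pol \circ \phi$ is monoidal and that $\Incl$ is generated under tensor product by $\id_{\JW_n}$ and $\iota_n$. The case of $\id_{\JW_n}$ is trivial. For $\iota_n$, Proposition~\ref{prop:kirbyunknot} identifies $\Pol(\phi(\iota_n))$ with the inclusion of the finite-dimensional subspace $\bigoplus_{i=0}^{n} q^{-2i}\K \cong \Pol(\JW_n)$ into $\bigoplus_{i=0}^{\infty} q^{-2i}\K \cong \Pol(\kirby_n)$, which is manifestly injective. Since tensor products of injective linear maps of graded vector spaces remain injective, (ii) follows. Combining (i) and (ii), if $\iota' \circ \theta_1 = \iota' \circ \theta_2$, then applying $\Pol \circ \phi$ and canceling the injection $\Pol(\phi(\iota'))$ yields $\Pol(\phi(\theta_1)) = \Pol(\phi(\theta_2))$, and faithfulness forces $\theta_1 = \theta_2$.

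For the partial-order claim, reflexivity ($\iota = \iota \circ \id_X$) and transitivity (composition of factorizations) are immediate from the definition. Antisymmetry is the one new point: if $\iota \leq \iota'$ and $\iota' \leq \iota$ are witnessed by $\iota = \iota' \circ \theta$ and $\iota' = \iota \circ \theta'$, then $\iota = \iota \circ (\theta'\theta)$, and applying the uniqueness clause to $\iota$ itself forces $\theta'\theta = \id_X$; symmetrically $\theta\theta' = \id_{X'}$, so $\theta$ is an isomorphism. Since by construction every element of $\Incl$ factors as a tensor product $f_1 \otimes \cdots \otimes f_k$ with $f_i \in \{\id_{\JW_{n_i}}, \iota_{n_i}\}$, the codomain $T$ determines both the factors $f_i$ and the domain $X$, so necessarily $X = X'$ and $\theta = \id_X$, giving $\iota = \iota'$. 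The main obstacle is verifying the injectivity claim in (ii); once that is in hand, the rest is a routine diagram chase.
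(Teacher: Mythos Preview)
Your uniqueness argument is correct and proceeds along a genuinely different route from the paper. The paper argues directly from the structure of canonical inclusions: when $T$ is infinite, any $\theta$ with $\iota = \iota'\circ\theta$ must be a (tensor product of) compositions of dotted cup maps between the relevant $\JW_n$'s, and relation~\eqref{eq:twodotswitch} shows any two such with fixed domain and codomain coincide. Your approach via faithfulness of $\Pol$ on $\Mat(\Kar(\dTL))$ together with injectivity of $\Pol(\phi(\iota'))$ is cleaner in that it bypasses any analysis of the shape of $\theta$; the paper's argument is more elementary and self-contained, not relying on Proposition~\ref{prop:kirbyunknot}. Note also that the paper's structural description of $\theta$ immediately yields antisymmetry: dotted cup compositions strictly increase the strand count, so mutually inverse such maps must both be identities.

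Your antisymmetry argument, however, has a genuine gap. The claim that ``the codomain $T$ determines both the factors $f_i$ and the domain $X$'' is false: when a tensor factor of $T$ is $\kirby'_i$, the corresponding factor of a canonical inclusion can be $\iota_m$ for \emph{any} $m\equiv i\pmod 2$, so $T$ does not pin down the domain (cf.\ the example $\Incl(\kirby'_i)\cong\N$ immediately following the lemma). You correctly deduce that $\theta$ is invertible, but this alone does not force $\iota=\iota'$. A fix within your framework: from $\iota\leq\iota'$ one gets $\operatorname{im}\Pol(\phi(\iota))\subseteq\operatorname{im}\Pol(\phi(\iota'))$, and by Proposition~\ref{prop:kirbyunknot} applied factorwise these images are the subspaces $\bigotimes_j\bigoplus_{\ell=0}^{m_j}q^{-2\ell}\K$, which recover the indices $m_j$. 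Symmetry then forces $m_j=m'_j$ for all $j$, hence $\iota=\iota'$.
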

\begin{proof}
If $T$ is a finite object, then $\iota = \iota' = \id_T$, 
since the only canonical inclusions into a finite object are identity maps, by definition. 
If $T$ is infinite, 
then the morphism $\theta \in \Hom_{\KdTL'}(X , X')$ in the equation $\iota = \iota' \circ \theta$ 
must be a composition of dotted cup maps between the appropriate tensor product of objects $\JW_n$.
Any two such compositions with fixed domain and codomain are equal, 
by \eqref{eq:twodotswitch}.
\end{proof}

\begin{example} 
We have 
\[
\Incl(\kirby'_i)
= \{\iota_m\in \Hom_{\Kar(\KdTL')}(\JW_m,\kirby'_i) \mid m\in \N,m\equiv i \text{ mod } 2\} 
\cong \N
\]
as partially ordered sets.
Similarly, 
\[
\Incl(\kirby'_i \otimes \kirby'_j) = 
\{\iota_m\otimes \iota_n \in \Hom_{\Kar(\KdTL')}(\JW_m\otimes \JW_n,\kirby'_i \otimes \kirby'_j)
	\mid m,n\in \N, (m,n)\equiv(i,j)  \text{ mod } 2 \} 
\cong \N^2
\]
where we use the product partial order on the latter.
\end{example}

Next, we observe that morphisms with domain a finite object 
admit factorizations as follows.

\begin{lemma}
	\label{lem:factor}
Any morphism in $\Kar(\KdTL')$ with domain a finite object can be expressed 
in the form $\iota \circ D$ where $D$ is a morphism in $\Kar(\dTL)$ and $\iota$ is a canonical inclusion.
\end{lemma}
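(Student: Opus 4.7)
The plan is to prove the lemma by putting any morphism in $\Kar(\KdTL')$ with finite domain into a normal form via systematic application of the defining relations of $\KdTL'$ in Definition~\ref{def:KdTL prime}. First I would dispose of the case where the codomain $T$ is itself a finite object. None of the generators added beyond those of $\dTL$ reduces the number of \kirbcolword factors from domain to codomain: the inclusion $\iota_n$ creates a \kirbcolword strand, the multiplication merges two \kirbcolword strands into one without eliminating them, and the dot preserves the \kirbcolword count. Since there is no generator that removes a \kirbcolword strand altogether, any composition of generators with finite codomain $T$ cannot involve any \kirbcolword generator at all. Hence $f \in \Kar(\dTL)$, and one takes $\iota = \id_T$ and $D = f$.

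Next suppose $T$ is infinite. Fix any diagrammatic presentation of $f$. Because there are no cap generators on \kirbcolword strands, the \kirbcolword-colored part of the diagram forms a forest, with trees rooted at the \kirbcolword tensor factors of $T$, leaves given by $\iota_n$ generators, and internal vertices given by multiplications and dots. I would first use the dot-slide relation (first equation of \eqref{eq:dotslideKdTL}) to push each dot on a \kirbcolword strand downwards through the multiplications (at the cost of a finite sum with parity-dependent signs), until each resulting dot sits directly above some leaf $\iota_n$; it is then absorbed via the third equation of \eqref{eq:dotslideKdTL} into a $\xs_n$ operation on the $n$ black strands entering that leaf. After this, each \kirbcolword tree consists only of multiplications and inclusion leaves. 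Using the associativity relation in \eqref{eq:red algebra rels} together with the final relation of \eqref{eq:red relation c} (which states that the composition of multiplication with $\iota_a \otimes \iota_b$ equals $\iota_{a+b}$), one iteratively collapses each tree to a single leaf $\iota_m$, where $m$ is the sum of the original leaf labels. The resulting diagram is manifestly of the form $\iota \circ D$, where $\iota$ is a tensor product of $\iota_m$'s and identities on the finite factors of $T$ (hence in $\Incl(T)$) and $D$ is a morphism in $\dTL$.

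Finally, to lift the normal form from $\KdTL'$ to $\Kar(\KdTL')$, note that the domain idempotent already lies in $\Kar(\dTL)$ and can be absorbed into $D$ on the right. A codomain idempotent $e_T$ (applied on the left of $\iota$) can likewise be handled: after pushing $e_T$ through $\iota$ using the first two relations in \eqref{eq:red relation c} (which respectively annihilate un-dotted cups below an inclusion and simplify dotted cups), the remaining contribution can be incorporated into $D$, yielding a morphism in $\Kar(\dTL)$. The main obstacle is the bookkeeping required in Step one: each dot slide splits a diagram into a signed sum, so one must verify that iterating the procedure terminates with a finite expression, and that interleaving with black-strand operations (for which one may need the auxiliary simplifications of Lemma~\ref{lemma:secondaryrels}, notably \eqref{eq:red-black associativity} and \eqref{eq:more dotsliding}) does not obstruct the eventual collapse of each \kirbcolword tree to a single inclusion. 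A clean induction on the total number of \kirbcolword generators appearing in the chosen presentation makes the argument rigorous.
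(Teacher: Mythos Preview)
Your approach is correct but more laborious than the paper's. The paper proceeds by induction on the length of a word in generators: since identity morphisms of finite objects are already canonical inclusions, it suffices to show that for any single generator $g$ of $\KdTL'$ (tensored with identities) and any canonical inclusion $\iota'$, the composition $g\circ\iota'$ again has the form $\iota\circ D$. This reduces the entire lemma to a short case-check on generators, bypassing the global normalization, the iterated dot-sliding with its sign bookkeeping, and the tree-collapsing you describe. Your direct normalization has the virtue of producing an explicit normal form and making the structure of the factorization visible, but the inductive reduction is shorter and cleanly sidesteps the interchange issues between black morphisms and the \kirbcolword forest that you correctly flag as the main obstacle; in particular it does not require the auxiliary relations of Lemma~\ref{lemma:secondaryrels}.
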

\begin{proof}
Since identity morphisms in $\dTL$ are canonical inclusions, 
it suffices to show that if $D'$ is a morphism in $\Kar(\KdTL')$ and $\iota'$ is a canonical inclusion, 
then $D' \circ \iota'$ 
can be expressed in
the form $\iota \circ D$ where $D$ is a morphism in $\Kar(\dTL)$ and $\iota$ is a canonical inclusion.
However, this can simply be checked on each generating morphism of $\KdTL'$.
\end{proof}

\begin{definition}
	\label{def:completed diag cat}
Let $\KdTL$ denote the category with the same objects as $\KdTL'$ 
and morphisms defined as follows.   
Given objects $S,T \in \KdTL$, 
let $\Hom_{\KdTL}^k(S,T)$ to be the set of equivalence classes of 
formal expressions $\sum_{i\in I} D_i$, 
wherein $I$ is some (potentially infinite) indexing set,
$D_i\in \Hom_{\KdTL'}^k(S,T)$, 
and such that for each $\iota \in \Incl(S)$ 
we have $D_i \circ \iota = 0$ for all but finitely many $i$. 

Two such infinite sums $\sum_{i\in I} D_i$ and $\sum_{j\in J} E_j \in $ are declared
equivalent provided that for all canonical inclusions $\incl \in \Incl(S)$ 
the finite sums $\sum_{i\in I} (D_i\circ \incl)$ and $\sum_{j\in J} (E_j\circ \incl)$ 
are equal as morphisms in $\Kar(\KdTL')$.
\end{definition}

It is straightforward to check that $\KdTL$ inherits 
composition and tensor product from $\KdTL'$, 
and thus naturally carries the structure of a $\Z$-graded $\K$-linear monoidal category.
For example, composition of morphisms in $\KdTL$ is given formally:
\begin{equation}\label{eq:KdTLcomp}
\left(\sum_{j\in J} E_j\right) \circ \left(\sum_{i\in I} D_i \right) := \sum_{(j,i)\in J \times I} (E_j \circ D_i).
\end{equation}
That this is again a morphism in $\KdTL$ follows from Lemma \ref{lem:factor}.
Equation \eqref{eq:KdTLcomp}
further implies that the infinite sums $\sum_{i\in I}D_i$ satisfy the
appropriate notion of distributivity with respect to finite sums in $\KdTL'$.

Our main goal in this section is to establish the following result.

\begin{thm}
	\label{thm:diagpres} 
The functor $\phi\colon \KdTL' \to \cdTL$ 
from Theorem \ref{thm:from diag kirby to kirby} extends to a fully faithful functor 
$\KdTL \to \cdTL$ that we also denote by $\phi$:
\[
\begin{tikzcd}
	\KdTL \arrow[dashed]{dr}
		&  \\
		\KdTL' \arrow[hookrightarrow]{u}\arrow{r}& \cdTL
	\end{tikzcd}
	\]
	\end{thm}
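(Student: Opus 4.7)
The plan is to identify both $\Hom_{\KdTL}(S,T)$ and $\Hom_{\cdTL}(\phi(S),\phi(T))$ with the same limit of colimits
\[
\lim_{\iota \in \Incl(S)} \colim_{\iota' \in \Incl(T)} \Hom_{\Kar(\dTL)}(\mathrm{dom}(\iota),\mathrm{dom}(\iota'))
\]
indexed by the directed posets of canonical inclusions from Lemma \ref{lemma:canonical maps}. The key structural input is that for every $T \in \KdTL$, the image $\phi(T) \in \cdTL$ is canonically isomorphic to the filtered colimit $\colim_{\iota' \in \Incl(T)} q^{-\deg(\iota')} \mathrm{dom}(\iota')$: tautological for $T \in \Kar(\dTL)$, and for tensor products involving $\kirby'_i$ following from the defining directed systems of Definition \ref{def:Kirby}, the compatibility of tensor product with filtered colimits (Remark \ref{rem:Indmonoidal}), and finality of the diagonal embedding (Lemma \ref{lem:final}). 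Combined with compactness of each $\mathrm{dom}(\iota)$ in $\cdTL$ (Remark \ref{rem:compact}), this yields the displayed limit-of-colimits formula for the right-hand $\Hom$-space.

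To extend $\phi$ to $\KdTL$, given $\sum_i D_i \in \Hom_{\KdTL}(S,T)$ and any $\iota \in \Incl(S)$, the truncation $\sum_i D_i \circ \iota$ is a finite sum in $\Hom_{\KdTL'}(\mathrm{dom}(\iota),T)$ by the defining condition. Lemma \ref{lem:factor} together with directedness of $\Incl(T)$ allows rewriting this common-target sum as $\iota' \circ D$ for a single $\iota' \in \Incl(T)$ and $D \in \Hom_{\Kar(\dTL)}(\mathrm{dom}(\iota),\mathrm{dom}(\iota'))$; the class $[D]$ is independent of choices by Lemma \ref{lemma:canonical maps}, and the collection over $\iota$ is a compatible family. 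Faithfulness then follows directly: Definition \ref{def:completed diag cat} identifies $\Hom_{\KdTL}(S,T)$ injectively with a subset of $\lim_\iota \Hom_{\KdTL'}(\mathrm{dom}(\iota),T)$, so the claim reduces to injectivity of $\Hom_{\KdTL'}(X,T) \to \colim_{\iota'} \Hom_{\Kar(\dTL)}(X,\mathrm{dom}(\iota'))$ for finite $X$. Any such morphism has the form $\iota' \circ D$ by Lemma \ref{lem:factor}, and vanishing of its image means $\theta \circ D = 0$ for some transition map $\theta \colon \mathrm{dom}(\iota') \to \mathrm{dom}(\iota'')$ satisfying $\iota' = \iota'' \circ \theta$; then $\iota' \circ D = \iota'' \circ (\theta \circ D) = 0$ already in $\KdTL'$.

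The main obstacle is fullness. Given a compatible family $(g_\iota)_\iota$ in $\lim_\iota \colim_{\iota'} \Hom_{\Kar(\dTL)}(\mathrm{dom}(\iota),\mathrm{dom}(\iota'))$, I must produce a formal sum $\sum_k E_k \in \Hom_{\KdTL}(S,T)$ whose truncation at each $\iota$ realizes $g_\iota$. The strategy is to fix a cofinal tower $\iota_0 \leq \iota_1 \leq \cdots$ in $\Incl(S)$ (available because each $\Incl(\kirby'_i)$ is a tower and a product of towers contains a cofinal diagonal tower), then iteratively construct ``localized'' lifts $E_k \in \Hom_{\KdTL'}(S,T)$ such that $E_k \circ \iota_k = g_{\iota_k} - \sum_{j<k} E_j \circ \iota_k$ while $E_k \circ \iota_j = 0$ for all $j < k$. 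Existence of these localized lifts is precisely where Proposition \ref{prop:Pn times kirby decomp} is essential: its orthogonal idempotent decomposition of $\id_{\kirby \otimes \JW_n}$ furnishes endomorphisms of $S$ in $\KdTL'$ that project onto arbitrarily deep finite truncations of each Kirby tensor factor, enabling one to engineer $E_k$ annihilating all canonical inclusions strictly below $\iota_k$. The resulting formal sum $\sum_k E_k$ satisfies the required truncation property by construction, and the remaining verification that it realizes $g_\iota$ on all of $\Incl(S)$ (not only on the tower) is forced by the compatibility of the family together with the partial-order structure of Lemma \ref{lemma:canonical maps}.
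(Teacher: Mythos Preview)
Your extension construction and faithfulness argument are essentially correct and match the paper's: the identification of $\Hom_{\cdTL}(\phi(S),\phi(T))$ with the limit-of-colimits is exactly how the paper defines the extension, and your injectivity argument for $\Hom_{\KdTL'}(X,T)\to\colim_{\iota'}\Hom_{\Kar(\dTL)}(X,\mathrm{dom}(\iota'))$ via Lemma~\ref{lem:factor} and the dotted-cup absorption relation is sound.

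The fullness argument, however, has a genuine gap. Your telescope construction requires, at each stage, a lift $E_k\in\Hom_{\KdTL'}(S,T)$ with $E_k\circ\iota_k=r_k$; equivalently, you need the precomposition map
\[
-\circ\iota_k \colon \Hom_{\KdTL'}(S,T)\longrightarrow \Hom_{\KdTL'}(\mathrm{dom}(\iota_k),T)
\]
to be surjective. But this is exactly where the known failure of fullness of $\phi$ on $\KdTL'$ bites: when $T$ has several Kirby factors, $\Hom_{\KdTL'}(S,T)$ is strictly smaller than $\Hom_{\cdTL}(\phi(S),\phi(T))$ (indeed countable versus uncountable in fixed degree, see the remark at the end of \S\ref{ss:ff1}), so there is no reason the lift should land in $\Hom_{\KdTL'}$ rather than merely in $\Hom_{\cdTL}$. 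Your invocation of Proposition~\ref{prop:Pn times kirby decomp} does not address this: that proposition decomposes $\id_{\kirby\otimes\JW_n}$ as a finite sum of idempotents factoring through shifts of $\kirby$, which is a statement about absorbing a \emph{finite} tensor factor into $\kirby$, not about projecting a Kirby strand onto a deep finite truncation. There are no endomorphisms of $\kirby'_i$ in $\KdTL'$ with the truncation property you describe.

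The paper proceeds quite differently for fullness. It first establishes the infinite-sum rewiring relation (Lemma~\ref{lem:rewiring}) and the degree bound (Lemma~\ref{lemma:homs bdd above}), and from these proves the biproduct decomposition $\kirby'_i\otimes\kirby'_j\cong\bigoplus_{k\ge 0}q^{-2k}\kirby'_{i+j}$ \emph{internally in $\KdTL$} (Proposition~\ref{prop:kirby squared diag}), in parallel with the corresponding decomposition in $\cdTL$ (Corollary~\ref{cor:kirby squared}). These, together with Theorem~\ref{thm:tensor with kirby}, reduce the comparison of $\Hom$-spaces to the case where both $S$ and $T$ have at most one Kirby tensor factor; in those cases the Hom-spaces on each side are computed explicitly (via $\Pol^\ast$ and Corollary~\ref{cor:endkirby}) and shown to agree. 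The point is that the reduction step itself requires working in $\KdTL$ (not $\KdTL'$), and replaces the lifting problem you face with a direct dimension count.
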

We will first check the existence of the extension. The proof of fullness and
faithfulness requires additional preparation.

\begin{proof}[Proof (existence of extension).]
If $X$ is a finite object, then we have 
$\Hom_{\KdTL}(X,T) = \Hom_{\KdTL'}(X,T)$ 
since the only canonical inclusions into finite objects are identity morphisms.
This determines the extension on such $\Hom$-spaces. 

If $f \in \Hom_{\KdTL}(S,T)$ for $S$ infinite, 
then we obtain a family of morphisms $f\circ \iota \in \Hom_{\KdTL'}(S_\iota, T)$ 
indexed by $\iota \in \mathrm{Incl}(S)$. 
Note that each finite object $S_\iota$ is obtained from $S$ by simply replacing each 
instance of $\kirby_i$ with some $\JW_n$ for $n \equiv i \text{ mod } 2$.
As such, we have a directed system of dotted cup morphisms relating the objects $S_\iota$
(the unique morphisms given in Lemma \ref{lemma:canonical maps}).
Precomposition with these morphisms makes the collection 
$\{\Hom_{\KdTL'}(S_\iota, T) \mid \iota \in \mathrm{Incl}(S)\}$ into an inverse system, 
and the limit of this system is $\Hom_{\cdTL}(S,T)$.
The collection of maps $\{f\circ \iota\}_{\iota \in \Incl(S)}$ is stable by 
\eqref{eq:red relation c}, 
hence it gives an element of the limit.
The extension is thus given by $f \mapsto \{f\circ \iota\}_{\iota \in \Incl(S)} \in \Hom_{\cdTL}(S,T)$.
It is immediate from Definition \ref{def:completed diag cat} 
that this assignment is independent of the representative of equivalence class.
\end{proof}

In order to establish fully faithfulness in Theorem \ref{thm:diagpres}, 
we first prove some relations in the completed category $\KdTL$
which will facilitate the proof.

\begin{subequations}
\begin{lemma}
	\label{lem:rewiring}
The following relations hold in $\KdTL$:
\begin{equation}
\label{eq:red rewire}
\begin{tikzpicture}[anchorbase,scale=1]
\draw[kirbystyle]  (0,0)node[below=-2pt]{\scriptsize $[i]$} to (0,1)node[above=-2pt]{\scriptsize $[i]$};
\draw[kirbystyle]  (1,0)node[below=-2pt]{\scriptsize $[k]$} to (1,1)node[above=-2pt]{\scriptsize $[j{+}k]$};
\draw[kirbystyle]  (.5,0)node[below=-2pt]{\scriptsize $[j]$} to (1,.5);
\end{tikzpicture}
= \
\sum_{n\geq 0}
\frac{(-1)^{\genfrac(){0pt}{2}{n}{2}}}{n!}
\begin{tikzpicture}[anchorbase,scale=1]
\draw[kirbystyle]  (0,0)node[below=-2pt]{\scriptsize $[i]$}  to (0,1)node[above=-2pt]{\scriptsize $[i]$};
\draw[kirbystyle]  (1,0)node[below=-2pt]{\scriptsize $[k]$}  to (1,1)node[above=-2pt]{\scriptsize $[j{+}k]$};
\draw[kirbystyle]  (.5,0)node[below=-2pt]{\scriptsize $[j]$}  to (0,.5);
\draw[kirbystyle] (0,.75) to (1,.75);
\draw[fill=white] (.5,.75) circle (.15);
\node at (.5,1) {\scriptsize $n$};
\node at (.25,.25) {\small $\bullet$};
\node at (.4,.35) {\scriptsize $n$};
\end{tikzpicture}
\, , \quad
\begin{tikzpicture}[anchorbase,xscale=-1]
\draw[kirbystyle]  (0,0)node[below=-2pt]{\scriptsize $[k]$} to (0,1)node[above=-2pt]{\scriptsize $[k]$};
\draw[kirbystyle]  (1,0)node[below=-2pt]{\scriptsize $[i]$} to (1,1)node[above=-2pt]{\scriptsize $[i{+}j]$};
\draw[kirbystyle]  (.5,0)node[below=-2pt]{\scriptsize $[j]$} to (1,.5);
\end{tikzpicture}
= \
\sum_{n\geq 0}
\frac{(-1)^{\genfrac(){0pt}{2}{n}{2}+n(n-j)}}{n!}
\begin{tikzpicture}[anchorbase,xscale=-1]
\draw[kirbystyle]  (0,0)node[below=-2pt]{\scriptsize $[k]$}  to (0,1)node[above=-2pt]{\scriptsize $[k]$};
\draw[kirbystyle]  (1,0)node[below=-2pt]{\scriptsize $[i]$}  to (1,1)node[above=-2pt]{\scriptsize $[i{+}j]$};
\draw[kirbystyle]  (.5,0)node[below=-2pt]{\scriptsize $[j]$}  to (0,.5);
\draw[kirbystyle] (0,.75) to (1,.75);
\draw[fill=white] (.5,.75) circle (.15);
\node at (.5,1) {\scriptsize $n$};
\node at (.25,.25) {\small $\bullet$};
\node at (.4,.35) {\scriptsize $n$};
\end{tikzpicture} \, .
\end{equation}
\end{lemma}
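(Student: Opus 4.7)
My plan is to prove the two rewiring relations by reducing them to finite equalities in $\KdTL'$. By Definition~\ref{def:completed diag cat}, verifying equality of two morphisms in $\KdTL$ amounts to checking that their precompositions with every canonical inclusion into the common source agree in $\KdTL'$. I would fix arbitrary canonical inclusions $\iota_a \otimes \iota_b \otimes \iota_c \colon \JW_a \otimes \JW_b \otimes \JW_c \to \kirby'_i \otimes \kirby'_j \otimes \kirby'_k$ (with $a \equiv i$, $b \equiv j$, $c \equiv k \pmod{2}$) and verify each relation after precomposition.

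The critical observation is that the formal infinite sum on the right-hand side truncates to a finite one after precomposition with $\iota_b$: the $n$ dots on the middle $[j]$-strand absorb into $\JW_b$ and, by \eqref{eq:PxxxP}, yield $(-1)^{\binom{n}{2}}\frac{(b-n)!}{b!}\, z_b^n \JW_b$ for $n \leq b$ and zero otherwise, so only the terms with $0 \leq n \leq b$ survive. On the left-hand side, the merge $\id_{\kirby'_i} \otimes m_{j,k}$ precomposed with $\iota_b \otimes \iota_c$ becomes, via Jones--Wenzl absorption, the canonical morphism $\JW_a \otimes \JW_b \otimes \JW_c \to \kirby'_i \otimes \kirby'_{j+k}$ that factors through $\iota_a \otimes (\iota_{b+c} \circ \JW_{b+c})$.

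The equality of the two finite morphisms in $\KdTL'$ then follows by applying Proposition~\ref{prop:Pn times kirby decomp}, which decomposes the identity on $\kirby \otimes \JW_b$ as a sum of orthogonal idempotents indexed by pairs $(k,l)$ with $k+l = b$, each built from a rung connecting $\kirby$ and $\JW_b$ decorated by $z_b^k$ and $z_b^l$. Inserting this resolution of identity into the left-hand side expression (between the absorbed $\JW_{b+c}$ and the outgoing canonical inclusion) produces a sum whose $n$-th summand matches the $n$-th term of the truncated right-hand side, after using Lemma~\ref{lemma:rungs}, in particular \eqref{eq:z on rungs}, to convert between the $z_b^\ast$-dot notation and the white-circle ``$n$ dots missing'' shorthand of Definition~\ref{def:shorthand}. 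The second relation then follows by applying the contravariant involution of Remark~\ref{rem:involution}, which swaps $\id \otimes m$ with $m \otimes \id$ and multiplies each of the $n$ dots on a parity-$j$ \kirbcolword strand by $(-1)^{j-1}$, introducing the additional sign $(-1)^{n(j-1)} \equiv (-1)^{n(n-j)} \pmod{2}$.

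The main obstacle I anticipate is the careful accounting of all signs and numerical factors: the coefficient $\frac{(-1)^{\binom{n}{2}}}{n!}$ on the right-hand side must emerge precisely by combining the factor $(-1)^{\binom{n}{2}}\frac{(b-n)!}{b!}$ from \eqref{eq:PxxxP}, the normalization $\frac{(-1)^{\binom{b}{2}}}{b!}$ from Proposition~\ref{prop:Pn times kirby decomp}, the conversion signs from Lemma~\ref{lemma:rungs}, and the combinatorial multiplicity arising from summing the orthogonal idempotents that contribute the same coefficient of $(-1)^{\binom{n}{2}}/n!$. Verifying this binomial-type identity term-by-term is likely the most delicate step of the argument.
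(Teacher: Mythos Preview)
Your proposal is correct and follows essentially the same approach as the paper: precompose with canonical inclusions to reduce to a finite identity in $\KdTL'$, insert the idempotent decomposition of Proposition~\ref{prop:Pn times kirby decomp}, simplify via \eqref{eq:z on rungs}, and derive the second equation from the first using the involution of Remark~\ref{rem:involution}. The only cosmetic difference is that the paper precomposes only with $\iota_m$ on the middle $[j]$-input (leaving the outer Kirby strands intact), which already yields an equality in $\KdTL'$; your further precomposition with $\iota_a$ and $\iota_c$ is harmless but unnecessary.
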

\begin{proof}
Using Proposition \ref{prop:Pn times kirby decomp} 
and equation \eqref{eq:z on rungs}, we compute:
\[
\begin{tikzpicture}[anchorbase]
	\draw[kirbystyle] (-.75,0) to (-.75,2);
	\draw[kirbystyle] (.75,0) to (.75,2);
	\draw[very thick] (-.25,0) to (-.25,1) to [out=90,in=180] (.75,1.5);
	\draw[very thick] (.25,0) to (.25,1) to [out=90,in=180] (.75,1.25);
	\filldraw[white] (-.5,.75) rectangle (.5,.25); 
	\draw[very thick] (-.5,.75) rectangle (.5,.25);
	\node at (0,.5) {\scriptsize$\JW_m$};
	\node at (0,1) {$\mydots$};
	\node at (0,.125) {$\mydots$};
\end{tikzpicture}
\ = \
\frac{(-1)^{\binom{m}{2}}}{m!}
\sum_{n=0}^m \,
\begin{tikzpicture}[anchorbase,xscale=-1]
	\draw[kirbystyle] (-.75,0) to (-.75,2.25);
	\draw[kirbystyle] (.75,0) to (.75,2.25);
	\draw[very thick] (-.75,2) to (.75,2);
	\draw[very thick] (-.75,1.75) to (.75,1.75);	
	\draw[very thick] (-.25,0) to (-.25,1) to [out=90,in=180] (.75,1.5);
	\draw[very thick] (.25,0) to (.25,1) to [out=90,in=180] (.75,1.25);
	\filldraw[white] (-.5,.625) rectangle (.5,.25); 
	\draw[very thick] (-.5,.625) rectangle (.5,.25);
	\node at (0,.43) {\scriptsize$\JW_m$};
	\filldraw[white] (-.5,.625) rectangle (.5,1); 
	\draw[very thick] (-.5,.625) rectangle (.5,1);
	\node at (0,.81) {\scriptsize$z_m^n$};
	\node at (0,1.125) {$\mydots$};
	\node at (0,.125) {$\mydots$};
	\filldraw[white] (-.3,1.625) rectangle (.3,2.125); 
	\draw[very thick] (-.3,1.625) rectangle (.3,2.125); 
		\node[rotate=-90] at (0,1.875) {\scriptsize $z_m^l$};
\end{tikzpicture}
\ = \
\sum_{n=0}^m \frac{(-1)^{\binom{n}{2}}}{n!} \,
\begin{tikzpicture}[anchorbase,xscale=-1]
	\draw[kirbystyle] (-.75,0) to (-.75,2.25);
	\draw[kirbystyle] (.75,0) to (.75,2.25);
	\draw[very thick] (-.25,0) to (-.25,1) to [out=90,in=180] (.75,1.5);
	\draw[very thick] (.25,0) to (.25,1) to [out=90,in=180] (.75,1.25);
	\filldraw[white] (-.5,.75) rectangle (.5,.25); 
	\draw[very thick] (-.5,.75) rectangle (.5,.25);
	\node at (0,.5) {\scriptsize$\xs_m^n$};
	\node at (0,1) {$\mydots$};
	\node at (0,.125) {$\mydots$};
	\draw[kirbystyle] (-.75,1.875) to (.75,1.875);
	\draw[fill=white] (0,1.875) circle (.15);
	\node at (0,2.2) {\scriptsize $n$};
\end{tikzpicture} \ .
\]
Thus, the left- and right-hand sides of the first equation in \eqref{eq:red
rewire} are equal in $\Kar(\KdTL')$ upon precomposing with all 
$\iota \in \Incl(\kirby'_i \otimes \kirby'_j \otimes \kirby'_k)$. 
Hence, they are equal in $\KdTL$.
The second equation in \eqref{eq:red rewire} follows from the first by application of 
the left-right reflection symmetry from Remark \ref{rem:involution}, 
noting that each of the dots on the \kirbcolword strand gives a sign $(-1)^{j-1}$. 
\end{proof}

Precomposing \eqref{eq:red rewire} with the appropriate \kirbcolword unit maps
gives the following relations.

\begin{cor}
The following identities hold in $\KdTL$:
\begin{equation}\label{eq:decomp of identity}
\begin{tikzpicture}[anchorbase,xscale=.75]
\draw[kirbystyle]  (0,0)node[below=-2pt]{\scriptsize $[i]$} to (0,1)node[above=-2pt]{\scriptsize $[i]$};
\draw[kirbystyle]  (1,0)node[below=-2pt]{\scriptsize $[j]$} to (1,1)node[above=-2pt]{\scriptsize $[j]$};
\end{tikzpicture}
= \
\sum_{n\geq 0}
\frac{(-1)^{\genfrac(){0pt}{2}{n}{2}}}{n!}
\begin{tikzpicture}[anchorbase,scale=1]
	\draw[kirbystyle]  (0,0)node[below=-2pt]{\scriptsize $[i]$}  to (0,1.25)node[above=-2pt]{\scriptsize $[i]$};
	\draw[kirbystyle]  (1,0)node[below=-2pt]{\scriptsize $[j]$} to (0,.5);
	\draw[kirbystyle] (0,1) to[out=-90,in=180] (.5,.75) to[out=0,in=-90] (1,1)
		to (1,1.25)node[above=-2pt]{\scriptsize $[j]$};
	\draw[fill=white] (.5,.75) circle (.15);
	\node at (.5,1) {\scriptsize $n$};
	\node at (.5,.25) {\small $\bullet$};
	\node at (.65,.4) {\scriptsize $n$};
\end{tikzpicture}
\ , \qquad
\begin{tikzpicture}[anchorbase,xscale=-.75]
\draw[kirbystyle]  (0,0)node[below=-2pt]{\scriptsize $[i]$} to (0,1)node[above=-2pt]{\scriptsize $[i]$};
\draw[kirbystyle]  (1,0)node[below=-2pt]{\scriptsize $[j]$} to (1,1)node[above=-2pt]{\scriptsize $[j]$};
\end{tikzpicture}
= \
\sum_{n\geq 0}
\frac{(-1)^{\genfrac(){0pt}{2}{n}{2}+n(j-1)}}{n!}
\begin{tikzpicture}[anchorbase,xscale=-1]
	\draw[kirbystyle]  (0,0)node[below=-2pt]{\scriptsize $[i]$}  to (0,1.25)node[above=-2pt]{\scriptsize $[i]$};
	\draw[kirbystyle]  (1,0)node[below=-2pt]{\scriptsize $[j]$} to (0,.5);
	\draw[kirbystyle] (0,1) to[out=-90,in=180] (.5,.75) to[out=0,in=-90] (1,1) 
		to (1,1.25)node[above=-2pt]{\scriptsize $[j]$};
	\draw[fill=white] (.5,.75) circle (.15);
	\node at (.5,1) {\scriptsize $n$};
	\node at (.5,.25) {\small $\bullet$};
	\node at (.65,.4) {\scriptsize $n$};
\end{tikzpicture}
\end{equation}
\begin{equation}
\label{eq:infrel}
\begin{tikzpicture}[anchorbase,xscale=.75]
\draw[kirbystyle]  (0,.5)node{$\bullet$} to (0,1)node[above=-2pt]{\scriptsize $[0]$};
\draw[kirbystyle]  (1,0)node[below=-2pt]{\scriptsize $[j]$} to (1,1)node[above=-2pt]{\scriptsize $[j]$};
\end{tikzpicture}
= \
\sum_{n\geq 0}
\frac{(-1)^{\genfrac(){0pt}{2}{n}{2}}}{n!}
\begin{tikzpicture}[anchorbase,xscale=1]
	\draw[kirbystyle]  (0,.25)node[below=-2pt]{\scriptsize $[j]$}  to (0,1.25)node[above=-2pt]{\scriptsize $[0]$};
	\draw[kirbystyle] (0,1) to[out=-90,in=180] (.5,.75) to[out=0,in=-90] (1,1) 
		to (1,1.25)node[above=-2pt]{\scriptsize $[j]$};
	\draw[fill=white] (.5,.75) circle (.15);
	\node at (.5,1) {\scriptsize $n$};
	\node at (0,.5) {\small $\bullet$};
	\node at (.2,.5) {\scriptsize $n$};
\end{tikzpicture}
\ , \qquad
\begin{tikzpicture}[anchorbase,xscale=-.75]
\draw[kirbystyle]  (0,.5)node{$\bullet$} to (0,1)node[above=-2pt]{\scriptsize $[0]$};
\draw[kirbystyle]  (1,0)node[below=-2pt]{\scriptsize $[j]$} to (1,1)node[above=-2pt]{\scriptsize $[j]$};
\end{tikzpicture}
= \
\sum_{n\geq 0}
\frac{(-1)^{\genfrac(){0pt}{2}{n}{2}+n(j-1)}}{n!}
\begin{tikzpicture}[anchorbase,xscale=-1]
	\draw[kirbystyle]  (0,.25)node[below=-2pt]{\scriptsize $[j]$}  to (0,1.25)node[above=-2pt]{\scriptsize $[0]$};
	\draw[kirbystyle] (0,1) to[out=-90,in=180] (.5,.75) to[out=0,in=-90] (1,1) 
		to (1,1.25)node[above=-2pt]{\scriptsize $[j]$};
	\draw[fill=white] (.5,.75) circle (.15);
	\node at (.5,1) {\scriptsize $n$};
	\node at (0,.5) {\small $\bullet$};
	\node at (.2,.5) {\scriptsize $n$};
\end{tikzpicture}
\end{equation}
\qed
\end{cor}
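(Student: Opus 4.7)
The plan is to derive each identity in \eqref{eq:decomp of identity} and \eqref{eq:infrel} directly from the corresponding equation of \eqref{eq:red rewire} by precomposing with the \kirbcolword unit map $\iota_0 \in \Hom_{\KdTL'}(\gen^0, \kirby'_0)$ on one or two of the bottom strands, and then invoking unitality of the multiplication morphism.

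For the first equation of \eqref{eq:decomp of identity}, I would take the first equation of Lemma \ref{lem:rewiring} and specialize to $k=0$, precomposing both sides with $\id_{\kirby'_i} \otimes \id_{\kirby'_j} \otimes \iota_0$ on the bottom. On the left-hand side, the resulting composition $\mu_{[j],[0]} \circ (\id_{\kirby'_j} \otimes \iota_0)$ equals $\id_{\kirby'_j}$ by the unitality relation in \eqref{eq:red algebra rels}, so the whole left-hand side collapses to $\id_{\kirby'_i \otimes \kirby'_j}$, matching the LHS of \eqref{eq:decomp of identity}. On the right-hand side, the bottom endpoint of the formerly $\kirby'_k=\kirby'_0$ strand becomes the unit $\bullet$, which together with the horizontal rung carrying the hollow circle labeled $n$ assembles into precisely the cup depicted on the RHS of \eqref{eq:decomp of identity}. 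The second equation of \eqref{eq:decomp of identity} is obtained in identical fashion from the second equation of \eqref{eq:red rewire}; the signs $(-1)^{\binom{n}{2}+n(j-1)}$ are inherited directly.

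For \eqref{eq:infrel}, I would apply the same argument but additionally precompose with $\iota_0$ on the $[i]$ slot, that is, set $i=k=0$ and precompose with $\iota_0 \otimes \id_{\kirby'_j} \otimes \iota_0$. The left-hand side of \eqref{eq:red rewire} then simplifies via unitality to $\iota_0 \otimes \id_{\kirby'_j}$, which is the LHS of \eqref{eq:infrel}. On the right-hand side, the leftmost $\kirby'_i = \kirby'_0$ strand now originates from a unit, and the $n$ dots placed on the merging arc in \eqref{eq:red rewire} transfer to $n$ dots on the short $\kirby'_j$ segment, exactly reproducing the RHS of \eqref{eq:infrel}. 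The mirror-image equation is handled analogously using the second equation of \eqref{eq:red rewire}, with the sign $(-1)^{n(j-1)}$ arising for the same reason as in the previous case.

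There is no genuine obstacle: everything beyond Lemma \ref{lem:rewiring} is bookkeeping of diagrammatic substitution together with two applications of the unit axiom in \eqref{eq:red algebra rels}. The only care required is verifying that the composite of the rung (hollow circle labeled $n$) with a unit-terminated \kirbcolword strand agrees, under the conventions of Definition \ref{def:shorthand}, with the cup drawn in \eqref{eq:decomp of identity} and \eqref{eq:infrel}, which is immediate from unpacking those conventions.
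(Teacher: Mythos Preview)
Your proposal is correct and matches the paper's approach exactly: the paper's entire proof is the single sentence ``Precomposing \eqref{eq:red rewire} with the appropriate \kirbcolword unit maps gives the following relations,'' and you have correctly identified which unit maps to use and why unitality \eqref{eq:red algebra rels} collapses the extra strands. Your observation that the rung of Definition~\ref{def:shorthand}, when one leg is fed a unit, reduces to the bare cup is the only nontrivial check, and it is indeed immediate from the definition.
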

\end{subequations}

\begin{cor}\label{cor:kirby squared}
We have that 
$\kirby_i\otimes \kirby_j \cong \bigoplus_{k\geq 0}q^{-2k}\kirby_{i+j}$ in $\cdTL$.
\end{cor}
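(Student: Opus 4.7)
The plan is to apply the Biproduct Recognition Lemma~\ref{lemma:biproduct recognition} in the compactly generated category $\cdTL$. The key input is equation~\eqref{eq:decomp of identity}, which expresses $\id_{\kirby_i \otimes \kirby_j}$ in $\KdTL$ as an infinite sum of morphisms, each of which factors through a shifted copy of $\kirby_{i+j}$. For each $n \geq 0$, I will take $\pi_n \colon \kirby_i \otimes \kirby_j \to q^{-2n}\kirby_{i+j}$ to be the ``bottom half'' of the $n$-th term (place $n$ dots on the $\kirby_j$ strand, then merge into $\kirby_i$), and $\sigma_n \colon q^{-2n}\kirby_{i+j} \to \kirby_i \otimes \kirby_j$ to be the ``top half'' of that term, rescaled by $(-1)^{\binom{n}{2}}/n!$, involving the hollow-$n$-circle splitting morphism. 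Via the fully faithful functor $\phi \colon \KdTL \to \cdTL$ of Theorem~\ref{thm:diagpres}, these become degree-zero morphisms in $\cdTL$; a quick dot-count confirms the grading shifts match.

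The verification of the three hypotheses of Lemma~\ref{lemma:biproduct recognition} runs as follows. First, the composition $\pi_n \circ \sigma_m$ produces a teardrop on a single $\kirby_{i+j}$ strand carrying $n$ dots against an $m$-hollow circle, which evaluates to $\delta_{nm} (-1)^{\binom{n}{2}} n!$ times the identity by~\eqref{eq:dotted teardrop}; combined with the scalar built into $\sigma_n$, this yields $\pi_n \circ \sigma_m = \delta_{nm}\, \id$. Second, for any compact $K \in \cdTL$, Proposition~\ref{prop:kirby reps pol} together with parity considerations identifies $\Hom^{k+2n}_{\cdTL}(K, \kirby_{i+j})$ with a graded piece of $\Pol(K)^\ast$; since $\Pol(K)$ is finite-dimensional, this piece vanishes for all sufficiently large $n$, giving local finiteness. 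Third, given $f \colon K \to \kirby_i \otimes \kirby_j$ with compact $K$, precomposing~\eqref{eq:decomp of identity} with $f$ in $\KdTL$ and transferring along $\phi$ yields $f = \sum_n \sigma_n \pi_n f$; compactness of $K$ (together with the local finiteness just established) ensures this sum is finite in each graded degree, so the equality makes sense in $\cdTL$.

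The main obstacle will be the bookkeeping: correctly defining $\sigma_n$ and $\pi_n$ as degree-zero morphisms in $\cdTL$ with the right signs and shifts, and verifying that the formal infinite sum of~\eqref{eq:decomp of identity} in $\KdTL$ translates faithfully (via $\phi$ and compactness of the source) into the pointwise finite conditions required by the Biproduct Recognition Lemma in $\cdTL$. Once the setup is in place, the heart of the argument reduces to invoking the two diagrammatic identities~\eqref{eq:dotted teardrop} (for orthogonality) and~\eqref{eq:decomp of identity} (for completeness), both already established.
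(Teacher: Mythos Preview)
Your proposal is correct and follows essentially the same route as the paper: apply the Biproduct Recognition Lemma~\ref{lemma:biproduct recognition} using the decomposition~\eqref{eq:decomp of identity}, verify orthogonality via~\eqref{eq:dotted teardrop} (the paper also invokes associativity from~\eqref{eq:red algebra rels} to reduce the relevant teardrop to that form), and check condition~(ii) via Proposition~\ref{prop:kirby reps pol}. One small caveat: you only need the \emph{existence} of the extension $\phi\colon \KdTL\to\cdTL$, not its full faithfulness---and indeed in the paper's logical order this corollary is an ingredient in the proof of full faithfulness, so invoking the latter would be circular.
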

\begin{proof}
Relation \eqref{eq:dotted teardrop} and the associativity relation in \eqref{eq:red algebra rels}
imply that
\begin{equation}
	\label{eq:orthogonality}
\begin{tikzpicture}[anchorbase,scale=.75]
	\draw[kirbystyle]  (0,0)node[below=-2pt]{\scriptsize $[j]$}  to (0,2)node[above=-2pt]{\scriptsize $[j]$};
	\draw[kirbystyle] (0,.75) to[out=-90,in=180] (.5,.5) to[out=0,in=-90] (1,.85) 
		to (1,.85) to[out=90,in=-30] (0,1.5);
	\node at (.65,1.5) {\scriptsize $m$};
	\node at (.5,1.31) {\small $\bullet$};
	\draw[fill=white] (.5,.5) circle (.15);
	\node at (.5,.75) {\scriptsize $n$};
\end{tikzpicture}
\ = \d_{n,m} (-1)^{\genfrac(){0pt}{2}{n}{2}} n! 
\begin{tikzpicture}[anchorbase,scale=.75]
	\draw[kirbystyle]  (0,0)node[below=-2pt]{\scriptsize $[j]$}  to (0,2)node[above=-2pt]{\scriptsize $[j]$};
\end{tikzpicture}
\, .
\end{equation}
Thus, equation \eqref{eq:decomp of identity} expresses 
$\id_{\kirby'_i}\otimes \id_{\kirby'_j}$ as a sum of orthogonal idempotents.
The result then follows by applying the functor $\KdTL\rightarrow \cdTL$ to this decomposition of identity 
and then using the Biproduct Recognition Lemma \ref{lemma:biproduct recognition}.
Note that condition (ii) therein holds by Proposition \ref{prop:kirby reps pol} and 
condition (iii) holds by Definition \ref{def:completed diag cat}.
\end{proof}

The analogous result is true in the purely diagrammatic category $\KdTL$,
but, since we have not yet shown that $\KdTL$ is compactly generated,
we cannot directly use the Biproduct Recognition Lemma \ref{lemma:biproduct recognition}.
Nonetheless, we can repeat the argument in the proof thereof, using the following technical fact.

\begin{lemma}\label{lemma:homs bdd above}
Let $X$ be a finite object in $\KdTL$ and let $T \in \KdTL$ be arbitrary. 
Then, $\Hom_{\KdTL}(X,T)$ is bounded above in $q$-degree.
\end{lemma}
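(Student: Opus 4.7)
The strategy is to reduce every morphism $X \to T$ in $\KdTL$ with finite source to a canonical form whose $q$-degree is explicitly bounded. First, since $X$ is finite the only canonical inclusion with codomain $X$ is $\id_X$, so Definition~\ref{def:completed diag cat} gives $\Hom_{\KdTL}(X,T) = \Hom_{\KdTL'}(X,T)$; and since Karoubi idempotents pick out subspaces, direct sums split $\Hom$-spaces, and grading shifts merely translate degrees, I may assume that $X = \gen^m$ and that $T = V_1 \otimes \cdots \otimes V_r$ is a plain tensor product of generators $\gen$ and Kirby objects $\kirby'_i$, with $r_b$ copies of $\gen$ among the $V_j$.

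I would then show that every morphism in $\Hom_{\KdTL'}(\gen^m, T)$ is a $\K$-linear combination of canonical diagrams of the form $P \circ I \circ D$, where $D \in \Hom_{\dTL}(\gen^m, \gen^{N + r_b})$ is a dotted Temperley--Lieb morphism, $I = \incl_{n_1} \otimes \cdots \otimes \incl_{n_s} \otimes \id_{\gen^{r_b}}$ is a tensor product of canonical inclusions with $n_1 + \cdots + n_s = N$, and $P$ is a \emph{dot-free pink topology} built only from iterated multiplications $\mu$ routing the $\kirby'_{n_j}$ outputs of $I$ to the $\kirby'$-factors of $T$. The structural observation enabling this reduction is that no generating morphism in Definition~\ref{def:KdTL prime} produces a black output from a pink input, so every connected pink component in any diagram necessarily traces back to inclusions $\incl_n$ at the bottom. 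Given this, the associativity and unit relations~\eqref{eq:red algebra rels} normalize the shape of the pink topology; relation~\eqref{eq:more dotsliding} together with its higher-multiplicity analogues derived in Lemma~\ref{lemma:secondaryrels} slides any dot on a pink strand past a multiplication, picking up signs; and~\eqref{eq:dotslideKdTL} converts a dot on a pink strand immediately above an inclusion $\incl_n$ into the central element $\xs_n$ acting on the $n$ black strands below. Iterating these moves, every pink dot eventually passes through an inclusion and is absorbed into $D$.

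Since all the relations used are degree-homogeneous, the degree of any representative equals the degree of each of its canonical summands, which is $\deg(D) + \deg(I) + \deg(P)$. By Definition~\ref{def:dTL}, cups and caps in $\dTL$ carry degree zero while each dot contributes degree $2$; since any planar dotted tangle from $\gen^m$ to $\gen^{N+r_b}$ has $(m + N + r_b)/2$ arcs and each arc may hold at most one dot, we obtain $\deg(D) \leq m + N + r_b$. The inclusion $I$ has degree $-N$, and the dot-free pink topology $P$ has degree $0$. Thus the $q$-degree of any canonical diagram is at most
\[
(m + N + r_b) + (-N) + 0 = m + r_b,
\]
a bound independent of $N$ and of the chosen pink topology, yielding the desired upper bound on $\Hom^{\bullet}_{\KdTL}(X,T)$.

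The main obstacle is the canonical-form reduction: although the pink dot generator admits no nilpotency relation a priori in $\KdTL'$, one must verify that it nonetheless becomes effectively nilpotent after composition with any inclusion, so that systematic application of~\eqref{eq:red algebra rels}, \eqref{eq:more dotsliding}, and~\eqref{eq:dotslideKdTL} always terminates with a dot-free pink topology. The essential input is that every pink strand in any diagram out of a finite object factors through the bottom inclusions, which provides the requisite foothold for absorbing all pink dots into the black layer below.
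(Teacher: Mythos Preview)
Your argument is correct, but the paper's proof is considerably shorter because it front-loads three reductions you chose to forgo. First, it uses duality in $\dTL$ to replace $\Hom_{\KdTL}(X,T)$ by $\Hom_{\KdTL}(\gen^0, X^\vee \otimes T)$, eliminating the parameter $m$. Second, it invokes Theorem~\ref{thm:tensor with kirby} to absorb all black factors of the target into a Kirby factor, so that without loss of generality $T = \kirby'_{i_1}\otimes\cdots\otimes\kirby'_{i_r}$ and your $r_b$ vanishes. Third, it simply cites Lemma~\ref{lem:factor}, which already gives the factorization $\iota \circ D$ with $\iota$ a canonical inclusion into $T$ and $D\in\Kar(\dTL)$; the degree count $\deg(\iota)=-2N$, $\deg(D)\le 2N$ is then immediate.

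Your three-layer factorization $P\circ I\circ D$ reproves Lemma~\ref{lem:factor} inline, and in fact the extra layer $P$ is unnecessary: by the last relation in~\eqref{eq:red relation c} one has $\mu\circ(\iota_m\otimes\iota_n)=\iota_{m+n}$, so every multiplication in your pink topology can be absorbed into the canonical inclusion below it, collapsing $P\circ I$ to a single canonical inclusion of the type used in the paper. What your route buys is independence from duality and from Theorem~\ref{thm:tensor with kirby}; what it costs is the bookkeeping of interleaved black and pink factors in $T$ (which you gloss over somewhat when writing $I$ with all black factors grouped on the right) and a reproof of a lemma already on the books.
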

\begin{proof}
The object $X$ has a dual, 
so we have $\Hom_{\KdTL}(X,T) \cong \Hom_{\KdTL}(\gen^0,X^\vee\otimes T)$.
Thus, without loss of generality we may assume $X=\gen^0$. 
Further, using Theorem \ref{thm:tensor with kirby} we may assume that $T$ 
is a tensor product of Kirby objects $T = \kirby'_{i_1}\otimes \cdots \otimes \kirby'_{i_r}$.
By Lemma \ref{lem:factor}, 
$\Hom_{\KdTL'}(\gen^0, \kirby'_{i_1}\otimes \cdots \otimes \kirby'_{i_r})$ 
is spanned by morphisms of form $\iota \circ D$ where 
$D \in \Hom_{\KdTL'}(\gen^0,\gen^{2N})$ 
and $\iota =\iota_{n_1}\otimes\cdots \otimes \iota_{n_r}$ with $n_1+\cdots + n_r = 2N$. 
Since $\iota$ therefore has degree $-2N$ and $D$ has degree at most $2N$, 
$\Hom_{\KdTL}(\gen^0,T)$ is supported in non-positive degrees.
\end{proof}

\begin{prop}
	\label{prop:kirby squared diag}  
We have that $\kirby'_i \otimes \kirby'_j \cong \bigoplus_{k\geq 0}q^{-2k}\kirby'_{i+j}$ in $\KdTL$.
\end{prop}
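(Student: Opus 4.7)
The plan is to parallel the proof of Corollary \ref{cor:kirby squared}, which established this isomorphism in $\cdTL$ by invoking the Biproduct Recognition Lemma \ref{lemma:biproduct recognition}. The obstacle, flagged in the text preceding the statement, is that that lemma cannot be applied verbatim in $\KdTL$, since we have not shown $\KdTL$ to be compactly generated. I will retrace the argument of Lemma \ref{lemma:biproduct recognition} directly, using Lemma \ref{lemma:homs bdd above} and the nilpotence $z_m^{m+1} = 0$ (Remark \ref{rmk:too many z's}) as substitutes for the compact-generation hypothesis.

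First I would extract from the $n$-th summand of \eqref{eq:decomp of identity} morphisms
\[
\sigma_n \colon q^{-2n}\kirby'_{i+j} \to \kirby'_i \otimes \kirby'_j
\quad \text{and} \quad
\pi_n \colon \kirby'_i \otimes \kirby'_j \to q^{-2n}\kirby'_{i+j},
\]
normalized so that $\sigma_n \circ \pi_n$ equals that summand. Equation \eqref{eq:orthogonality} yields the orthogonality $\pi_m \circ \sigma_n = \delta_{m,n}\,\id$, and \eqref{eq:decomp of identity} itself becomes the infinite-sum decomposition $\id_{\kirby'_i \otimes \kirby'_j} = \sum_n \sigma_n \circ \pi_n$ in $\KdTL$.

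The bulk of the work, and where the main difficulty lies, is showing that for any object $T \in \KdTL$ the assignments
\[
(g_n)_n \longmapsto \textstyle\sum_n g_n \circ \pi_n
\quad \text{and} \quad
(f_n)_n \longmapsto \textstyle\sum_n \sigma_n \circ f_n
\]
define valid morphisms in $\KdTL$ per Definition \ref{def:completed diag cat}; these will then provide inverses to the canonical maps $g \mapsto (g \circ \sigma_n)_n$ and $f \mapsto (\pi_n \circ f)_n$, exhibiting the biproduct universal property. For the first assignment, given any canonical inclusion $\iota = \iota_a \otimes \iota_b \in \Incl(\kirby'_i \otimes \kirby'_j)$, the composite $\pi_n \circ \iota$ factors through $\iota_{a+b}$, with the $n$ dots appearing in $\pi_n$ absorbed into a scalar multiple of $z_{a+b}^n$ on $\JW_{a+b}$ via \eqref{eq:PxxxP}; since $z_{a+b}^{a+b+1} = 0$, we obtain $\pi_n \circ \iota = 0$ for $n > a+b$, so that $\sum_n g_n \circ \pi_n \circ \iota$ is automatically a finite sum. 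For the second assignment, given any canonical inclusion $\iota \colon X \to T$ with $X$ finite, the morphism $f_n \circ \iota$ lies in $\Hom_{\KdTL}^k(X, q^{-2n}\kirby'_{i+j}) \cong \Hom_{\KdTL}^{k+2n}(X, \kirby'_{i+j})$ for the relevant fixed $k$, and this vanishes for $n$ sufficiently large by Lemma \ref{lemma:homs bdd above}.

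Once both candidate inverses are known to be well-defined, the verification that they are mutually inverse to the canonical maps is formal, using \eqref{eq:orthogonality} and \eqref{eq:decomp of identity} exactly as in the corresponding manipulations in the proof of Lemma \ref{lemma:biproduct recognition}.
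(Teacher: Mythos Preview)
Your proposal is correct and matches the paper's proof essentially step for step: the same maps $\sigma_n,\pi_n$, the same use of \eqref{eq:orthogonality} and \eqref{eq:decomp of identity}, Lemma~\ref{lemma:homs bdd above} for well-definedness of $\sum_n \sigma_n\circ f_n$, and nilpotence of $z$ for well-definedness of $\sum_n g_n\circ \pi_n$.
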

\begin{proof}
The proof parallels that of the Biproduct Recognition Lemma \ref{lemma:biproduct recognition}. 
Let $Y := \kirby'_i \otimes \kirby'_j$ and $Y_n = q^{-i-j-2n} \kirby'_{i+j}$ and consider the 
projection and inclusion morphisms:
\[
\sigma_n:= 
\begin{tikzpicture}[anchorbase,scale=1]
	\draw[kirbystyle]  (0,.25)node[below=-2pt]{\scriptsize $[i{+}j]$}  to (0,1.25)node[above=-2pt]{\scriptsize $[i]$};
	\draw[kirbystyle] (0,1) to[out=-90,in=180] (.5,.75) to[out=0,in=-90] (1,1)
		to (1,1.25)node[above=-2pt]{\scriptsize $[j]$};
	\draw[fill=white] (.5,.75) circle (.15);
	\node at (.5,1) {\scriptsize $n$};
\end{tikzpicture} \colon Y_n \to Y
\, , \qquad
\pi_n := 
\frac{(-1)^{\genfrac(){0pt}{2}{n}{2}}}{n!}
\begin{tikzpicture}[anchorbase,scale=1]
	\draw[kirbystyle]  (0,0)node[below=-2pt]{\scriptsize $[i]$} to (0,1)node[above=-2pt]{\scriptsize $[i{+}j]$};
	\draw[kirbystyle]  (1,0)node[below=-2pt]{\scriptsize $[j]$} to (0,.5);
	\node at (.5,.25) {\small $\bullet$};
	\node at (.65,.4) {\scriptsize $n$};
\end{tikzpicture} \colon Y\to Y_n \, .
\]
We have 
\begin{equation}
	\label{eq:sigmapi}
\pi_n\circ \sigma_m = \d_{n,m} \id_{\kirby'_{i+j}} \, , \quad
\id_{\kirby'_i}\otimes \id_{\kirby'_j} = \sum_{n=0}^\infty \sigma_n\circ \pi_n
\end{equation}
by \eqref{eq:orthogonality} and \eqref{eq:decomp of identity}, respectively.

We first claim that the collection of maps $\{\pi_n\}_{n \in \N}$ exhibits $Y$ as a product $Y\cong \prod_{n \in \N} Y_n$. 
For this, we must construct a two-sided inverse to the assignment:
\[
\Phi \colon \Hom_{\KdTL}(T,Y) \to \prod_{n \in \N} \Hom_{\KdTL}(T,Y_n) 
\, , \quad 
F \mapsto (\pi_n\circ F)_{n \in \N} \, .
\]
where $T \in \KdTL$ is an arbitrary object. 
Consider the map
\[
\Phi' \colon \prod_{n \in \N} \Hom_{\KdTL}(T,Y_n) \to \Hom_{\KdTL}(T,Y) 
\, , \quad 
(f_n)_{n \in \N} \mapsto   \sum_{n=0}^\infty \sigma_n\circ f_n \, .
\]
To see that this is well-defined, 
it suffices to consider the case when each component $f_n\in \Hom(T,Y_n)$ is homogeneous of degree zero.
Since $Y_n = q^{-i-j-2n} \kirby'_{i+j}$, 
this means that $f_n$ corresponds to a degree $i+j+2n$ element of $\Hom(T,\kirby'_{i+j})$.
Fix a canonical inclusion $\iota \in \Incl(T)$ with domain $T_\iota$, 
then in $\Hom_{\KdTL'}(T_\iota, \kirby'_{i+j})$ we have that
\[
\deg(f_n \circ \iota) = i+j+2n+\deg(\iota) \, .
\]
Lemma \ref{lemma:homs bdd above} then implies that 
$f_n\circ \iota=0$ for all but finitely many $n$.
It is straightforward to check that $\Phi$ and $\Phi'$ are two-sided inverses.

Next, we must show that the collection of maps $\{\sigma_n\}_{n \in \N}$ exhibits $Y$ 
as a coproduct $Y\cong \coprod_{n \in \N} Y_n$, 
i.e.~that, for all objects $T \in \KdTL$, the assignment
\[
\Psi \colon \Hom_{\KdTL}(Y,T) \to \prod_{n \in \N} \Hom_{\KdTL}(Y_n,T)
\, , \quad 
G \mapsto \{G\circ \sigma_n\}_n
\]
has a $2$-sided inverse.
Consider
\[
\Psi' \colon \prod_{n \in \N} \Hom_{\KdTL}(Y_n,T) \to  \Hom_{\KdTL}(Y,T) 
\, , \quad \{g_n\}_n \mapsto  \sum_{n=0}^\infty g_n\circ \pi_n \, .
\]
Given any canonical inclusion $\iota \in \incl(\kirby'_{i})$, 
the first relation in Lemma \ref{lem:KarRels} implies that $\pi_n \circ \iota = 0$
for all but finitely many $n$. 
Hence, this assignment is well-defined,
and we leave it as an
exercise to show that $\Psi$ and $\Psi'$ are two-sided inverses.

Lastly, the condition (i) in Definition \ref{def:biproducts} holds by \eqref{eq:sigmapi}.
\end{proof}

At last, we establish the following.

\begin{proof}[Proof (fully faithfullness in Theorem \ref{thm:diagpres}).]
	Given two objects $S,T$ in $\KdTL$, we need to show that 
	\[
		\phi \colon \Hom_{\KdTL}(S,T) \to \Hom_{\cdTL}(\phi(S),\phi(T))
	\]
	is an isomorphism.   
	If $T$ is finite, then these $\Hom$-spaces are zero unless $S$ is finite as well.
	In that case, both $\Hom$-spaces are simply the corresponding $\Hom$-space 
	in $\Kar(\dTL)$ so there is nothing to show.
	
	Thus, suppose that $T$ is infinite.
	Using (the proof of) Theorem \ref{thm:tensor with kirby}, 
	Corollary \ref{cor:kirby squared} and Proposition \ref{prop:kirby squared diag}, 
	we immediately reduce to the case where $T = \kirby'_i$.
	We now compute all of the relevant $\Hom$-spaces case-by-case.
	
	\begin{enumerate}
	\item If $S=X$ is finite, then without loss of generality we may assume $X=\gen^n$
	and $i\equiv n \text{ mod } 2$, since otherwise 
	$\Hom_{\KdTL}(X,Y)=0$ and $\Hom_{\overline{dTL}}(\phi(X),\phi(Y))=0$. 
Proposition \ref{prop:kirby reps pol} gives that
\[
\Hom_{\cdTL}(\phi(X), \phi(T)) = \Hom_{\cdTL}(\gen^n, \kirby_i) \cong \Pol^\ast(\gen^n)
\]
while Definition \ref{def:completed diag cat} and Lemma \ref{lem:factor}
imply that
\[
\Hom_{\KdTL}(\gen^n,\kirby'_i) = \Hom_{\KdTL'}(\gen^n,\kirby'_i)
= 
\spann \left\{
\begin{tikzpicture}[anchorbase,xscale=-1]\draw[very thick] (.35,-.5) to (.35,0)  to [out=90,in=-45] (0,.5);
	\draw[very thick] (-.35,-.5) to (-.35,0)  to [out=90,in=-135] (0,.5);
	\draw[kirbystyle] (0,0.5) to (0,1) node[above=-2pt]{\scriptsize $[n]$};
	\node at (0,.15) {$\cdots$};
	\filldraw[white] (-.5,-.3) rectangle (.5,0); 
	\draw[very thick] (-.5,-.3) rectangle (.5,0); 
	\node at (0,-.15) {\scriptsize$f$};
\end{tikzpicture}
\right\}
\]
where $f$ ranges over all square free monic monomials in $\K[x_1,\ldots,x_n]$.
The isomorphism from Theorem \ref{thm:tensor with kirby}, 
in the explicit form provided by \eqref{eq:idempotent}, 
identifies these $\Hom$-spaces.

	\item If $S$ is infinite, then as above we immediately reduce to the case of 
	$S=\kirby'_j$. Without loss of generality we may assume $i\equiv j \text{ mod } 2$, 
	since otherwise both $\Hom$-spaces are zero.
	Definition \ref{def:completed diag cat} gives that
\[
\Hom_{\KdTL}(\kirby'_j,\kirby'_i) = 
\Hom_{\KdTL'}(\kirby'_j,\kirby'_i)  = 
\spann\left\{
\begin{tikzpicture}[anchorbase,xscale=1]
	\draw[kirbystyle] (0,0) node[below=-2pt]{\scs$[j]$} to node[black]{$\bullet$} (0,1) node[above=-2pt]{\scs$[i]$};
	\node at (.25,.5) {\scriptsize $n$};
\end{tikzpicture}
\right\}_{n\in \N}
\]
and the result follows from Corollary \ref{cor:endkirby}.
	\end{enumerate}
\end{proof}

\section{Kirby-colored Khovanov homology}
	\label{sec:KirbyKh}

In this section, we review cabling properties of Khovanov homology and use them
to define the Kirby-colored Khovanov homology of a link. Using results of
Manolescu--Neithalath \cite{2020arXiv200908520M}, we show that Kirby-colored
Khovanov homology recovers the skein lasagna modules from
\cite{2019arXiv190712194M} evaluated on $4$-dimensional $2$-handlebodies, and thus gives a
$4$-manifold invariant.

\subsection{Cabling and Khovanov homology}
\label{ss:cabling}

As in the rest of the paper, we work over a field $\K$ of characteristic zero.
The following construction depends on a fully functorial version 
of Khovanov homology. 
While we prefer Blanchet's oriented model \cite{Bla} because of its
compatibility with the $\glN$ version developed in \cite{QR, ETW}, 
the precise fix to functoriality will not play a role here.
See e.g.~the introduction of \cite{2019arXiv190312194B} for a discussion of 
several of the means of fixing the functoriality of Khovanov homology.

\begin{thm}[Cabling in Khovanov homology]
	\label{thm:cable functor}
Let $\mathcal{L} = \mathcal{K}_1\cup \cdots \cup \mathcal{K}_r$ be an $r$-component 
framed oriented link in $S^3$.  There is a functor
\[
\Kh_{\mathcal{L}} \colon \ABN^{\times r} \rightarrow \Vect^{\Z\times \Z}
\]
sending $(\gen^{m_1},\cdots, \gen^{m_r})$ to the Khovanov homology of the cable
$\mathcal{K}_1^{m_1} \cup \cdots \cup \mathcal{K}_r^{m_r}$
of $\mathcal{L}$ obtained by replacing the component $\mathcal{K}_j$ by its
$m_j$-fold parallel cable, 
with parallelism determined by the framing of $\mathcal{K}_j$, 
and with alternating orientations\footnote{If $m_j$ is even then the orientations are balanced; 
if $m_j$ is odd there is one more component oriented parallel to $\mathcal{K}_j$ than antiparallel.}.
\end{thm}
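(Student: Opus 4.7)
The plan is to construct $\Kh_\mathcal{L}$ by transplanting annular objects and cobordisms into a neighborhood of $\mathcal{L}$ inside $S^3$, and then invoking functoriality of Khovanov homology. For each component $\mathcal{K}_j$, I fix an embedding $\nu_j \colon S^1 \times [0,1] \hookrightarrow S^3$ of the closed annulus whose core circle is $\mathcal{K}_j$ and whose induced normal framing agrees with the given framing of $\mathcal{K}_j$. Thickening, this extends to an embedding $\tilde{\nu}_j \colon (S^1 \times [0,1]) \times [-\epsilon,\epsilon] \hookrightarrow S^3$ onto a tubular neighborhood $N_j$ of $\mathcal{K}_j$, and I arrange the $N_j$ to be pairwise disjoint.

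On objects, a tuple $(C_1,\ldots,C_r) \in \ABN^{\times r}$ maps to the framed oriented link $\mathcal{L}_{C_\bullet} := \bigsqcup_j \nu_j(C_j) \subset S^3$, where each connected component of each $C_j$ is oriented according to the alternating convention relative to the orientation of $\mathcal{K}_j$, and framings are inherited from $\tilde{\nu}_j$. When $C_j = \gen^{m_j}$ this produces exactly the cable $\mathcal{K}_1^{m_1} \cup \cdots \cup \mathcal{K}_r^{m_r}$, and I set $\Kh_\mathcal{L}(C_\bullet) := \Kh(\mathcal{L}_{C_\bullet})$. On morphisms, a tuple $(W_1,\ldots,W_r)$ of dotted cobordisms with $W_j \subset (S^1 \times [0,1]) \times [0,1]$ is transplanted via $\tilde{\nu}_j \times \mathrm{id}_{[0,1]}$ into a disjoint collection of dotted cobordisms in $S^3 \times [0,1]$ between the source and target cables, and functoriality of Khovanov homology associates a bigraded map $\Kh_\mathcal{L}(C_\bullet) \to \Kh_\mathcal{L}(C'_\bullet)$ (with quantum degree shift matching the degree convention $\tfrac{1}{2}|\mathbf{p}| - \chi(W)$ from Definition~\ref{def:BN}). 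Since the local Bar-Natan relations \eqref{eq:BNrels} defining $\ABN$ are precisely the relations satisfied by the Khovanov TQFT, this assignment descends to well-defined morphisms in $\ABN^{\times r}$; compatibility with vertical composition and with the product structure reduces to composition and spatial disjointness of the embedded cobordisms, and extension to the $\Mat \circ \Kar$-completion is automatic since $\Vect^{\Z\times\Z}$ is $\Z$-additive and idempotent complete.

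The principal obstacle is the classical sign ambiguity in the functoriality of Khovanov homology: without a fix, cobordism-induced maps are defined only up to an overall sign, so composition fails to be strictly functorial. I resolve this by working with a genuinely functorial model, such as Blanchet's oriented version \cite{Bla}. The required data is an orientation-like decoration on each cobordism together with Blanchet's signing conventions; I fix this data once and for all on the generating morphisms of $\ABN$ (via the $\dTL$ presentation from Proposition~\ref{prop:dTLABN}) and extend by composition and disjoint union. The alternating orientation convention on the cables is arranged precisely so that all generators---cups, caps, and dots---transplant to oriented surfaces satisfying Blanchet's conventions coherently. Once signs are pinned down in this way, the remaining verifications (independence of the concrete embedding $\nu_j$ among framed-isotopic choices, independence of the Blanchet decoration within an isotopy class rel boundary, and compatibility with disjoint union across factors) all reduce to the isotopy and disjoint-union invariance of the chosen functorial Khovanov homology.
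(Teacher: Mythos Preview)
Your proof is correct and follows essentially the same route as the paper: both construct $\Kh_{\mathcal{L}}$ by using the framed link to embed annular curves and cobordisms into $S^3$, then apply a sign-fixed functorial Khovanov homology and check that the Bar-Natan relations descend. The paper organizes this slightly more cleanly by naming intermediate cobordism categories $\CS_{\mathcal{A}}$ and $\CS_{S^3}$ and factoring as $\Kh \circ \Phi_{\mathcal{L}}$, but the content is the same.
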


\begin{proof}
Let $\CS_{\mathcal{A}}$ denote the category whose objects are 
oriented, embedded $1$-manifolds in $\mathcal{A} := S^1 \times [0,1]$
and whose morphisms are formal $\K$-linear combinations 
of oriented link cobordisms properly embedded in $\mathcal{A} \times [0,1]$.
Similarly, let $\CS_{S^3}$ denote the category whose objects are oriented links in $S^3$ 
and whose morphisms are formal $\K$-linear combinations 
of link cobordisms in $S^3\times [0,1]$. 
A framed link $\mathcal{L}$ determines an embedding $\mathcal{A}^{\sqcup r}\rightarrow S^3$, 
which gives a functor 
$\Phi_{\mathcal{L}} \colon \CS_{\mathcal{A}}^{\times r} \rightarrow \CS_{S^3}$.

Meanwhile, Khovanov homology 
(with a chosen fix of its functoriality) gives a functor 
$\Kh\colon \CS_{S^3} \rightarrow \Vect^{\Z\times \Z}$.
Since $\Vect^{\Z\times \Z}$ is closed under direct sums and grading shifts, 
the composite $\Kh \circ \Phi_{\mathcal{L}} \colon \CS_{\mathcal{A}}^{\times r} \to \Vect^{\Z\times \Z}$ 
extends to $\Mat(\CS_{\mathcal{A}})^{\times r}$. 
By definition, $\ABN$ is a quotient of $\Mat(\CS_{\mathcal{A}})$ and the functor 
$\Kh \circ \Phi_{\mathcal{L}}$ descends to this quotient, 
since the local relations that define $\ABN$ are satisfied by the Khovanov invariant.  
The resulting functor $\ABN^{\times r}\rightarrow \Vect^{\Z\times \Z}$ is $\Kh_{\mathcal{L}}$.
\end{proof}

\begin{remark}
We work with alternating orientations in the components $\mathcal{K}^n$ so that
the annulus cobordisms $\mathcal{K}^n \leftrightarrow \mathcal{K}^{n+2}$ 
(the images under $\Phi_{\mathcal{L}}$ of cap and cup morphisms in $\dTL \subset \ABN$) are oriented.  
\end{remark}

We will also use $\Kh_{\mathcal{L}}$ to denote the corresponding functors 
$\dTL^{\times r} \to \Vect^{\Z\times \Z}$ and $\cdTL^{\times r} \to \Vect^{\Z\times \Z}$. 
Note that the latter exists since $\Vect^{\Z\times \Z}$ is further closed under 
direct summands and (filtered) colimits.

\begin{example}
	\label{ex:eval is pol}
If $\mathcal{U}$ is the 0-framed unknot, then $\Kh_{\mathcal{U}}$ coincides with the polynomial representation 
(after forgetting about the homological grading, which is trivial on the image of $\Kh_{\mathcal{U}}$).
On the level of objects, $\Kh_{\mathcal{U}}(\gen^n)$ is the Khovanov homology of an $n$-component unlink, 
which is isomorphic to $\Pol(n)$ (with graded dimension $(q+q^{-1})^n$). 
The verification that cups, caps, and dots act in the same way under $\Kh_{\mathcal{U}}$ and $\Pol$ 
is a straightforward exercise.
\end{example}

For the purposes of this paper, we take the following definition of a colored link.
(Recall Theorem \hyperlink{introthm:cabling}{B} and Definition \ref{def:KKh} from the introduction.)

\begin{definition}
	\label{def:colored homology}
A \emph{colored link} is a framed oriented link $\mathcal{L}\subset S^3$ 
with an ordering of its components, $\mathcal{L}=\mathcal{K}_1\cup\cdots \cup \mathcal{K}_r$ 
together with a choice of objects $X_1,\ldots,X_r\in \cdTL$. 
We write the data of a colored link as 
$\mathcal{L}^{\underline{X}} := \mathcal{K}_1^{X_1}\cup \cdots\cup \mathcal{K}_r^{X_r}$, 
where $\underline{X}:=(X_1,\ldots,X_r)$. 
Lastly, the \emph{Khovanov homology of a colored link} 
is denoted $\Kh(\mathcal{L}^{\underline{X}}):=\Kh_{\mathcal{L}}(X_1,\ldots,X_r)$.
\end{definition}

Further, we abbreviate 
$\mathcal{K}_1^{\gen^{m_1}} \cup \cdots\cup \mathcal{K}_r^{\gen^{m_r}}$ 
by $\mathcal{K}_1^{m_1} \cup \cdots \cup \mathcal{K}_r^{m_r}$, 
which is compatible with the usage of the latter in Theorem \ref{thm:cable functor} 
under the functor $\Phi_{\mathcal{L}}$ appearing in its proof.

The $n$-strand braid group $\Br_n$ acts on $\Kh(\mathcal{K}^n)$ 
by cobordisms that braid parallel components of the cable. 
Strictly speaking, the action of a braid also permutes the chosen orientations 
of the components of $\mathcal{K}^n$; 
however, these orientations only affect the 
Khovanov invariant up to an overall shift, so there is no harm in regarding braids as 
acting by automorphisms on $\Kh(\mathcal{K}^n)$. 
Work of Grigsby-Licata-Wehrli \cite{GLW} shows that this
braid group action factors through the symmetric group $\mathfrak{S}_n$, 
and, moreover, that this action coincides with the one induced by the 
$\mathfrak{S}_n$ action on $\gen^n \in \dTL$ from \eqref{eqn:TLbraiding}.

Before discussing Kirby-colored Khovanov homology, 
we take the opportunity to prove a
folk theorem that identifies the invariant subspace $\Kh(\mathcal{K}^n)^{\mathfrak{S}_n}$ 
under this action with both our invariant $\Kh(\mathcal{K}^{\JW_n})$ and another invariant appearing in the 
literature: Khovanov's categorification of the $n$-colored Jones polynomial from \cite{Kho4}.
We denote this latter invariant by $\Kh(\mathcal{K};n)$.

\begin{proposition}
	\label{prop:sym colored} 
If $\mathcal{K}$ is a framed knot and $n \in \N$, 
then $\Kh(\mathcal{K}^n)^{\mathfrak{S}_n} \cong \Kh(\mathcal{K}^{\JW_n}) \cong \Kh(\mathcal{K};n)$.
\end{proposition}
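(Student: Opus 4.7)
The plan is to verify the two isomorphisms in turn, with both stemming from the identification of $\JW_n$ as a symmetrizing idempotent together with the Grigsby--Licata--Wehrli theorem. First I would unpack the relevant definitions: by Definition \ref{def:JW}, over our characteristic-zero field $\K$ the Jones--Wenzl projector $\JW_n$ is the image in $\Kar(\dTL)$ of the symmetrizing idempotent
\[
e_n := \frac{1}{n!}\sum_{w\in\mathfrak{S}_n} w \ \in \ \End_{\dTL}(\gen^n),
\]
where $\mathfrak{S}_n$ acts on $\gen^n$ via the formula \eqref{eqn:TLbraiding}. Extending the cabling functor to $\Kar(\dTL)$, one therefore has $\Kh(\mathcal{K}^{\JW_n}) \cong \im\bigl(\Kh_{\mathcal{K}}(e_n)\bigr)$, realized as a direct summand of $\Kh(\mathcal{K}^n)$.

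For the isomorphism $\Kh(\mathcal{K}^n)^{\mathfrak{S}_n} \cong \Kh(\mathcal{K}^{\JW_n})$, I would invoke the Grigsby--Licata--Wehrli theorem recalled immediately prior to the statement: the geometric action of $\Br_n$ on $\Kh(\mathcal{K}^n)$ by cabled braid cobordisms factors through $\mathfrak{S}_n$, and under that factorization it coincides with the action obtained by applying $\Kh_{\mathcal{K}}$ to the $\mathfrak{S}_n$-action on $\gen^n$ from \eqref{eqn:TLbraiding}. Consequently $\Kh_{\mathcal{K}}(e_n) = \frac{1}{n!}\sum_w \Kh_{\mathcal{K}}(w)$ is the Reynolds averaging projector onto the subspace of $\mathfrak{S}_n$-invariants, whose image is precisely $\Kh(\mathcal{K}^n)^{\mathfrak{S}_n}$.

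For the isomorphism $\Kh(\mathcal{K}^{\JW_n}) \cong \Kh(\mathcal{K};n)$, I would appeal directly to the construction of the categorified $n$-colored Jones invariant in \cite{Kho4}, which proceeds by applying the Khovanov functor to the $n$-cable $\mathcal{K}^n$ with a categorified Jones--Wenzl projector inserted along a transverse arc. Because the cabling functor $\Kh_{\mathcal{K}}$ factors through $\dTL$ (and hence through $\Kar(\dTL)$), the effect on total homology of such a categorified projector is the same as the effect of the ordinary idempotent $\JW_n \in \End_{\Kar(\dTL)}(\gen^n)$ under $\Kh_{\mathcal{K}}$, which is $\Kh(\mathcal{K}^{\JW_n})$ by definition.

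The main obstacle is the second identification, since it requires reconciling Khovanov's original chain-level construction using a categorified projector with our formulation via the ordinary idempotent $\JW_n$ in $\Kar(\dTL)$. This reconciliation is essentially routine once one recognizes that both categorified Jones--Wenzl projectors in the Bar--Natan category and the idempotent $\JW_n$ are uniquely characterized by killing turnbacks, so they produce the same summand after passing to total homology. The first isomorphism, by contrast, is a direct consequence of Grigsby--Licata--Wehrli together with the standard fact that in characteristic zero the symmetrizer $e_n$ projects onto invariants.
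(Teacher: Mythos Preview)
Your argument for the first isomorphism $\Kh(\mathcal{K}^n)^{\mathfrak{S}_n} \cong \Kh(\mathcal{K}^{\JW_n})$ is correct and matches the paper's: apply Grigsby--Licata--Wehrli to identify the geometric $\mathfrak{S}_n$-action with $\Kh_{\mathcal{K}}$ applied to the action \eqref{eqn:TLbraiding}, so that $\Kh_{\mathcal{K}}(e_n)$ is the Reynolds projector onto invariants.

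For the second isomorphism your description of \cite{Kho4} is off, and this leads to a gap. Khovanov's construction there does \emph{not} insert a categorified Jones--Wenzl projector (those came later, with Cooper--Krushkal and Rozansky); rather, \cite{Kho4} builds a \emph{finite} complex $C_n\in K^b(\TL)$, coming from an explicit resolution of $\Sym^n(V)$ by tensor powers of $V$, and sets $\Kh(\mathcal{K};n):=H^\bullet\bigl(\Kh_{\mathcal{K}}(C_n)\bigr)$. The paper's argument is then a one-liner: in $K^b(\Kar(\TL))$ the complex $C_n$ is a resolution of the object $\JW_n$, hence $C_n\simeq \JW_n$ there, and since $\Kh_{\mathcal{K}}$ is additive it sends this homotopy equivalence to a homotopy equivalence of complexes of bigraded vector spaces, giving $\Kh(\mathcal{K};n)\cong \Kh(\mathcal{K}^{\JW_n})$.

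Your proposed route via ``both categorified projectors and $\JW_n$ are uniquely characterized by killing turnbacks'' does not directly yield the statement. The categorified projectors are infinite complexes in $K^-(\BN)$, and their turnback-killing uniqueness is a statement in that homotopy category; the idempotent $\JW_n$ lives in $\Kar(\dTL)$. Bridging these requires exactly the observation the paper uses (passing to $K^b(\Kar(\TL))$ where the finite resolution contracts onto $\JW_n$), or else a separate comparison theorem between the Cooper--Krushkal and Khovanov colored invariants. As written, your argument asserts the conclusion without supplying that bridge.
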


\begin{proof}
Recall that $V$ denotes the vector representation of $\slnn{2}$.
One of the central ideas in \cite{Kho4} is the construction of 
an explicit complex of $\slnn{2}$-representations which gives a 
resolution (in the category of finite-dimensional $\slnn{2}$ representations) 
of $\Sym^n(V)$ by tensor powers of $V$.
Equivalently, Khovanov provides a complex $C_n \in K^b(\TL)$ that gives a resolution of 
$\JW_n \in K^b(\Kar(\TL))$.

In the language of the present paper, 
Khovanov's colored invariant $\Kh(\mathcal{K};n)$ is defined by extending 
$\Kh_{\mathcal{K}} \colon \cdTL \to \Vect^{\Z\times \Z}$ 
to a functor between homotopy categories of complexes
\begin{equation}
	\label{eq:cablecx}
\Kh_{\mathcal{K}} \colon K^b(\cdTL) \to K^b(\Vect^{\Z\times \Z})
\end{equation}
and setting $\Kh(\mathcal{K};n) := H^{\bullet} \big( \Kh_{\mathcal{K}}(C_n) \big)$, 
where here $H^{\bullet} (-)$ denotes taking homology.
Since $\JW_n \simeq C_n$ in $K^b(\cdTL)$ and $\Kh_{\mathcal{K}}$ is additive, 
it is then immediate that
\[
\Kh(\mathcal{K};n) = H^{\bullet} \big( \Kh_{\mathcal{K}}(C_n) \big) 
\cong H^{\bullet} \big( \Kh_{\mathcal{K}}(\JW_n) \big) = \Kh_{\mathcal{K}}(\JW_n) = \Kh(\mathcal{K}^{\JW_n}) \, .
\]

Lastly, note that $\Kh(\mathcal{K}^n)^{\mathfrak{S}_n}$ can be identified with
the object $\big(\Kh(\mathcal{K}^n) , Q_n \big) \in \Kar(\Vect^{\Z \times \Z})
\cong \Vect^{\Z \times \Z}$, where $Q_n = \frac{1}{n!} \sum_{w \in
\mathfrak{S}_n} w$ is the symmetrizing idempotent for the action of
$\mathfrak{S}_n$ on $\Kh(\mathcal{K}^n)$ by braiding cobordisms. The
aforementioned results in \cite{GLW} imply\footnote{Their results are stated in
the setting of annular Khovanov homology, but their proof holds in the
non-annular setting. See \cite[Proposition 3.6]{2020arXiv200908520M} 
and also the discussion in \cite[\S6.1]{2019arXiv190404481G}.} 
that $Q_n = \Kh_{\mathcal{K}}(\JW_n)$, thus
\[
\Kh(\mathcal{K}^n)^{\mathfrak{S}_n} = \big(\Kh(\mathcal{K}^n) , Q_n \big)
=\big(\Kh_{\mathcal{K}}(\gen^n)) , \Kh_{\mathcal{K}}(\JW_n) \big) = \Kh_{\mathcal{K}}\big( (\gen^n , \JW_n) \big)
= \Kh(\mathcal{K}^{\JW_n}) \, . \qedhere
\]
\end{proof}

\begin{remark}\label{rem:dgtrace} Note that the cabling operation from
	\eqref{eq:cablecx} in the proof of Proposition~\ref{prop:sym colored}
	assembles Khovanov \emph{homologies} into a new complex. Although not needed
	here, a more sophistic cabling operation on the level of Khovanov chain
	complexes can be realized in the form of a dg (or $A_\infty$) functor from
	the ``derived horizontal trace'' \cite{2002.06110} of the Bar-Natan
	bicategory to complexes of $\Z$-graded vector spaces.  We expect that all
	the constructions in this paper should have analogues in the setting of
	derived traces, but we do not investigate them here.
\end{remark}

For brevity, 
we will often denote the colored knot $\mathcal{K}^{\symobj_n}$ by $\mathcal{K}^{(n)}$. 

\begin{example}\label{ex:kirby colored}
The invariant $\Kh(\mathcal{K}^{\kirby_i})$ of a knot colored by a Kirby object
is computed as
\[
\Kh(\mathcal{K}^{\kirby_i}) = \colim_{n\in \N} q^{-i-2n} \Kh \big(\mathcal{K}^{(i+2n)} \big)
\]
where the maps in the colimit are induced by dotted annulus cobordisms. 
As in Example \ref{ex:eval is pol}, we can identify $\Kh(\mathcal{U}^{\kirby_i})$
with $\Pol(\kirby_i)$. 
This agrees with the $N=2$ case of \cite[Theorem 1.2]{2020arXiv200908520M}, 
which computes Manolescu--Neithalath's \emph{cabled Khovanov--Rozansky homology} 
of the $0$-framed unknot (in any given homology class). 
A generalization is given below in Theorem~\ref{thm:MN}.

If instead we color by the total Kirby color $\kirby = \kirby_0\oplus \kirby_1$, 
then $\Kh(\mathcal{K}^\kirby)$ is computed as
\[
\Kh(\mathcal{K}^{\kirby}) = \colim_{m\in \N} q^{-m} \Kh \big(\mathcal{K}^{(m)} \big)
\]
where for the purposes of this colimit we must use the 
nonstandard partial order on $\N$
given by $m\unlhd m'$ iff $m\leq m'$ and $m\equiv m' \text{ mod } 2$.
(This colimit isn't filtered, but the result is a direct sum of two filtered colimits.)
\end{example}

\subsection{Kirby colored Khovanov homology and skein modules}
\label{ss:skeins}
To prepare for our main theorem on Kirby colored Khovanov homology, 
let $\mathcal{L}\subset S^3$ be a framed oriented  link 
with a decomposition into sublinks $\mathcal{L} = \mathcal{L}_1\cup \mathcal{L}_2$.  
Denote by $B^4(\mathcal{L}_2)$ the $4$-manifold obtained by attaching $2$-handles 
to the $4$-ball $B^4$ along $\mathcal{L}_2 \subset S^3 = \partial B^4$.  
We regard $\mathcal{L}_1$ as a link in the boundary $3$-manifold
$S^3(\mathcal{L}_2):=\partial B^4(\mathcal{L}_2)$.

Write $\mathcal{L}_2$ in terms of its components as 
$\mathcal{L}_2 = \mathcal{K}_1\cup\cdots\cup \mathcal{K}_r$. 
For $\underline{i}= (i_1,\ldots,i_r) \in \{0,1\}^r$, 
we let $\mathcal{L}_1\cup \mathcal{L}_2^{\kirby_{\underline{i}}}$ denote the colored link 
wherein the components of $\mathcal{L}_1$ are viewed as 
colored by the generating object $\gen$ of $\dTL$ and 
$\mathcal{L}_2^{\kirby_{\underline{i}}} = \mathcal{K}_1^{\kirby_{i_1}} \cup\cdots\cup \mathcal{K}_r^{\kirby_{i_r}}$.

\begin{thm}
	\label{thm:MN}
There is an isomorphism
\[
\Kh(\mathcal{L}_1\cup \mathcal{L}_2^{\kirby_{\underline{i}}})\cong 
\mathcal{S}^2_0(B^4(\mathcal{L}_2);\mathcal{L}_1,\underline{i},\K)
\]
of $\Z\times \Z$-graded vector spaces, 
where the right-hand side denotes the $\glnn{2}$ skein lasagna module 
(i.e.~degree zero blob homology) 
as defined in \cite{2019arXiv190712194M}, 
and $\underline{i}$ is interpreted as a class in the second relative homology group 
$H_2(B^4(\mathcal{L}_2),\mathcal{L}_1;\Z)$.
\end{thm}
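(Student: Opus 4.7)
The strategy is to verify that Kirby-colored Khovanov homology directly computes the right-hand side of the Manolescu--Neithalath 2-handle formula that defines the cabled $N=2$ Khovanov--Rozansky homology, and then invoke the established isomorphism between the latter and the $\glnn{2}$ skein lasagna module. Concretely, the plan is to identify the chain of isomorphisms (1) $\cong$ (2) $\cong$ (3) from Theorem~C: the isomorphism (2)~$\cong$~(3) is already proved in \cite{2020arXiv200908520M} for split unions and in \cite{2206.04616} in general, so the content of Theorem~\ref{thm:MN} is (1)~$\cong$~(2).

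First I would recall the Manolescu--Neithalath formula in the $N=2$ case. For a given homology class $\underline{i} \in \{0,1\}^r \cong H_2(B^4(\mathcal{L}_2),\mathcal{L}_1;\Z/2)$, their cabled Khovanov homology in class $\underline{i}$ is the filtered colimit, taken over tuples $\underline{n}\in \N^r$, of the shifted Khovanov homologies of the cables $\mathcal{L}_1 \cup \mathcal{K}_1^{\JW_{i_1+2n_1}} \cup \cdots \cup \mathcal{K}_r^{\JW_{i_r+2n_r}}$, with transition maps induced by certain dotted annulus cobordisms that raise each cabling number by two. My next step is to recognize these transition cobordisms as the images under the cabling functor $\Kh_{\mathcal{L}}$ of the morphisms $U_{i_j+2n_j}\in \Hom_{\Kar(\dTL)}(\symobj_{i_j+2n_j}, q^{-2}\symobj_{i_j+2n_j+2})$ from Definition~\ref{def:KarGens}; this follows by inspection, since both sides are described by a dotted annulus cobordism sandwiched between Jones--Wenzl projectors.

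Once this identification is made, the theorem follows by unpacking Definition~\ref{def:KKh}. By Theorem~\ref{thm:cable functor}, the functor $\Kh_{\mathcal{L}}$ preserves grading shifts, direct sums/summands, and filtered colimits. Since $\kirby_{i_j}$ is by Definition~\ref{def:Kirby} the filtered colimit of $q^{-i_j-2n_j}\symobj_{i_j+2n_j}$ along the maps $U_{i_j+2n_j}$, and since filtered colimits commute with the $r$-fold cabling functor in each slot (Remark~\ref{rem:Indmonoidal}), we obtain
\[
\Kh(\mathcal{L}_1 \cup \mathcal{L}_2^{\kirby_{\underline{i}}}) \;\cong\; \colim_{\underline{n}\in \N^r}\, q^{-|\underline{i}|-2|\underline{n}|}\,\Kh\big(\mathcal{L}_1 \cup \mathcal{K}_1^{\JW_{i_1+2n_1}} \cup \cdots \cup \mathcal{K}_r^{\JW_{i_r+2n_r}}\big),
\]
which is exactly the Manolescu--Neithalath cabled Khovanov homology in class $\underline{i}$. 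Composing with the isomorphism to the skein lasagna module from \cite{2020arXiv200908520M, 2206.04616} then yields the result, with the $\underline{i}$-grading on both sides identified via the $\kirby = \kirby_0 \oplus \kirby_1$ decomposition. The main obstacle will be a careful bookkeeping step: matching the precise form of the Manolescu--Neithalath cobordisms (including sign conventions inherited from the chosen functorial fix of Khovanov homology, and the combinatorial factors appearing in the definition of $U_n$ and $D_n$) with the structure maps in our directed system defining $\kirby_{i}$, so as to ensure that the comparison is an honest isomorphism of directed systems and not merely a cofinal reindexing.
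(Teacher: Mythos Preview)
Your approach is essentially the same as the paper's: expand the Kirby-colored Khovanov homology as a filtered colimit over $\N^r$ of symmetrized cable homologies and match it against the Manolescu--Neithalath $2$-handle formula, then cite \cite{2020arXiv200908520M,2206.04616} for the passage to the skein lasagna module.

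One point to flag: you state the Manolescu--Neithalath formula already in its fully symmetrized form, with cables labeled $\JW_{i_j+2n_j}$ and transition maps $U_{i_j+2n_j}$. The raw $2$-handle formula is instead a quotient of $\bigoplus_{\underline{n}} q^{-|\underline{i}|-2|\underline{n}|}\Kh(\mathcal{L}_1\cup \mathcal{K}_1^{i_1+n_1+\overline{n_1}}\cup\cdots)$ by three families of relations: coinvariants under the parabolic $\mathfrak{S}_{i_j+n_j}\times\mathfrak{S}_{n_j}$, killing undotted annulus cobordisms, and identifying along dotted annulus maps. The paper observes that the first two relations together amount to full $\mathfrak{S}_{i_j+2n_j}$-symmetrization (this uses the Grigsby--Licata--Wehrli identification of the braiding action with the Temperley--Lieb crossing, as in Proposition~\ref{prop:sym colored}), after which the third relation gives exactly the colimit along the $U$-maps. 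Your ``careful bookkeeping step'' should therefore include this reduction from partial to full symmetrization; once that is in place, the sign and normalization concerns you raise are not an issue, since any nonzero scalar discrepancy in the transition maps of a directed system leaves the colimit unchanged.
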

\begin{proof}[Proof (sketch)]
The result is a straightforward consequence of the 
Manolescu--Neithalath $2$-handle formula \cite{2020arXiv200908520M} for 
the $\glnn{N}$ skein lasagna module $\mathcal{S}^N_0$ 
in its slightly generalized form from \cite{2206.04616}, together with some
optimizations that are currently only possible in the $N=2$ case.  

Observe that the Kirby-colored Khovanov homology 
$\Kh(\mathcal{L}_1\cup \mathcal{L}_2^{\kirby_{\underline{i}}})$ 
can be computed as in Example \ref{ex:kirby colored}:
\begin{equation}\label{eq:kirby kh}
\Kh(\mathcal{L}_1\cup \mathcal{L}_2^{\kirby_{\underline{i}}}) \cong
\colim_{\underline{n}\in \N^r} \Kh\left(\mathcal{L}_1\cup 
	\mathcal{K}_1^{(i_1+2n_1)}\cup \cdots \cup \mathcal{K}_r^{(i_r+n_r)}\right) \, .
\end{equation}
Next, for the purposes of stating the 2-handle formula, let us temporarily use the notation 
$\mathcal{K}^{m+\overline{n}}$ to denote the $(m+n)$-cable of a framed oriented knot
in which $m$ strands have the orientation inherited from $\mathcal{K}$ and $n$ have the opposite orientation.  
The product of braid groups $\Br_m\times \Br_n$ acts on $\mathcal{K}^{m+\overline{n}}$ 
by cobordisms which braid similarly-oriented components.  

The $2$-handle formula expresses the skein lasagna module 
$\mathcal{S}^2_0(B^4(\mathcal{L}_2);\mathcal{L}_1,\underline{i},\K)$ as
\begin{equation}\label{eq:big direct sum}
\left(\bigoplus_{\underline{n}\in \N^r} q^{-|\underline{i}|-2|\underline{n}|}  
\Kh(\mathcal{L}_1\cup \mathcal{K}_1^{i_1+n_1+\overline{n_1}}\cup 
	\cdots \cup \mathcal{K}_r^{i_r+n_r+\overline{n_r}})\right) \bigg/ \sim
\end{equation}
where $|\underline{i}| = i_1+\cdots+i_r$ and $|\underline{n}| = n_1+\cdots+n_r$.
The relations imposed on this direct sum are as follows:
\begin{enumerate}
\item Partially symmetrize, 
i.e.~take coinvariants with respect to the parabolic subgroup $\mathfrak{S}_{i_j+n_j}\times \mathfrak{S}_{n_j}$ 
acting via cobordisms braiding the similarly-oriented components of 
$\mathcal{K}_j^{i_j+n_j+\overline{n_j}}$.
\item Kill the action of the annulus cobordisms.
\item Identify along the images of dotted annulus maps.
\end{enumerate}
Taken together, (1) and (2) accomplish the full symmetrization on each component 
with respect to $\mathfrak{S}_{i_j+2n_j}$.
The direct sum with these relations is hence precisely the colimit calculated in \eqref{eq:kirby kh}.  
\end{proof}

\begin{remark}
Both of the invariants $\Kh(\mathcal{L}_1\cup \mathcal{L}_2^{\kirby_{\underline{i}}})$ 
and $\mathcal{S}^2_0(B^4(\mathcal{L}_2);\mathcal{L}_1,\underline{i},\K)$ 
are defined for arbitrary $\underline{i}\in \Z^r$.  
The $2$-periodicity of Kirby objects $\kirby_i\cong \kirby_{i+2}$ 
ensures that the former depends only on the parity of the components of $\underline{i}$. 
On the other hand, the same is only true of the skein lasagna invariants
$\mathcal{S}^2_0(B^4(\mathcal{L}_2);\mathcal{L}_1,\underline{i},\K)$ up to overall grading shift, 
because the components of $\mathcal{K}^{i+n+\overline{n}}$ 
can be given an alternating orientation only if $i\in \{0,1\}$. 
This disagreement between Kirby-colored Khovanov homology 
and skein lasagna modules would disappear had we worked with 
$\glnn{2}$ foams instead of the Bar-Natan category.
\end{remark}

\section{Future directions} 
To conclude, 
we list and comment on questions related to
the Kirby color for Khovanov homology that we find interesting, 
but which exceed the scope of this paper. \medskip

\subsection{Kirby color in other constructions}
	\label{ss:other+glN}
Khovanov homology admits many different constructions 
using technology from different areas of mathematics, 
including combinatorics, geometric representation theory, and symplectic geometry. 
We expect that the Kirby color $\kirby$ has an avatar in each of these constructions. 
It is an interesting problem to identify the Kirby color intrinsically 
in each of these settings.

Similarly, we expect that Kirby colors also exist for other link homology theories, 
provided they extend to functorial tangle invariants.
Key examples are the $\glN$ link homology theories 
originally constructed by Khovanov--Rozansky~\cite{KR}.
In this setting, 
the role of the dotted Temperley--Lieb category is played by the dotted $\glN$ web
	category $\dWeb_N$. 
Isomorphism classes of objects in this category are
	indexed by pairs of natural numbers $(k,\ell)\in \Z_{\geq 0}^2$, which are
	represented by $k$ upward-oriented boundary points next to $\ell$
	downward-oriented boundary points. 
Morphisms in $\dWeb_N$ are $\K$-linear combinations of dotted trivalent graphs, 
and all preserve the difference $k-\ell$.
The relation $x^2=0$ in $\dTL$ generalizes to the relation $x^N=0$ in $\dWeb_N$.
	
In the same way as $\dTL$ describes the annular Bar-Natan category $\ABN$, 
the dotted web category $\dWeb_N$ describes 
the category of annular $\glN$ foams (after appropriate completion).
A version of the latter (its positive half) appears in \cite{QR2}, 
and it is possible to adapt the arguments therein 
to the entire category of annular $\glN$ foams, 
defined by feeding the canopolis of $\glN$ foams from \cite{ETW} 
into the machinery from \cite[\S  3.1]{1806.03416}.
This yields the equivalence between $\dWeb_N$ and annular $\glN$ foams.
Under this equivalence, the image of the standard object $(k,\ell) \in \dWeb_N$ is a collection of
concentric essential circles in the annulus of which 
$k$ have the standard orientation and $\ell$ have the opposite orientation. 
Therefore, the total winding number of this object around the core of the annulus is $k-\ell$.

The analogue of Jones-Wenzl projectors in $\dWeb_N$ are idempotent 
morphisms $\JW_{k,\ell}$, one for each object $(k,\ell)$. 
For any winding number $m \in \Z$, 
we expect that one can assemble the morphisms $\JW_{m+n,n}$ for $n\geq \max(0,-m)$ 
into directed systems, with transition maps involving $(N-1)$-fold dotted cup morphisms.
We conjecture that the ind-objects $\kirby_m$ giving the colimits of these directed systems 
play an analogous role to the Kirby colors developed in the present paper for $\glN$ link homology.
In the $\glN$ setting, directed systems of different winding numbers are 
not isomorphic: there are no nonzero morphisms between them.


\subsection{Representing other modules for the annular category}
As discussed in Remark~\ref{rem:adjoints}, 
the Kirby color is a representing
object for the (dual of) $\Pol$, thus in light of Example \ref{ex:eval is pol}
also for \emph{planar evaluation} of $\cABN$. A natural follow-up is to
determine whether other evaluation functors $\rho\colon \cABN\to \Vect^\Gamma$
can be represented by objects in $\cABN$. Interesting examples of such Rhos
include the following: 
\begin{itemize}
	\item Annular Khovanov homology \cite{MR2113902} factors through a functor
	$\cABN \to  \Vect^{\Z\times \Z}$, 
	which involves an additional \emph{annular grading}. 
	In this case, one can ask for an object representing each annular degree.
	
	\item By Theorem~\ref{thm:cable functor}, any framed knot $\mathcal{K}
	\subset S^3$ determines a functor $\Kh_\mathcal{K}\colon \cABN \to
	\Vect^{\Z\times \Z}$. If it exists, a representing object for
	$\kirby_\mathcal{K}$ in $\cABN$ would capture the data of all colored
	Khovanov homologies of $\mathcal{K}$, the Khovanov homologies of all parallel cables
	of $\mathcal{K}$, as well as all linear maps associated by Khovanov homology
	to a certain class of link cobordisms
	which are supported over a tubular neighborhood of $\mathcal{K}$. If
	$\kirby_\mathcal{K}$ could be expressed in terms of
	$\kirby=:\kirby_\mathcal{U}$ and compact objects of $\cABN$, this would
	amount to a quantification of the growth behavior \cite{Wed3} of the colored
	Khovanov homologies of $\mathcal{K}$ relative to those for the unknot
	$\mathcal{U}$. We speculate that this may provide an approach to formalizing
	a notion of \emph{q-holonomicity of colored Khovanov homology}, in
	categorical analogy with the q-holonomicity of the colored Jones polynomial
	\cite{MR2174266}. 
\end{itemize}

\subsection{2-handles in the Asaeda--Frohman TQFT} 

Our primary interest in the
	Kirby color $\kirby$ is due to its relevance for computing $4$-manifold
	invariants associated with Khovanov homology. 
	However, $\kirby$ may also play a role in the $(3+\epsilon)$-dimensional TQFT associated 
	with the Bar-Natan monoidal bicategory $\BN$, 
	whose associated 3-manifold invariants are the Bar-Natan skein modules from
	\cite{MR2370224,Russell,MR2503518,thesis-Fadali}. 
	Specifically, we expect that $\kirby \in \cABN$ models $3$-dimensional 
	$2$-handle attachments in this theory.
	This parallels the fact that the Kirby color for $2$-handle attachments in
	the 4-dimensional Crane--Yetter-type TQFT based on a braided fusion category
	$\CS$ also serves as the Kirby color in the 3-dimensional Turaev--Viro-type
	TQFT associated with the fusion category $\CS$ 
	(with the braiding forgotten). 
	
Our results may therefore be of particular interest in
	the program to categorify the Kauffman bracket skein
	modules of $3$-manifolds presented via Heegaard decompositions.
	This approach first seeks to categorify the Kauffman bracket
	skein algebra of the Heegaard surface in the form of a monoidal category, 
	and then to take a categorified version of a $2$-sided quotient by $2$-handle relations.
This program was initiated in \cite{1806.03416} with the construction of such a
	categorified skein algebra which 
	was partially based on conjectural functoriality properties of Khovanov homology under foams. 
The requisite functoriality properties have recently been addressed in \cite{2209.08794}. 
	It is unclear whether the resulting (candidate)
	categorified skein algebra is compatible with the Kirby color
	as a model for $2$-handles, since the construction in \cite{1806.03416} requires a certain
	truncation by higher degree morphisms, including the dotted annulus maps. 
	The latter, of course, are essential in the definition of the Kirby color, 
	so it is an interesting problem to reconcile this divide.

\subsection{Kirby color via coends:} 
In the context of 3-manifold invariants associated with (not necessarily semisimple) 
	ribbon categories $\CS$, a useful method for 
	encoding cabling procedures is in terms of the coend of the functor 
	$\CS^{\mathrm{op}}\otimes \CS \to \CS$, $(X,Y) \mapsto X^*\otimes Y$. 
If it exists, the coend $A \in \CS$ represents 
	the relative skein module of $\CS$-labelled ribbon graphs in the thickened annulus 
	with boundary points in a specified disk on the boundary, 
	viewed as a $\CS$-module via the action at this boundary disk.
A Kirby element can then be defined \cite{MR2251160} as an element of 
	$\Hom_{\CS}(\oone,A)$ such that the associated cabled link invariants 
	are constant under the second Kirby move. Since morphisms are taken here from 
	the monoidal unit $\oone \in \CS$, this models the non-relative skein of the 
	thickened annulus.
We speculate that an analogous framework can be provided in the categorified
	case by considering a suitable completion of the Bar-Natan monoidal
	bicategory $\BN$.

\subsection{Kirby color in positive characteristic} 

Throughout the present paper, we have worked in characteristic zero. In
characteristic $p>0$, the description of symmetric objects $\Sym^{k}(\gen)$ as
images of symmetrizers on $\gen^k$ as in Definition \ref{def:JW} no longer holds
when $k\geq p$. Thus, it is unclear if a directed system like $\kirby$ can be
constructed in this case. However, the category of tilting modules for
$\mathrm{SL}(2)$, which is modeled by the Temperley--Lieb category, admits
interesting tensor ideals in the modular case, reminiscent of (but richer than)
quantum groups at a root of unity. It is an interesting question whether
Khovanov homology in positive characteristic admits a ``smaller'' Kirby object,
roughly in parallel with the Kirby element for the Jones polynomial at a root of
unity being a finite sum.

\subsection{Connections with Category $\mathcal{O}$} 
	
As discussed in Remarks \ref{rem:Verma} and \ref{rem:Verma2}, 
	for $k \geq 0$ there are strong parallels between the shifted Kirby objects $q^k \kirby_k$
	and the $\slnn{2}$ dual Verma modules $\nabla(k)$ of highest weight $k$.
To recap:
\begin{itemize}
	\item $\Pol(q^k \kirby_k)$ and $\nabla(k)$ have the same graded dimension.
	\item There is a degree zero monic morphism $\JW_k \hookrightarrow q^k \kirby_k$ in $\cdTL$ 
	akin to the inclusion of $\slnn{2}$-modules $\Sym^k(V) \hookrightarrow \nabla(k)$.
	\item $\Hom_{\cdTL}(q^k \kirby_k,X)=0$ for any compact $X \in \cdTL$ and 
	$\Hom_{\Rep(\slnn{2})}(\nabla(k),W)=0$ for any finite-dimensional $W \in \Rep(\slnn{2})$.
\end{itemize}
It would be interesting to understand the precise relation between $\cdTL$ 
and the BGG category $\mathcal{O}(\slnn{2})$, 
and to extend this relation to the $\glN$ setting discussed in \S \ref{ss:other+glN}.

 \subsection{A homotopy colimit} Instead of modelling the Kirby color as a filtered colimit, we can also
 build a similar object as a homotopy colimit of the directed system
 \eqref{eq:dirsys}.  
  For $k\geq 0$ this is the 2-term complex $\Kirby_k$:
 	\begin{equation}
 		\label{eq:Kirbycx}    
 		\begin{tikzcd}
 			q^{-k}\, \JW_{k}  \arrow{r}{\id}\arrow{dr}{U_k} &    q^{-k}\, \JW_{k} \\
 			q^{-k-2}\, \JW_{k+2}\arrow{r}{\id}\arrow{dr}{U_{k+2}}  & q^{-k-2}\, \JW_{k+2} \\
 			\quad\quad\quad \arrow[dotted]{r}&      q^{-k-4} \JW_{k+4}
 		\end{tikzcd} 
 	\end{equation}
 	obtained by taking coproducts in the columns, with the right-hand column in
 	homological degree zero. The complex $\Kirby_k$ can be considered as an
 	object of the dg category $\Ch^b_{\mathrm{dg}}(\Kar(\dTL)^{\coprod})$ of
 	bounded chain complexes over $\Kar(\dTL)^{\coprod}$, where the superscript
 	$\coprod$ refers to completion with respect to countable coproducts.
 	Gaussian elimination immediately implies $\Kirby_k\simeq \Kirby_{k+2}$, the
 	analogue of Lemma~\ref{lem:shift}. We expect that $\Kirby_k$ and $\kirby_k$
 	are quasi-isomorphic when considered as objects of a suitably defined
 	derived category of $\dTL$.

 	One advantage of $\Kirby_k$ is that for $k\in\{0,1\}$ it is manifestly filtered, with
 	subquotients of the form 
 	\[
 	 C_{l}:= \left(q^{-l+2}\, \JW_{l-2} \xrightarrow{U_{l}} q^{-l}\,
 	 \JW_{l}\right)\] for $l\geq 0$, where we declare $\JW_{l}=0$ for $l<0$.
 	Following an observation of Elijah Bodish, these complexes have the property
 	 that all morphisms $C_{l} \to C_{l'}$ are null-homotopic when $l\leq
 	 l'$, and thus may be part of a highest weight structure. We plan to
 	 investigate these structures in future work.

\bibliographystyle{alpha}
\bibliography{pw}

\end{document}